\newcommand{\margnote}[1]{
\ifthenelse{\boolean{shownotes}}%
{\marginpar{\raggedright\tiny\texttt{#1}}}%
{}%
}
\newcommand{\hole}[1]{
\ifthenelse{\boolean{shownotes}}%
{\begin{center} \fbox{ \rule {.25cm}{0cm}
\rule[-.1cm]{0cm}{.4cm} \parbox{.85\textwidth}{\begin{center}
\texttt{#1}\end{center}} \rule {.25cm}{0cm}}\end{center}}
{}
}
\newtheorem{theorem}{Theorem}[section]
\newtheorem{proposition}[theorem]{Proposition}
\newtheorem{lemma}[theorem]{Lemma}
\newtheorem{definition}[theorem]{Definition}
\theoremstyle{remark}
\newtheorem{remark}[theorem]{Remark}
\newcommand{\R}{\mathbb{R}}
\newcommand{\dive}{\mathop{\mathrm {div}}}
\newcommand{\trace}{\mathrm{Tr}}
\newcommand{\We}{We}
\newcommand{\re}{Re}
\newcommand{\cof}{\mathrm{Cof}}
\numberwithin{equation}{section}
\begin{document}
\title[]{Splash singularities for a general Oldroyd model with finite Weissenberg number}

\author{Elena Di Iorio}
\address{GSSI - Gran Sasso Science Institute\\
67100 L'Aquila - 
Italy}

\email{elena.diiorio@gssi.it}

\author{Pierangelo Marcati}
\address{GSSI - Gran Sasso Science Institute\\
67100 L'Aquila -
Italy}
\email{pierangelo.marcati@gssi.it}

\author{Stefano Spirito}
\address{Department of Information Engineering, Computer Science and Mathematics\\ University of L'Aquila\\
67100 L'Aquila - Italy}
\email{stefano.spirito@univaq.it}

\begin{abstract}
In this paper we study a 2D free-boundary Oldroyd-B model which describes the evolution of a viscoelastic fluid. We prove the existence of splash singularities, namely points where the free-boundary remains smooth but self-intersects. This paper extends the previous result obtained for infinite Weissenberg number by the authors in \cite{DMS1}, \cite{DMS2} to the more realistic physical case of any finite Weissenberg number. The main difficulty faced in this paper is due to the non linear balance law of the elastic tensor, which cannot be reduced, as in the case of infinite Weissenberg, to a transport equation for the deformation gradient. Overcoming this difficulty requires a very accurate local existence theorem in terms of dependence on the  Weissenberg number. The method in this case is based on the combined use of conformal transformations and lagrangian coordinates, whose formulation must however take into account the general balance law of the elastic tensor and its dependence on the Weissenberg number.  The existence of the splash singularities is therefore guaranteed by an adequate choice of initial data, depending also on the elastic tensor, combined with stability estimates.
\end{abstract}

\maketitle 

\section{Introduction}

The aim of this paper is to study the existence of splash singularities,  namely a point where the interface self-intersects, for the free-boundary value problem of a general Oldroyd-B model. This model describes the behaviour of viscoelastic fluids, which unlike the classical Newtonian fluids, exhibit both viscous and elastic behaviours.\\
This paper extends in a nontrivial way our previous papers  \cite{DMS1}, \cite{DMS2} and provides an estimate of the 
blow-up time in terms of the Weissenberg number (namely a dimensionless number given by the ratio between the elastic forces and the viscous forces). As we stated in the abstract the main difficulty in this general case is due to the non linear balance law of the elastic tensor, which cannot be reduced, as in the case of infinite Weissenberg, to a transport equation for the deformation gradient. Our model behaves like a viscous fluid \cite{CCFGG2} for a very small Weissenberg number and like the models analyzed  in \cite{DMS1}, \cite{DMS2} for very large  Weissenberg numbers.

We focus our analysis on the case of incompressible viscoelastic fluids and we choose a constant density normalized to $\bar{\rho}\equiv 1$. Then the conservation laws of mass and momentum for a general material are given by
\begin{equation}\label{conservation-eq1}
\left\{\begin{array}{lll}
\dive u=0\\[2mm]
\partial_t u+u\cdot\nabla u=\dive\sigma.
\end{array}\right.
\end{equation}
\medskip

Viscoelastic fluids are known to be characterized by the following constitutive equation for the stress tensor $\sigma$,

\begin{equation}\label{extra-str}
\sigma=-p\mathcal{I}+\tau \hspace{0.3cm}\textrm{and}\hspace{0.3cm}\tau=\mu_s(\nabla u+\nabla u^T)+\tau_p,
\end{equation}

\noindent where $\tau$ is called extra-stress and $\tau_p$ is the polymeric stress tensor related to the elastic behaviour. 
In order to have a closed system we need to formulate an equation for $\tau$.\par

 In literature there are several formulations for the tensor $\tau$, both in differential and integral forms and they describe viscoelastic models with various distinct properties, see \cite{FGO} and \cite{R}. In particular, since viscoelastic materials have both elastic and viscous properties, a natural approach to understand their behaviours is through an ideal model made of springs, for the elastic part and dashpots, for the viscous one. We can combine them in a number of different ways to simulate various possible material responses. The simplest models in viscoelasticity are given by a serial or a parallel connection of springs and dashpots, the former representing Maxwell fluids, while the latter represents Kelvin-Voigt solids. The simplest constitutive relationship for Maxwell fluids is obtained as a linear combination of the constitutive relations for the elasticity and the viscosity terms. Precisely, 

\begin{equation}\label{Maxwell}
\tau+\lambda\partial_t\tau=\mu_0(\nabla u+\nabla u^T),
\end{equation}
where $\mu_0=\mu_s+\mu_p$ denotes the material viscosity, $\mu_s$ the solvent viscosity and $\mu_p$ the polymeric viscosity respectively. The quantity $\lambda$ has the dimension of time and it represents the \textit{relaxation time}, which is the typical time for the system to return to the equilibrium after a typical deformation, \cite{OP} and \cite{R}. This parameter is proportional to the Weissenberg number $\We$, which measures the ratio of the elastic and the viscous forces and it appears instead of $\lambda$ when we use dimensionless variables, see \eqref{undimsys}.\par

We remark that the Maxwell viscoelastic equations \eqref{Maxwell} are not frame-indifferent, which means they are not invariant under proper time dependent rotations. To recover this property, see \cite{FGO}, the following upper convected derivative, 
$$\partial^{uc}_t \tau=\partial_t\tau+u\cdot\nabla\tau-\nabla u\tau-\tau\nabla u^T,$$
was introduced in his 1950 celebrated paper by Oldoryd \cite {Old}.  Then the equation for $\tau$ provided by the general Oldoryd model is given by the following

\begin{equation}\label{UCO}
\tau+\lambda\partial^{uc}_t\tau=\mu_0(\nabla u+\nabla u^T +\lambda_s\partial^{uc}_t(\nabla u+\nabla u^T)),
\end{equation}
where $\lambda_s=\frac{\mu_s}{\mu_0}\lambda$ is the characteristic retardation time for the fluid. If we plug in \eqref{UCO} the extra-stress tensor given in \eqref{extra-str}, we get that $\tau_p$, the stress associated to the elastic part, satisfies the upper convected Maxwell equation
\begin{equation}\label{UCMM}
\lambda\partial_t^{uc}\tau_p+\tau_p=\mu_p(\nabla u+(\nabla u)^T).
\end{equation}
In order to have a closed system in $u$ and $\tau_p$, we combine  \eqref {UCMM} with \eqref{conservation-eq1} and \eqref{extra-str}, namely we put together the conservation of mass and momentum and the constitutive law, expressed in terms of $\tau_p$ to obtain
\begin{equation}\label{Nav}
\partial_t u+u\cdot\nabla u+\nabla p-\mu_s\Delta u=\dive{\tau_p}
\end{equation}
Therefore a closed system is given by coupling  \eqref{UCMM} and \eqref{Nav}.
 \medskip

To describe the moving boundary we use the particle-trajectory mapping, namely the flux $X(t,\alpha)$ provided by the following system of ODEs of the integral curves of the velocity vector field $u$. 

\begin{equation}\label{ODEflux}
\left\{\begin{array}{lll}
\displaystyle\frac{d}{dt}{X}(t,\alpha)=u(t,X(t,\alpha))\\ [3mm]
X(0,\alpha)=\alpha,
\end{array}\right.
\end{equation}

\noindent where $\alpha\in \R^2$ denotes the material point in the reference configuration, also called Lagrangian particle marker.
Given the initial domain  $\Omega_0 \subset \R^2$, the moving domain is then given by   $\Omega(t)=X(t,\Omega_0)$ and hence in the related free-boundary problem one of the unknown is given by $\partial \Omega(t)$ itself. The motion of the free-boundary  is determined through boundary conditions given by the  balance of the force fields at the interface.  
\\ \
\\
 In this paper we consider the free boundary initial value  problem the general Oldroyd-B system  endowed with boundary  conditions

\begin{equation}\label{lambda-Eulsys}
\left\{\begin{array}{lll}
\partial_t u+u\cdot\nabla u+\nabla p-\mu_s\Delta u=\dive{\tau_p}\hspace{2cm}\textrm{in}\hspace{0.1cm}\Omega(t)\\[3mm]
\lambda\partial_t^{uc}\tau_p+\tau_p=\mu_p(\nabla u+ \nabla u^T)\hspace{2.98cm}\textrm{in}\hspace{0.1cm}\Omega(t)\\[3mm]
\displaystyle\dive u=0 \hspace{6.35cm}\textrm{in}\hspace{0.1cm}\Omega(t)\\ [3mm]
\displaystyle (-p\mathcal{I}+\mu_s(\nabla u+\nabla u^{T})+\tau_p)n=0 \hspace{2.15cm}\textrm{on}\hspace{0.1cm}\partial\Omega(t)\\ [3mm]
\displaystyle u(0,\alpha)=u_0(\alpha),\hspace{0.1cm} \tau_p(0,\alpha)=\tau_0(\alpha)\hspace{2.4cm} \textrm{in}\hspace{0.3cm}\Omega_0,\\
\end{array}\right.
\end{equation}
\medskip

\noindent We use the notation $(\dive M)_j=\sum_{i}\partial_i M_{ij}$, for any matrix valued function $M$. Since our goal is to have an explicit dependence of all the estimates on the Weissenberg number,  we prefer to rewrite the previous system \eqref{lambda-Eulsys} in dimensionless variables. We set

\begin{align*}
&X=\frac{X^*}{L},\quad u=\frac{u^*}{U},\quad t=t^*\frac{U}{L},\quad \tau_p=\frac{\tau_p^*L}{\mu_0 U}, \quad p=p^*\frac{L}{\mu_0 U},
\end{align*}
where $U, L$ represent the typical velocity and length of the flow. \\ 
Moreover we define
\begin{itemize}
\item the Weissenberg number $\We=\frac{U\lambda}{L}$, 
\item the Reynolds number $\re=\bar{\rho}\frac{UL}{\mu_0}$, 
\end{itemize}
where  $\bar{\rho}=1$, $\frac{\mu_p}{\mu_0}=\kappa$. The stars are attached to denote the dimensional variables. \\

\noindent Hence the system \eqref{lambda-Eulsys} becomes 

\begin{equation}\label{undimsys}
\left\{\begin{array}{lll}
\re(\partial_t u+u\cdot\nabla u)+\nabla p=(1-\kappa)\Delta u+\dive\tau_p\hspace{2cm}\textrm{in}\hspace{0.1cm}\Omega(t)\\[3mm]
\We\hspace{0.1cm} \partial_t^{uc}\tau_p+\tau_p=\kappa(\nabla u+\nabla u^T)\hspace{4.25cm}\textrm{in}\hspace{0.1cm}\Omega(t)\\[3mm]
\dive u=0\hspace{7.95cm}\textrm{in}\hspace{0.1cm}\Omega(t)\\[3mm]
(-p\mathcal{I}+(1-\kappa)(\nabla u+\nabla u^T)+\tau_p)n=0\hspace{2.95cm}\textrm{on}\hspace{0.1cm}\partial\Omega(t)\\[3mm]
u(0,\alpha)=u_0(\alpha),\quad \tau_p(0,\alpha)=\tau_0(\alpha)\hspace{3.7cm}\textrm{in}\hspace{0.1cm}\Omega_0.
\end{array}\right.
\end{equation}

\noindent This system is supplemented with the following compatibility conditions

\begin{equation}\label{undim-compatibility}
\left\{\begin{array}{lll}
\displaystyle \dive{u_0}=0\hspace{6cm}\textrm{ in } \Omega_0 \\ [1mm]
\displaystyle n_0^{\perp}((1-\kappa)(\nabla u_0 +\nabla  u_0^T) + \tau_0)n_0=0 \hspace{1.5cm} \textrm{ on } \partial\Omega_0.\\
\end{array}\right.
\end{equation}
\medskip

\noindent The main goal is to prove existence of splash singularity for the above system. The definition of splash sigularity involves the interface $\partial\Omega(t)$, which is a smooth, simple and closed curve and it is parametrized by $z:\mathbb{R}/\mathbb{Z}\rightarrow\mathbb{R}^2$, that  satisfies the arc-chord condition 
$|z(\alpha)-z(\alpha')|\geq K\|\alpha-\alpha'\|,$ for $\alpha, \alpha'\in\mathbb{R}/\mathbb{Z}$ and $K$ is the chord-arc constant. A rigorous definition is the following, see \cite{CCFGG2}.

\begin{definition}
 $z(\alpha)=(z_1(\alpha),z_2(\alpha))$ is a splash curve if
\begin{enumerate}
\item $z_1(\alpha), z_2(\alpha)$ are smooth functions and $2\pi$-periodic.
\item $z(\alpha)$ satisfies the arc-chord condition at every point except at $\alpha_1$ and $\alpha_2$, with $\alpha_1< \alpha_2$ where $z(\alpha_1)=z(\alpha_2)$ and $|\partial_{\alpha} z(\alpha_1)|,|\partial_{\alpha} z(\alpha_2)|>0$. This means $z(\alpha_1)=z(\alpha_2)$, but if we remove either a neighborhood of $\alpha_1$ or a neighborhood of $\alpha_2$ in parameter space, then the arc-chord condition holds.
\item The curve $z(\alpha)$ separates the domain into two regions; a connected region with the fluid and a vacuum region (not necessarily connected). We choose the parametrization in such a way that the normal vector $n=\frac{(-\partial_{\alpha}z_2(\alpha),\partial_{\alpha}z_1(\alpha))}{|\partial_{\alpha}z(\alpha)|}$ points in the vacuum region. The interface to be a part of the region with the fluid.
\item We can choose a branch of the function $P$, a conformal map that we will define later, on the fluid region such that the curve $\tilde{z}(\alpha)=(\tilde{z_1}(\alpha), \tilde{z_2}(\alpha))=P(z(\alpha))$ satisfies
\begin{enumerate}
\item [a.] $\tilde{z_1}(\alpha)$ and $\tilde{z_2}(\alpha)$ are smooth and $2\pi$-periodic.
\item [b.] $\tilde{z}$ is a closed contour.
\item [c.]$\tilde{z}$ satifies the arc-chord condition.
\end{enumerate}
\end{enumerate} 
\end{definition}

\noindent   Our approach to the analysis of the free boundary problem for the \eqref{undimsys}  takes inspiration from \cite{B} and \cite{CCFGG2} for the fluid part  and from our analysis of the Oldroyd-B model at infinite Weissenberg number in \cite{DMS1} and \cite{DMS2} for the elastic part. The idea for proving the existence of splash singularities in the general Oldroyd-B model \eqref{undimsys}  is inspired by the geometric construction  done in \cite{CCFGG2} for the Navier-Stokes equation and by our results in  \cite{DMS1} and \cite{DMS2} which take into account the balance laws of the elastic tensor.\par

Let us now proceed to summarize the key points of our geometric approach.
\begin{itemize}
\item  Let the initial domain  $\Omega_0$ be a domain of splash type as in fig. \ref{fig:1} (b), then let us introduce a conformal transformation $P$ in the complex plane, with the property to transform a non-splash  type domain $\tilde \Omega_0 $, into a domain of splash type  
$P^{-1}(\tilde{\Omega}_0)=\Omega_0$,  
as shown in fig. \ref{fig:1}(b).

\item If $\{\tilde{\Omega}_0, \tilde{u}_0, \tilde{p}_0, \tilde{\tau}_0\}$ are sufficiently smooth  we can apply the conformal transformation to \eqref{undimsys} and we can prove in  tilde plane the local existence of a smooth solution  $$\{\tilde{\Omega}(t), \tilde{u}(t,\cdot), \tilde{p}(t,\cdot), \tilde{\tau}_p(t,\cdot)\}$$ for $t\in[0,T],$ with a sufficiently small $T$.

\item  We make a suitable choice of the initial velocity, in particular $\tilde{u}(0,\tilde{z}_1)\cdot n>0,$ $\tilde{u}(0,\tilde{z}_2)\cdot n>0$ in order to guarantee that the interface moves from a configuration as in \ref{fig:1}(b) towards a configuration as in \ref{fig:1} (c), namely 
  there exists $\bar{t}\in(0,T]$ such that $P^{-1}(\partial\tilde{\Omega}(\bar{t}))$ is \textit{self-intersecting}, as in the case fig. \ref{fig:1} (c). 
  This solution using  \ref{fig:1} (b) as initial datum,  is well defined only in the tilde complex plane and cannot be mapped back by $P^{-1}$ into a solution in the non-tilde complex plane, hence  it cannot be used  to  show the existence of a splash singularity.
  
\item To solve the problem in the non-tilde domain, we consider a one-parameter family of initial data $\{\tilde{\Omega}_{\varepsilon}(0),\tilde{u}_{\varepsilon}(0),\tilde{p}_{\varepsilon}, \tilde{\tau}_{p,\varepsilon}(0)\}$ in the configuration \ref{fig:1} (a),   with $\tilde{\Omega}_{\varepsilon}(0)=\tilde{\Omega}_0+\varepsilon b$ and  $|b|=1$, such that  $P^{-1}(\partial\tilde{\Omega}_{\varepsilon}(0))$ is regular, then there exists a local in time smooth solution $\{{\Omega}_{\varepsilon}(t), {u}_{\varepsilon}(t,\cdot), {p}_{\varepsilon}(t,\cdot), {\tau}_{p,\varepsilon}(t,\cdot)\}$, in the non tilde complex plane.

\item We then prove a stability result  which allows us to say
\begin{center}
$\textrm{dist}(\partial\tilde{\Omega}_{\varepsilon}(\bar{t}),\partial\tilde{\Omega}(\bar{t}))\leq \varepsilon\quad\textrm{ hence }\quad
P^{-1}(\partial\tilde{\Omega}_{\varepsilon}(\bar{t}))\sim P^{-1}(\partial\tilde{\Omega}(\bar{t}))$
\end{center}
\medskip

\noindent and so $P^{-1}(\partial\tilde{\Omega}_{\varepsilon}(\bar{t}))$ self-intersects.
\item Since $P^{-1}(\tilde{\Omega}_{\varepsilon}(0))$ is regular of type fig. \ref{fig:1}(a) and $P^{-1}(\tilde{\Omega}_{\varepsilon}(\bar{t}))$ is self-intersecting domain of type fig. \ref{fig:1}(c), then there exists a time $t^*$ such that $P^{-1}(\tilde{\Omega}_{\varepsilon}(t^*))$ has a splash singularity.
\end{itemize}

\begin{figure}
\centering
\includegraphics[width=0.9\textwidth] {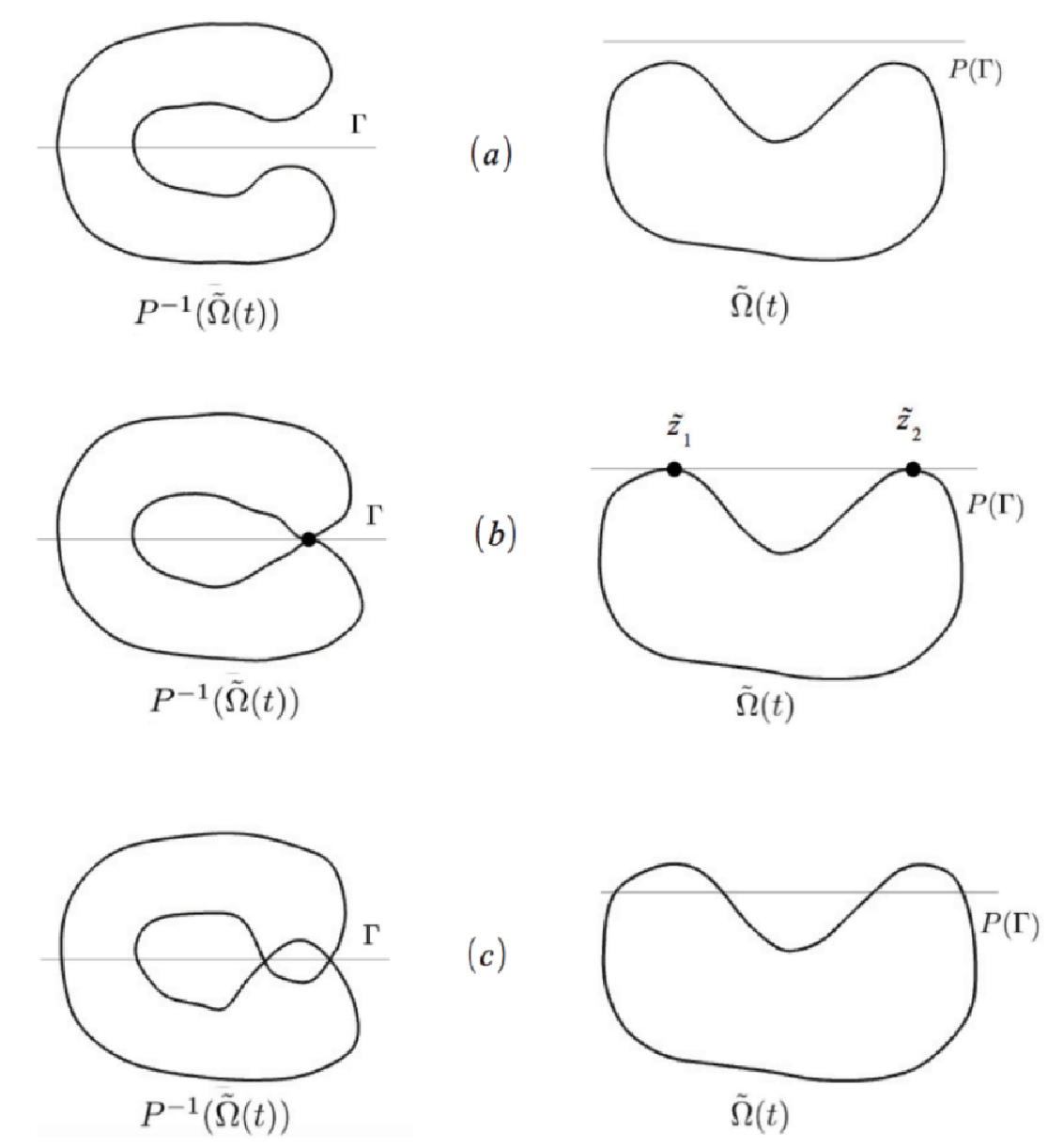}
\caption{Possibilities for $P^{-1}(\tilde{\Omega}(t))$}
\label{fig:1}     
\end{figure}

\subsection{Outline of the paper} 
In the Section \ref{sec:2}  we prove, through a fixed point argument, a local existence result for the conformal-lagragian system associated to \eqref{undimsys} and as byproduct of our estimates, we get that the local existence time $T$ can be estimated from above by $\frac{\We}{1+\We}$. \\
In the Section \ref{sec:3} we prove the stability estimates, with respect to a suitable one parameter family of perturbations of the initial splash domain. \\
Finally, in the Section \ref{sec:4} we show the existence of a splash singularity, by means of the previously mentioned geometric argument.

\section{Local existence for the system in conformal-lagrangian coordinates}\label{sec:2}
We focus on the analysis of the system \eqref{undimsys} but we want to rewrite it in conformal-lagrangian coordinates in order to use a fixed point argument for proving the local existence. The first step is to pass in conformal coordinates. Let us apply the conformal map $P(z)=\tilde{z}$, for $z\in\mathbb{C} \setminus \Gamma$, defined as a branch of $\sqrt{z}$, where $\Gamma$ is a line, passed through the splash point, for details see \cite{CCFGG2} and \cite{DMS1}; and the change of coordinates from $\Omega\rightarrow\tilde{\Omega}=P(\Omega)$. The conformal velocity field and elastic stress tensor are defined as follows

\begin{align*}
&\tilde{u}(t,\tilde{X})=u(t,P^{-1}(\tilde{X})), \hspace{0.5cm}\textrm{hence}\hspace{0.3cm}  u(t,X)=\tilde{u}(t,P(X)),\\[1mm]
&\tilde{\tau}_p(t,\tilde{X})=\tau_p(t,P^{-1}(\tilde{X})),\hspace{0.35cm}\textrm{hence}\hspace{0.2cm} \tau_p(t,X)=\tilde{\tau}_p(t,P(X)).
\end{align*}

\noindent Defining $J^P_{kj}=\partial_{X_j}P_k(P^{-1}(\tilde{X}))$ and $\displaystyle Q^2=\left|\frac{dP}{dz}(P^{-1}(\tilde{X}))\right|^2$, then the new system in conformal coordinates is the following in $[0,T]\times \tilde{\Omega}(t)$.

\begin{equation}
\left\{\begin{array}{lll}
\re\hspace{0.1cm}(\partial_t\tilde{u}+(J^P\tilde{u}\cdot \nabla)\tilde{u})-(1-\kappa)Q^2\Delta\tilde{u}+(J^P)^T\nabla\tilde{p}=\trace(\nabla\tilde{\tau}_pJ^P)\\[2mm]
\partial_t\tilde{\tau}_p+(J^P\tilde{u}\cdot\nabla)\tilde{\tau}_p-\nabla\tilde{u}J^P\tilde{\tau}_p-\tilde{\tau}_p(\nabla\tilde{u}J^P)^T=\\[2mm]
\displaystyle\hspace{5cm}=\frac{\kappa(\nabla\tilde{u}J^P+(\nabla\tilde{u}J^P)^T)-\tilde{\tau}_p}{\We}\\[2mm]
\trace(\nabla\tilde{u}J^P)=0\\[2mm]
(-\tilde{p}\mathcal{I}+(1-\kappa)(\nabla\tilde{u}J^P+(\nabla\tilde{u}J^P)^T)+\tilde{\tau}_p)\tilde{n}=0\\[2mm]
\tilde{u}(0,\tilde{\alpha})=\tilde{u}_0,\quad \tilde{\tau}_p(0,\tilde{\alpha})=\tilde{\tau}_0.
\end{array}\right.
\end{equation}
\medskip

\noindent The second step is to pass in lagrangian coordinates in order to fix the domain. We look at the flux equation

\begin{equation}\label{Conf-flux}
\left\{\begin{array}{lll}
\displaystyle \frac{d}{dt}\tilde{X}(t,\tilde{\alpha})=J^P(\tilde{X}(t,\tilde{\alpha}))\tilde{u}(t,\tilde{X}(t,\tilde{\alpha}))\hspace{1cm}\textrm{in}\hspace{0.3cm}\tilde{\Omega}(t)\\[3mm]
\displaystyle\tilde{X}(0,\tilde{\alpha})=\tilde{\alpha}\hspace{5.18cm} \textrm{in}\hspace{0.3cm}\tilde{\Omega}(0),
\end{array}\right.
\end{equation}
\medskip

\noindent by using the new lagrangian velocity, pressure and elastic stress

\begin{equation}\label{conf-lag-var}
\left\{\begin{array}{lll}
\tilde{v}(t,\tilde{\alpha})=\tilde{u}(t, \tilde{X}(t,\tilde{\alpha}))\\[2mm]
\tilde{q}(t,\tilde{\alpha})=\tilde{p}(t, \tilde{X}(t,\tilde{\alpha}))\\[2mm]
\tilde{\mathbf{T}}_p(t, \tilde{\alpha})=\tilde{\tau}_p(t,\tilde{X}(t,\tilde{\alpha})),
\end{array}\right.
\end{equation}
\medskip

we get the new conformal-lagrangian system in $[0,T]\times\tilde{\Omega}_0$, written as follows.

\begin{equation}\label{undim-Conf-Lag-Sys}
\left\{\begin{array}{lll}
\re\hspace{0.1cm} \partial_t \tilde{v}-(1-\kappa)Q^2(\tilde{X})\tilde{\zeta}\nabla(\tilde{\zeta}\nabla\tilde{v})+(J^P(\tilde{X}))^T\tilde{\zeta}\nabla \tilde{q}=\trace(\tilde{\zeta}\nabla \tilde{\mathbf{T}}_pJ^P(\tilde{X}))\\[2mm]
\partial_t\tilde{\mathbf{T}}_p-J^P(\tilde{X})\tilde{\zeta}\nabla\tilde{v}\tilde{\mathbf{T}}_p-\tilde{\mathbf{T}}_p(J^P(\tilde{X})\tilde{\zeta}\nabla\tilde{v})^T=\\[2mm]
\displaystyle\hspace{5cm}=\frac{\kappa\left(J^P(\tilde{X})\tilde{\zeta}\nabla\tilde{v}+(J^P(\tilde{X})\tilde{\zeta}\nabla \tilde{v})^T\right)-\tilde{\mathbf{T}}_p}{\We}\\[2mm]
\trace(\nabla \tilde{v}\tilde{\zeta}J^P(\tilde{X}))=0\\[2mm]
(-\tilde{q}\mathcal{I}+(1-\kappa)(\nabla \tilde{v}\tilde{\zeta}J^P(\tilde{X}) +(\nabla\tilde{v}\tilde{\zeta}J^P(\tilde{X}))^{T})+ \tilde{\mathbf{T}}_p)(J^P(\tilde{X}))^{-1}\nabla_{\Lambda} \tilde{X} \tilde{n}_0=0 \\[3mm]
\tilde{v}(0,\tilde{\alpha})=\tilde{v}_0(\tilde{\alpha}),\hspace{0.3cm}\tilde{\mathbf{T}}_p(0,\tilde{\alpha})=\tilde{\mathbf{T}}_0(\tilde{\alpha}),
\end{array}\right.
\end{equation}
\medskip

where $\tilde{\zeta}(t,\tilde{\alpha})=(\nabla \tilde{X})^{-1}(t,\tilde{\alpha})$ and $\nabla_{\Lambda} \tilde{X}=-\Lambda\nabla\tilde{ X}\Lambda$, with $\Lambda=\left(\begin{matrix} 0 & -1 \\ 1& 0\end{matrix}\right)$.

\subsection{Iterative scheme for \eqref{undim-Conf-Lag-Sys}}
\noindent We prove a local existence result throughout a fixed point argument. Thus the idea is to separate the equations for the elastic stress tensor $\tilde{\mathbf{T}}_p$ from the equations for the velocity and the pressure. By using Picard iterations, we get the following systems

\begin{equation}\label{Re-v-conf-iterative}
\left\{\begin{array}{lll}
\displaystyle\re\hspace{0.1cm} \partial_t \tilde{v}^{(n+1)}-(1-\kappa)Q^2\Delta \tilde{v}^{(n+1)}+(J^P)^T\nabla \tilde{q}^{(n+1)}={\tilde{f}}^{(n)}\\[3mm]

\displaystyle\trace(\nabla\tilde{v}^{(n+1)}J^P)={\tilde{g}}^{(n)}\\[3mm]

\displaystyle [-\tilde{q}^{(n+1)}\mathcal{I}+(1-\kappa) (\nabla\tilde{ v}^{(n+1)}J^P+(\nabla\tilde{v}^{(n+1)}J^P)^T)](J^P)^{-1}\tilde{n}_0={\tilde{h}}^{(n)}\\[3mm]
\displaystyle \tilde{v} (0,\tilde{\alpha})=\tilde{v}_0(\tilde{\alpha}),
\end{array}\right.
\end{equation}

\noindent where ${\tilde{f}}^{(n)}, {\tilde{g}}^{(n)}, {\tilde{h}}^{(n)}$ are defined as follows

\begin{align*} 
&{\tilde{f}}^{(n)}=-(1-\kappa)Q^2\Delta\tilde{v}^{(n)}+(J^P)^T\nabla\tilde{q}^{(n)}+(1-\kappa)Q^2(\tilde{X}^{(n)})\tilde{\zeta}^{(n)}\nabla(\tilde{\zeta}^{(n)}\nabla\tilde{v}^{(n)})\\
&\hspace{1cm}-J^P(\tilde{X}^{(n)})^T\tilde{\zeta}^{(n)}\nabla\tilde{q}^{(n)}+\trace(\tilde{\zeta}^{(n)}\nabla\tilde{\mathbf{T}}_p^{(n)}J^P(\tilde{X}^{(n)})),\\[3mm]
&{\tilde{g}}^{(n)}=\trace(\nabla\tilde{v}^{(n)}J^P)-\trace(\nabla\tilde{v}^{(n)}\tilde{\zeta}^{(n)}J^P(\tilde{X}^{(n)})),\\[3mm]
&{\tilde{h}}^{(n)}=-\tilde{q}^{(n)}(J^P)^{-1}\tilde{n}_0+\tilde{q}^{(n)}(J^P(\tilde{X}^{(n)}))^{-1}\nabla_{\Lambda}\tilde{X}^{(n)}\tilde{n}_0+
(1-\kappa)\nabla\tilde{v}^{(n)}\tilde{n}_0\\
&\hspace{1cm}+(1-\kappa)(\nabla\tilde{v}^{(n)}J^P)^T(J^P)^{-1}\tilde{n_0}-(1-\kappa)\nabla\tilde{v}^{(n)}\tilde{\zeta}^{(n)}\nabla_{\Lambda}\tilde{X}^{(n)}\tilde{n}_0\\
&\hspace{1cm}-(1-\kappa)(\nabla\tilde{v}^{(n)}\tilde{\zeta}^{(n)}J^P(\tilde{X}^{(n)}))^T(J^P(\tilde{X}^{(n)}))^{-1}\nabla_{\Lambda}\tilde{X}^{(n)}\tilde{n}_0\\
&\hspace{1cm}-\tilde{\mathbf{T}}_p^{(n)}(J^P(\tilde{X}^{(n)}))^{-1}\nabla_{\Lambda}\tilde{X}^{(n)}\tilde{n}_0,
\end{align*}
\medskip

\noindent The elastic stress tensor satisfies the following ODE.

\begin{equation}\label{We-G-conf-iterative}
\left\{\begin{array}{lll}
\displaystyle\partial_t \tilde{\mathbf{T}}_p^{(n+1)}-J^P(\tilde{X}^{(n)}) \tilde{\zeta}^{(n)}\nabla\tilde{v}^{(n)}\tilde{\mathbf{T}}_p^{(n)}- \tilde{\mathbf{T}}_p^{(n)}\left(J^P(\tilde{X}^{(n)})\tilde{\zeta}^{(n)}\nabla\tilde{v}^{(n)}\right)^T=\\[2mm]
\displaystyle-\frac{\tilde{\mathbf{T}}_p^{(n)}}{\We}
+\frac{\kappa\left(J^P(\tilde{X}^{(n)})\tilde{\zeta}^{(n)}\nabla\tilde{v}^{(n)}J+(J^P(\tilde{X}^{(n)})\tilde{\zeta}^{(n)}\nabla \tilde{v}^{(n)})^T\right)}{\We}\\[7mm]
\tilde{\mathbf{T}}_p(0, \tilde{\alpha})= \tilde{T}_0.
\end{array}\right.
\end{equation}
\bigskip

\noindent Moreover the flux satisfies

\begin{equation}\label{Lag-X-conf-iterative}
\left\{\begin{array}{lll}
\displaystyle\frac{d}{dt}\tilde{X}^{(n+1)}(t,\tilde{\alpha})= J^P(\tilde{X}^{(n)}(t,\tilde{\alpha})) \tilde{v}^{(n)}(t,\tilde{\alpha})\\[5mm]
\displaystyle \tilde{X}(0, \tilde{\alpha})= \tilde{\alpha}.
\end{array}\right.
\end{equation}
\medskip

\noindent We study separately the three systems \eqref{Re-v-conf-iterative}, \eqref{We-G-conf-iterative} and \eqref{Lag-X-conf-iterative}. We solve system \eqref{Re-v-conf-iterative}, by using the idea and the results of \cite{B} and \cite{CCFGG2}. In particular the general linearized system we consider is the following

\begin{equation}\label{v-conf-lag-sys}
\left\{\begin{array}{lll}
\displaystyle\re\hspace{0.1cm} \partial_t \tilde{v}-(1-\kappa)Q^2\Delta \tilde{v}+(J^P)^T \nabla \tilde{q}=\tilde{f} \hspace{2.9cm}\textrm{in}\hspace{0.3cm}(0,T)\times\tilde{\Omega}_0\\[2mm]
\displaystyle \trace(\nabla \tilde{v} J^P)=\tilde{g}\hspace{6.95cm}\textrm{in}\hspace{0.3cm}(0,T)\times\tilde{\Omega}_0\\[2mm]
\displaystyle [-\tilde{q}\mathcal{I}+(1-\kappa)(\nabla \tilde{v} J^P+(\nabla \tilde{v} J^P)^T))]\frac{(J^P)^T}{Q^2} \tilde{n}=\tilde{h}   \hspace{1.2cm}\textrm{on}\hspace{0.3cm}(0,T)\times\partial\tilde{\Omega}_0\\[2mm]
\displaystyle \tilde{v}(0,\tilde{\alpha})=\tilde{v}_0(\tilde{\alpha})\hspace{6.85cm}\textrm{on}\hspace{0.3cm}\{t=0\}\times\tilde{\Omega}_0.
\end{array}\right.
\end{equation}
\medskip

\noindent This system is supplemented with compatibility conditions 

\begin{equation}\label{compcond}
\left\{\begin{array}{lll}
\displaystyle \trace(\nabla \tilde{v}_0 J^P)=\tilde{g}(0)\hspace{8.2cm}\textrm{in}\hspace{0.1cm}\tilde{\Omega}_0\\[3mm]
\displaystyle \{(1-\kappa)(\nabla \tilde{v}_0 J^P+(\nabla \tilde{v}_0 J^P)^T)(J^P)^{-1}\tilde{n}\}_{tang}=\{\tilde{h}(0)(J^P)^{-1}\tilde{n}\}_{tang}\hspace{0.3cm}\textrm{on}\hspace{0.1cm}\partial\tilde{\Omega}_0.
\end{array}\right.
\end{equation}

\noindent In order to analyze \eqref{v-conf-lag-sys}, we introduce the following function space of the solution $X_0$ and the function space of the data $Y_0$, for their precise definition see Appendix \ref{appendix}.

\begin{align*}
&X_0:=\left\lbrace (\tilde{v}, \tilde{q})\in\mathcal{K}^{s+1}_{(0)}\times \mathcal{K}^{s}_{pr(0)}\right\rbrace,\\[2mm]
& Y_0:=\{(\tilde{f},\tilde{g},\tilde{h})\in\mathcal{K}^{s-1}_{(0)}\times \mathcal{\bar{K}}^{s}_{(0)}\times \mathcal{K}^{s-\frac{1}{2}}_{(0)}([0,T];\partial\tilde{\Omega}): \hspace{0.1cm}(\ref{compcond})\hspace{0.1cm} \textrm{are satisfied}\},
\end{align*}
\medskip

\noindent and a linear operator  $L: X_0\rightarrow Y_0$, related to the system (\ref{v-conf-lag-sys}) by 
\begin{equation}\label{L}
L(\tilde{v},\tilde{q})=(\tilde{f},\tilde{g},\tilde{h}, \tilde{v}_0).
\end{equation}
\noindent The well-posedness of the system (\ref{v-conf-lag-sys}) is guaranteed by the invertibility of the operator $L$, proved in \cite[Theorem 4.3]{B}.  For our system \eqref{Re-v-conf-iterative} we want to use the invertibility of $L$ but we do not have zero initial velocity. For this reason we need to define a new velocity field $\tilde{w}=\tilde{v}-\tilde{\phi}$, where 

$$\phi=\tilde{v}_0+\frac{1}{\re}t\left((1-\kappa)Q^2\Delta \tilde{v}_0-(J^P)^T\nabla \tilde{q}_{\phi}+\trace(\nabla\tilde{\mathbf{T}}_0J^P)\right)=\tilde{v}_0+t\hat{\phi}.
$$
\medskip

\noindent We choose $\tilde{q}_{\phi}$ such that $\partial_t\tilde{v}^{(n+1)}(0,\tilde{\alpha})=\partial_t\phi^{(n+1)}(0,\tilde{\alpha})$. In this way $\partial_t\tilde{w}(0,\tilde{\alpha})=0$ and $\tilde{w}(0,\tilde{\alpha})=0$. Indeed $\tilde{q}_{\phi}$ satisfies 

\begin{equation}\label{q_phi}
\left\{\begin{array}{lll}
-Q^2\Delta\tilde{q}_{\phi}=\re\left(\trace(\nabla\tilde{v}_0J^P\nabla\tilde{v}_0J^P)-\trace(\nabla(\trace(\nabla\mathbf{\tilde{T}}_0J^P))J^P)\right)\hspace{0.6cm}\textrm{in}\hspace{0.3cm}\tilde{\Omega}_0\\[3mm]
\tilde{q}_{\phi}\tilde{n}_0=\left((1-\kappa)(J^P\nabla\tilde{v}_0+(J^P\nabla\tilde{v}_0)^T)+\mathbf{\tilde{T}}_0\right)(J^P)^{-1}\tilde{n}_0\hspace{1.8cm}\textrm{on}\hspace{0.3cm}\partial\tilde{\Omega}_0
\end{array}\right.
\end{equation}
\medskip

\noindent The new system for $\tilde{w}$ is the following

\begin{equation}\label{systemW}
\left\{\begin{array}{lll}
\displaystyle\re \partial_t\tilde{w}^{(n+1)}-(1-\kappa)Q^2\Delta\tilde{w}^{(n+1)}+(J^P)^T\nabla\tilde{q}_w^{(n+1)}=\tilde{f}^{(n)}+\tilde{f}^L_{\phi}\\ [3mm]
\displaystyle\trace(\nabla\tilde{w}^{(n+1)}J^P)=\tilde{g}^{(n)}+\tilde{g}^L_{\phi}\\ [3mm]
\displaystyle [-\tilde{q}_w^{(n+1)}\mathcal{I}+(1-\kappa)((\nabla\tilde{w}^{(n+1)}J^P)+(\nabla\tilde{w}^{(n+1)}J^P)^T)](J^P)^{-1}\tilde{n_0}=\\[1mm]
\hspace{1cm}=\tilde{h}^{(n)}+\tilde{h}^L_{\phi}\\ [3mm]
\displaystyle \tilde{w}^{(n+1)}(0,\tilde{\alpha})=0,
\end{array}\right.
\end{equation}

\noindent where we split $\tilde{f}^{(n)}, \tilde{g}^{(n)}$ and $\tilde{h}^{(n)}$  as follows

\begin{equation}\label{Re-split}
\begin{split}
&\tilde{f}^{(n)}_w=(1-\kappa)Q^2(\tilde{X}^{(n)})\tilde{\zeta}^{(n)}\partial(\tilde{\zeta}^{(n)}\partial\tilde{w}^{(n)})- (1-\kappa)Q^2\Delta\tilde{w}^{(n)},\\[2mm]
&\tilde{f}_{\phi}=(1-\kappa)Q^2(\tilde{X}^{(n)})\tilde{\zeta}^{(n)}\partial(\tilde{\zeta}^{(n)}\partial\phi)- (1-\kappa)Q^2\Delta\phi,\\[2mm]
&\tilde{f}^{(n)}_q=(J^P)^T\nabla\tilde{q}^{(n)}-J^P(\tilde{X}^{(n)})^T\tilde{\zeta}^{(n)}\nabla\tilde{q}^{(n)},\\[2mm]
&\tilde{f}^{(n)}_T=\trace(\tilde{\zeta}^{(n)}\nabla\tilde{\mathbf{T}}_p^{(n)}J^P(\tilde{X^{(n)}})).\\[4mm]
&\tilde{g}^{(n)}_w=\trace(\nabla\tilde{w}^{(n)}J^P)-\trace(\nabla\tilde{w}^{(n)}\tilde{\zeta}^{(n)}J^P(\tilde{X}^{(n)})),\\[2mm]
&\tilde{g}^{(n)}_{\phi}=\trace(\nabla\phi J^P)-\trace(\nabla\phi\tilde{\zeta}^{(n)}J^P(\tilde{X}^{(n)})).\\[5mm]
&\tilde{h}^{(n)}_w=(1-\kappa)\nabla\tilde{w}^{(n)}\tilde{n}_0-(1-\kappa)\nabla\tilde{w}^{(n)}\tilde{\zeta}^{(n)}\nabla_{\Lambda}\tilde{X}^{(n)}\tilde{n}_0,\\[2mm]
&\tilde{h}^{(n)}_{w^T}=(1-\kappa)(\nabla\tilde{w}J^P)^T(J^P)^{-1}\tilde{n}_0\\
&\hspace{1cm}-(1-\kappa)(\nabla\tilde{w}^{(n)}\tilde{\zeta}^{(n)}J^P(\tilde{X}^{(n)}))^T(J^P)^{-1}\nabla_{\Lambda}\tilde{X}^{(n)}\tilde{n}_0,\\[2mm]
&\tilde{h}^{(n)}_{\phi}=(1-\kappa)\nabla\phi\tilde{n}_0-(1-\kappa)\nabla\phi\tilde{\zeta}^{(n)}\nabla_{\Lambda}\tilde{X}^{(n)}\tilde{n}_0,\\[2mm]
&\tilde{h}^{(n)}_{\phi^T}=(1-\kappa)(\nabla\phi J^P)^T(J^P)^{-1}\tilde{n}_0\\
&\hspace{1cm}-(1-\kappa)(\nabla\phi\tilde{\zeta}^{(n)}J^P(\tilde{X}^{(n)}))^T(J^P)^{-1}\nabla_{\Lambda}\tilde{X}^{(n)}\tilde{n}_0,\\[2mm]
&\tilde{h}^{(n)}_{q}=\tilde{q}^{(n)}(J^P(\tilde{X}^{(n)}))^{-1}\nabla_{\Lambda}\tilde{X}^{(n)}\tilde{n}_0-\tilde{q}^{(n)}(J^P)^{-1}\tilde{n}_0,\\[2mm]
&\tilde{h}^{(n)}_T=-\tilde{\mathbf{T}}_p^{(n)}(J^P(\tilde{X}^{(n)}))^{-1}\nabla_{\Lambda}\tilde{X}^{(n)}\tilde{n}_0.
\end{split}
\end{equation}

\noindent Moreover the presence of the new function $\phi$ introduces new terms in the RHS of system \eqref{Re-v-conf-iterative}

\begin{align*}
&\tilde{f}^L_{\phi}=-\re\hspace{0.1cm}\partial_t\phi+(1-\kappa)Q^2\Delta\phi-(J^P)^T\tilde{q}_{\phi},\\[2mm]
&\tilde{g}^L_{\phi}=-\trace(\nabla\phi J^P),\\[2mm]
&\tilde{h}^L_{\phi}=\tilde{q}_{\phi}(J^P)^{-1}\tilde{n}_0-(1-\kappa)(\nabla\phi J^P+(\nabla\phi J^P)^T)(J^P)^{-1}\tilde{n}_0.
\end{align*}

\noindent Furthermore, in writing $\tilde{g}^{(n)}$, we make some adjustements in order to satisfy $(\tilde{g}^{(n)},\partial_t\tilde{g}^{(n)})_{|t=0}=(0,0)$. We set

\begin{align*}
&\bar{g}^{(n)}=\tilde{g}^{(n)}+\trace(\nabla\phi\tilde{\zeta}_{\phi}J^P_{\phi})-\trace(\nabla\phi J^P),\\[2mm]
&\bar{g}^L_{\phi}=\tilde{g}^L_{\phi}-\trace(\nabla\phi\tilde{\zeta}_{\phi}J^P_{\phi})+\trace(\nabla\phi J^P),
\end{align*}
\medskip

\noindent where $\tilde{\zeta}_{\phi}=\mathcal{I}-t(\nabla(J^P \tilde{v}_0))$ and $(J^P_{\phi})_{ij}=J^P_{ij}+t\partial_kJ^P_{ij}J^P_{kl}\tilde{v}_{0,l}$. Now we can define properly the operator $L$ associated to the system \eqref{systemW}, from $X_0$ into $Y_0$.

\begin{align*}
&L(\tilde{w}^{(n+1)},\tilde{q}^{(n+1)})=\left(\tilde{f}^{(n)}-\tilde{f}_{T_0},\bar{g}^{(n)}, \tilde{h}^{(n)}-\tilde{h}_{T_0}\right)+\left(\tilde{f}_{\phi}^L+\tilde{f}_{T_0}, \bar{g}_{\phi}^L, \tilde{h}^L_{\phi}+\tilde{h}_{T_0} \right),
\end{align*}
\noindent where $\tilde{f}_{T_0}=\trace(\nabla\tilde{T}_0J^P)$ and $\tilde{h}_{T_0}=-\tilde{T}_0(J^P)^{-1}\tilde{n}_0$.
\medskip

\noindent Concerning the systems  \eqref{We-G-conf-iterative} and \eqref{Lag-X-conf-iterative}, they are ODEs thus we obtain an explicit formula for $\tilde{\mathbf{T}}_p^{(n+1)}$ and $\tilde{X}^{(n+1)}$. The main result of this Section is  the following 

\begin{theorem}\label{local-existence conf-lag}
Let $2<s<\frac{5}{2}$, $1<\gamma<s-1$. If $\tilde{v}_0$ and $\tilde{\mathbf{T}}_0$ belong to $H^r(\tilde{\Omega}_0)$ for $r$ big enough. Then there exists a time $T=T(\re,\kappa,\We,\tilde{v}_0, \tilde{\mathbf{T}}_0)$ sufficiently small and a solution 
$\{\tilde{X}-\hat{X}, \tilde{w},\tilde{q}_w, \tilde{\mathbf{T}}_p-\tilde{\mathbf{T}}_0-t\hat{\mathbf{T}}\}\in\mathcal{F}^{s+1,\gamma}\times\mathcal{K}^{s+1}_{(0)}\times\mathcal{K}^s_{pr(0)}\times\mathcal{F}^{s,\gamma-1}_{(0)}$ on $(0,T]\times\tilde{\Omega}_0$.
Moreover, there is an explicit dependence of $T$ on $\We$, namely $T< C\left(\frac{\We}{1+\We}\right)$.
\end{theorem}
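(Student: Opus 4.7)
The plan is to set up a Banach fixed point argument in the product space $\mathcal{F}^{s+1,\gamma}\times\mathcal{K}^{s+1}\times\mathcal{K}^s_{pr}\times\mathcal{F}^{s,\gamma-1}$, exploiting the decoupling already built into the Picard scheme \eqref{Re-v-conf-iterative}--\eqref{Lag-X-conf-iterative}. Concretely, I would fix a closed ball $\mathcal{B}_R$ of radius $R=R(\|\tilde{v}_0\|_{H^s},\|\tilde{\mathbf{T}}_0\|_{H^s})$ around the lifted initial datum and show that, for $T$ sufficiently small (depending on $\kappa$, $\We$, and $R$), the iteration map is a self map of $\mathcal{B}_R$ and a strict contraction in a slightly weaker norm. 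The skeleton follows the template of \cite{DMS1}; the genuinely new ingredient is the relaxation ODE \eqref{We-G-conf-iterative} with time scale $\We$, which is what will generate the bound $T\lesssim \We/(1+\We)$.

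For boundedness I would proceed in three substeps. First, integrate \eqref{Lag-X-conf-iterative} to obtain $\tilde{X}^{(n+1)}-\tilde{\alpha}=O(T)$ in $H^{s+1}$, from which $\tilde{\zeta}^{(n+1)}$ is close to $\mathcal{I}$ and $J^P(\tilde{X}^{(n+1)})$ close to $J^P(\tilde{\alpha})$; consequently the sources $\tilde{f}^{(n)}$, $\tilde{g}^{(n)}$, $\tilde{h}^{(n)}$ are of order $T^{\sigma}$ times norms of the previous iterate. Second, the Stokes-type system \eqref{Re-v-conf-iterative} has a uniformly elliptic principal part since $Q^2>0$ on $\tilde{\Omega}_0$, so the maximal regularity estimate used in \cite{CCFGG2,DMS1},
\[
\|\tilde{v}^{(n+1)}\|_{\mathcal{K}^{s+1}}+\|\tilde{q}^{(n+1)}\|_{\mathcal{K}^s_{pr}}\lesssim \|\tilde{v}_0\|_{H^s}+\|\tilde{f}^{(n)}\|+\|\tilde{g}^{(n)}\|+\|\tilde{h}^{(n)}\|,
\]
applies with constants independent of $\We$. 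Third, \eqref{We-G-conf-iterative} is a linear ODE with relaxation $-1/\We$, so by Duhamel
\[
\tilde{\mathbf{T}}_p^{(n+1)}(t)=e^{-t/\We}\tilde{\mathbf{T}}_0+\int_0^t e^{-(t-s)/\We}\mathcal{N}^{(n)}(s)\,ds,
\]
where $\mathcal{N}^{(n)}$ collects the quadratic terms of type $\tilde{\zeta}\nabla\tilde{v}J^P\tilde{\mathbf{T}}_p$ and the stiff linear forcing $(\kappa/\We)(\tilde{\zeta}\nabla\tilde{v}J^P+\text{transp.})$. The identity $\int_0^t e^{-(t-s)/\We}\,ds=\We(1-e^{-t/\We})\le\min(t,\We)$ produces two kinds of contributions: one independent of $\We$ against the stiff forcing (the $1/\We$ is absorbed into the kernel mass), and one of size $\min(T,\We)$ against the quadratic pieces. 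Matching this against the $O(T)$ smallness of the Stokes sources yields exactly the restriction $T\lesssim\min(\We,1)\asymp \We/(1+\We)$.

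The contraction step repeats the three substeps for the differences $\tilde{v}^{(n+1)}-\tilde{v}^{(n)}$, $\tilde{\mathbf{T}}_p^{(n+1)}-\tilde{\mathbf{T}}_p^{(n)}$, and the associated flux and pressure, using Moser-type inequalities $\|fg\|_{H^s}\lesssim\|f\|_{H^s}\|g\|_{H^s}$ valid for $s>1$. The main obstacle I anticipate is keeping all constants uniform in $\We$ while still extracting the sharp rate $T\lesssim \We/(1+\We)$: a naive triangle inequality on \eqref{We-G-conf-iterative} would introduce a bare $1/\We$ in $\|\tilde{\mathbf{T}}_p^{(n+1)}\|_{H^s}$ and degrade the bound to $T\lesssim \We^2/(1+\We)$. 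The cure is to always integrate the stiff forcing against the exponential kernel before taking spatial norms, so that the small parameter $\We(1-e^{-T/\We})$ absorbs the $1/\We$ from the relaxation coefficient. Once this balance is in place, Banach's fixed point theorem yields existence, uniqueness, and the advertised regularity, together with the explicit bound $T<C(\We/(1+\We))$.
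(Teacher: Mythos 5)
Your overall skeleton is the paper's: Picard iteration on \eqref{Re-v-conf-iterative}--\eqref{Lag-X-conf-iterative}, solvability of the Stokes-type part through the operator $L$ of \cite{B}, \cite{CCFGG2} (which in the paper requires the lifting $\phi$, the auxiliary pressure $\tilde{q}_{\phi}$ and the compatibility conditions \eqref{compcond} so that $L^{-1}$ is applicable with nonzero initial velocity --- a step you only gesture at), separate flux estimates, and a direct estimate of the stress iteration, with smallness extracted in the weighted spaces $\mathcal{F}^{s,\gamma-1}=L^{\infty}_{\frac14,t}H^s_x\cap H^2_tH^{\gamma-1}_x$ rather than by Moser inequalities alone. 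The genuine divergence, and the gap, is your treatment of the $\We$-dependence of \eqref{We-G-conf-iterative}.

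First, as defined in the paper the relaxation term $-\tilde{\mathbf{T}}_p^{(n)}/\We$ sits at the \emph{previous} iterate, so the solution of \eqref{We-G-conf-iterative} is the plain time integral used in Proposition \ref{We-G-conf-lag-estimate}, not your Duhamel formula; to write $\tilde{\mathbf{T}}_p^{(n+1)}=e^{-t/\We}\tilde{\mathbf{T}}_0+\int_0^te^{-(t-s)/\We}\mathcal{N}^{(n)}\,ds$ you must change the scheme and treat the damping implicitly. That is legitimate, but then your accounting of the stiff forcing does not close the contraction: absorbing the $1/\We$ into the kernel mass $\We(1-e^{-t/\We})$ makes that contribution ``independent of $\We$'' only at the price of losing every positive power of $T$, so in the difference estimate the term $\tfrac{\kappa}{\We}\bigl(\tilde{\zeta}\nabla(\tilde v^{(n)}-\tilde v^{(n-1)})J^P+\cdots\bigr)$ produces an $O(\kappa)$ constant times $\|\tilde v^{(n)}-\tilde v^{(n-1)}\|_{\mathcal{K}^{s+1}}$ that cannot be beaten down by shrinking $T$, and the map is not a contraction. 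To recover a factor $T^{\delta}$ you must estimate the kernel by $1$ (or use Cauchy--Schwarz, giving $\kappa T^{1/2}/\We$ or $\kappa/\sqrt{\We}$), which reinstates exactly the $(1+\tfrac1\We)T^{\delta}$ constant of the paper. Moreover your side claim that the ``naive triangle inequality'' would degrade the bound to $T\lesssim\We^2/(1+\We)$ is wrong: the paper does precisely the naive estimate, the relaxation and stiff terms contribute $C(\kappa,M)T^{\beta}/\We$ to the contraction constant (see \eqref{G-conf-lag-part2}), and imposing $\tilde C(\kappa,M)(1+\tfrac1\We)T^{\mu}<1$ yields \eqref{conf-lag-time}, i.e. the advertised $T<C\tfrac{\We}{1+\We}$. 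So the exponential kernel buys nothing here; as you describe it the contraction step fails, and the fix is simply to keep the explicit $(1+\tfrac1\We)$ factor together with the $T^{\delta}$ smallness, as the paper does.
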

\medskip

\noindent The main problem we have to face is that $\tilde{X}-\tilde{\alpha}$ and $\tilde{\mathbf{T}}_p-\tilde{\mathbf{T}}_0$ do not belong to the space $H^2_{(0)}([0,T])$ because their time derivatives are not zero at time zero. For that reason we introduce $\hat{X}=\tilde{\alpha}+tJ^P\tilde{v}_0$ and $\hat{\mathbf{T}}=J^P\nabla\tilde{v}_0\mathbf{\tilde{T}}_0+\mathbf{\tilde{T}}_0(J^P\nabla\tilde{v}_0)^T-\frac{1}{\We}\mathbf{\tilde{T}}_0+\frac{\kappa}{\We}\left(J^P\nabla\tilde{v}_0+(J^P\nabla\tilde{v}_0)^T\right).$\\

\noindent Theorem \ref{local-existence conf-lag} is a consequence of the following proposition.

\begin{proposition}(\textbf{Iterative bounds}) \label{fixed point}
For $2<s<\frac{5}{2}$, $1<\gamma<s-1$ and for  $\tilde{X}^{(n)}-\tilde{\alpha}, \tilde{X}^{(n-1)}-\tilde{\alpha}\in B_1$,
$(\tilde{w}^{(n)},\tilde{q}_w^{(n)}), (\tilde{w}^{(n-1)}, \tilde{q}_w^{(n)})\in B_2$ and $\tilde{\mathbf{T}}_p^{(n)}-\tilde{\mathbf{T}}_0, \tilde{\mathbf{T}}_p^{(n-1)}-\tilde{\mathbf{T}}_0\in B_3$, where $B_1, B_2, B_3$ are balls that we will define later. Then it follows

\begin{align*}
&\left\|\tilde{w}^{(n+1)}-\tilde{w}^{(n)}\right\|_{\mathcal{K}^{s+1}_{(0)}}+\left\|\tilde{q}_w^{(n+1)}-\tilde{q}_w^{(n)}\right\|_{\mathcal{K}^{s}_{pr(0)}}+ \left\|\tilde{X}^{(n+1)}-\tilde{X}^{(n)}\right\|_{\mathcal{F}^{s+1,\gamma}}+\left\|\tilde{\mathbf{T}}_p^{(n+1)}-\tilde{\mathbf{T}}_p^{(n)}\right\|_{\mathcal{F}^{s,\gamma-1}}\\
&\leq C(\tilde{v}_0, \tilde{\mathbf{T}}_0,\re,\kappa)\left(1+\frac{1}{\We}\right)T^{\mu}\left( \left\|\tilde{X}^{(n)}-\tilde{X}^{(n-1)}\right\|_{\mathcal{F}^{s+1,\gamma}}+\left\|\tilde{w}^{(n)}-\tilde{w}^{(n-1)}\right\|_{\mathcal{K}^{s+1}_{(0)}}\right.\\
&\left.+\left\|\tilde{q}^{(n)}-\tilde{q}^{(n-1)}\right\|_{\mathcal{K}^{s}_{pr(0)}}+\left\|\tilde{\mathbf{T}}_p^{(n)}-\tilde{\mathbf{T}}_p^{(n-1)}\right\|_{\mathcal{F}^{s,\gamma-1}}\right).
\end{align*}
\end{proposition}
\medskip

\noindent The proof of Proposition \ref{fixed point} is obtained by estimating the velocity and the pressure, the flux and finally the elastic stress tensor separately. Before starting with all the computations, we stress that in these results a lot of terms of the type $A\cdot B^T$ or $A^T\cdot B$ will appear and we are not going to estimate all of them since the transpose does not affect the estimates.

\subsection{Estimate for the Conformal Lagrangian velocity and pressure}
\noindent We start with the analysis of the system \eqref{systemW}.

\begin{proposition}\label{estimate-conf-lag-(v,q)}
Given $2<s<\frac{5}{2}$, $\tilde{v}_0\in H^r(\tilde{\Omega}_0)$, $\tilde{\mathbf{T}}_0\in H^r(\tilde{\Omega}_0)$, for $r$ big enough and $T>0$ small enough we have
\begin{enumerate}

\item Let $\tilde{X}^{(n)}-\hat{X}\in\mathcal{F}^{s+1,\gamma}$, $\tilde{w}^{(n)}\in\mathcal{K}^{s+1}_{(0)}$, and $\tilde{q}_w^{(n)}\in \mathcal{K}^s_{pr(0)}$  and such that 

\begin{description}

\item [(a)] $\tilde{X}^{(n)}-\hat{X}\in\left\lbrace \tilde{X}-\hat{X}\in\mathcal{F}^{s+1,\gamma} :\left\|\tilde{X}-\tilde{\alpha}-\int_0^t J^P\nabla\phi\,d\tau\right\|_{\mathcal{F}^{s+1,\gamma}} \leq N \right\rbrace\equiv B_1,$\\[3mm]

\item [(b)]  $(\tilde{w}^{(n)},\tilde{q}_w^{(n)})\in\left\lbrace (\tilde{w},\tilde{q})\in\mathcal{K}^{s+1}_{(0)}\times\mathcal{K}^s_{pr(0)}:  \right.\\
\left.\hspace{1cm}\left\|(\tilde{w},\tilde{q})-L^{-1}(\tilde{f}_{\phi}^L+\tilde{f}_{T_0},\bar{g}_{\phi}^L,\tilde{h}_{\phi}^L+\tilde{h}_{T_0}))\right\|_{\mathcal{K}^{s+1}_{(0)}\times \mathcal{K}^s_{pr(0)}}\leq N\right\rbrace\equiv B_2.$\\[3mm]

\item [(c)]  $\tilde{\mathbf{T}}_p^{(n)}-\tilde{\mathbf{T}}_0-t\hat{\mathbf{T}}\in\left\lbrace \tilde{\mathbf{T}}_p-\tilde{\mathbf{T}}_0-t\hat{\mathbf{T}}\in\mathcal{F}^{s,\gamma-1}: \left\|\tilde{\mathbf{T}}_p-\tilde{\mathbf{T}}_0-\int_0^t \hat{\mathbf{T}}_{\phi}\,d\tau\right\|_{\mathcal{F}^{s,\gamma-1}} \leq N\right\rbrace\\
\equiv B_3,$
\end{description}
\medskip

\noindent where 
\begin{align*}
&\hat{\mathbf{T}}_{\phi}=J^P\nabla\phi\tilde{\mathbf{T}}_0+\tilde{\mathbf{T}}_0(J^P\nabla\phi)^T-\frac{1}{\We}\tilde{\mathbf{T}}_0+\frac{\kappa}{\We}\left(J^P\nabla\phi+(J^P\nabla\phi)^T\right)
\end{align*}

\noindent Then

\begin{equation}\label{part1-lag-(v,q)}
(\tilde{w}^{(n+1)},\tilde{q}_w^{(n+1)})\in B_2.
\end{equation}
\medskip

\item Let $\tilde{X}^{(n)}-\tilde{\alpha}, \tilde{X}^{(n-1)}-\tilde{\alpha}\in B_1$, $(\tilde{w}^{(n)}, \tilde{q}_w^{(n)}), (\tilde{w}^{(n-1)}, \tilde{q}_w^{(n-1)}) \in B_2$ and $\tilde{\mathbf{T}}_p^{(n)}-\tilde{\mathbf{T}}_0, \tilde{\mathbf{T}}_p^{(n-1)}-\tilde{\mathbf{T}}_0\in B_3$. Then for a suitable  $\varrho>0$

\begin{equation}\label{part2-lag-(v,q)}
\begin{split}
&\left\|\tilde{w}^{(n+1)}-\tilde{w}^{(n)}\right\|_{\mathcal{K}^{s+1}_{(0)}}+\left\|\tilde{q}_w^{(n+1)}-\tilde{q}_w^{(n)}\right\|_{\mathcal{K}^{s}_{pr(0)}}\leq C(\tilde{v}_0,\tilde{\mathbf{T}}_0,\re,\kappa)\left(1+\frac{1}{\We}\right) T^{\varrho}\\
&\cdot\left( \left\|\tilde{X}^{(n)}-\tilde{X}^{(n-1)}\right\|_{\mathcal{F}^{s+1,\gamma}}+\left\|\tilde{w}^{(n)}-\tilde{w}^{(n-1)}\right\|_{\mathcal{K}^{s+1}_{(0)}}+\left\|\tilde{q}_w^{(n)}-\tilde{q}_w^{(n-1)}\right\|_{\mathcal{K}^{s}_{pr(0)}}+ \left\|\tilde{\mathbf{T}}_p^{(n)}-\tilde{\mathbf{T}}_p^{(n-1)}\right\|_{\mathcal{F}^{s,\gamma-1}}\right).
\end{split}
\end{equation}
\end{enumerate}
\end{proposition}

\begin{proof}
The first part of this proof, concerning the boundedness of $(\tilde{w}^{(n+1)},\tilde{q}_w^{(n+1)})$ can be proved in the same way as \cite[Proposition 5.4]{CCFGG2}. The difference concerns the choice of the parameters of the ball $B_2$ and the presence of  new terms,  $\tilde{f}^{(n)}_{T}-\tilde{f}_{T_0}$ and  $\tilde{h}^{(n)}_{T}-\tilde{h}_{T_0}$ . We have to show

\begin{align*}
&\left\|(\tilde{w}^{(n+1)},\tilde{q}_w^{(n+1)})-L^{-1}(\tilde{f}_{\phi}^L+\tilde{f}_{T_0}, \bar{g}_{\phi}^L, \tilde{h}_{\phi}^L+\tilde{h}_{T_0})\right\|_{X_0}\leq C \left\|L^{-1}(\tilde{f}^{(n)}-\tilde{f}_{T_0},\hspace{0.2cm} \bar{g}^{(n)},\hspace{0.2cm}\tilde{h}^{(n)}-\tilde{h}_{T_0}, \tilde{v}_0)\right\|_{X_0}\\[2mm]
&\hspace{0.5cm}\leq C\left(\left\|\tilde{f}^{(n)}-\tilde{f}_{T_0}\right\|_{\mathcal{K}^{s-1}_{(0)}}+\left\|\bar{g}^{(n)}\right\|_{\mathcal{\bar{K}}^{s}_{(0)}}+\left\|\tilde{h}^{(n)}-\tilde{h}_{T_0}\right\|_{\mathcal{K}^{s-\frac{1}{2}}_{(0)}}\right)
\end{align*}
\medskip

\noindent Thus it is sufficient to prove

\begin{align*}
&\|\tilde{f}^{(n)}-\tilde{f}_{T_0}\|_{\mathcal{K}^{s-1}_{(0)}}\leq C\left(\tilde{v}_0,\tilde{\mathbf{T}}_0,\re,\kappa, \|\tilde{w}^{(n)}\|_{\mathcal{K}^{s+1}_{(0)}}, \|\tilde{q}_w^{(n)}\|_{\mathcal{K}^{s}_{pr(0)}},\|\tilde{X}^{(n)}-\hat{X}\|_{\mathcal{F}^{s+1,\gamma}},\right.\\
&\hspace{3cm}\left.\|\tilde{\mathbf{T}}_p-\tilde{\mathbf{T}}_0-t\hat{\mathbf{T}}\|_{\mathcal{F}^{s,\gamma-1}}\right)T^{\delta},\\[2mm]
&\|\bar{g}^{(n)}\|_{\mathcal{\bar{K}}^{s}_{(0)}}\leq C\left(\tilde{v}_0,\tilde{\mathbf{T}}_0,\|\tilde{w}^{(n)}\|_{\mathcal{K}^{s+1}_{(0)}},\|\tilde{X}^{(n)}-\hat{X}\|_{\mathcal{F}^{s+1,\gamma}}\right)T^{\theta},\\[2mm]
&\|\tilde{h}^{(n)}-\tilde{h}_{T_0}\|_{\mathcal{K}^{s-\frac{1}{2}}_{(0)}}\leq C\left(\tilde{v}_0,\tilde{\mathbf{T}}_0,\kappa, \|\tilde{w}^{(n)}\|_{\mathcal{K}^{s+1}_{(0)}}, \|\tilde{q}_w^{(n)}\|_{\mathcal{K}^{s}_{pr}(0)},\|\tilde{X}^{(n)}-\hat{X}\|_{\mathcal{F}^{s+1,\gamma}},\right.\\
&\hspace{3cm}\left.\|\tilde{\mathbf{T}}_p-\tilde{\mathbf{T}}_0-t\hat{\mathbf{T}}\|_{\mathcal{F}^{s,\gamma-1}}\right)T^{\beta},
\end{align*}
\medskip

\noindent We have to estimate $\tilde{f}^{(n)}-\tilde{f}_{T_0},$ $\bar{g}^{(n)},$ and $\tilde{h}^{(n)}-\tilde{h}_{T_0}$. In particular all the terms in \eqref{Re-split}. We observe that these terms have already been estimated in \cite[Proposition 5.4]{CCFGG2} except for $\tilde{f}^{(n)}_{T}-\tilde{f}_{T_0}$ and $\tilde{h}^{(n)}_T-\tilde{h}_{T_0}$. The result already obtained can be resumed as follows

\begin{equation}\label{their-estim}
\begin{split}
&\|\tilde{f}^{(n)}_w\|_{\mathcal{K}^{s+1}_{(0)}}+\|\tilde{f}^{(n)}_{\phi}\|_{\mathcal{K}^{s+1}_{(0)}}+\|\tilde{f}^{(n)}_q\|_{\mathcal{K}^{s+1}_{(0)}}\leq C(\tilde{v}_0,\tilde{\mathbf{T}}_0,\re,\kappa) T^{\delta'},\\[2mm]
&\|\tilde{g}_w\|_{\mathcal{\bar{K}}^s_{(0)}}+\|\tilde{g}_{\phi}\|_{\mathcal{\bar{K}}^s_{(0)}}\leq C(\tilde{v}_0)T^{\theta},\\[2mm]
&\|\tilde{h}^{(n)}_w\|_{\mathcal{K}^{s-\frac{1}{2}}_{(0)}}+ \|\tilde{h}^{(n)}_{w^T}\|_{\mathcal{K}^{s-\frac{1}{2}}_{(0)}}+\|\tilde{h}^{(n)}_{\phi}\|_{\mathcal{K}^{s-\frac{1}{2}}_{(0)}}+\|\tilde{h}^{(n)}_{\phi^T}\|_{\mathcal{K}^{s-\frac{1}{2}}_{(0)}}+\|\tilde{h}^{(n)}_q\|_{\mathcal{K}^{s-\frac{1}{2}}_{(0)}}\\[1mm]
&\hspace{7cm}\leq C(\tilde{v}_0,\tilde{\mathbf{T}}_0,\re,\kappa) T^{\beta'},
\end{split}
\end{equation}
\medskip

\noindent for all $\tilde{X}^{(n)}-\hat{X}\in B_1, (\tilde{w^{(n)}},\tilde{q}_w^{(n)})\in B_2$ and $\tilde{\mathbf{T}}_p-\tilde{\mathbf{T
}}_0-t\hat{\mathbf{T}}\in B_3$.
\medskip

\noindent\underline{\textbf{Estimate for $\tilde{f}^{(n)}_T-\tilde{f}_{T_0}$}}\\

\noindent  The term we have to estimate $\tilde{f}^{(n)}_T-\tilde{f}_{T_0}= \trace(\tilde{\zeta}^{(n)}\nabla\tilde{\mathbf{T}}_p^{(n)}J^P(\tilde{X}^{(n)}))-\trace(\nabla\tilde{\mathbf{T}}_0J^P)$ belongs to the space $\mathcal{K}^{s-1}_{(0)}([0,T];\tilde{\Omega}_0)$, then we have

\begin{align*}
\|\tilde{f}^{(n)}_T-\tilde{f}_{T_0}\|_{\mathcal{K}^{s-1}_{(0)}}\leq \|\tilde{f}^{(n)}_T-\tilde{f}_{T_0}\|_{L^2H^{s-1}}+\|\tilde{f}^{(n)}_T-\tilde{f}_{T_0}\|_{H^{\frac{s-1}{2}_{(0)}}L^2}=I_1+I_2.
\end{align*}

\noindent We start with the estimate in $L^2H^{s-1}$ and we split $I_1$ as follows

\begin{align*}
I_1&=\trace(\tilde{\zeta}^{(n)}(\nabla\tilde{\mathbf{T}}_p^{(n)}-\nabla\tilde{\mathbf{T}}_0)J^P(\tilde{X}^{(n)}))+\trace(\tilde{\zeta}^{(n)}\nabla\tilde{\mathbf{T}}_0(J^P(\tilde{X}^{(n)})-J^P))\\
&+\trace((\tilde{\zeta}^{(n)}-\mathcal{I})\nabla\tilde{\mathbf{T}}_0 J^P)=I_{1,1}+I_{1,2}+I_{1,3}.
\end{align*}

\noindent We will show the estimate of $I_{1,1}$ since the other estimates can be deduced from it. We use lemma \ref{Jp-est} and lemma \ref{zeta-est}. In addition we need to use the estimate for the flux \eqref{flux-estim} and fot the elastic stress tensor \eqref{elastic-estim}.

\begin{align*}
&\|I_{1,1}\|_{L^2H^{s-1}}\leq \|\tilde{\zeta}^{(n)}\|_{L^{\infty}H^{s-1}}\|\nabla\tilde{\mathbf{T}}_p^{(n)}-\nabla\tilde{\mathbf{T}}_0\|_{L^2H^{s-1}}\|J^P(\tilde{X}^{(n)})\|_{L^{\infty}H^{s-1}}\\[2mm]
&\leq C(\tilde{v}_0)\|\tilde{X}^{(n)}-\tilde{\alpha}\|_{L^{\infty}H^{s}}T^{\frac{1}{2}}\|\tilde{\mathbf{T}}_p^{(n)}-\tilde{\mathbf{T}}_0\|_{L^{\infty}H^{s}}\leq C(\tilde{v}_0,\tilde{\mathbf{T}}_0,\kappa) \left(1+\frac{1}{\We}\right) T^{\frac{3}{4}}.
\end{align*}
\medskip

\noindent The estimate of $I_2$ is more complicated since each term  has to satisfy the zero conditions at time zero required by lemma \ref{lem5}. For this important reason $I_2$ becomes

\begin{align*}
I_2&=\trace((\tilde{\zeta}^{(n)}-\mathcal{I})(\nabla\tilde{\mathbf{T}}_p^{(n)}-\nabla\tilde{\mathbf{T}}_0)(J^P(\tilde{X}^{(n)})-J^P))\\[1mm]
&+\trace((\tilde{\zeta}^{(n)}-\mathcal{I})\nabla\tilde{\mathbf{T}}_0(J^P(\tilde{X}^{(n)})-J^P))\\[1mm]
&+\trace((\tilde{\zeta}^{(n)}-\mathcal{I})(\nabla\tilde{\mathbf{T}}_p^{(n)}-\nabla\tilde{\mathbf{T}}_0)J^P)+\trace((\tilde{\zeta}^{(n)}-\mathcal{I})\nabla\tilde{\mathbf{T}}_0J^P)\\[1mm]
&+\trace((\nabla\tilde{\mathbf{T}}_p^{(n)}-\nabla\tilde{\mathbf{T}}_0)(J^P(\tilde{X}^{(n)})-J^P))+\trace(\nabla\tilde{\mathbf{T}}_0(J^P(\tilde{X}^{(n)})-J^P))\\[1mm]
&+\trace((\nabla\tilde{\mathbf{T}}_p^{(n)}-\nabla\tilde{\mathbf{T}}_0)J^P)=\sum_{i=1}^7 I_{2,i}.
\end{align*}
\medskip

\noindent We show the estimates of $I_{2,1}$ and $I_{2,6}$. For $I_{2,1}$ we use lemma \ref{lem3}, lemma \ref{lem5}. Moreover we need lemma \ref{Jp-est} and lemma \ref{zeta-est} and to conclude the estimate we use the estimate \eqref{elastic-estim} for the elastic tensor. For $I_{2,6}$ is a bit easier. We use the property of the space $H^{\frac{s-1}{2}}_{(0)}([0,T])$ which allows us to apply lemma \ref{lem3} in order to separte the term $\nabla\tilde{\mathbf{T}}_0$ which does not depend on time and to conclude we apply the estimate for the flux \eqref{flux-estim}.

\begin{align*}
&\|I_{2,1}\|_{H^{\frac{s-1}{2}}_{(0)}L^2}\leq \|(\tilde{\zeta}^{(n)}-\mathcal{I})(J^P(\tilde{X}^{(n)})-J^P)\|_{H^{\frac{s-1}{2}}_{(0)}H^{1+\eta}}\|\nabla\tilde{\mathbf{T}}_p^{(n)}-\nabla\tilde{\mathbf{T}}_0\|_{H^{\frac{s-1}{2}}_{(0)}L^2}\\[2mm]
&\hspace{1cm}\leq \|\tilde{\zeta}^{(n)}-\mathcal{I}\|_{H^{\frac{s-1}{2}}_{(0)}H^{1+\eta}}\|J^P(\tilde{X}^{(n)})-J^P\|_{H^{\frac{s-1}{2}}_{(0)}H^{1+\eta}}\|\tilde{\mathbf{T}}_p^{(n)}-\tilde{\mathbf{T}}_0\|_{H^{\frac{s-1}{2}}_{(0)}H^1}\\[2mm]
&\hspace{1cm}\leq C(\tilde{v}_0,\tilde{\mathbf{T}}_0,\kappa)\left(1+\frac{1}{\We}\right)T^{\delta_1}\\[5mm]
&\|I_{2,6}\|_{H^{\frac{s-1}{2}}_{(0)}L^2}\leq \|\nabla\tilde{\mathbf{T}}_0\|_{L^2}\|J^P(\tilde{X}^{(n)})-J^P\|_{H^{\frac{s-1}{2}}_{(0)}H^{1+\eta}}\leq C(\tilde{\mathbf{T}}_0)\|\tilde{X}^{(n)}-\tilde{\alpha}\|_{H^{\frac{s-1}{2}}_{(0)}H^{1+\eta}}\\[2mm]
&\hspace{1cm}\leq C(\tilde{v}_0,\tilde{\mathbf{T}}_0) T^{\delta_6}.
\end{align*}
\medskip

\noindent For the other terms we have $\|I_{2,i}\|_{H^{\frac{s-1}{2}}_{(0)}L^2}\leq C(\tilde{v}_0,\tilde{\mathbf{T}}_0,\kappa)\left(1+\frac{1}{\We}\right)T^{\delta_i}$, for $i=3,5,7$ and $\|I_{2,i}\|_{H^{\frac{s-1}{2}}_{(0)}L^2}\leq C(\tilde{v}_0,\tilde{\mathbf{T}}_0) T^{\delta_i}$, for $i=2,4$.
\bigskip

\noindent\underline{\textbf{Estimate for $\tilde{h}^{(n)}_T-\tilde{h}_{T_0}$}}\\

\noindent The term  $\tilde{h}_T^{(n)}-\tilde{h}_{T_0}=-\tilde{\mathbf{T}}_p^{(n)}J^P(\tilde{X}^{(n)})^{-1}\nabla_{\Lambda}\tilde{X}^{(n)}\tilde{n}_0+\tilde{\mathbf{T}}_0(J^P)^{-1}\tilde{n}_0$ is related to the boundary. So its norm can be written as follows

\begin{align*}
&\|\tilde{h}_T^{(n)}-\tilde{h}_{T_0}\|_{\mathcal{K}^{s-\frac{1}{2}}_{(0)}}\leq \|\tilde{h}_T^{(n)}-\tilde{h}_{T_0}\|_{L^2H^{s-\frac{1}{2}}}+\|\tilde{h}_T^{(n)}-\tilde{h}_{T_0}\|_{H^{\frac{s}{2}-\frac{1}{4}}_{(0)}L^2}=I_1+I_2
\end{align*}
\medskip

\noindent As for the estimate of $\tilde{f}^{(n)}_T-\tilde{f}_{T_0}$ also in this case the estimate of $I_1$ is easier than the estimate of  $I_2$. First we deal with $I_1$ and we split it in the following way

\begin{align*}
I_1&=(\tilde{\mathbf{T}}_0-\tilde{\mathbf{T}}_p^{(n)})J^P(\tilde{X}^{(n)})^{-1}\nabla_{\Lambda}\tilde{X}^{(n)}\tilde{n}_0+\tilde{\mathbf{T}}_0((J^P)^{-1}-J^P(\tilde{X}^{(n)})^{-1})\nabla_{\Lambda}\tilde{X}^{(n)}\tilde{n}_0\\
&+\tilde{\mathbf{T}}_0(J^P)^{-1}(\mathcal{I}-\nabla_{\Lambda}\tilde{X}^{(n)})\tilde{n}_0=I_{1,1}+I_{1,2}+I_{1,3}.
\end{align*}

\noindent The estimate of $I_{1,1}$ is based on the use of the trace theorem \ref{parabolic-trace}, lemma \ref{Jp-est} and lemma \ref{zeta-est}. Moreover we need to use \eqref{flux-estim} and \eqref{elastic-estim}. For $I_{1,2}$ we use the same lemmas ad $I_{1,1}$ but we do not need \eqref{elastic-estim}.

\begin{align*}
&\|I_{1,1}\|_{L^2H^{s-\frac{1}{2}}}\leq \|\tilde{\mathbf{T}}_0-\tilde{\mathbf{T}}_p^{(n)}\|_{L^2H^{s-\frac{1}{2}}}\|J^P(\tilde{X}^{(n)})^{-1}\|_{L^{\infty}H^{s-\frac{1}{2}}}\|\nabla_{\Lambda}\tilde{X}^{(n)}\|_{L^{\infty}H^{s-\frac{1}{2}}}\\[2mm]
&\hspace{2.1cm}\leq C(\tilde{v}_0)T^{\frac{1}{2}}\|\tilde{\mathbf{T}}_p^{(n)}-\tilde{\mathbf{T}}_0\|_{L^{\infty}H^{s-1}}\leq C(\tilde{v}_0,\tilde{\mathbf{T}}_0,\kappa)\left(1+\frac{1}{\We}\right)T^{\frac{3}{4}},\\[5mm]
&\|I_{1,2}\|_{L^2H^{s-\frac{1}{2}}}\leq \|\tilde{\mathbf{T}}_0\|_{H^{s-\frac{1}{2}}}\|(J^P)^{-1}-J^P(\tilde{X}^{(n)})^{-1})\|_{L^2H^{s-\frac{1}{2}}}\|\nabla_{\Lambda}\tilde{X}^{(n)}\|_{L^{\infty}H^{s-\frac{1}{2}}}\\[2mm]
&\hspace{2.1cm}\leq C(\tilde{v}_0,\tilde{\mathbf{T}}_0)T^{\frac{1}{2}}\|\tilde{X}^{(n)}-\tilde{\alpha}\|_{L^{\infty}H^{s-1}}\leq C(\tilde{v}_0,\tilde{\mathbf{T}}_0)T^{\frac{3}{4}}.
\end{align*}

\noindent The estimate of $I_{1,3}$ is exactly as $I_{1,2}$. So we can pass to estimate the term $I_2$, which needs more splittings in order to satisfy the properties required for the space $H^{\frac{s}{2}-\frac{1}{4}}_{(0)}$. Thus we have the following terms

\begin{align*}
I_2=&(\tilde{\mathbf{T}}_0-\tilde{\mathbf{T}}_p^{(n)})(J^P(\tilde{X}^{(n)})^{-1}-(J^P)^{-1})(\nabla_{\Lambda}\tilde{X}^{(n)}-\mathcal{I})\tilde{n}_0\\
&-\tilde{\mathbf{T}}_0(J^P(\tilde{X}^{(n)})^{-1}-(J^P)^{-1})(\nabla_{\Lambda}\tilde{X}^{(n)}-\mathcal{I})\tilde{n}_0+(\tilde{\mathbf{T}}_0-\tilde{\mathbf{T}}_p^{(n)})(J^P)^{-1}(\nabla_{\Lambda}\tilde{X}^{(n)}-\mathcal{I})\tilde{n}_0\\
&-\tilde{\mathbf{T}}_0(J^P)^{-1}(\nabla_{\Lambda}\tilde{X}^{(n)}-\mathcal{I})\tilde{n}_0+(\tilde{\mathbf{T}}_0-\tilde{\mathbf{T}}_p^{(n)})(J^P(\tilde{X}^{(n)})^{-1}-(J^P)^{-1})\tilde{n}_0\\
&+(\tilde{\mathbf{T}}_0-\tilde{\mathbf{T}}_p^{(n)})(J^P)^{-1}\tilde{n}_0=\sum_{i=1}^6 I_{2,i}.
\end{align*}

\noindent We show the estimate of $I_{2,1}$ and $I_{2,2}$. For the first term we use lemma \ref{lem4} with $\frac{1}{q}=\frac{1}{2}-\eta$, the trace theorem and lemma \ref{lem5}, finally lemma \ref{Jp-est}, lemma \ref{zeta-est} and \eqref{flux-estim}, \eqref{elastic-estim} give the result. The term $I_{2,2}$ can be estimatated by using the same lemmas as $I_{2,1}$ but we have to take into account some properties of the $H^{\frac{s}{2}-\frac{1}{4}}_{(0)}(0,T])$ in order to estimate the term that does not depend on time $\tilde{\mathbf{T}}_0$.

\begin{align*}
&\|I_{2,1}\|_{H^{\frac{s}{2}-\frac{1}{4}}_{(0)}L^2}\leq \|\tilde{\mathbf{T}}_0-\tilde{\mathbf{T}}_p^{(n)}\|_{H^{\frac{s}{2}-\frac{1}{4}}_{(0)}H^{\frac{1}{2}-\eta}}\|(J^P(\tilde{X}^{(n)})^{-1}-(J^P)^{-1})(\nabla_{\Lambda}\tilde{X}^{(n)}-\mathcal{I})\|_{H^{\frac{s}{2}-\frac{1}{4}}_{(0)}H^{\frac{1}{2}+\eta}}\\[2mm]
&\hspace{0.3cm}\leq \|\tilde{\mathbf{T}}_0-\tilde{\mathbf{T}}_p^{(n)}\|_{H^{\frac{s}{2}-\frac{1}{4}}_{(0)}H^{1-\eta}}\|J^P(\tilde{X}^{(n)})^{-1}-(J^P)^{-1}\|_{H^{\frac{s}{2}-\frac{1}{4}}_{(0)}H^{1+\eta}}\|\nabla_{\Lambda}\tilde{X}^{(n)}-\mathcal{I}\|_{H^{\frac{s}{2}-\frac{1}{4}}_{(0)}H^{1+\eta}}\\[2mm]
&\hspace{0.3cm}\leq C(\tilde{v}_0,\tilde{\mathbf{T}}_0,\kappa)\left(1+\frac{1}{\We}\right)T^{\beta_1},\\[5mm]
&\|I_{2,2}\|_{H^{\frac{s}{2}-\frac{1}{4}}_{(0)}L^2}\leq \|\tilde{\mathbf{T}}_0\|_{H^{\frac{1}{2}-\eta}}\|(J^P(\tilde{X}^{(n)})^{-1}-(J^P)^{-1})(\nabla_{\Lambda}\tilde{X}^{(n)}-\mathcal{I})\|_{H^{\frac{s}{2}-\frac{1}{4}}_{(0)}H^{\frac{1}{2}+\eta}}\\[2mm]
&\hspace{0.3cm}\leq C(\tilde{\mathbf{T}}_0)\|J^P(\tilde{X}^{(n)})^{-1}-(J^P)^{-1}\|_{H^{\frac{s}{2}-\frac{1}{4}}_{(0)}H^{1+\eta}}\|\nabla_{\Lambda}\tilde{X}^{(n)}-\mathcal{I}\|_{H^{\frac{s}{2}-\frac{1}{4}}_{(0)}H^{1+\eta}}\\[2mm]
&\hspace{0.3cm}\leq  C(\tilde{\mathbf{T}}_0)\|\tilde{X}^{(n)}-\tilde{\alpha}\|_{H^{\frac{s}{2}-\frac{1}{4}}_{(0)}H^{1+\eta}}\leq C(\tilde{v}_0,\tilde{\mathbf{T}}_0) T^{\beta_2}.
\end{align*}
\medskip

\noindent In conclusion, for $i=3,5,6$ we have $\|I_{2,i}\|_{H^{\frac{s}{2}-\frac{1}{4}}_{(0)}L^2}\leq C(\tilde{v}_0,\tilde{\mathbf{T}}_0,\kappa)\left(1+\frac{1}{\We}\right)T^{\beta_i}$ as $I_{2,1}$ and $\|I_{2,4}\|_{H^{\frac{s}{2}-\frac{1}{4}}_{(0)}L^2}\leq C(\tilde{v}_0,\tilde{\mathbf{T}}_0) T^{\beta_4}$.\\

\noindent The first point of the proposition holds by choosing $\delta=\min\{\delta', \frac{3}{4},\delta_i\}$, for $i=1,\ldots,7$ and $\beta=\min\{\beta', \frac{3}{4}, \beta_i\}$, for $i=1,\ldots,6$. Then by gather the estimates for $\tilde{f}^{(n)}_T-\tilde{f}_{T_0}$ and the estimates for $\tilde{h}^{(n)}_T-\tilde{h}_{T_0}$ with \eqref{their-estim}, we get the final result.
\bigskip

\noindent Now we can prove the second part of the proposition, then we must estimate the differences. In according to the proof above, it is enough to estimate

\begin{align*}
&\|\tilde{f}^{(n)}-\tilde{f}^{(n-1)}\|_{\mathcal{K}^{s-1}_{(0)}}\leq C(\tilde{v}_0,\tilde{\mathbf{T}}_0,\re,\kappa)T^{\delta}\left( \|\tilde{w}^{(n)}-\tilde{w}^{(n-1)}\|_{\mathcal{K}^{s+1}_{(0)}}+\|\tilde{q}_w^{(n)}-\tilde{q}_w^{(n-1)}\|_{\mathcal{K}^{s}_{pr(0)}}\right.\\[2mm]
&\hspace{0.5cm}\left.+\|\tilde{\mathbf{T}}_p^{(n)}-\tilde{\mathbf{T}}_p^{(n-1)}\|_{\mathcal{F}^{s,\gamma-1}}+\|\tilde{X}^{(n)}-\tilde{X}^{(n-1)}\|_{\mathcal{F}^{s+1,\gamma}}\right),\\[5mm]
&\|\tilde{g}^{(n)}-\tilde{g}^{(n-1)}\|_{\mathcal{\bar{K}}^{s}_{(0)}}\leq C(\tilde{v}_0,\tilde{\mathbf{T}}_0)T^{\theta}\left(\|\tilde{w}^{(n)}-\tilde{w}^{(n-1)}\|_{\mathcal{K}^{s+1}_{(0)}}+\|\tilde{X}^{(n)}-\tilde{X}^{(n-1)}\|_{\mathcal{F}^{s+1,\gamma}}\right),\\[5mm]
&\|\tilde{h}^{(n)}-\tilde{h}^{(n-1)}\|_{\mathcal{K}^{s-\frac{1}{2}}_{(0)}}\leq C(\tilde{v}_0,\tilde{\mathbf{T}}_0,\re,\kappa) T^{\beta}\left( \|\tilde{w}^{(n)}-\tilde{w}^{(n-1)}\|_{\mathcal{K}^{s+1}_{(0)}}+\|\tilde{q}_w^{(n)}-\tilde{q}_w^{(n-1)}\|_{\mathcal{K}^{s}_{pr(0)}}\right.\\[2mm]
&\hspace{0.5cm}\left.+\|\tilde{\mathbf{T}}_p^{(n)}-\tilde{\mathbf{T}}_p^{(n-1)}\|_{\mathcal{F}^{s,\gamma-1}}+\|\tilde{X}^{(n)}-\tilde{X}^{(n-1)}\|_{\mathcal{F}^{s+1,\gamma}}\right).
\end{align*}
\medskip

\textbf{\underline{Estimate for $\tilde{f}^{(n)}-\tilde{f}^{(n-1)}$}}\\

\noindent This difference can be estimated as follows

\begin{align*}
\|\tilde{f}^{(n)}-\tilde{f}^{(n-1)}\|_{\mathcal{K}^{s-1}_{(0)}}&\leq \|\tilde{f}^{(n)}_w-\tilde{f}^{(n-1)}_w\|_{\mathcal{K}^{s-1}_{(0)}}+\|\tilde{f}^{(n)}_{\phi}-\tilde{f}^{(n-1)}_{\phi}\|_{\mathcal{K}^{s-1}_{(0)}}+\|\tilde{f}^{(n)}_q-\tilde{f}^{(n-1)}_q\|_{\mathcal{K}^{s-1}_{(0)}}\\[2mm]
&+\|\tilde{f}^{(n)}_T-\tilde{f}^{(n-1)}_T\|_{\mathcal{K}^{s-1}_{(0)}}
\end{align*}

\noindent  Some of these estimates have been proved in details in \cite[Proposition 5.4]{CCFGG2} and we resume here the results

\begin{equation}\label{their-f}
\begin{split}
&\|\tilde{f}^{(n)}_w-\tilde{f}^{(n-1)}_w\|_{\mathcal{K}^{s-1}_{(0)}}\leq C(\tilde{v}_0, \tilde{\mathbf{T}}_0,\re,\kappa)T^{\delta'}\left(\|\tilde{X}^{(n)}-\tilde{X}^{(n-1)}\|_{\mathcal{F}^{s+1,\gamma}}+\|\tilde{w}^{(n)}-\tilde{w}^{(n-1)}\|_{\mathcal{K}^{s+1}_{(0)}}\right),\\[2mm]
&\|\tilde{f}^{(n)}_{\phi}-\tilde{f}^{(n-1)}_{\phi}\|_{\mathcal{K}^{s-1}_{(0)}}\leq C(\tilde{v}_0, \tilde{\mathbf{T}}_0,\kappa)T^{\delta''}\|\tilde{X}^{(n)}-\tilde{X}^{(n-1)}\|_{\mathcal{F}^{s+1,\gamma}},\\[2mm]
&\|\tilde{f}^{(n)}_q-\tilde{f}^{(n-1)}_q\|_{\mathcal{K}^{s-1}_{(0)}}\leq C(\tilde{v}_0, \tilde{\mathbf{T}}_0,\re,\kappa)T^{\delta'''}\left(\|\tilde{X}^{(n)}-\tilde{X}^{(n-1)}\|_{\mathcal{F}^{s+1,\gamma}}+\|\tilde{q}^{(n)}_w-\tilde{q}^{(n-1)}_w\|_{\mathcal{K}^{s}_{pr(0)}}\right).
\end{split}
\end{equation}

\noindent We want to show the resulting estimates related to $\tilde{f}^{(n)}_T-\tilde{f}^{(n-1)}_T$, in both $L^2H^{s-1}$ and $H^{\frac{s-1}{2}}_{(0)}L^2$. For the estimates in $L^2H^{s-1}-$norm we split as follows

\begin{align*}
&d_{1,T}^{\tilde{f}}=\trace\left((\tilde{\zeta}^{(n)}-\tilde{\zeta}^{(n-1)})(\nabla\tilde{\mathbf{T}}_p^{(n)}-\nabla\tilde{\mathbf{T}}_0)J^P(\tilde{X}^{(n)})\right),\\[1mm]
&d_{2,T}^{\tilde{f}}=\trace\left((\tilde{\zeta}^{(n)}-\tilde{\zeta}^{(n-1)})\nabla\tilde{\mathbf{T}}_0J^P(\tilde{X}^{(n)})\right),\\[1mm]
&d_{3,T}^{\tilde{f}}=\trace\left(\tilde{\zeta}^{(n-1)}(\nabla\tilde{\mathbf{T}}_p^{(n)}-\nabla\tilde{\mathbf{T}}_p^{(n-1)})J^P(\tilde{X}^{(n)})\right),\\[1mm]
&d_{4,T}^{\tilde{f}}=\trace\left(\tilde{\zeta}^{(n-1)}(\nabla\tilde{\mathbf{T}}_p^{(n-1)}-\nabla\tilde{\mathbf{T}}_0)(J^P(\tilde{X}^{(n)})-J^P(\tilde{X}^{(n-1)}))\right),\\[1mm]
&d_{5,T}^{\tilde{f}}=\trace\left(\tilde{\zeta}^{(n-1)}\nabla\tilde{\mathbf{T}}_0(J^P(\tilde{X}^{(n)}-J^P(\tilde{X}^{(n-1)}))\right)
\end{align*}
\medskip

\noindent We show the estimate of $d_{1,T}^{\tilde{f}}$ and $d_{3,T}^{\tilde{f}}$, which give us the desired differences by means of  lemma \ref{Jp-est},  lemma \ref{zeta-dif-est} and \eqref{flux-estim}, \eqref{elastic-estim}.

\begin{align*}
&\|d_{1,T}^{\tilde{f}}\|_{L^2H^{s-1}}\leq \|\tilde{\zeta}^{(n)}-\tilde{\zeta}^{(n-1)}\|_{L^2H^{s-1}}\|\nabla\tilde{\mathbf{T}}_p^{(n)}-\nabla\tilde{\mathbf{T}}_0\|_{L^2H^{s-1}}\|J^P(\tilde{X}^{(n)})\|_{L^{\infty}H^{s-1}}\\[1mm]
&\hspace{0.5cm}\leq C(\tilde{v}_0)\|\tilde{X}^{(n)}-\tilde{X}^{(n-1)}\|_{L^{\infty}H^{s+1}}T^{\frac{1}{2}}\|\tilde{\mathbf{T}}_p^{(n)}-\tilde{\mathbf{T}}_0\|_{L^{\infty}H^{s-1}}\\[1mm]
&\hspace{0.5cm}\leq C(\tilde{v}_0,\tilde{\mathbf{T}}_0) \left(1+\frac{1}{\We}\right)T^{\frac{3}{4}}\|\tilde{X}^{(n)}-\tilde{X}^{(n-1)}\|_{\mathcal{F}^{s+1,\gamma}},\\[3mm]
&\|d_{3,T}^{\tilde{f}}\|_{L^2H^{s-1}}\leq\|\tilde{\zeta}^{(n-1)}\|_{L^{\infty}H^{s-1}}\|\nabla\tilde{\mathbf{T}}_p^{(n)}-\nabla\tilde{\mathbf{T}}_p^{(n-1)}\|_{L^2H^{s-1}}\|J^P(\tilde{X}^{(n)})\|_{L^{\infty}H^{s-1}}\\[1mm]
&\hspace{0.5cm}\leq C(\tilde{v}_0)T^{\frac{1}{2}}\|\tilde{\mathbf{T}}_p^{(n)}-\tilde{\mathbf{T}}_p^{(n-1)}\|_{L^{\infty}H^{s}}\leq C(\tilde{v}_0)T^{\frac{3}{4}}\|\tilde{\mathbf{T}}_p^{(n)}-\tilde{\mathbf{T}}_p^{(n-1)}\|_{\mathcal{F}^{s,\gamma-1}}.
\end{align*}
\noindent The remaining terms give the same results of $d_{1,T}^{\tilde{f}}$ by using also lemma \ref{Jp-dif-est} and lemma \ref{zeta-est}. For the $H^{\frac{s-1}{2}}_{(0)}L^2-$norm we split the difference in another way because in this space, in order to have constants independent of time, all terms are required to be zero at $t=0$. 
\begin{align*}
&d_{1,T}^{\tilde{f}}=\trace\left((\tilde{\zeta}^{(n)}-\tilde{\zeta}^{(n-1)})(\nabla\tilde{\mathbf{T}}_p^{(n)}-\nabla\tilde{\mathbf{T}}_0)(J^P(\tilde{X}^{(n)})-J^P)\right),\\
&d_{2,T}^{\tilde{f}}=\trace\left((\tilde{\zeta}^{(n)}-\tilde{\zeta}^{(n-1)})\nabla\tilde{\mathbf{T}}_0(J^P(\tilde{X}^{(n)})-J^P)\right)\\
&d_{3,T}^{\tilde{f}}=\trace\left((\tilde{\zeta}^{(n)}-\tilde{\zeta}^{(n-1)})(\nabla\tilde{\mathbf{T}}_p^{(n)}-\nabla\tilde{\mathbf{T}}_0)J^P\right),\\
&d_{4,T}^{\tilde{f}}=\trace\left((\tilde{\zeta}^{(n)}-\tilde{\zeta}^{(n-1)})\nabla\tilde{\mathbf{T}}_0 J^P\right)\\
&d_{5,T}^{\tilde{f}}=\trace\left((\tilde{\zeta}^{(n-1)}-\mathcal{I})(\nabla\tilde{\mathbf{T}}_p^{(n)}-\nabla\tilde{\mathbf{T}}_p^{(n-1)})(J^P(\tilde{X}^{(n)})-J^P)\right),\\
&d_{6,T}^{\tilde{f}}=\trace\left((\nabla\tilde{\mathbf{T}}_p^{(n)}-\nabla\tilde{\mathbf{T}}_p^{(n-1)})(J^P(\tilde{X}^{(n)})-J^P)\right),\\
&d_{7,T}^{\tilde{f}}=\trace\left((\tilde{\zeta}^{(n-1)}-\mathcal{I})(\nabla\tilde{\mathbf{T}}_p^{(n)}-\nabla\tilde{\mathbf{T}}_p^{(n-1)})J^P\right),\\
&d_{8,T}^{\tilde{f}}=\trace\left((\nabla\tilde{\mathbf{T}}_p^{(n)}-\nabla\tilde{\mathbf{T}}_p^{(n-1)})J^P\right),\\
&d_{9,T}^{\tilde{f}}=\trace\left((\tilde{\zeta}^{(n-1)}-\mathcal{I})(\nabla\tilde{\mathbf{T}}_p^{(n-1)}-\nabla\tilde{\mathbf{T}}_0)(J^P(\tilde{X}^{(n)})-J^P(\tilde{X}^{(n-1)}))\right),\\
&d_{10,T}^{\tilde{f}}=\trace\left((\nabla\tilde{\mathbf{T}}_p^{(n-1)}-\nabla\tilde{\mathbf{T}}_0)(J^P(\tilde{X}^{(n)})-J^P(\tilde{X}^{(n-1)}))\right),\\[1mm]
&d_{11,T}^{\tilde{f}}=\trace\left((\tilde{\zeta}^{(n-1)}-\mathcal{I})\nabla\tilde{\mathbf{T}}_0(J^P(\tilde{X}^{(n)})-J^P(\tilde{X}^{(n-1)}))\right),\\
&d_{12,T}^{\tilde{f}}=\trace\left(\nabla\tilde{\mathbf{T}}_0(J^P(\tilde{X}^{(n)})-J^P(\tilde{X}^{(n-1)}))\right).
\end{align*}

\noindent We show the estimate of $d_{1,T}^{\tilde{f}}$ by using lemma \ref{lem3}, lemma \ref{lem5}. In addition lemma \ref{Jp-est}, lemma \ref{zeta-dif-est} and estimate \eqref{elastic-estim}. Finally we conclude with lemma \ref{lem2}

\begin{align*}
&\|d_{1,T}^{\tilde{f}}\|_{H^{\frac{s-1}{2}}_{(0)}L^2}\leq \|(\tilde{\zeta}^{(n)}-\tilde{\zeta}^{(n-1)})(J^P(\tilde{X}^{(n)})-J^P)\|_{H^{\frac{s-1}{2}}_{(0)}H^{1+\eta}}\|\nabla\tilde{\mathbf{T}}_p^{(n)}-\nabla\tilde{\mathbf{T}}_0\|_{H^{\frac{s-1}{2}}_{(0)}L^2}\\[1mm]
&\hspace{0.5cm}\leq \|\tilde{\zeta}^{(n)}-\tilde{\zeta}^{(n-1)}\|_{H^{\frac{s-1}{2}}_{(0)}H^{1+\eta}}\|J^P(\tilde{X}^{(n)})-J^P\|_{H^{\frac{s-1}{2}}_{(0)}H^{1+\eta}}\|\tilde{\mathbf{T}}_p^{(n)}-\tilde{\mathbf{T}}_0\|_{H^{\frac{s-1}{2}}_{(0)}H^1}\\[1mm]
&\hspace{0.5cm}\leq C(\tilde{v}_0,\tilde{\mathbf{T}}_0,\kappa)\left(1+\frac{1}{\We}\right)\|\tilde{X}^{(n)}-\tilde{X}^{(n-1)}\|_{H^{\frac{s-1}{2}}_{(0)}H^{2+\eta}}\\[1mm]
&\hspace{0.5cm}\leq C(\tilde{v}_0,\tilde{\mathbf{T}}_0,\kappa)\left(1+\frac{1}{\We}\right)\left\|\partial_t\int_0^t\tilde{X}^{(n)}-\tilde{X}^{(n-1)}\right\|_{H^{\frac{s-1}{2}+\delta_1-\delta_1}_{(0)}H^{2+\eta}}\\[1mm]
&\hspace{0.5cm}\leq C(\tilde{v}_0,\tilde{\mathbf{T}}_0,\kappa)\left(1+\frac{1}{\We}\right) T^{\delta_1}\|\tilde{X}^{(n)}-\tilde{X}^{(n-1)}\|_{\mathcal{F}^{s+1,\gamma}}.
\end{align*}

\noindent The estimate of $d_{3,T}^{\tilde{f}}$ is exactly as $d_{1,T}^{\tilde{f}}$, while $\|d_{i,T}^{\tilde{f}}\|_{H^{\frac{s-1}{2}}_{(0)}L^2}\leq  C(\tilde{v}_0,\tilde{\mathbf{T}}_0)T^{\delta_i}\|\tilde{X}^{(n)}-\tilde{X}^{(n-1)}\|_{\mathcal{F}^{s+1,\gamma}},$  for $i=2,4$. Furthermore, for $i=9,10$ by using lemma \ref{Jp-dif-est} and lemma \ref{zeta-est} instead of  lemma \ref{Jp-est} and lemma \ref{zeta-dif-est}, respectively we have $\|d_{i,T}^{\tilde{f}}\|_{H^{\frac{s-1}{2}}_{(0)}L^2}\leq  C(\tilde{v}_0,\tilde{\mathbf{T}}_0,\kappa)\left(1+\frac{1}{\We}\right) T^{\delta_i}\|\tilde{X}^{(n)}-\tilde{X}^{(n-1)}\|_{\mathcal{F}^{s+1,\gamma}}$ and for $i=11,12$, $\|d_{i,T}^{\tilde{f}}\|_{H^{\frac{s-1}{2}}_{(0)}L^2}\leq  C(\tilde{v}_0,\tilde{\mathbf{T}}_0)T^{\delta_i}\|\tilde{X}^{(n)}-\tilde{X}^{(n-1)}\|_{\mathcal{F}^{s+1,\gamma}}.$  For the remaining terms we focus on $d_{5,T}^{\tilde{f}}$, we use lemma \ref{lem3}, lemma \ref{lem5}. Then lemma \ref{Jp-est} and lemma \ref{zeta-est} and to conclude lemma \ref{lem2}.

\begin{align*}
&\|d_{5,T}^{\tilde{f}}\|_{H^{\frac{s-1}{2}}_{(0)}L^2}\leq \|(\tilde{\zeta}^{(n-1)}-\mathcal{I})(J^P(\tilde{X}^{(n)})-J^P)\|_{H^{\frac{s-1}{2}}_{(0)}H^{1+\eta}}\|\nabla\tilde{\mathbf{T}}_p^{(n)}-\nabla\tilde{\mathbf{T}}_p^{(n-1)}\|_{H^{\frac{s-1}{2}}_{(0)}L^2}\\[1mm]
&\leq \|\tilde{\zeta}^{(n-1)}-\mathcal{I}\|_{H^{\frac{s-1}{2}}_{(0)}H^{1+\eta}}\|J^P(\tilde{X}^{(n)})-J^P\|_{H^{\frac{s-1}{2}}_{(0)}H^{1+\eta}}\|\tilde{\mathbf{T}}_p^{(n)}-\tilde{\mathbf{T}}_p^{(n-1)}\|_{H^{\frac{s-1}{2}}_{(0)}H^1}\\[1mm]
&\leq C(\tilde{v}_0)\left\|\partial_t\int_0^t\tilde{\mathbf{T}}_p^{(n)}-\tilde{\mathbf{T}}_p^{(n-1)}\right\|_{H^{\frac{s-1}{2}+\delta_5+\delta_5}_{(0)}H^1}\leq C(\tilde{v}_0)T^{\delta_5}\|\tilde{\mathbf{T}}_p^{(n)}-\tilde{\mathbf{T}}_p^{(n-1)}\|_{\mathcal{F}^{s,\gamma-1}}.
\end{align*} 
\medskip

\noindent For $i=6,7,8$ we have $\|d_{i,T}^{\tilde{f}}\|_{H^{\frac{s-1}{2}}_{(0)}L^2}\leq C(\tilde{v}_0)T^{\delta_i}\|\tilde{\mathbf{T}}_p^{(n)}-\tilde{\mathbf{T}}_p^{(n-1)}\|_{\mathcal{F}^{s,\gamma-1}}.$ Finally  by choosing $\delta=\min\{\delta',\delta'',\delta''',\frac{3}{4},\delta_i\}$, for $i=1,\ldots,12$ we get the desired result.
\medskip

\noindent\underline{\textbf{Estimate for $\tilde{g}^{(n)}-\tilde{g}^{(n-1)}$}}\\

\noindent For the estimate of this difference, we have

\begin{align*}
&d_{1}^{\tilde{g}}=-\trace((\nabla\tilde{w}^{(n)}-\nabla\tilde{w}^{(n-1)})(\tilde{\zeta}^{(n)}-\mathcal{I})J^P(\tilde{X}^{(n)})),\\
&d_{2}^{\tilde{g}}=-\trace(\nabla\tilde{w}^{(n-1)}(\tilde{\zeta}^{(n)}-\tilde{\zeta}^{(n-1)})J^P(\tilde{X}^{(n)})),\\
&d_{3}^{\tilde{g}}=-\trace(\nabla\tilde{w}^{(n-1)}\tilde{\zeta}^{(n-1)}(J^P(\tilde{X}^{(n)})-J^P(\tilde{X}^{(n-1)}))),\\
&d_{4}^{\tilde{g}}=-\trace((\nabla\tilde{w}^{(n)}-\nabla\tilde{w}^{(n-1)})(J^P(\tilde{X}^{(n)})-J^P)),\\
&d_{5}^{\tilde{g}}=-\trace(\nabla{\phi}(\tilde{\zeta}^{(n)}-\tilde{\zeta}^{(n-1)})J^P(\tilde{X}^{(n)})),\\
&d_{6}^{\tilde{g}}=-\trace(\nabla{\phi}\tilde{\zeta}^{(n-1)}(J^P(\tilde{X}^{(n)})-J^P(\tilde{X}^{(n-1)}))).
\end{align*}

\noindent By using the estimate obtained in \cite{CCFGG2} and by observing that the only difference is due to the fact that in our $\phi$ there is the presence of $\tilde{\mathbf{T}}_0$, but it does not depends on time, we get the final result

\begin{equation}\label{tilde(g_n-g_n-1)}
\begin{split}
&\|\tilde{g}^{(n)}-\tilde{g}^{(n-1)}\|_{\mathcal{\bar{K}}^{s}_{(0)}}\leq C(\tilde{v}_0,\tilde{\mathbf{T}}_0)\left(1+\frac{1}{\We}\right)T^{\theta}\left(\|\tilde{w}^{(n)}-\tilde{w}^{(n-1)}\|_{\mathcal{K}^{s+1}_{(0)}}+\|\tilde{X}^{(n)}-\tilde{X}^{(n-1)}\|_{\mathcal{F}^{s+1,\gamma}}\right).
\end{split}
\end{equation}
\medskip

\noindent\underline{\textbf{Estimate for $\tilde{h}^{(n)}-\tilde{h}^{(n-1)}$}}\\

\noindent For the estimate of this difference we separate the terms depending on $\tilde{w}, \tilde{q}$ and $\tilde{\mathbf{T}}_p$. We notice that the only term which needs a detailed estimate is the one depending on $\tilde{\mathbf{T}}_p$, since the others have already been estimated in \cite{CCFGG2}. We resume here their results

\begin{equation}\label{their-h}
\begin{split}
&\|\tilde{h}^{(n)}_{w}-\tilde{h}^{(n-1)}_{w}\|_{\mathcal{K}^{s-\frac{1}{2}}_{(0)}}\leq C(\tilde{v}_0,\kappa)T^{\beta'}\left(\|\tilde{w}^{(n)}-\tilde{w}^{(n-1)}\|_{\mathcal{K}^{s+1}_{(0)}}+\|\tilde{X}^{(n)}-\tilde{X}^{(n-1)}\|_{\mathcal{F}^{s+1,\gamma}}\right),\\[1mm]
&\|\tilde{h}^{(n)}_{w^T}-\tilde{h}^{(n-1)}_{w^T}\|_{\mathcal{K}^{s-\frac{1}{2}}_{(0)}}\leq C(\tilde{v}_0,\kappa)T^{\beta''}\left(\|\tilde{w}^{(n)}-\tilde{w}^{(n-1)}\|_{\mathcal{K}^{s+1}_{(0)}}+\|\tilde{X}^{(n)}-\tilde{X}^{(n-1)}\|_{\mathcal{F}^{s+1,\gamma}}\right),\\[1mm]
&\|\tilde{h}^{(n)}_{\phi}-\tilde{h}^{(n-1)}_{\phi}\|_{\mathcal{K}^{s-\frac{1}{2}}_{(0)}}\leq C(\tilde{v}_0,\tilde{\mathbf{T}}_0,\re,\kappa)T^{\beta'''}\|\tilde{X}^{(n)}-\tilde{X}^{(n-1)}\|_{\mathcal{F}^{s+1,\gamma}},\\[1mm]
&\|\tilde{h}^{(n)}_{\phi^T}-\tilde{h}^{(n-1)}_{\phi^T}\|_{\mathcal{K}^{s-\frac{1}{2}}_{(0)}}\leq C(\tilde{v}_0,\tilde{\mathbf{T}}_0,\re,\kappa)T^{\beta^{iv}}\|\tilde{X}^{(n)}-\tilde{X}^{(n-1)}\|_{\mathcal{F}^{s+1,\gamma}},\\[1mm]
&\|\tilde{h}^{(n)}_{q}-\tilde{h}^{(n-1)}_{q}\|_{\mathcal{K}^{s-\frac{1}{2}}_{(0)}}\leq C(\tilde{v}_0,\kappa)T^{\beta^{v}}\left(\|\tilde{q}^{(n)}_w-\tilde{q}^{(n-1)}_w\|_{\mathcal{K}^{s}_{pr(0)}}+\|\tilde{X}^{(n)}-\tilde{X}^{(n-1)}\|_{\mathcal{F}^{s+1,\gamma}}\right).
\end{split}
\end{equation}

\begin{remark}
We notice that $\phi=\tilde{v}_0+\frac{1}{\re}t\left((1-\kappa)Q^2\Delta \tilde{v}_0-(J^P)^T\nabla \tilde{q}_{\phi}\right.$\\
$\left.+\trace(\nabla\tilde{\mathbf{T}}_0J^P)\right)$ and differently with respect to the definition of $\phi$ in \cite{CCFGG2}, it depends on $\tilde{\mathbf{T}}_0$ but it does not affect the estimates except for the constant which will depend on it.
\end{remark}

\noindent Now we show the explicit estimates for the part related to the elastic stress tensor  $\tilde{h}_T^{(n)}-\tilde{h}_T^{(n-1)}$. First we deal with the $L^2H^{s-\frac{1}{2}}-$norm and we split the difference as follows

\begin{align*}
&d_{1,T}^{\tilde{h}}=(\tilde{\mathbf{T}}_p^{(n-1)}-\tilde{\mathbf{T}}_p^{(n)})J^P(\tilde{X}^{(n-1)})^{-1}\nabla_{\Lambda}\tilde{X}^{(n-1)}\tilde{n}_0,\\
&d_{2,T}^{\tilde{h}}=(\tilde{\mathbf{T}}_p^{(n)}-\tilde{\mathbf{T}}_0)(J^P(\tilde{X}^{(n-1)})^{-1}-J^P(\tilde{X}^{(n)})^{-1})\nabla_{\Lambda}\tilde{X}^{(n-1)}\tilde{n}_0,\\
&d_{3,T}^{\tilde{h}}=\tilde{\mathbf{T}}_0(J^P(\tilde{X}^{(n-1)})^{-1}-J^P(\tilde{X}^{(n)})^{-1})\nabla_{\Lambda}\tilde{X}^{(n-1)}\tilde{n}_0,\\
&d_{4,T}^{\tilde{h}}=(\tilde{\mathbf{T}}_p^{(n)}-\tilde{\mathbf{T}}_0)J^P(\tilde{X}^{(n)})^{-1}(\nabla_{\Lambda}\tilde{X}^{(n-1)}-\nabla_{\Lambda}\tilde{X}^{(n)})\tilde{n}_0,\\
&d_{5,T}^{\tilde{h}}=\tilde{\mathbf{T}}_0J^P(\tilde{X}^{(n)})^{-1}(\nabla_{\Lambda}\tilde{X}^{(n-1)}-\nabla_{\Lambda}\tilde{X}^{(n)})\tilde{n}_0.
\end{align*}

\noindent We show the estimates of $d_{1,T}^{\tilde{h}}$ and $d_{2,T}^{\tilde{h}}$ which give an idea also for the other terms. We use the trace theorem, lemma \ref{Jp-est} or \ref{Jp-dif-est} and lemma \ref{zeta-dif-est} and in the end \eqref{flux-estim} and \eqref{elastic-estim}.

\begin{align*}
&\|d_{1,T}^{\tilde{h}}\|_{L^2H^{s-\frac{1}{2}}}\leq \|\tilde{\mathbf{T}}_p^{(n-1)}-\tilde{\mathbf{T}}_p^{(n)}\|_{L^{2}H^{s-\frac{1}{2}}}\|J^P(\tilde{X}^{(n-1)})^{-1}\|_{L^{\infty}H^{s-\frac{1}{2}}}\|\nabla_{\Lambda}\tilde{X}^{(n-1)}\|_{L^{\infty}H^{s-\frac{1}{2}}}\\[2mm]
&\hspace{0.5cm}\leq T^{\frac{1}{2}}\|\tilde{\mathbf{T}}_p^{(n-1)}-\tilde{\mathbf{T}}_p^{(n)}\|_{L^{\infty}H^{s}}\|J^P(\tilde{X}^{(n-1)})^{-1}\|_{L^{\infty}H^{s}}\|\nabla_{\Lambda}\tilde{X}^{(n-1)}\|_{L^{\infty}H^{s}}\\[2mm]
&\hspace{0.5cm}\leq C(\tilde{v}_0)T^{\frac{3}{4}} \|\tilde{\mathbf{T}}_p^{(n)}-\tilde{\mathbf{T}}_p^{(n-1)}\|_{\mathcal{F}^{s,\gamma-1}},\\[4mm]
&\|d_{2,T}^{\tilde{h}}\|_{L^2H^{s-\frac{1}{2}}}\leq \|\tilde{\mathbf{T}}_p^{(n)}-\tilde{\mathbf{T}}_0\|_{L^2H^{s-\frac{1}{2}}}\|J^P(\tilde{X}^{(n-1)})^{-1}-J^P(\tilde{X}^{(n)})^{-1}\|_{L^{\infty}H^{s-\frac{1}{2}}}\|\nabla_{\Lambda}\tilde{X}^{(n-1)}\|_{L^{\infty}H^{s-\frac{1}{2}}}\\[2mm]
&\hspace{0.5cm}\leq T^{\frac{1}{2}} \|\tilde{\mathbf{T}}_p^{(n)}-\tilde{\mathbf{T}}_0\|_{L^{\infty}H^{s}}\|J^P(\tilde{X}^{(n-1)})^{-1}-J^P(\tilde{X}^{(n)})^{-1}\|_{L^{\infty}H^{s}}\|\nabla_{\Lambda}\tilde{X}^{(n-1)}\|_{L^{\infty}H^{s}}\\[2mm]
&\hspace{0.5cm}\leq C(\tilde{v}_0, \tilde{\mathbf{T}}_0,\kappa)\left(1+\frac{1}{\We}\right)T^{\frac{3}{4}}\|\tilde{X}^{(n)}-\tilde{X}^{(n-1)}\|_{\mathcal{F}^{s,\gamma-1}}.
\end{align*}

\noindent For $d_{4,T}^{\tilde{h}}$ the estimate is exactly as $d_{2,T}^{\tilde{h}}$ but with the application of lemma \ref{zeta-dif-est} instead of lemma \ref{Jp-dif-est}, while for $i=3,5$ we have $\|d_{i,T}^{\tilde{h}}\|_{L^2H^{s-\frac{1}{2}}}\leq C(\tilde{v}_0, \tilde{\mathbf{T}}_0) T^{\frac{3}{4}}\cdot\|\tilde{X}^{(n)}-\tilde{X}^{(n-1)}\|_{\mathcal{F}^{s,\gamma-1}}.$ Now we approach the estimate of the $H^{\frac{s}{2}-\frac{1}{4}}_{(0)}L^2-$norm. We split the difference as follows

\begin{align*}
&d_{1,T}^{\tilde{h}}=(\tilde{\mathbf{T}}_p^{(n-1)}-\tilde{\mathbf{T}}_p^{(n)})(J^P(\tilde{X}^{(n-1)})^{-1}-(J^P)^{-1})(\nabla_{\Lambda}\tilde{X}^{(n-1)}-\mathcal{I})\tilde{n}_0,\\
&d_{2,T}^{\tilde{h}}=(\tilde{\mathbf{T}}_p^{(n-1)}-\tilde{\mathbf{T}}_p^{(n)})(J^P)^{-1}(\nabla_{\Lambda}\tilde{X}^{(n-1)}-\mathcal{I})\tilde{n}_0,\\
&d_{3,T}^{\tilde{h}}=(\tilde{\mathbf{T}}_p^{(n-1)}-\tilde{\mathbf{T}}_p^{(n)})(J^P(\tilde{X}^{(n-1)})^{-1}-(J^P)^{-1})\tilde{n}_0,\\
&d_{4,T}^{\tilde{h}}=(\tilde{\mathbf{T}}_p^{(n-1)}-\tilde{\mathbf{T}}_p^{(n)})(J^P)^{-1}\tilde{n}_0,\\
&d_{5,T}^{\tilde{h}}=(\tilde{\mathbf{T}}_p^{(n)}-\tilde{\mathbf{T}}_0)(J^P(\tilde{X}^{(n-1)})^{-1}-J^P(\tilde{X}^{(n)})^{-1})(\nabla_{\Lambda}\tilde{X}^{(n-1)}-\mathcal{I})\tilde{n}_0,\\
&d_{6,T}^{\tilde{h}}=\tilde{\mathbf{T}}_0(J^P(\tilde{X}^{(n-1)})^{-1}-J^P(\tilde{X}^{(n)})^{-1})(\nabla_{\Lambda}\tilde{X}^{(n-1)}-\mathcal{I})\tilde{n}_0,\\
&d_{7,T}^{\tilde{h}}=(\tilde{\mathbf{T}}_p^{(n)}-\tilde{\mathbf{T}}_0)(J^P(\tilde{X}^{(n-1)})^{-1}-J^P(\tilde{X}^{(n)})^{-1})\tilde{n}_0,\\
&d_{8,T}^{\tilde{h}}=\tilde{\mathbf{T}}_0(J^P(\tilde{X}^{(n-1)})^{-1}-J^P(\tilde{X}^{(n)})^{-1})\tilde{n}_0,\\
&d_{9,T}^{\tilde{h}}=(\tilde{\mathbf{T}}_p^{(n)}-\tilde{\mathbf{T}}_0)(J^P(\tilde{X}^{(n)})^{-1}-(J^P)^{-1})(\nabla_{\Lambda}\tilde{X}^{(n-1)}-\nabla_{\Lambda}\tilde{X}^{(n)})\tilde{n}_0,\\
&d_{10,T}^{\tilde{h}}=\tilde{\mathbf{T}}_0(J^P(\tilde{X}^{(n)})^{-1}-(J^P)^{-1})(\nabla_{\Lambda}\tilde{X}^{(n-1)}-\nabla_{\Lambda}\tilde{X}^{(n)})\tilde{n}_0,\\
&d_{11,T}^{\tilde{h}}=(\tilde{\mathbf{T}}_p^{(n)}-\tilde{\mathbf{T}}_0)(J^P)^{-1}(\nabla_{\Lambda}\tilde{X}^{(n-1)}-\nabla_{\Lambda}\tilde{X}^{(n)})\tilde{n}_0,\\
&d_{12,T}^{\tilde{h}}=\tilde{\mathbf{T}}_0(J^P)^{-1}(\nabla_{\Lambda}\tilde{X}^{(n-1)}-\nabla_{\Lambda}\tilde{X}^{(n)})\tilde{n}_0.
\end{align*}

\noindent As before, we estimate only the relevant terms. We start with $d_{1,T}^{\tilde{h}}$, by using lemma \ref{lem4}, with $\frac{1}{q}=\frac{1}{2}-\eta$, trace theorem \ref{parabolic-trace}, lemma \ref{lem5}, lemma \ref{Jp-est} and lemma \ref{zeta-est} and finally lemma \ref{lem2}. 

\begin{align*}
&\|d_{1,T}^{\tilde{h}}\|_{H^{\frac{s}{2}-\frac{1}{4}}_{(0)}L^2}\leq \|\tilde{\mathbf{T}}_p^{(n-1)}-\tilde{\mathbf{T}}_p^{(n)}\|_{H^{\frac{s}{2}-\frac{1}{4}}_{(0)}H^{\frac{1}{2}-\eta}}\|(J^P(\tilde{X}^{(n-1)})^{-1}-(J^P)^{-1})(\nabla_{\Lambda}\tilde{X}^{(n-1)}-\mathcal{I})\|_{H^{\frac{s}{2}-\frac{1}{4}}_{(0)}H^{\frac{1}{2}+\eta}}\\[2mm]
\end{align*}
\begin{align*}
&\leq \|\tilde{\mathbf{T}}_p^{(n-1)}-\tilde{\mathbf{T}}_p^{(n)}\|_{H^{\frac{s}{2}-\frac{1}{4}}_{(0)}H^{1-\eta}}\|(J^P(\tilde{X}^{(n-1)})^{-1}-(J^P)^{-1})(\nabla_{\Lambda}\tilde{X}^{(n-1)}-\mathcal{I})\|_{H^{\frac{s}{2}-\frac{1}{4}}_{(0)}H^{1+\eta}}\\[2mm]
&\leq\|\tilde{\mathbf{T}}_p^{(n-1)}-\tilde{\mathbf{T}}_p^{(n)}\|_{H^{\frac{s}{2}-\frac{1}{4}}_{(0)}H^{1-\eta}}\|J^P(\tilde{X}^{(n-1)})^{-1}-(J^P)^{-1}\|_{H^{\frac{s}{2}-\frac{1}{4}}_{(0)}H^{1+\eta}}\|\nabla_{\Lambda}\tilde{X}^{(n-1)}-\mathcal{I}\|_{H^{\frac{s}{2}-\frac{1}{4}}_{(0)}H^{1+\eta}}\\[2mm]
&\leq C(\tilde{v}_0)\left\|\partial_t \int_0^t \tilde{\mathbf{T}}_p^{(n-1)}-\tilde{\mathbf{T}}_p^{(n)}\right\|_{H^{\frac{s}{2}-\frac{1}{4}+\beta_1-\beta_1}_{(0)}H^{1-\eta}}\leq C(\tilde{v}_0) T^{\beta_1}\| \tilde{\mathbf{T}}_p^{(n)}-\tilde{\mathbf{T}}_p^{(n-1)}\|_{\mathcal{F}^{s,\gamma-1}}.
\end{align*}
\medskip

\noindent Thus $\|d_{i,T}^{\tilde{h}}\|_{H^{\frac{s}{2}-\frac{1}{4}}_{(0)}L^2}\leq C(\tilde{v}_0) T^{\beta_i}\| \tilde{\mathbf{T}}_p^{(n)}-\tilde{\mathbf{T}}_p^{(n-1)}\|_{\mathcal{F}^{s,\gamma-1}},$ for $i=2,3,4$. Furthermore in order to get the other estimates we consider $d_{5,T}^{\tilde{h}}$ and for this term we use lemma \ref{lem4}, with $\frac{1}{q}=\frac{1}{2}-\eta$, trace theorem and lemma \ref{lem5} and to finish lemma \ref{Jp-dif-est}, lemma \ref{zeta-est} and lemma \ref{lem2}.

\begin{align*}
&\|d_{5,T}^{\tilde{h}}\|_{H^{\frac{s}{2}-\frac{1}{4}}_{(0)}L^2}\\[1mm]
&\hspace{0.5cm}\leq\|\tilde{\mathbf{T}}_p^{(n)}-\tilde{\mathbf{T}}_0\|_{H^{\frac{s}{2}-\frac{1}{4}}_{(0)}H^{\frac{1}{2}-\eta}}\cdot\|(J^P(\tilde{X}^{(n-1)})^{-1}-J^P(\tilde{X}^{(n)})^{-1})(\nabla_{\Lambda}\tilde{X}^{(n-1)}-\mathcal{I})\|_{H^{\frac{s}{2}-\frac{1}{4}}_{(0)}H^{\frac{1}{2}+\eta}}\\[2mm]
&\hspace{0.5cm}\leq \|\tilde{\mathbf{T}}_p^{(n)}-\tilde{\mathbf{T}}_0\|_{H^{\frac{s}{2}-\frac{1}{4}}_{(0)}H^{1-\eta}}|(J^P(\tilde{X}^{(n-1)})^{-1}-J^P(\tilde{X}^{(n)})^{-1})(\nabla_{\Lambda}\tilde{X}^{(n-1)}-\mathcal{I})\|_{H^{\frac{s}{2}-\frac{1}{4}}_{(0)}H^{1+\eta}}\\[2mm]
&\hspace{0.5cm}\leq \|\tilde{\mathbf{T}}_p^{(n)}-\tilde{\mathbf{T}}_0\|_{H^{\frac{s}{2}-\frac{1}{4}}_{(0)}H^{1-\eta}}\|J^P(\tilde{X}^{(n-1)})^{-1}-J^P(\tilde{X}^{(n)})^{-1}\|_{H^{\frac{s}{2}-\frac{1}{4}}_{(0)}H^{1+\eta}}\|\nabla_{\Lambda}\tilde{X}^{(n-1)}-\mathcal{I}\|_{H^{\frac{s}{2}-\frac{1}{4}}_{(0)}H^{1+\eta}}\\[2mm]
&\hspace{0.5cm}\leq C(\tilde{v}_0,\tilde{\mathbf{T}}_0,\kappa)\left(1+\frac{1}{\We}\right) \left\|\partial_t\int_0^t \tilde{X}^{(n)}-\tilde{X}^{(n-1)}\right\|_{H^{\frac{s}{2}-\frac{1}{4}+\beta_5-\beta_5}_{(0)}H^{1+\eta}}\\[2mm]
&\hspace{0.5cm}\leq C(\tilde{v}_0,\tilde{\mathbf{T}}_0,\kappa) \left(1+\frac{1}{\We}\right)T^{\beta_5}\|\tilde{X}^{(n)}-\tilde{X}^{(n-1)}\|_{\mathcal{F}^{s+1,\gamma}}.
\end{align*}
\medskip

\noindent Finally, $\|d_{i,T}^{\tilde{h}}\|_{H^{\frac{s}{2}-\frac{1}{4}}_{(0)}L^2}\leq C(\tilde{v}_0,\tilde{\mathbf{T}}_0) T^{\beta_i}\|\tilde{X}^{(n)}-\tilde{X}^{(n-1)}\|_{\mathcal{F}^{s+1,\gamma}}$, for $i=6,8,10,12$ and \\
$\|d_{i,T}^{\tilde{h}}\|_{H^{\frac{s}{2}-\frac{1}{4}}_{(0)}L^2}\leq C(\tilde{v}_0,\tilde{\mathbf{T}}_0,\kappa) \left(1+\frac{1}{\We}\right)T^{\beta_i}\|\tilde{X}^{(n)}-\tilde{X}^{(n-1)}\|_{\mathcal{F}^{s+1,\gamma}},$ for $i=7,9,11$.  By choosing 
$\beta=\min\{\beta',\beta'',\beta'',\beta ^{iv}, \beta^v,\frac{3}{4},\beta_i\}$, for $i=1,\ldots,12$ also the second part of the proposition holds.
\end{proof}
\medskip

\subsection{Estimate for the Conformal Lagrangian flux}

\noindent The equation for the conformal lagrangian flux is given by  \eqref{Conf-flux}. In order to prove theorem \ref{local-existence conf-lag} we need iterative bounds also for the flux. Here we state the proposition, without the proof, since it is exactly the same as in \cite[Proposition 5.3]{CCFGG2}.

\begin{proposition}\label{estimate-conf-lag-flux}
For $2<s<\frac{5}{2}$ and $T>0$ small enough depending on $N$, the radius of the balls and $ \tilde{v}_0$, we have

\begin{enumerate}
\item Let $\tilde{X}^{(n)}-\hat{X}\in \mathcal{F}^{s+1,\gamma}$, $\tilde{w}^{(n)}\in\mathcal{K}^{s+1}_{(0)}$ and $\tilde{q}_w^{(n)}\in\mathcal{K}^s_{pr(0)}$ and such that

\begin{description}{}

\item [(a)] $\tilde{X}^{(n)}-\hat{X}\in B_1,$\\

\item[(b)] $(\tilde{w}^{(n)},\tilde{q}_w^{(n)})\in B_2.$
\end{description}
\medskip

\noindent Then, $\tilde{X}^{(n+1)}-\hat{X}\in B_1$.
\bigskip

\item Let $\tilde{X}^{(n)}-\tilde{\alpha}, \tilde{X}^{(n-1)}-\tilde{\alpha}\in B_1$ and $(\tilde{w}^{(n)}, \tilde{q}^{(n)}), (\tilde{w}^{(n-1)}, \tilde{q}^{(n-1)})\in B_2.$ Then

\begin{align*}
&\left\|\tilde{X}^{(n+1)}-\tilde{X}^{(n)}\right\|_{\mathcal{F}^{s+1,\gamma}}\leq C(\tilde{v}_0)T^{\eta}\left(\left\|\tilde{w}^{(n)}-\tilde{w}^{(n-1)}\right\|_{\mathcal{K}^{s+1}_{(0)}}+\left\|\tilde{X}^{(n)}-\tilde{X}^{(n-1)}\right\|_{\mathcal{F}^{s+1,\gamma}}\right).
\end{align*}
\end{enumerate}
\end{proposition}

\subsection{Estimate for the Conformal Lagrangian elastic stress tensor}
\noindent The elastic stress tensor $\tilde{\mathbf{T}}_p$ satisfies the equation \eqref{We-G-conf-iterative}. So we have an explicit formula for this term

\begin{align*}
&\tilde{\mathbf{T}}_p^{(n+1)}=\tilde{\mathbf{T}}_0+\int_0^t \left(J^P(\tilde{X}^{(n)})\tilde{\zeta}^{(n)}\nabla\tilde{v}^{(n)} \tilde{\mathbf{T}}_p^{(n)}\right)+\int_0^t \tilde{\mathbf{T}}_p^{(n)}\left(J^P(\tilde{X}^{(n)}\tilde{\zeta}^{(n)}\nabla\tilde{v}^{(n)})\right)^T-\frac{1}{\We}\int_0^t\tilde{\mathbf{T}}_p^{(n)}\\
&\hspace{0.5cm}+\frac{\kappa}{\We}\int_0^t\left(J^P(\tilde{X}^{(n)})\tilde{\zeta}^{(n)}\nabla\tilde{v}^{(n)}+\left(J^P(\tilde{X}^{(n)}\tilde{\zeta}^{(n)}\nabla \tilde{v}^{(n)})\right)^T\right)
\end{align*}

\noindent The following proposition gives the estimate for the conformal lagrangian elastic tensor.

 \begin{proposition}\label{We-G-conf-lag-estimate}
 For $2<s<\frac{5}{2}$ and $T>0$ small enough, depending on  $N$, the radius of the balls, on $\tilde{v}_0$ and on $\tilde{\mathbf{T}}_0$, we have
 
 \begin{enumerate}
 
\item Let $\tilde{X}^{(n)}-\hat{X}\in\mathcal{F}^{s+1,\gamma}, \tilde{w}^{(n)}\in\mathcal{K}^{s+1}_{(0)}$ and $\tilde{\mathbf{T}}_p^{(n)}-\tilde{\mathbf{T}}_0-t\hat{\mathbf{T}}\in\mathcal{F}^{s,\gamma-1}$ such that

\begin{description}
 \item[(a)]  $\tilde{X}^{(n)}-\hat{X}\in B_1$,\\
 
 \item[(b)] $(\tilde{w}^{(n)},\tilde{q}_w^{(n-1)})\in B_2$,\\
 
 \item[(c)] $\tilde{\mathbf{T}}_p^{(n)}-\tilde{\mathbf{T}}_0-t\hat{\mathbf{T}}\in B_3$.\\

\noindent Then $\tilde{\mathbf{T}}_p^{(n+1)}-\tilde{\mathbf{T}}_0-t\hat{\mathbf{T}}\in B_3$
\end{description}
\medskip

\item Let   $\tilde{X}^{(n)}-\tilde{\alpha}, \tilde{X}^{(n-1)}-\tilde{\alpha}\in B_1$, $(\tilde{w}^{(n)},\tilde{q}_w), (\tilde{w}^{(n-1)},\tilde{q}_w^{(n-1)})\in B_2$ and $\tilde{\mathbf{T}}_p^{(n)}-\tilde{\mathbf{T}}_0, \tilde{\mathbf{T}}_p^{(n-1)}-\tilde{\mathbf{T}}_0\in B_3$. Then for a suitable $\delta>0$ we have

\begin{equation}\label{G-conf-lag-part2}
\begin{split}
&\left\|\tilde{\mathbf{T}}_p^{(n+1)}-\tilde{\mathbf{T}}_p^{(n)}\right\|_{\mathcal{F}^{s,\gamma-1}}\leq C(N, \tilde{v}_0, \tilde{\mathbf{T}}_0,\kappa)\left(1+\frac{1}{\We}\right)T^{\delta}\\[1mm]
&\cdot\left( \left\|\tilde{\mathbf{T}}_p^{(n)}-\tilde{\mathbf{T}}_p^{(n-1)}\right\|_{\mathcal{F}^{s,\gamma-1}}+\left\|\tilde{w}^{(n)}-\tilde{w}^{(n-1)}\right\|_{\mathcal{K}^{s+1}_{(0)}}+\left\|\tilde{X}^{(n)}-\tilde{X}^{(n-1)}\right\|_{\mathcal{ F}^{s+1,\gamma}}\right) 
\end{split}
\end{equation}
\end{enumerate}
\end{proposition}

\begin{proof}
\noindent Let us recall the definition of the ball $B_3$.
$$B_3:=\left\lbrace \tilde{\mathbf{T}}_p-\tilde{\mathbf{T}}_0-t\hat{\mathbf{T}}\in\mathcal{F}^{s,\gamma-1} : \left\|\tilde{\mathbf{T}}_p-\tilde{\mathbf{T}}_0-\int_0^t \hat{\mathbf{T}}_{\phi}\,d\tau\right\|_{\mathcal{F}^{s,\gamma-1}} \leq N\right\rbrace,$$

\noindent where 
\begin{align*}
&\hat{\mathbf{T}}=J^P\nabla\tilde{v}_0\mathbf{\tilde{T}}_0+\mathbf{\tilde{T}}_0(J^P\nabla\tilde{v}_0)^T-\frac{1}{\We}\mathbf{\tilde{T}}_0+\frac{\kappa}{\We}\left(J^P\nabla\tilde{v}_0+(J^P\nabla\tilde{v}_0)^T\right)\\
&\hat{\mathbf{T}}_{\phi}=J^P\nabla\phi\tilde{\mathbf{T}}_0+\tilde{\mathbf{T}}_0(J^P\nabla\phi)^T-\frac{1}{\We}\tilde{\mathbf{T}}_0+\frac{\kappa}{\We}\left(J^P\nabla\phi+(J^P\nabla\phi)^T\right).
\end{align*}
\medskip

\noindent We have to estimate $\tilde{\mathbf{T}}^{(n+1)}_p-\tilde{\mathbf{T}}_0-\int_0^t \hat{\mathbf{T}}_{\phi}$ in both $L^{\infty}_{\frac{1}{4}}H^s$ and $H^2_{(0)}H^{\gamma-1}$. At first we rewrite the term as follows

\begin{equation}\label{split-T}
\begin{split}
&\tilde{\mathbf{T}}^{(n+1)}_p-\tilde{\mathbf{T}}_0-\int_0^t \hat{\mathbf{T}}_{\phi}=\int_0^t J^P(\tilde{X}^{(n)})\tilde{\zeta}^{(n)}\nabla\tilde{w}^{(n)}\tilde{\mathbf{T}}_p^{(n)}\\
&\hspace{0.5cm}+\int_0^t J^P(\tilde{X}^{(n)})\tilde{\zeta}^{(n)}\nabla\phi\tilde{\mathbf{T}}_p^{(n)}-J^P\nabla\phi\tilde{\mathbf{T}}_0+\int_0^t \tilde{\mathbf{T}}_p^{(n)}\left(J^P(\tilde{X}^{(n)})\tilde{\zeta}^{(n)}\nabla\tilde{w}^{(n)}\right)^T\\
&\hspace{0.5cm}+\int_0^t  \tilde{\mathbf{T}}_p^{(n)}\left(J^P(\tilde{X}^{(n)})\tilde{\zeta}^{(n)}\nabla\phi\right)^T-\tilde{\mathbf{T}}_0(J^P\nabla\phi)^T+\frac{1}{\We}\int_0^t \tilde{\mathbf{T}}_0-\tilde{\mathbf{T}}_p^{(n)}\\
&\hspace{0.5cm}+\frac{\kappa}{\We}\int_0^t J^P(\tilde{X}^{(n)})\tilde{\zeta}^{(n)}\nabla\tilde{w}^{(n)}+\frac{\kappa}{\We}\int_0^t J^P(\tilde{X}^{(n)})\tilde{\zeta}^{(n)}\nabla\phi-J^P\nabla\phi\\
&\hspace{0.5cm}+\frac{\kappa}{\We}\int_0^t \left(J^P(\tilde{X}^{(n)})\tilde{\zeta}^{(n)}\nabla\tilde{w}^{(n)}\right)^T+\frac{\kappa}{\We}\int_0^t \left(J^P(\tilde{X}^{(n)})\tilde{\zeta}^{(n)}\nabla\phi\right)^T-(J^P\nabla\phi)^T=\sum_{i=1}^9 I_i.
\end{split}
\end{equation}

\noindent The estimates in $L^{\infty}_{\frac{1}{4}}H^s$ are based on the use of lemma \ref{Jp-est}, lemma \ref{zeta-est} and estimates \eqref{flux-estim}, \eqref{elastic-estim}.  

\begin{align*}
&\|I_1\|_{L^{\infty}_{\frac{1}{4}}H^s}\leq\sup_{t\in[0,T]}t^{-\frac{1}{4}}\int_0^t\| J^P(\tilde{X}^{(n)})\tilde{\zeta}^{(n)}\nabla\tilde{w}^{(n)}(\tilde{\mathbf{T}}_p^{(n)}-\tilde{\mathbf{T}}_0)+J^P(\tilde{X}^{(n)})\tilde{\zeta}^{(n)}\nabla\tilde{w}^{(n)}\tilde{\mathbf{T}}_0\|_{H^s}\\[1mm]
&\leq T^{\frac{1}{4}}\left(\|J^P(\tilde{X}^{(n)})\tilde{\zeta}^{(n)}\nabla\tilde{w}^{(n)}(\tilde{\mathbf{T}}_p^{(n)}-\tilde{\mathbf{T}}_0)\|_{L^2H^s}+\|J^P(\tilde{X}^{(n)})\tilde{\zeta}^{(n)}\nabla\tilde{w}^{(n)}\tilde{\mathbf{T}}_0\|_{L^2H^s}\right)\\[1mm]
&\leq T^{\frac{1}{4}}\left(\|J^P(\tilde{X}^{(n)})\|_{L^{\infty}H^s}\|\tilde{\zeta}^{(n)}\|_{L^{\infty}H^s}\|\nabla\tilde{w}^{(n)}\|_{L^2H^s}\|\tilde{\mathbf{T}}_p^{(n)}-\tilde{\mathbf{T}}_0\|_{L^{\infty}H^s}\right.\\[1mm]
&\hspace{0.5cm}\left.+\|J^P(\tilde{X}^{(n)})\|_{L^{\infty}H^s}\|\tilde{\zeta}^{(n)}\|_{L^{\infty}H^s}\|\nabla\tilde{w}^{(n)}\|_{L^2H^s}\|\tilde{\mathbf{T}}_0\|_{H^s}\right)\\[2mm]
&\leq C(\tilde{v}_0,\tilde{\mathbf{T}}_0,\kappa)\left(1+\frac{1}{\We}\right)T^{\frac{1}{4}}.
\end{align*}
\medskip

\noindent For $I_2$ we use the definition of $\phi$ and we have 

\begin{align*}
&I_{2,1}=\int_0^t J^P(\tilde{X}^{(n)})\tilde{\zeta}^{(n)}\nabla\tilde{v}_0\tilde{\mathbf{T}}_p^{(n)}-J^P\nabla\tilde{v}_0\tilde{\mathbf{T}}_0=\int_0^t (J^P(\tilde{X}^{(n)})-J^P)\tilde{\zeta}^{(n)}\nabla\tilde{v}_0\tilde{\mathbf{T}}_p^{(n)}\\[1mm]
&\hspace{0.5cm}+\int_0^t J^P(\tilde{\zeta}^{(n)}-\mathcal{I})\nabla\tilde{v}_0\tilde{\mathbf{T}}_p^{(n)}+\int_0^t J^P\nabla\tilde{v}_0(\tilde{\mathbf{T}}_p^{(n)}-\tilde{\mathbf{T}}_0),\\[2mm]
&I_{2,2}=\int_0^t J^P(\tilde{X}^{(n)})\tilde{\zeta}^{(n)}t\nabla\hat{\phi}\tilde{\mathbf{T}}_p^{(n)}-J^P t\nabla\hat{\phi}\tilde{\mathbf{T}}_0=\int_0^t (J^P(\tilde{X}^{(n)})-J^P)\tilde{\zeta}^{(n)}t\nabla\hat{\phi}\tilde{\mathbf{T}}_p^{(n)}\\[1mm]
&\hspace{0.5cm}+\int_0^t J^P(\tilde{\zeta}^{(n)}-\mathcal{I})t\nabla\hat{\phi}\tilde{\mathbf{T}}_p^{(n)}+\int_0^t J^P t\nabla\hat{\phi}(\tilde{\mathbf{T}}_p^{(n)}-\tilde{\mathbf{T}}_0).
\end{align*}
\medskip

\noindent For the estimates we use the same lemmas as for $I_1$. First of all we deal separately which each term of $I_{2,1}$ and $I_{2,2}$.

\begin{align*}
&\left\|\int_0^t (J^P(\tilde{X}^{(n)})-J^P)\tilde{\zeta}^{(n)}\nabla\tilde{v}_0\tilde{\mathbf{T}}_p^{(n)}\right\|_{L^{\infty}_{\frac{1}{4}}H^s}\\[1mm]
&\hspace{1cm}\leq T^{\frac{1}{4}}\|(J^P(\tilde{X}^{(n)})-J^P)\tilde{\zeta}^{(n)}\nabla\tilde{v}_0(\tilde{\mathbf{T}}_p^{(n)}-\tilde{\mathbf{T}}_0)\|_{L^2H^s}+T^{\frac{1}{4}}\|(J^P(\tilde{X}^{(n)})-J^P)\tilde{\zeta}^{(n)}\nabla\tilde{v}_0\tilde{\mathbf{T}}_0\|_{L^2H^s}\\[2mm]
&\hspace{1cm}\leq T^{\frac{3}{4}}\|J^P(\tilde{X}^{(n)})-J^P\|_{L^{\infty}H^s}\|\tilde{\zeta}^{(n)}\|_{L^{\infty}H^s}\|\nabla\tilde{v}_0\|_{H^s}\|\tilde{\mathbf{T}}_p^{(n)}-\tilde{\mathbf{T}}_0\|_{L^{\infty}H^s}\\[2mm]
&\hspace{1cm}+T^{\frac{3}{4}}\|J^P(\tilde{X}^{(n)})-J^P\|_{L^{\infty}H^s}\|\tilde{\zeta}^{(n)}\|_{L^{\infty}H^s}\|\nabla\tilde{v}_0\|_{H^s}\|\tilde{\mathbf{T}}_0\|_{H^s}\\[2mm]
&\hspace{1cm}\leq C(\tilde{v}_0,\tilde{\mathbf{T}}_0,\kappa)\left(1+\frac{1}{\We}\right)T^{\frac{3}{4}},\\[4mm]
&\left\|\int_0^t J^P(\tilde{\zeta}^{(n)}-\mathcal{I})\nabla\tilde{v}_0\tilde{\mathbf{T}}_p^{(n)}\right\|_{L^{\infty}_{\frac{1}{4}}H^s}\\[2mm]
&\hspace{1cm}\leq T^{\frac{1}{4}}\|J^P(\tilde{\zeta}^{(n)}-\mathcal{I})\nabla\tilde{v}_0(\tilde{\mathbf{T}}_p^{(n)}-\tilde{\mathbf{T}}_0)\|_{L^2H^s}+T^{\frac{1}{4}}\|J^P(\tilde{\zeta}^{(n)}-\mathcal{I})\nabla\tilde{v}_0\tilde{\mathbf{T}}_0\|_{L^2H^s}\\[2mm]
&\hspace{1cm}\leq C T^{\frac{3}{4}}\|\tilde{\zeta}^{(n)}-\mathcal{I}\|_{L^{\infty}H^s}\|\nabla\tilde{v}_0\|_{H^s}\|\tilde{\mathbf{T}}_p^{(n)}-\tilde{\mathbf{T}}_0\|_{L^{\infty}H^s}+C T^{\frac{3}{4}}\|\tilde{\zeta}^{(n)}-\mathcal{I}\|_{L^{\infty}H^s}\|\nabla\tilde{v}_0\|_{H^s}\|\tilde{\mathbf{T}}_0\|_{H^s}\\[2mm]
&\hspace{1cm}\leq C(\tilde{v}_0,\tilde{\mathbf{T}}_0,\kappa)\left(1+\frac{1}{\We}\right)T^{\frac{3}{4}},\\[4mm]
&\left\|\int_0^t J^P\nabla\tilde{v}_0(\tilde{\mathbf{T}}_p^{(n)}-\tilde{\mathbf{T}}_0)\right\|_{L^{\infty}_{\frac{1}{4}}H^s}\leq C T^{\frac{3}{4}}\|\nabla\tilde{v}_0\|_{H^s}\|\tilde{\mathbf{T}}_p^{(n)}-\tilde{\mathbf{T}}_0\|_{L^{\infty}H^s}\leq C(\tilde{v}_0,\tilde{\mathbf{T}}_0,\kappa)\left(1+\frac{1}{\We}\right)T^{\frac{3}{4}},\\[4mm]
&\left\|\int_0^t (J^P(\tilde{X}^{(n)})-J^P)\tilde{\zeta}^{(n)}t\nabla\hat{\phi}\tilde{\mathbf{T}}_p^{(n)}\right\|_{L^{\infty}_{\frac{1}{4}}H^s}\\[1mm]
&\hspace{1cm}\leq  T^{\frac{1}{4}} \|(J^P(\tilde{X}^{(n)})-J^P)\tilde{\zeta}^{(n)}t\nabla\hat{\phi}(\tilde{\mathbf{T}}_p^{(n)}-\tilde{\mathbf{T}}_0)\|_{L^2H^s}+\|J^P(\tilde{X}^{(n)})-J^P)\tilde{\zeta}^{(n)}t\nabla\hat{\phi}\tilde{\mathbf{T}}_0\|_{L^2H^s}\\[2mm]
&\hspace{0.5cm}\leq T^{\frac{1}{4}}\|J^P(\tilde{X}^{(n)})-J^P\|_{L^{\infty}H^s}\|\tilde{\zeta}^{(n)}\|_{L^{\infty}H^s}\|t\nabla\hat{\phi}\|_{L^2H^s}\|\tilde{\mathbf{T}}_p^{(n)}-\tilde{\mathbf{T}}_0\|_{L^{\infty}H^s}\\[2mm]
&\hspace{1cm}+T^{\frac{1}{4}}\|J^P(\tilde{X}^{(n)})-J^P\|_{L^{\infty}H^s}\|\tilde{\zeta}^{(n)}\|_{L^{\infty}H^s}\|t\nabla\hat{\phi}\|_{L^2H^s}\|\tilde{\mathbf{T}}_0\|_{H^s}\\[2mm]
&\hspace{1cm}\leq C(\tilde{v}_0,\tilde{\mathbf{T}}_0,\kappa)\left(1+\frac{1}{\We}\right)T^{\frac{7}{4}},\\[2mm]
&\left\|\int_0^t J^P(\tilde{\zeta}^{(n)}-\mathcal{I})t\nabla\hat{\phi}\tilde{\mathbf{T}}_p^{(n)}\right\|_{L^{\infty}_{\frac{1}{4}}H^s}\\[1mm]
&\hspace{1cm}\leq T^{\frac{1}{4}} \|J^P(\tilde{\zeta}^{(n)}-\mathcal{I})t\nabla\hat{\phi}(\tilde{\mathbf{T}}_p^{(n)}-\tilde{\mathbf{T}}_0)\|_{L^{\infty}H^s}+T^{\frac{1}{4}} \|J^P(\tilde{\zeta}^{(n)}-\mathcal{I})t\nabla\hat{\phi}\tilde{\mathbf{T}}_0\|_{L^{\infty}H^s}\\[2mm]
&\hspace{1cm}\leq C T^{\frac{1}{4}} \|\tilde{\zeta}^{(n)}-\mathcal{I}\|_{L^{\infty}H^s}\|t\nabla\hat{\phi}\|_{L^2H^s}\|\tilde{\mathbf{T}}_p^{(n)}-\tilde{\mathbf{T}}_0\|_{L^{\infty}H^s}\\[2mm]
&\hspace{1cm}+C T^{\frac{1}{4}} \|\tilde{\zeta}^{(n)}-\mathcal{I}\|_{L^{\infty}H^s}\|t\nabla\hat{\phi}\|_{L^2H^s}\|\tilde{\mathbf{T}}_0\|_{H^s}\\[2mm]
&\hspace{1cm}\leq C(\tilde{v}_0,\tilde{\mathbf{T}}_0,\kappa)\left(1+\frac{1}{\We}\right)T^{\frac{7}{4}},\\[4mm]
\end{align*}
\begin{align*}
&\left\|\int_0^t J^P t\nabla\hat{\phi}(\tilde{\mathbf{T}}_p^{(n)}-\tilde{\mathbf{T}}_0)\right\|_{L^{\infty}_{\frac{1}{4}}H^s} \leq C T^{\frac{1}{4}} \|t\nabla\hat{\phi}(\tilde{\mathbf{T}}_p^{(n)}-\tilde{\mathbf{T}}_0)\|_{L^2H^s}\\[2mm]
&\hspace{1cm}\leq C T^{\frac{1}{4}}\|t\nabla\hat{\phi}\|_{L^2H^s}\|\tilde{\mathbf{T}}_p^{(n)}-\tilde{\mathbf{T}}_0\|_{L^{\infty}H^s}\leq  C(\tilde{v}_0,\tilde{\mathbf{T}}_0,\kappa)\left(1+\frac{1}{\We}\right)T^{\frac{7}{4}}.
\end{align*}

\noindent The estimates of $I_3, I_4$ are the same as $I_1, I_2$. We show the estimate of $I_5$, by using definition of $L^{\infty}_{\frac{1}{4}}$-norm and estiamate \eqref{elastic-estim}.

\begin{align*}
\|I_5\|_{L^{\infty}_{\frac{1}{4}}H^s}\leq \frac{T^{\frac{1}{4}}}{\We}\|\tilde{\mathbf{T}}_0-\tilde{\mathbf{T}}_p^{(n)}\|_{L^2H^s}\leq C(\tilde{v}_0,\tilde{\mathbf{T}}_0,\kappa)\left(1+\frac{1}{\We}\right)T.
\end{align*}

\noindent The estimates of $I_6, I_7, I_8$ and $I_9$ can be obtained in the same way as the integrals before so we pass to the estimate of the $H^2_{(0)}H^{\gamma-1}$-norm. We start by rewriting the integrals $I_1,\ldots,I_9$ in \eqref{split-T} in a more convenient way. 

\begin{align*}
I_1&=\int_0^t (J^P(\tilde{X}^{(n)})-J^P)(\tilde{\zeta}^{(n)}-\mathcal{I})\nabla\tilde{w}^{(n)}(\tilde{\mathbf{T}}_p^{(n)}-\tilde{\mathbf{T}}_0)+\int_0^t J^P(\tilde{\zeta}^{(n)}-\mathcal{I})\nabla\tilde{w}^{(n)}(\tilde{\mathbf{T}}_p^{(n)}-\tilde{\mathbf{T}}_0)\\[1mm]
&+\int_0^t (J^P(\tilde{X}^{(n)})-J^P)(\tilde{\zeta}^{(n)}-\mathcal{I})\nabla\tilde{w}^{(n)}\tilde{\mathbf{T}}_0+\int_0^t J^P(\tilde{\zeta}^{(n)}-\mathcal{I})\nabla\tilde{w}^{(n)}\tilde{\mathbf{T}}_0\\[1mm]
&+\int_0^t (J^P(\tilde{X}^{(n)})-J^P)\nabla\tilde{w}^{(n)}(\tilde{\mathbf{T}}_p^{(n)}-\tilde{\mathbf{T}}_0)+\int_0^t J^P\nabla\tilde{w}^{(n)}\tilde{\mathbf{T}}_0=\sum_{i=1}^6  I_{1,i}.
\end{align*}

\noindent We show the estimate of $I_{1,1}$, for the others we have just to apply less inequalities. In order to deal with this estimate we need to use lemma \ref{lem2} with $\varepsilon=0$ then we can use lemma \ref{lem3} with $\gamma>1$, lemma \ref{lem5}, lemma \ref{lem1} and lemma \ref{Jp-est}, lemma \ref{zeta-est}  and to conclude the estimates for the flux and for the elastic stress tensor \eqref{flux-estim} and \eqref{elastic-estim}.

\begin{align*}
&\|I_{1,1}\|_{H^2_{(0)}H^{\gamma-1}}\leq \|(J^P(\tilde{X}^{(n)})-J^P)(\tilde{\zeta}^{(n)}-\mathcal{I})\nabla\tilde{w}^{(n)}(\tilde{\mathbf{T}}_p^{(n)}-\tilde{\mathbf{T}}_0)\|_{H^1_{(0)}H^{\gamma-1}}\\[2mm]
&\hspace{0.5cm}\leq \|J^P(\tilde{X}^{(n)})-J^P\|_{H^1_{(0)}H^{\gamma}}\|(\tilde{\zeta}^{(n)}-\mathcal{I})\nabla\tilde{w}^{(n)}(\tilde{\mathbf{T}}_p^{(n)}-\tilde{\mathbf{T}}_0)\|_{H^1_{(0)}H^{\gamma-1}}\\[2mm]
&\hspace{0.5cm}\leq \|\tilde{X}^{(n)}-\tilde{\alpha}\|_{H^1_{(0)}H^{\gamma}}\|\tilde{\zeta}^{(n)}-\mathcal{I}\|_{H^1_{(0)}H^{\gamma-1}}\|\tilde{w}^{(n)}\|_{H^1_{(0)}H^{\gamma}}\|\tilde{\mathbf{T}}_p^{(n)}-\tilde{\mathbf{T}}_0\|_{H^1_{(0)}H^{\gamma-1}}\\[2mm]
&\hspace{0.5cm}\leq C(\tilde{v}_0,\tilde{\mathbf{T}}_0, \kappa)\left(1+\frac{1}{\We}\right) T^{\delta_1}.
\end{align*}
\medskip

\noindent   The estimates of  $I_{1,2}, I_{1,5}$ are exactly as $I_{1,1}$, on the other hand the remaining terms have a slight difference that concerns the absence of $\tilde{\mathbf{T}}_p^{(n)}-\tilde{\mathbf{T}}_0$ and then the absence of the Weissenberg number in the estimates, as we show below

\begin{align*}
&\|I_{1,i}\|_{H^2_{(0)}H^{\gamma-1}}\leq C(\tilde{v}_0,\tilde{\mathbf{T}}_0, \kappa)\left(1+\frac{1}{\We}\right) T^{\delta_i},\hspace{0.2cm}\textrm{for}\hspace{0.3cm} i=2,5\\[2mm]
&\|I_{1,i}\|_{H^2_{(0)}H^{\gamma-1}}\leq C(\tilde{v}_0,\tilde{\mathbf{T}}_0)T^{\delta_i},\hspace{0.2cm}\textrm{for}\hspace{0.3cm} i=3,4,6.
\end{align*}
\medskip

\noindent In the same way we have to split $I_{2}$

\begin{align*}
I_2&=\int_0^t (J^P(\tilde{X}^{(n)})-J^P)(\tilde{\zeta}^{(n)}-\mathcal{I})\nabla\tilde{v}_0(\tilde{\mathbf{T}}_p^{(n)}-\tilde{\mathbf{T}}_0)+\int_0^t J^P(\tilde{\zeta}^{(n)}-\mathcal{I})\nabla\tilde{v}_0(\tilde{\mathbf{T}}_p^{(n)}-\tilde{\mathbf{T}}_0)\\[1mm]
&+\int_0^t (J^P(\tilde{X}^{(n)})-J^P)(\tilde{\zeta}^{(n)}-\mathcal{I})\nabla\tilde{v}_0\tilde{\mathbf{T}}_0+\int_0^t J^P(\tilde{\zeta}^{(n)}-\mathcal{I})\nabla\tilde{v}_0\tilde{\mathbf{T}}_0\\[1mm]
&+\int_0^t (J^P(\tilde{X}^{(n)})-J^P)\nabla\tilde{v}_0\tilde{\mathbf{T}}_0+\int_0^t (J^P(\tilde{X}^{(n)})-J^P)(\tilde{\zeta}^{(n)}-\mathcal{I})t\nabla\hat{\phi}(\tilde{\mathbf{T}}_p^{(n)}-\tilde{\mathbf{T}}_0)\\[1mm]
&+\int_0^t J^P(\tilde{\zeta}^{(n)}-\mathcal{I})t\nabla\hat{\phi}(\tilde{\mathbf{T}}_p^{(n)}-\tilde{\mathbf{T}}_0)+\int_0^t (J^P(\tilde{X}^{(n)})-J^P)(\tilde{\zeta}^{(n)}-\mathcal{I})t\nabla\hat{\phi}\tilde{\mathbf{T}}_0\\[1mm]
&+\int_0^t J^P(\tilde{\zeta}^{(n)}-\mathcal{I})t\nabla\hat{\phi}\tilde{\mathbf{T}}_0+\int_0^t (J^P(\tilde{X}^{(n)})-J^P)t\nabla\hat{\phi}\tilde{\mathbf{T}}_0=\sum_{i=1}^{10} I_{2,i}
\end{align*}
\medskip

\noindent For the estimates of these terms we use the lemmas already mentioned for $I_1$ and we will show the estimates of $I_{2,3}$ and $I_{2,6}$, by assuming $\tilde{v}_0$ and $\tilde{\mathbf{T}}_0$ regular enough.

\begin{align*}
&\|I_{2,3}\|_{H^{2}_{(0)}H^{\gamma-1}}=\left\|\int_0^t  (J^P(\tilde{X}^{(n)})-J^P)(\tilde{\zeta}^{(n)}-\mathcal{I})\nabla\tilde{v}_0\tilde{\mathbf{T}}_0\right\|_{H^{2}_{(0)}H^{\gamma-1}}\\[2mm]
&\hspace{0.5cm}\leq \|(J^P(\tilde{X}^{(n)})-J^P)(\tilde{\zeta}^{(n)}-\mathcal{I})\nabla\tilde{v}_0\tilde{\mathbf{T}}_0\|_{H^{1}_{(0)}H^{\gamma-1}}\\[2mm]
&\hspace{0.5cm}\leq  \|J^P(\tilde{X}^{(n)})-J^P\|_{H^1_{(0)}H^{\gamma}}\|(\tilde{\zeta}^{(n)}-\mathcal{I})\nabla\tilde{v}_0\tilde{\mathbf{T}}_0\|_{H^{1}_{(0)}H^{\gamma-1}}\\[2mm]
&\hspace{0.5cm}\leq \|\tilde{X}^{(n)}-\tilde{\alpha}\|_{H^1_{(0)}H^{\gamma}}\|\tilde{\zeta}^{(n)}-\mathcal{I}\|_{H^1_{(0)}H^{\gamma-1}}\|\nabla\tilde{v}_0\tilde{\mathbf{T}}_0\|_{H^{\gamma}} \leq C(\tilde{v}_0,\tilde{\mathbf{T}}_0) T^{\beta_3} \\[4mm]
&\|I_{2,6}\|_{H^{2}_{(0)}H^{\gamma-1}}=\left\|\int_0^t (J^P(\tilde{X}^{(n)})-J^P)(\tilde{\zeta}^{(n)}-\mathcal{I})t\nabla\hat{\phi}(\tilde{\mathbf{T}}_p^{(n)}-\tilde{\mathbf{T}}_0)\right\|_{H^{1}_{(0)}H^{\gamma-1}}\\[2mm]
&\hspace{0.5cm}\leq \|(J^P(\tilde{X}^{(n)})-J^P)(\tilde{\zeta}^{(n)}-\mathcal{I})t\nabla\hat{\phi}(\tilde{\mathbf{T}}_p^{(n)}-\tilde{\mathbf{T}}_0)\|_{H^{1}_{(0)}H^{\gamma-1}}\\[2mm]
&\hspace{0.5cm}\leq \|J^P(\tilde{X}^{(n)})-J^P\|_{H^{1}_{(0)}H^{\gamma}}\|(\tilde{\zeta}^{(n)}-\mathcal{I})t\nabla\hat{\phi}(\tilde{\mathbf{T}}_p^{(n)}-\tilde{\mathbf{T}}_0)\|_{H^{1}_{(0)}H^{\gamma-1}}\\[2mm]
&\hspace{0.5cm}\leq \|\tilde{X}^{(n)}-\tilde{\alpha}\|_{H^{1}_{(0)}H^{\gamma}}\|\tilde{\zeta}^{(n)}-\mathcal{I}\|_{H^{1}_{(0)}H^{\gamma-1}}\|t\|_{H^1_{(0)}}\|\hat{\phi}\|_{H^{\gamma}}\|\tilde{\mathbf{T}}_p^{(n)}-\tilde{\mathbf{T}}_0\|_{H^{1}_{(0)}H^{\gamma-1}}\\[2mm]
&\hspace{0.5cm}\leq C(\tilde{v}_0,\tilde{\mathbf{T}}_0,\re,\kappa)\left(1+\frac{1}{\We}\right) T^{\beta_5}.
\end{align*}

\noindent For all the remaining terms we have $\|I_{2,i}\|_{H^{2}_{(0)}H^{\gamma-1}}\leq C(\tilde{v}_0,\tilde{\mathbf{T}}_0) T^{\beta_i}$, for $i=4,5,8,9,10$ and $\|I_{2,i}\|_{H^{2}_{(0)}H^{\gamma-1}}\leq C(\tilde{v}_0,\tilde{\mathbf{T}}_0,\kappa)\left(1+\frac{1}{\We}\right) T^{\beta_i}$, for $i=1,2,7$. Now, for the sake of simplicity we omit the splitting of $I_3$ and $I_4$, since it can be notice that these terms are similar to $I_1$ and $I_2$, respectively. Thus the way to split and estimate is the same. We summarize the result below

\begin{align*}
&\|I_3\|_{H^{2}_{(0)}H^{\gamma-1}}\leq C(\tilde{v}_0,\tilde{\mathbf{T}}_0,\kappa)\left(1+\frac{1}{\We}\right) T^{\mu_1},\\[2mm]
&\|I_4\|_{H^{2}_{(0)}H^{\gamma-1}}\leq C(\tilde{v}_0,\tilde{\mathbf{T}}_0,\kappa)\left(1+\frac{1}{\We}\right) T^{\mu_2}.
\end{align*}

\noindent It remains to estimate  $I_5$, by using lemma \ref{lem2} with $\varepsilon=0$ and the estimate \eqref{elastic-estim}.

\begin{align*}
&\frac{1}{\We}\left\|\int_0^t \tilde{\mathbf{T}}_p^{(n)}-\tilde{\mathbf{T}}_0\right\|_{H^{2}_{(0)}H^{\gamma-1}}\leq \frac{1}{\We}\|\tilde{\mathbf{T}}_p^{(n)}-\tilde{\mathbf{T}}_0\|_{H^{1}_{(0)}H^{\gamma-1}}\leq C(\tilde{v}_0,\tilde{\mathbf{T}}_0,\kappa)\frac{1}{\We} T^{\mu_3}.
\end{align*}
\medskip

\noindent For $I_6$ we consider the following splitting

\begin{align*}
I_6&=\frac{\kappa}{\We}\int_0^t (J^P(\tilde{X}^{(n)})-J^P)(\tilde{\zeta}^{(n)}-\mathcal{I})\nabla\tilde{w}^{(n)}+\frac{\kappa}{\We}\int_0^t J^P(\tilde{\zeta}^{(n)}-\mathcal{I})\nabla\tilde{w}^{(n)}\\[2mm]
&+\frac{\kappa}{\We}\int_0^t (J^P(\tilde{X}^{(n)})-J^P)\nabla\tilde{w}^{(n)}+\frac{\kappa}{\We}\int_0^t J^P\nabla\tilde{w}^{(n)}=\sum_{i=1}^4 I_{6,i}.
\end{align*}
\medskip

\noindent We show how to manage $I_{6,1}$ by using lemma \ref{lem2},  lemma \ref{lem3} with $\gamma>1$, lemma \ref{lem5}, lemma \ref{lem1} and lemma \ref{Jp-est}, lemma \ref{zeta-est}. To conclude we use the estimates for the flux and for the elastic stress tensor \eqref{flux-estim}, \eqref{elastic-estim}.

\begin{align*}
&\|I_{61}\|_{H^2_{(0)}H^{\gamma-1}}\leq \frac{\kappa}{\We}\|(J^P(\tilde{X}^{(n)})-J^P)(\tilde{\zeta}^{(n)}-\mathcal{I})\nabla\tilde{w}^{(n)}\|_{H^1_{(0)}H^{\gamma-1}}\\[2mm]
&\hspace{0.5cm}\leq \frac{\kappa}{\We}\|J^P(\tilde{X}^{(n)})-J^P\|_{H^1_{(0)}H^{\gamma}}\|(\tilde{\zeta}^{(n)}-\mathcal{I})\nabla\tilde{w}^{(n)}\|_{H^1_{(0)}H^{\gamma-1}}\\[2mm]
&\hspace{0.5cm}\leq \frac{\kappa}{\We}\|\tilde{X}^{(n)}-\tilde{\alpha}\|_{H^1_{(0)}H^{\gamma}}\|\tilde{\zeta}^{(n)}-\mathcal{I}\|_{H^1_{(0)}H^{\gamma-1}}\|\tilde{w}^{(n)}\|_{H^1_{(0)}H^{\gamma}}\leq C(\tilde{v}_0,\kappa)\frac{1}{\We} T^{\varrho_1}.
\end{align*}
\medskip

\noindent For $i=2,3,4$, we have $\|I_{6,i}\|_{H^2_{(0)}H^{\gamma-1}}\leq C(\tilde{v}_0,\kappa)\frac{1}{\We} T^{\varrho_i}$. For $I_7$, we  have the following terms

\begin{align*}
I_7&=\frac{\kappa}{\We}\int_0^t (J^P(\tilde{X}^{(n)})-J^P)(\tilde{\zeta}^{(n)}-\mathcal{I})\nabla\tilde{v}_0+\frac{\kappa}{\We}\int_0^t J^P(\tilde{\zeta}^{(n)}-\mathcal{I})\nabla\tilde{v}_0\\[2mm]
&+\frac{\kappa}{\We}\int_0^t (J^P(\tilde{X}^{(n)})-J^P)\nabla\tilde{v}_0+\frac{\kappa}{\We}\int_0^t (J^P(\tilde{X}^{(n)})-J^P)(\tilde{\zeta}^{(n)}-\mathcal{I})t\nabla\hat{\phi}\\[2mm]
&+\frac{\kappa}{\We}\int_0^t J^P(\tilde{\zeta}^{(n)}-\mathcal{I})t\nabla\hat{\phi}+\frac{\kappa}{\We}\int_0^t (J^P(\tilde{X}^{(n)})-J^P)t\nabla\hat{\phi}=\sum_{i=1}^6 I_{7,i}.
\end{align*}

\noindent We prove the results for $I_{7,1}$ and $I_{7,4}$, in the same way as $I_6$.

\begin{align*}
&\|I_{7,1}\|_{H^{2}_{(0)}H^{\gamma-1}}\leq \frac{\kappa}{\We} \|(J^P(\tilde{X}^{(n)})-J^P)(\tilde{\zeta}^{(n)}-\mathcal{I})\nabla\tilde{v}_0\|_{H^1_{(0)}H^{\gamma-1}}\\[2mm]
&\hspace{0.5cm}\leq \frac{\kappa}{\We}\|J^P(\tilde{X}^{(n)})-J^P\|_{H^1_{(0)}H^{\gamma}}\|(\tilde{\zeta}^{(n)}-\mathcal{I})\nabla\tilde{v}_0\|_{H^1_{(0)}H^{\gamma-1}}\\[2mm]
&\hspace{0.5cm}\leq C(\tilde{v}_0,\kappa)\frac{1}{\We}\|\tilde{X}^{(n)}-\tilde{\alpha}\|_{H^{1}_{(0)}H^{\gamma}}\|\tilde{\zeta}^{(n)}-\mathcal{I}\|_{H^1_{(0)}H^{\gamma-1}}\|\nabla\tilde{v}_0\|_{H^{\gamma}}\leq  C(\tilde{v}_0,\kappa)\frac{1}{\We}T^{\eta_1}\\[5mm]
&\|I_{7,4}\|_{H^{2}_{(0)}H^{\gamma-1}}\leq \frac{\kappa}{\We}\| (J^P(\tilde{X}^{(n)})-J^P)(\tilde{\zeta}^{(n)}-\mathcal{I})t\nabla\hat{\phi}\|_{H^1_{(0)}H^{\gamma-1}}\\[2mm]
&\hspace{0.5cm}\leq\frac{\kappa}{\We}\|J^P(\tilde{X}^{(n)})-J^P\|_{H^1_{(0)}H^{\gamma}}\|(\tilde{\zeta}^{(n)}-\mathcal{I})t\nabla\hat{\phi}\|_{H^1_{(0)}H^{\gamma-1}}\\[2mm]
&\hspace{0.5cm}\leq C(\tilde{v}_0,\kappa)\frac{1}{\We}\|\tilde{X}^{(n)}-\tilde{\alpha}\|_{H^{1}_{(0)}H^{\gamma}}\|\tilde{\zeta}^{(n)}-\mathcal{I}\|_{H^1_{(0)}H^{\gamma-1}}\|t\nabla\hat{\phi}\|_{H^1_{(0)}H^{\gamma-1}}\\[2mm]
&\hspace{0.5cm}\leq  C(\tilde{v}_0,\tilde{T}_0,\re,\kappa)\frac{1}{\We}T^{\eta_4}.
\end{align*}
\medskip

\noindent For $I_{7,2}, I_{7,3}$ the estimate is the same as $I_{7,1}$ and for $I_{7,5}, I_{7,6}$ is the same as $I_{7,4}$. Furthermore, the integrals $I_8$ and $I_9$ can be splitted and estimated in a similas manner as $I_6$ and $I_7$, since they are just transposed. So we will skip all the computations and we outline just the final results

\begin{align*}
&\|I_8\|_{H^{2}_{(0)}H^{\gamma-1}}\leq C(\tilde{v}_0,\kappa)\frac{1}{\We}T^{\sigma_1},\\[2mm]
&\|I_9\|_{H^{2}_{(0)}H^{\gamma-1}}\leq C(\tilde{v}_0,\tilde{T}_0,\re,\kappa)\frac{1}{\We}T^{\sigma_2}.
\end{align*}
\medskip

\noindent To conclude the proof of the first part we choose\\
$\delta=\min\{\frac{1}{4},\delta_i, \beta_j,\mu_1,\mu_2,\mu_3,\varrho_k,\eta_h,\sigma_1,\sigma_2\},$ for $i=1,\ldots,6$, $j=1,\ldots,10$, $k=1,\ldots,4$ and $h=1,\ldots,6$.
\bigskip

\noindent At this point, we consider the difference  $\tilde{\mathbf{T}}_p^{(n+1)}-\tilde{\mathbf{T}}_p^{(n)}$, namely
\begin{align*}
&\tilde{\mathbf{T}}_p^{(n+1)}-\tilde{\mathbf{T}}_p^{(n)}=\int_0^t \left(J^P(\tilde{X}^{(n)})\tilde{\zeta}^{(n)}\nabla\tilde{w}^{(n)} \tilde{\mathbf{T}}_p^{(n)}-J^P(\tilde{X}^{(n-1)})\tilde{\zeta}^{(n-1)}\nabla\tilde{w}^{(n-1)} \tilde{\mathbf{T}}_p^{(n-1)}\right)\\[1mm]
&+\int_0^t \left(J^P(\tilde{X}^{(n)})\tilde{\zeta}^{(n)}\nabla\tilde{v}_0 \tilde{\mathbf{T}}_p^{(n)}-J^P(\tilde{X}^{(n-1)})\tilde{\zeta}^{(n-1)}\nabla\tilde{v}_0 \tilde{\mathbf{T}}_p^{(n-1)}\right)\\[2mm]
&+\int_0^t \left(J^P(\tilde{X}^{(n)})\tilde{\zeta}^{(n)}t\nabla\hat{\phi} \tilde{\mathbf{T}}_p^{(n)}-J^P(\tilde{X}^{(n-1)})\tilde{\zeta}^{(n-1)}t\nabla\hat{\phi} \tilde{\mathbf{T}}_p^{(n-1)}\right)\\[2mm]
&+\int_0^t\left(\tilde{\mathbf{T}}_p^{(n)}\left(J^P(\tilde{X}^{(n)})\tilde{\zeta}^{(n)}\nabla\tilde{w}^{(n)}\right)^T-\tilde{\mathbf{T}}_p^{(n-1)}\left(J^P(\tilde{X}^{(n-1)})\tilde{\zeta}^{(n-1)}\nabla\tilde{w}^{(n-1)}\right)^T\right)\\[2mm]
&+\int_0^t\left(\tilde{\mathbf{T}}_p^{(n)}\left(J^P(\tilde{X}^{(n)})\tilde{\zeta}^{(n)}\nabla\tilde{v}_0\right)^T-\tilde{\mathbf{T}}_p^{(n-1)}\left(J^P(\tilde{X}^{(n-1)})\tilde{\zeta}^{(n-1)}\nabla\tilde{v}_0\right)^T\right)\\[2mm]
\end{align*}
\begin{align*}
&+\int_0^t\left(\tilde{\mathbf{T}}_p^{(n)}\left(J^P(\tilde{X}^{(n)})\tilde{\zeta}^{(n)}t\nabla\hat{\phi}\right)^T-\tilde{\mathbf{T}}_p^{(n-1)}\left(J^P(\tilde{X}^{(n-1)})\tilde{\zeta}^{(n-1)}t\nabla\hat{\phi}\right)^T\right)\\[2mm]
&-\frac{1}{\We}\int_0^t \left(\tilde{\mathbf{T}}_p^{(n)}-\tilde{\mathbf{T}}_p^{(n-1)}\right)+\frac{\kappa}{\We}\int_0^t\left(J^P(\tilde{X}^{(n)})\tilde{\zeta}^{(n)}\nabla\tilde{w}^{(n)}-J^P(\tilde{X}^{(n-1)})\tilde{\zeta}^{(n-1)}\nabla\tilde{w}^{(n-1)}\right)\\[2mm]
&+\frac{\kappa}{\We}\int_0^t\left(J^P(\tilde{X}^{(n)})\tilde{\zeta}^{(n)}\nabla\tilde{v}_0-J^P(\tilde{X}^{(n-1)})\tilde{\zeta}^{(n-1)}\nabla\tilde{v}_0\right)\\[2mm]
&+\frac{\kappa}{\We}\int_0^t\left(J^P(\tilde{X}^{(n)})\tilde{\zeta}^{(n)}t\nabla\hat{\phi}-J^P(\tilde{X}^{(n-1)})\tilde{\zeta}^{(n-1)}t\nabla\hat{\phi}\right)\\[2mm]
&+\frac{\kappa}{\We}\int_0^t\left(\left(J^P(\tilde{X}^{(n)})\tilde{\zeta}^{(n)}\nabla\tilde{w}^{(n)}\right)^T-\left(J^P(\tilde{X}^{(n-1)})\tilde{\zeta}^{(n-1)}\nabla\tilde{w}^{(n-1)}\right)^T\right)\\[2mm]
&+\frac{\kappa}{\We}\int_0^t\left(\left(J^P(\tilde{X}^{(n)})\tilde{\zeta}^{(n)}\nabla\tilde{v}_0\right)^T-\left(J^P(\tilde{X}^{(n-1)})\tilde{\zeta}^{(n-1)}\nabla\tilde{v}_0\right)^T\right)\\[2mm]
&+\frac{\kappa}{\We}\int_0^t\left(\left(J^P(\tilde{X}^{(n)})\tilde{\zeta}^{(n)}t\nabla\hat{\phi}\right)^T-\left(J^P(\tilde{X}^{(n-1)})\tilde{\zeta}^{(n-1)}\nabla\hat{\phi}\right)^T\right)=\sum_{i=1}^{13} I_i
\end{align*}

\noindent  We deal with the first term in $L^{\infty}_{\frac{1}{4}}H^s$-norm and in order to be able to estimate correctly we split as follows

\begin{align*}
&I_1=\int_0^t (J^P(\tilde{X}^{(n)})-J^P(\tilde{X}^{(n-1)}))\tilde{\zeta}^{(n)}\nabla\tilde{w}^{(n)} (\tilde{\mathbf{T}}_p^{(n)}-\tilde{\mathbf{T}}_0)\\[1mm]
&+\int_0^t (J^P(\tilde{X}^{(n)})-J^P(\tilde{X}^{(n-1)}))\tilde{\zeta}^{(n)}\nabla\tilde{w}^{(n)}\tilde{\mathbf{T}}_0\\[1mm]
&+\int_0^t J^P(\tilde{X}^{(n-1)})(\tilde{\zeta}^{(n)}-\tilde{\zeta}^{(n-1)})\nabla\tilde{w}^{(n)} (\tilde{\mathbf{T}}_p^{(n)}-\tilde{\mathbf{T}}_0)\\[1mm]
&+\int_0^t J^P(\tilde{X}^{(n-1)})(\tilde{\zeta}^{(n)}-\tilde{\zeta}^{(n-1)})\nabla\tilde{w}^{(n)}\tilde{\mathbf{T}}_0\\[1mm]
&+\int_0^t J^P(\tilde{X}^{(n-1)})\tilde{\zeta}^{(n-1)}(\nabla\tilde{w}^{(n)}-\nabla\tilde{w}^{(n-1)})(\tilde{\mathbf{T}}_p^{(n)}-\tilde{\mathbf{T}}_0)\\[1mm]
&+\int_0^t J^P(\tilde{X}^{(n-1)})\tilde{\zeta}^{(n-1)}(\nabla\tilde{w}^{(n)}-\nabla\tilde{w}^{(n-1)})\tilde{\mathbf{T}}_0\\[1mm]
&+\int_0^t J^P(\tilde{X}^{(n-1)})\tilde{\zeta}^{(n-1)}\nabla\tilde{w}^{(n-1)}(\tilde{\mathbf{T}}_p^{(n)}-\tilde{\mathbf{T}}_p^{(n-1)})=\sum_{i=1}^7 I_{1,i}.
\end{align*}
\medskip

\noindent We show the estimates of the most significant terms $I_{1,1}, I_{1,6}, I_{1,7}$. We use the definition of the $L^{\infty}_{\frac{1}{4}}-$norm in time, lemma \ref{Jp-est} or lemma \ref{Jp-dif-est}, lemma \ref{zeta-est}  and the estimates \eqref{flux-estim} and \eqref{elastic-estim}.

\begin{align*}
&\|I_{1,1}\|_{L^{\infty}_{\frac{1}{4}}H^{s}}\leq T^{\frac{1}{4}}\|(J^P(\tilde{X}^{(n)})-J^P(\tilde{X}^{(n-1)}))\tilde{\zeta}^{(n)}\nabla\tilde{w}^{(n)} (\tilde{\mathbf{T}}_p^{(n)}-\tilde{\mathbf{T}}_0)\|_{L^2H^s}\\[1mm]
&\hspace{0.5cm}\leq T^{\frac{1}{4}}\|J^P(\tilde{X}^{(n)})-J^P(\tilde{X}^{(n-1)})\|_{L^{\infty}H^s}\|\tilde{\zeta}^{(n)}\|_{L^{\infty}H^s}
\|\nabla\tilde{w}^{(n)}\|_{L^2H^s}\\[1mm]
&\hspace{0.5cm}\cdot\|\tilde{\mathbf{T}}_p^{(n)}-\tilde{\mathbf{T}}_0\|_{L^{\infty}H^s}\\[1mm]
&\hspace{0.5cm}\leq C(\tilde{v}_0,\tilde{\mathbf{T}}_0,\kappa)\left(1+\frac{1}{\We}\right) T^{\frac{1}{2}}\|\tilde{X}^{(n)}-\tilde{X}^{(n-1)}\|_{\mathcal{F}^{s+1,\gamma}},\\[4mm]
&\|I_{1,6}\|_{L^{\infty}_{\frac{1}{4}}H^{s}}\leq T^{\frac{1}{4}} \|J^P(\tilde{X}^{(n-1)})\tilde{\zeta}^{(n-1)}(\nabla\tilde{w}^{(n)}-\nabla\tilde{w}^{(n-1)})\tilde{\mathbf{T}}_0\|_{L^2H^s}\\[1mm]
&\hspace{0.5cm}\leq T^{\frac{1}{4}} \|J^P(\tilde{X}^{(n-1)})\|_{L^{\infty}H^s}\|\tilde{\zeta}^{(n-1)}\|_{L^{\infty}H^s}\|\nabla\tilde{w}^{(n)}-\nabla\tilde{w}^{(n-1)}\|_{L^2H^s}\|\tilde{\mathbf{T}}_0\|_{H^s}\\[1mm]
&\hspace{0.5cm}\leq C(\tilde{v}_0,\tilde{\mathbf{T}}_0)T^{\frac{1}{4}}\|\tilde{w}^{(n)}-\tilde{w}^{(n-1)}\|_{\mathcal{K}^{s+1}_{(0)}},\\[4mm]
&\|I_{1,7}\|_{L^{\infty}_{\frac{1}{4}}H^s}\leq T^{\frac{1}{4}}\|J^P(\tilde{X}^{(n-1)})\tilde{\zeta}^{(n-1)}\nabla\tilde{w}^{(n-1)}(\tilde{\mathbf{T}}_p^{(n)}-\tilde{\mathbf{T}}_p^{(n-1)})\|_{L^2H^s}\\[1mm]
&\hspace{0.5cm}\leq T^{\frac{1}{4}} \|J^P(\tilde{X}^{(n-1)})\|_{L^{\infty}H^s}\|\tilde{\zeta}^{(n-1)}\|_{L^{\infty}H^s}\|\nabla\tilde{w}^{(n-1)}\|_{L^2H^s}\|\tilde{\mathbf{T}}_p^{(n)}-\tilde{\mathbf{T}}_p^{(n-1)}\|_{L^{\infty}H^s}\\[1mm]
&\hspace{0.5cm}\leq C(\tilde{v}_0) T^{\frac{1}{2}}\|\tilde{\mathbf{T}}_p^{(n)}-\tilde{\mathbf{T}}_p^{(n-1)}\|_{\mathcal{F}^{s,\gamma-1}}.
\end{align*}
\medskip

\noindent For the remaining terms by applying the mentioned lemmas, we have \\
$\|I_{1,i}\|_{L^{\infty}_{\frac{1}{4}}H^s}\leq C(\tilde{v}_0,\tilde{\mathbf{T}}_0) T^{\frac{1}{2}}\|\tilde{X}^{(n)}-\tilde{X}^{(n-1)}\|_{\mathcal{F}^{s+1,\gamma}},$ for $i=2,4$ and\\
$\|I_{1,3}\|_{L^{\infty}_{\frac{1}{4}}H^s}\leq C(\tilde{v}_0,\tilde{\mathbf{T}}_0,\kappa) \left(1+\frac{1}{\We}\right)T^{\frac{1}{2}}\|\tilde{X}^{(n)}-\tilde{X}^{(n-1)}\|_{\mathcal{F}^{s+1,\gamma}}$, for $i=3,5$. The next step is to analyze $I_2$ and $I_3$, but we notice that these two integrals have the same splitting as $I_1$ except for the presence of the differences of the velocities. Thus, we rewrite them as follows 

\begin{align*}
&I_2=\int_0^t (J^P(\tilde{X}^{(n)})-J^P(\tilde{X}^{(n-1)}))\tilde{\zeta}^{(n)}\nabla\tilde{v}_0 (\tilde{\mathbf{T}}_p^{(n)}-\tilde{\mathbf{T}}_0)+\int_0^t (J^P(\tilde{X}^{(n)})-J^P(\tilde{X}^{(n-1)}))\tilde{\zeta}^{(n)}\nabla\tilde{v}_0\tilde{\mathbf{T}}_0\\[1mm]
&\hspace{0.5cm}+\int_0^t J^P(\tilde{X}^{(n-1)})(\tilde{\zeta}^{(n)}-\tilde{\zeta}^{(n-1)})\nabla\tilde{v}_0(\tilde{\mathbf{T}}_p^{(n)}-\tilde{\mathbf{T}}_0)+\int_0^t J^P(\tilde{X}^{(n-1)})(\tilde{\zeta}^{(n)}-\tilde{\zeta}^{(n-1)})\nabla\tilde{v}_0\tilde{\mathbf{T}}_0\\[1mm]
&\hspace{0.5cm}+\int_0^t J^P(\tilde{X}^{(n-1)})\tilde{\zeta}^{(n-1)}\nabla\tilde{v}_0(\tilde{\mathbf{T}}_p^{(n)}-\tilde{\mathbf{T}}_p^{(n-1)})=\sum_{i=1}^5 I_{2,i},\\[4mm]
&I_3=\int_0^t (J^P(\tilde{X}^{(n)})-J^P(\tilde{X}^{(n-1)}))\tilde{\zeta}^{(n)}t\nabla\hat{\phi} (\tilde{\mathbf{T}}_p^{(n)}-\tilde{\mathbf{T}}_0)+\int_0^t (J^P(\tilde{X}^{(n)})-J^P(\tilde{X}^{(n-1)}))\tilde{\zeta}^{(n)}t\nabla\hat{\phi}\tilde{\mathbf{T}}_0\\[1mm]
&\hspace{0.5cm}+\int_0^t J^P(\tilde{X}^{(n-1)})(\tilde{\zeta}^{(n)}-\tilde{\zeta}^{(n-1)})t\nabla\hat{\phi}(\tilde{\mathbf{T}}_p^{(n)}-\tilde{\mathbf{T}}_0)+\int_0^t J^P(\tilde{X}^{(n-1)})(\tilde{\zeta}^{(n)}-\tilde{\zeta}^{(n-1)})t\nabla\hat{\phi}\tilde{\mathbf{T}}_0\\[1mm]
&\hspace{0.5cm}+\int_0^t J^P(\tilde{X}^{(n-1)})\tilde{\zeta}^{(n-1)}t\nabla\hat{\phi}(\tilde{\mathbf{T}}_p^{(n)}-\tilde{\mathbf{T}}_p^{(n-1)})=\sum_{i=1}^5 I_{3,i}.
\end{align*}
\medskip

\noindent For the estimate in $L^{\infty}_{\frac{1}{4}}H^s$, we resume as follows, for details it is enough to check $I_1$

\begin{align*}
&\|I_2\|_{L^{\infty}_{\frac{1}{4}}H^s}+\|I_3\|_{L^{\infty}_{\frac{1}{4}}H^s}\leq C(\tilde{v}_0,\tilde{\mathbf{T}}_0,\re,\kappa) \left(1+\frac{1}{\We}\right)T^{\frac{1}{2}}\\
&\hspace{1cm}\cdot\left(\|\tilde{X}^{(n)}-\tilde{X}^{(n-1)}\|_{\mathcal{F}^{s+1,\gamma}}
+\|\tilde{\mathbf{T}}_p^{(n)}-\tilde{\mathbf{T}}_p^{(n-1)}\|_{\mathcal{F}^{s,\gamma-1}}\right)
\end{align*}
\noindent The integrals $I_4, I_5$ and $I_6$ can be treated in the same way as $I_1, I_2$ and $I_3$ so we skip their estimate. We pass to the study of $I_7$, that can be easily managed 

\begin{align*}
\|I_7\|_{L^{\infty}_{\frac{1}{4}}H^s}\leq \frac{1}{\We}T^{\frac{1}{4}}\|\tilde{\mathbf{T}}_p^{(n)}-\tilde{\mathbf{T}}_p^{(n-1)}\|_{L^2H^s}\leq \frac{1}{\We}T\|\tilde{\mathbf{T}}_p^{(n)}-\tilde{\mathbf{T}}_p^{(n-1)}\|_{\mathcal{F}^{s,\gamma-1}}.
\end{align*}
\medskip

\noindent Finally, we focus on $I_8, I_9, I_{10}$ and we avoid $I_{11}, I_{12}, I_{13}$. For $I_8$ we have the following 

\begin{align*}
I_8&=\frac{\kappa}{\We}\int_0^t(J^P(\tilde{X}^{(n)})-J^P(\tilde{X}^{(n-1)}))\tilde{\zeta}^{(n)}\nabla\tilde{w}^{(n)}+\frac{\kappa}{\We}\int_0^t J^P(\tilde{X}^{(n-1)})(\tilde{\zeta}^{(n)}-\tilde{\zeta}^{(n-1)})\nabla\tilde{w}^{(n)}\\[1mm]
&+\frac{\kappa}{\We}\int_0^t J^P(\tilde{X}^{(n-1)})\tilde{\zeta}^{(n-1)}(\nabla\tilde{w}^{(n)}-\nabla\tilde{w}^{(n-1)})=I_{8,1}+I_{8,2}+I_{8,3}.
\end{align*}

\noindent By means of lemma \ref{Jp-est} or lemma \ref{Jp-dif-est}, lemma \ref{zeta-est} or lemma \ref{zeta-dif-est} and the estimate for the flux \eqref{flux-estim}.

\begin{align*}
&\|I_{8,1}\|_{L^{\infty}_{\frac{1}{4}}H^s}\leq \frac{\kappa}{\We}T^{\frac{1}{4}}\|(J^P(\tilde{X}^{(n)})-J^P(\tilde{X}^{(n-1)}))\tilde{\zeta}^{(n)}\nabla\tilde{w}^{(n)}\|_{L^2H^s} \\[2mm]
&\hspace{0.5cm}\leq\frac{\kappa}{\We}T^{\frac{1}{4}}\|J^P(\tilde{X}^{(n)})-J^P(\tilde{X}^{(n-1)})\|_{L^{\infty}H^s}\|\tilde{\zeta}^{(n)}\|_{L^{\infty}H^s}\|\nabla\tilde{w}^{(n)}\|_{L^2H^s}\\[2mm]
&\hspace{0.5cm}\leq C(\tilde{v}_0,\kappa)\frac{1}{\We}T^{\frac{1}{2}}\|\tilde{X}^{(n)}-\tilde{X}^{(n-1)}\|_{\mathcal{F}^{s+1,\gamma}}\\[4mm]
&\|I_{8,2}\|_{L^{\infty}_{\frac{1}{4}}H^s}\leq \frac{\kappa}{\We}T^{\frac{1}{4}}\|J^P(\tilde{X}^{(n-1)})(\tilde{\zeta}^{(n)}-\tilde{\zeta}^{(n-1)})\nabla\tilde{w}^{(n)}\|_{L^2H^s}\\[2mm]
&\hspace{0.5cm}\leq\frac{\kappa}{\We}T^{\frac{1}{4}}\|J^P(\tilde{X}^{(n-1)})\|_{L^{\infty}H^s}\|\tilde{\zeta}^{(n)}-\tilde{\zeta}^{(n-1)}\|_{L^{\infty}H^s}\|\nabla\tilde{w}^{(n)}\|_{L^2H^s}\\[2mm]
&\hspace{0.5cm}\leq C(\tilde{v}_0,\kappa) \frac{1}{\We}T^{\frac{1}{2}}\|\tilde{X}^{(n)}-\tilde{X}^{(n-1)}\|_{\mathcal{F}^{s+1,\gamma}}\\[4mm]
&\|I_{8,3}\|_{L^{\infty}_{\frac{1}{4}}H^s}\leq \frac{\kappa}{\We}T^{\frac{1}{4}}\|J^P(\tilde{X}^{(n-1)})\tilde{\zeta}^{(n-1)}(\nabla\tilde{w}^{(n)}-\nabla\tilde{w}^{(n-1)})\|_{L^2H^s}\\[2mm]
&\hspace{0.5cm}\leq \frac{\kappa}{\We}T^{\frac{1}{4}}\|J^P(\tilde{X}^{(n-1)})\|_{L^{\infty}H^s}\|\tilde{\zeta}^{(n-1)}\|_{L^{\infty}H^s}\|\tilde{w}^{(n)}-\tilde{w}^{(n-1)}\|_{L^2H^{s+1}}\\[2mm]
&\hspace{0.5cm}\leq C(\tilde{v}_0,\kappa) \frac{1}{\We}T^{\frac{1}{4}}\|\tilde{w}^{(n)}-\tilde{w}^{(n-1)}\|_{\mathcal{K}^{s+1}_{(0)}}.
\end{align*}
\medskip

\noindent We rewrite $I_9$ and $I_{10}$ as follows

\begin{align*}
&I_9=\frac{\kappa}{\We}\int_0^t(J^P(\tilde{X}^{(n)})-J^P(\tilde{X}^{(n-1)}))\tilde{\zeta}^{(n)}\nabla\tilde{v}_0+\frac{\kappa}{\We}\int_0^t J^P(\tilde{X}^{(n-1)})(\tilde{\zeta}^{(n)}-\tilde{\zeta}^{(n-1)})\nabla\tilde{v}_0\\[2mm]
&I_{10}=\frac{\kappa}{\We}\int_0^t(J^P(\tilde{X}^{(n)})-J^P(\tilde{X}^{(n-1)}))\tilde{\zeta}^{(n)}t\nabla\hat{\phi}+\frac{\kappa}{\We}\int_0^t J^P(\tilde{X}^{(n-1)})(\tilde{\zeta}^{(n)}-\tilde{\zeta}^{(n-1)})t\nabla\hat{\phi}.
\end{align*}
\medskip

\noindent The estimates make use of the lemmas already mentioned for $I_8$ and we assume the initial data $\tilde{v}_0, \tilde{\mathbf{T}}_0$ to be enough regular. Then we get

\begin{align*}
&\|I_{9,1}\|_{L^{\infty}_{\frac{1}{4}}H^s}\leq \frac{\kappa}{\We}T^{\frac{1}{4}}\|(J^P(\tilde{X}^{(n)})-J^P(\tilde{X}^{(n-1)}))\tilde{\zeta}^{(n)}\nabla\tilde{v}_0\|_{L^2H^s}\\[2mm]
&\hspace{1.8cm}\leq\frac{\kappa}{\We}T^{\frac{3}{4}}\|J^P(\tilde{X}^{(n)})-J^P(\tilde{X}^{(n-1)})\|_{L^{\infty}H^s}\|\tilde{\zeta}^{(n)}\|_{L^{\infty}H^s}\|\tilde{v}_0\|_{H^{s+1}}\\[2mm]
&\hspace{1.8cm}\leq C(\tilde{v}_0,\kappa)\frac{1}{\We}T\|\tilde{X}^{(n)}-\tilde{X}^{(n-1)}\|_{\mathcal{F}^{s+1,\gamma}}\\[4mm]
&\|I_{9,2}\|_{L^{\infty}_{\frac{1}{4}}H^s}\leq \frac{\kappa}{\We}T^{\frac{1}{4}}\|J^P(\tilde{X}^{(n-1)})(\tilde{\zeta}^{(n)}-\tilde{\zeta}^{(n-1)})\nabla\tilde{v}_0\|_{L^2H^s}\\[2mm]
&\hspace{1.8cm}\leq\frac{\kappa}{\We}T^{\frac{3}{4}}\|J^P(\tilde{X}^{(n-1)})\|_{L^{\infty}H^s}\|\tilde{\zeta}^{(n)}-\tilde{\zeta}^{(n-1)}\|_{L^{\infty}H^s}\|\tilde{v}_0\|_{H^{s+1}}\\[2mm]
&\hspace{1.8cm}\leq C(\tilde{v}_0,\kappa) \frac{1}{\We}T \|\tilde{X}^{(n)}-\tilde{X}^{(n-1)}\|_{\mathcal{F}^{s+1,\gamma}}\\[4mm]
&\|I_{10,1}\|_{L^{\infty}_{\frac{1}{4}}H^s}\leq \frac{\kappa}{\We}T^{\frac{1}{4}}\|(J^P(\tilde{X}^{(n)})-J^P(\tilde{X}^{(n-1)}))\tilde{\zeta}^{(n)}t\nabla\hat{\phi}\|_{L^2H^s} \\[2mm]
&\hspace{1.8cm}\leq\frac{\kappa}{\We}T^{\frac{1}{4}}\|J^P(\tilde{X}^{(n)})-J^P(\tilde{X}^{(n-1)})\|_{L^{\infty}H^s}\|\tilde{\zeta}^{(n)}\|_{L^{\infty}H^s}\|t\nabla\hat{\phi}\|_{L^2H^s}\\[2mm]
&\hspace{1.8cm}\leq C(\tilde{v}_0,\tilde{\mathbf{T}}_0,\re,\kappa)\frac{1}{\We}T^{\frac{9}{4}}\|\tilde{X}^{(n)}-\tilde{X}^{(n-1)}\|_{\mathcal{F}^{s+1,\gamma}}\\[4mm]
&\|I_{10,2}\|_{L^{\infty}_{\frac{1}{4}}H^s}\leq \frac{\kappa}{\We}T^{\frac{1}{4}}\|J^P(\tilde{X}^{(n-1)})(\tilde{\zeta}^{(n)}-\tilde{\zeta}^{(n-1)})t\nabla\hat{\phi}\|_{L^2H^s}\\[2mm]
&\hspace{1.8cm}\leq\frac{\kappa}{\We}T^{\frac{1}{4}}\|J^P(\tilde{X}^{(n-1)})\|_{L^{\infty}H^s}\|\tilde{\zeta}^{(n)}-\tilde{\zeta}^{(n-1)}\|_{L^{\infty}H^s}\|t\nabla\hat{\phi}\|_{L^2H^s}\\[2mm]
&\hspace{1.8cm}\leq C(\tilde{v}_0,\tilde{\mathbf{T}}_0,\re,\kappa) \frac{1}{\We}T^{\frac{9}{4}}\|\tilde{X}^{(n)}-\tilde{X}^{(n-1)}\|_{\mathcal{F}^{s+1,\gamma}}
\end{align*}

\noindent We deduce similar results for the remaining integrals. We can move to analysis of the $H^2_{(0)}H^{\gamma-1}-$norm. In order to deal with this space we have to guarantee enough cancelations at  time zero since this assumption is required in the space $H^2_{(0)}([0,T])$. We start with $I_1$, that can be splitted as follows

\begin{align*}
&I_{1}=\int_0^t (J^P(\tilde{X}^{(n)})-J^P(\tilde{X}^{(n-1)}))(\tilde{\zeta}^{(n)}-\mathcal{I})\nabla\tilde{w}^{(n)} (\tilde{\mathbf{T}}_p^{(n)}-\tilde{\mathbf{T}}_0)\\[1mm]
&\hspace{0.5cm}+\int_0^t (J^P(\tilde{X}^{(n)})-J^P(\tilde{X}^{(n-1)}))(\tilde{\zeta}^{(n)}-\mathcal{I})\nabla\tilde{w}^{(n)}\tilde{\mathbf{T}}_0\\[1mm]
&\hspace{0.5cm}+\int_0^t (J^P(\tilde{X}^{(n)})-J^P(\tilde{X}^{(n-1)}))\nabla\tilde{w}^{(n)} (\tilde{\mathbf{T}}_p^{(n)}-\tilde{\mathbf{T}}_0)\\[1mm]
&\hspace{0.5cm}+\int_0^t (J^P(\tilde{X}^{(n)})-J^P(\tilde{X}^{(n-1)}))\nabla\tilde{w}^{(n)} \tilde{\mathbf{T}}_0\\[1mm]
&\hspace{0.5cm}+\int_0^t (J^P(\tilde{X}^{(n-1)})-J^P)(\tilde{\zeta}^{(n)}-\tilde{\zeta}^{(n-1)})\nabla\tilde{w}^{(n)} (\tilde{\mathbf{T}}_p^{(n)}-\tilde{\mathbf{T}}_0)\\[1mm]
&\hspace{0.5cm}+\int_0^t (J^P(\tilde{X}^{(n-1)})-J^P)(\tilde{\zeta}^{(n)}-\tilde{\zeta}^{(n-1)})\nabla\tilde{w}^{(n)}\tilde{\mathbf{T}}_0\\[1mm]
&\hspace{0.5cm}+\int_0^t J^P(\tilde{\zeta}^{(n)}-\tilde{\zeta}^{(n-1)})\nabla\tilde{w}^{(n)} (\tilde{\mathbf{T}}_p^{(n)}-\tilde{\mathbf{T}}_0)+\int_0^t J^P(\tilde{\zeta}^{(n)}-\tilde{\zeta}^{(n-1)})\nabla\tilde{w}^{(n)}\tilde{\mathbf{T}}_0\\[1mm]
&\hspace{0.5cm}+\int_0^t (J^P(\tilde{X}^{(n-1)})-J^P)(\tilde{\zeta}^{(n-1)}-\mathcal{I})(\nabla\tilde{w}^{(n)}-\nabla\tilde{w}^{(n-1)})(\tilde{\mathbf{T}}_p^{(n)}-\tilde{\mathbf{T}}_0)\\[1mm]
&\hspace{0.5cm}+\int_0^t(J^P(\tilde{X}^{(n-1)})-J^P)(\tilde{\zeta}^{(n-1)}-\mathcal{I})(\nabla\tilde{w}^{(n)}-\nabla\tilde{w}^{(n-1)})\tilde{\mathbf{T}}_0\\[1mm]
&\hspace{0.5cm}+\int_0^t J^P(\tilde{\zeta}^{(n-1)}-\mathcal{I})(\nabla\tilde{w}^{(n)}-\nabla\tilde{w}^{(n-1)})(\tilde{\mathbf{T}}_p^{(n)}-\tilde{\mathbf{T}}_0)\\[1mm]
&\hspace{0.5cm}+\int_0^t J^P(\tilde{\zeta}^{(n-1)}-\mathcal{I})(\nabla\tilde{w}^{(n)}-\nabla\tilde{w}^{(n-1)})\tilde{\mathbf{T}}_0\\[1mm]
&\hspace{0.5cm}+\int_0^t(J^P(\tilde{X}^{(n-1)})-J^P)(\nabla\tilde{w}^{(n)}-\nabla\tilde{w}^{(n-1)})(\tilde{\mathbf{T}}_p^{(n)}-\tilde{\mathbf{T}}_0)\\[1mm]
&\hspace{0.5cm}+\int_0^t(J^P(\tilde{X}^{(n-1)})-J^P)(\nabla\tilde{w}^{(n)}-\nabla\tilde{w}^{(n-1)})\tilde{\mathbf{T}}_0\\[1mm]
&\hspace{0.5cm}+\int_0^t J^P(\nabla\tilde{w}^{(n)}-\nabla\tilde{w}^{(n-1)})(\tilde{\mathbf{T}}_p^{(n)}-\tilde{\mathbf{T}}_0)+\int_0^t J^P(\nabla\tilde{w}^{(n)}-\nabla\tilde{w}^{(n-1)})\tilde{\mathbf{T}}_0\\[1mm]
&\hspace{0.5cm}+\int_0^t (J^P(\tilde{X}^{(n-1)})-J^P)(\tilde{\zeta}^{(n-1)}-\mathcal{I})\nabla\tilde{w}^{(n-1)}(\tilde{\mathbf{T}}_p^{(n)}-\tilde{\mathbf{T}}_p^{(n-1)})\\[1mm]
&\hspace{0.5cm}+\int_0^t J^P(\tilde{\zeta}^{(n-1)}-\mathcal{I})\nabla\tilde{w}^{(n-1)}(\tilde{\mathbf{T}}_p^{(n)}-\tilde{\mathbf{T}}_p^{(n-1)})\\[1mm]
&\hspace{0.5cm}+\int_0^t(J^P(\tilde{X}^{(n-1)})-J^P)\nabla\tilde{w}^{(n-1)}(\tilde{\mathbf{T}}_p^{(n)}-\tilde{\mathbf{T}}_p^{(n-1)})\\[1mm]
&\hspace{0.5cm}+\int_0^t J^P\nabla\tilde{w}^{(n-1)}(\tilde{\mathbf{T}}_p^{(n)}-\tilde{\mathbf{T}}_p^{(n-1)})=\sum_{i=1}^{20} I_{1,i}.
\end{align*}
\medskip

\noindent For the estimate we focus on the most relevant integrals, as $I_{1,1}, I_{1,10}$ and $I_{1,17}$, which give us all the desired differences. We use lemma \ref{lem2}, with $\varepsilon=0$, lemma \ref{lem3} with $\gamma>1$, lemma \ref{lem5}, in order to separate each term. Then for the terms with $J^P(\tilde{X})$, we use lemma \ref{Jp-est} or lemma \ref{Jp-dif-est}. For the estimate of the velocity $\tilde{w}$ Lemma \ref{lem1} , for $\tilde{\zeta}$ lemma \ref{zeta-est} or lemma \ref{zeta-dif-est}. In the end to get the results we use the fundamental estimates for the flux and the elastic stress \eqref{flux-estim} and \eqref{elastic-estim} and lemma \ref{lem2}, with $0<\delta_i<\eta_i$, for $i=1,\ldots,8,17,\ldots,20$ and $\delta_i<\eta_i<\frac{s-1-\gamma}{2}$, for $i=9,\ldots,16$.

\begin{align*}
&\|I_{1,1}\|_{H^2_{(0)}H^{\gamma-1}}\leq \|(J^P(\tilde{X}^{(n)})-J^P(\tilde{X}^{(n-1)}))(\tilde{\zeta}^{(n)}-\mathcal{I})\nabla\tilde{w}^{(n)} (\tilde{\mathbf{T}}_p^{(n)}-\tilde{\mathbf{T}}_0)\|_{H^1_{(0)}H^{\gamma-1}}\\[1mm]
&\hspace{0.5cm}\leq \|J^P(\tilde{X}^{(n)})-J^P(\tilde{X}^{(n-1)})\|_{H^1_{(0)}H^{\gamma}}\|(\tilde{\zeta}^{(n)}-\mathcal{I})\nabla\tilde{w}^{(n)} (\tilde{\mathbf{T}}_p^{(n)}-\tilde{\mathbf{T}}_0)\|_{H^1_{(0)}H^{\gamma-1}}\\[1mm]
&\hspace{0.5cm}\leq C(\tilde{v}_0)\|\tilde{X}^{(n)}-\tilde{X}^{(n-1)}\|_{H^1_{(0)}H^{\gamma}}\|\tilde{\zeta}^{(n)}-\mathcal{I}\|_{H^1_{(0)}H^{\gamma-1}}\|\tilde{w}^{(n)}\|_{H^1_{(0)}H^{\gamma}}\|\tilde{\mathbf{T}}_p^{(n)}-\tilde{\mathbf{T}}_0\|_{H^1_{(0)}H^{\gamma-1}}\\[1mm]
&\hspace{0.5cm}\leq C(\tilde{v}_0, \tilde{\mathbf{T}}_0, \kappa)\left(1+\frac{1}{\We}\right)\left\|\int_0^t \partial_t(\tilde{X}^{(n)}-\tilde{X}^{(n-1)})\right\|_{H^{1+\eta_1-\delta_1}_{(0)}H^{\gamma}}\\[1mm]
&\hspace{0.5cm}\leq  C(\tilde{v}_0, \tilde{\mathbf{T}}_0, \kappa)\left(1+\frac{1}{\We}\right) T^{\delta_1}\|\tilde{X}^{(n)}-\tilde{X}^{(n-1)}\|_{H^{1+\eta_1}_{(0)}H^{\gamma}}\\[1mm]
&\hspace{0.5cm}\leq  C(\tilde{v}_0, \tilde{\mathbf{T}}_0, \kappa)\left(1+\frac{1}{\We}\right) T^{\delta_1}\|\tilde{X}^{(n)}-\tilde{X}^{(n-1)}\|_{\mathcal{F}^{s+1,\gamma}},\\[4mm]
&\|I_{1,10}\|_{H^2_{(0)}H^{\gamma-1}}\leq \|(J^P(\tilde{X}^{(n-1)})-J^P)(\tilde{\zeta}^{(n-1)}-\mathcal{I})(\nabla\tilde{w}^{(n)}-\nabla\tilde{w}^{(n-1)})\tilde{\mathbf{T}}_0\|_{H^1_{(0)}H^{\gamma-1}}\\[1mm]
&\hspace{0.5cm}\leq \|J^P(\tilde{X}^{(n-1)})-J^P\|_{H^1_{(0)}H^{\gamma}}\|(\tilde{\zeta}^{(n-1)}-\mathcal{I})(\nabla\tilde{w}^{(n)}-\nabla\tilde{w}^{(n-1)})\tilde{\mathbf{T}}_0\|_{H^1_{(0)}H^{\gamma-1}}\\[1mm]
&\hspace{0.5cm}\leq C(\tilde{v}_0)\|\tilde{w}^{(n)}-\tilde{w}^{(n-1)}\|_{H^{1}_{(0)}H^{\gamma}}\|(\tilde{\zeta}^{(n-1)}-\mathcal{I})\tilde{\mathbf{T}}_0\|_{H^1_{(0)}H^{\gamma-1}}\\[1mm]
&\hspace{0.5cm}\leq C(\tilde{v}_0)\|\tilde{w}^{(n)}-\tilde{w}^{(n-1)}\|_{H^{1}_{(0)}H^{\gamma}}\|\tilde{\zeta}^{(n-1)}-\mathcal{I}\|_{H^1_{(0)}H^{\gamma-1}}\|\tilde{\mathbf{T}}_0\|_{H^{\gamma}}\\[1mm]
&\hspace{0.5cm}\leq C(\tilde{v}_0, \tilde{\mathbf{T}}_0)\left\|\int_0^t \partial_t(\tilde{w}^{(n)}-\tilde{w}^{(n-1)})\right\|_{H^{1+\eta_{10}-\delta_{10}}_{(0)}H^{\gamma}}\\[1mm]
&\hspace{0.5cm}\leq C(\tilde{v}_0, \tilde{\mathbf{T}}_0) T^{\delta_{10}}\|\tilde{w}^{(n)}-\tilde{w}^{(n-1)}\|_{H^{1+\eta_{10}}_{(0)}H^{\gamma}}\\[1mm]
&\hspace{0.5cm}\leq C(\tilde{v}_0, \tilde{\mathbf{T}}_0) T^{\delta_{10}}\|\tilde{w}^{(n)}-\tilde{w}^{(n-1)}\|_{\mathcal{K}^{s+1}_{(0)}},\\[4mm]
&\|I_{1,17}\|_{H^2_{(0)}H^{\gamma-1}}\leq\|(J^P(\tilde{X}^{(n-1)})-J^P)(\tilde{\zeta}^{(n-1)}-\mathcal{I})\nabla\tilde{w}^{(n-1)}(\tilde{\mathbf{T}}_p^{(n)}-\tilde{\mathbf{T}}_p^{(n-1)})\|_{H^1_{(0)}H^{\gamma-1}}\\[1mm]
&\hspace{0.5cm}\leq \|J^P(\tilde{X}^{(n-1)})-J^P\|_{H^1_{(0)}H^{\gamma}}\|(\tilde{\zeta}^{(n-1)}-\mathcal{I})\nabla\tilde{w}^{(n-1)}(\tilde{\mathbf{T}}_p^{(n)}-\tilde{\mathbf{T}}_p^{(n-1)})\|_{H^1_{(0)}H^{\gamma-1}}\\[1mm]
&\hspace{0.5cm}\leq C(\tilde{v}_0)\|\tilde{\zeta}^{(n-1)}-\mathcal{I}\|_{H^1_{(0)}H^{\gamma-1}}\|\tilde{w}^{(n-1)}\|_{H^1_{(0)}H^{\gamma}}\|\tilde{\mathbf{T}}_p^{(n)}-\tilde{\mathbf{T}}_p^{(n-1)}\|_{H^1_{(0)}H^{\gamma-1}}\\[1mm]
&\hspace{0.5cm}\leq C(\tilde{v}_0)\left\|\int_0^t\partial_t(\tilde{\mathbf{T}}_p^{(n)}-\tilde{\mathbf{T}}_p^{(n-1)})\right\|_{H^{1+\eta_{17}-\delta_{17}}_{(0)}H^{\gamma-1}}\\[1mm]
&\hspace{0.5cm}\leq C(\tilde{v}_0)T^{\delta_{17}}\|\tilde{\mathbf{T}}_p^{(n)}-\tilde{\mathbf{T}}_p^{(n-1)})\|_{H^{1+\eta_{17}}_{(0)}H^{\gamma-1}}\leq C(\tilde{v}_0)T^{\delta_{17}}\|\tilde{\mathbf{T}}_p^{(n)}-\tilde{\mathbf{T}}_p^{(n-1)})\|_{\mathcal{F}^{s,\gamma-1}}.
\end{align*}
\medskip

\noindent For $i=2,\ldots,8$ we have $I_{1,i}\sim I_{1,1}$, for $i=9,11,\dots,16$, $I_{1,i}\sim I_{1,10}$ and finally for $i=18,19,20$ we have $I_{1,i}\sim I_{1,17}$. Now we move to the study of $I_2$ and we have the following splitting

\begin{align*}
I_{2}&=\int_0^t (J^P(\tilde{X}^{(n)})-J^P(\tilde{X}^{(n-1)}))(\tilde{\zeta}^{(n)}-\mathcal{I})\nabla\tilde{v}_0 (\tilde{\mathbf{T}}_p^{(n)}-\tilde{\mathbf{T}}_0)\\[1mm]
&+\int_0^t (J^P(\tilde{X}^{(n)})-J^P(\tilde{X}^{(n-1)}))(\tilde{\zeta}^{(n)}-\mathcal{I})\nabla\tilde{v}_0\tilde{\mathbf{T}}_0\\[1mm]
&+\int_0^t (J^P(\tilde{X}^{(n)})-J^P(\tilde{X}^{(n-1)}))\nabla\tilde{v}_0 (\tilde{\mathbf{T}}_p^{(n)}-\tilde{\mathbf{T}}_0)+\int_0^t (J^P(\tilde{X}^{(n)})-J^P(\tilde{X}^{(n-1)}))\nabla\tilde{v}_0 \tilde{\mathbf{T}}_0\\[1mm]
&+\int_0^t (J^P(\tilde{X}^{(n-1)})-J^P)(\tilde{\zeta}^{(n)}-\tilde{\zeta}^{(n-1)})\nabla\tilde{v}_0(\tilde{\mathbf{T}}_p^{(n)}-\tilde{\mathbf{T}}_0)\\[1mm]
&+\int_0^t (J^P(\tilde{X}^{(n-1)})-J^P)(\tilde{\zeta}^{(n)}-\tilde{\zeta}^{(n-1)})\nabla\tilde{v}_0\tilde{\mathbf{T}}_0\\[1mm]
&+\int_0^t J^P(\tilde{\zeta}^{(n)}-\tilde{\zeta}^{(n-1)})\nabla\tilde{v}_0 (\tilde{\mathbf{T}}_p^{(n)}-\tilde{\mathbf{T}}_0)+\int_0^t J^P(\tilde{\zeta}^{(n)}-\tilde{\zeta}^{(n-1)})\nabla\tilde{v}_0\tilde{\mathbf{T}}_0\\[1mm]
&+\int_0^t (J^P(\tilde{X}^{(n-1)})-J^P)(\tilde{\zeta}^{(n-1)}-\mathcal{I})\nabla\tilde{v}_0(\tilde{\mathbf{T}}_p^{(n)}-\tilde{\mathbf{T}}_p^{(n-1)})\\[1mm]
&+\int_0^t J^P(\tilde{\zeta}^{(n-1)}-\mathcal{I})\nabla\tilde{v}_0(\tilde{\mathbf{T}}_p^{(n)}-\tilde{\mathbf{T}}_p^{(n-1)})+\int_0^t(J^P(\tilde{X}^{(n-1)})-J^P)\nabla\tilde{v}_0(\tilde{\mathbf{T}}_p^{(n)}-\tilde{\mathbf{T}}_p^{(n-1)})\\[1mm]
&+\int_0^t J^P\nabla\tilde{v}_0(\tilde{\mathbf{T}}_p^{(n)}-\tilde{\mathbf{T}}_p^{(n-1)})=\sum_{i=1}^{12} I_{2,i}.
\end{align*}

\noindent For these integrals we observe that the results are the same as for $I_1$ but without the differences for the velocities. Then we want  to skip these estimates and to give a summary of the final result. 

\begin{align*}
&\|I_2\|_{H^2_{(0)}H^{\gamma-1}}\leq C(\tilde{v}_0,\tilde{\mathbf{T}}_0,\kappa)\left(1+\frac{1}{\We}\right)T^{\varrho_1}\left(\|\tilde{X}^{(n)}-\tilde{X}^{(n-1)}\|_{\mathcal{F}^{s+1,\gamma}}+\|\tilde{\mathbf{T}}_p^{(n)}-\tilde{\mathbf{T}}_p^{(n-1)})\|_{\mathcal{F}^{s,\gamma-1}}\right).
\end{align*}
\medskip

\noindent However we focus on the next term $I_3$ that contains the approximated velocity $t\hat{\phi}$ and we mark that in this case we use the fact that $t\in H^1_{(0)}([0,T])$. But before the estimates we make the following splitting

\begin{align*}
I_{3}&=\int_0^t (J^P(\tilde{X}^{(n)})-J^P(\tilde{X}^{(n-1)}))(\tilde{\zeta}^{(n)}-\mathcal{I})t\nabla\hat{\phi} (\tilde{\mathbf{T}}_p^{(n)}-\tilde{\mathbf{T}}_0)\\[1mm]
&+\int_0^t (J^P(\tilde{X}^{(n)})-J^P(\tilde{X}^{(n-1)}))(\tilde{\zeta}^{(n)}-\mathcal{I})t\nabla\hat{\phi} \tilde{\mathbf{T}}_0\\[1mm]
&+\int_0^t (J^P(\tilde{X}^{(n)})-J^P(\tilde{X}^{(n-1)}))t\nabla\hat{\phi}  (\tilde{\mathbf{T}}_p^{(n)}-\tilde{\mathbf{T}}_0)+\int_0^t (J^P(\tilde{X}^{(n)})-J^P(\tilde{X}^{(n-1)}))t\nabla\hat{\phi}  \tilde{\mathbf{T}}_0
\end{align*}
\begin{align*}
&+\int_0^t (J^P(\tilde{X}^{(n-1)})-J^P)(\tilde{\zeta}^{(n)}-\tilde{\zeta}^{(n-1)})t\nabla\hat{\phi} (\tilde{\mathbf{T}}_p^{(n)}-\tilde{\mathbf{T}}_0)\\[1mm]
&+\int_0^t (J^P(\tilde{X}^{(n-1)})-J^P)(\tilde{\zeta}^{(n)}-\tilde{\zeta}^{(n-1)})t\nabla\hat{\phi} \tilde{\mathbf{T}}_0\\[1mm]
&+\int_0^t J^P(\tilde{\zeta}^{(n)}-\tilde{\zeta}^{(n-1)})t\nabla\hat{\phi}  (\tilde{\mathbf{T}}_p^{(n)}-\tilde{\mathbf{T}}_0)+\int_0^t J^P(\tilde{\zeta}^{(n)}-\tilde{\zeta}^{(n-1)})t\nabla\hat{\phi} \tilde{\mathbf{T}}_0\\[1mm]
&+\int_0^t (J^P(\tilde{X}^{(n-1)})-J^P)(\tilde{\zeta}^{(n-1)}-\mathcal{I})t\nabla\hat{\phi} (\tilde{\mathbf{T}}_p^{(n)}-\tilde{\mathbf{T}}_p^{(n-1)})\\[1mm]
&+\int_0^t J^P(\tilde{\zeta}^{(n-1)}-\mathcal{I})t\nabla\hat{\phi} (\tilde{\mathbf{T}}_p^{(n)}-\tilde{\mathbf{T}}_p^{(n-1)})\\[1mm]
&+\int_0^t(J^P(\tilde{X}^{(n-1)})-J^P)t\nabla\hat{\phi} (\tilde{\mathbf{T}}_p^{(n)}-\tilde{\mathbf{T}}_p^{(n-1)})+\int_0^t J^Pt\nabla\hat{\phi} (\tilde{\mathbf{T}}_p^{(n)}-\tilde{\mathbf{T}}_p^{(n-1)})=\sum_{i=1}^{12} I_{3,i}.
\end{align*}
\medskip

\noindent We estimate one term for each difference $I_{3,1}, I_{3,9}$. We use lemma \ref{lem2} with $\varepsilon=0$, lemma \ref{lem3}, lemma \ref{lem5}. Then lemma \ref{Jp-est} or lemma \ref{Jp-dif-est} and lemma \ref{zeta-est} or lemma \ref{zeta-dif-est}, with the estimates \eqref{flux-estim} and \eqref{elastic-estim}. Finally, to conclude lemma \ref{lem2} with $0<\delta_i<\beta_i$, for $i=1,12$.

\begin{align*}
&\|I_{3,1}\|_{H^{2}_{(0)}H^{\gamma-1}}\leq \|(J^P(\tilde{X}^{(n)})-J^P(\tilde{X}^{(n-1)}))(\tilde{\zeta}^{(n)}-\mathcal{I})t\nabla\hat{\phi} (\tilde{\mathbf{T}}_p^{(n)}-\tilde{\mathbf{T}}_0)\|_{H^{1}_{(0)}H^{\gamma-1}}\\[1mm]
&\hspace{0.5cm}\leq \|J^P(\tilde{X}^{(n)})-J^P(\tilde{X}^{(n-1)})\|_{H^{1}_{(0)}H^{\gamma}} \|(\tilde{\zeta}^{(n)}-\mathcal{I})t\nabla\hat{\phi} (\tilde{\mathbf{T}}_p^{(n)}-\tilde{\mathbf{T}}_0)\|_{H^{1}_{(0)}H^{\gamma-1}}\\[1mm]
&\hspace{0.5cm}\leq C(\tilde{v}_0)\|\tilde{X}^{(n)}-\tilde{X}^{(n-1)}\|_{H^{1}_{(0)}H^{\gamma}}\|\tilde{\zeta}^{(n)}-\mathcal{I}\|_{H^{1}_{(0)}H^{\gamma-1}}\|t\nabla\hat{\phi}\|_{H^{1}_{(0)}H^{\gamma-1}}\|\tilde{\mathbf{T}}_p^{(n)}-\tilde{\mathbf{T}}_0\|_{H^{1}_{(0)}H^{\gamma-1}}\\[1mm]
&\hspace{0.5cm}\leq C(\tilde{v}_0,\tilde{\mathbf{T}}_0,\kappa)\left(1+\frac{1}{\We}\right)\|\tilde{X}^{(n)}-\tilde{X}^{(n-1)}\|_{H^{1}_{(0)}H^{\gamma}} \|t\|_{H^{1}_{(0)}}\|\hat{\phi}\|_{H^{\gamma}}\\[1mm]
&\hspace{0.5cm}\leq C(\tilde{v}_0,\tilde{\mathbf{T}}_0,\re,\kappa)\left(1+\frac{1}{\We}\right) T^{\frac{1}{2}}\left\|\int_0^t\partial_t(\tilde{X}^{(n)}-\tilde{X}^{(n-1)})\right\|_{H^{1+\eta_1-\beta_1}_{(0)}H^{\gamma}}\\[1mm]
&\hspace{0.5cm}\leq C(\tilde{v}_0,\tilde{\mathbf{T}}_0,\re,\kappa)\left(1+\frac{1}{\We}\right) T^{\beta_1}\|\tilde{X}^{(n)}-\tilde{X}^{(n-1)}\|_{H^{1+\eta_1}_{(0)}H^{\gamma}}\\[1mm]
&\hspace{0.5cm}\leq C(\tilde{v}_0,\tilde{\mathbf{T}}_0,\re,\kappa)\left(1+\frac{1}{\We}\right) T^{\beta_1}\|\tilde{X}^{(n)}-\tilde{X}^{(n-1)}\|_{\mathcal{F}^{s+1,\gamma}},\\[4mm]
&\|I_{3,9}\|_{H^{2}_{(0)}H^{\gamma-1}}\leq\|(J^P(\tilde{X}^{(n-1)})-J^P)(\tilde{\zeta}^{(n-1)}-\mathcal{I})t\nabla\hat{\phi} (\tilde{\mathbf{T}}_p^{(n)}-\tilde{\mathbf{T}}_p^{(n-1)})\|_{H^1_{(0)}H^{\gamma-1}}\\[1mm]
&\hspace{0.5cm}\leq \|J^P(\tilde{X}^{(n-1)})-J^P\|_{H^1_{(0)}H^{\gamma}}\|(\tilde{\zeta}^{(n-1)}-\mathcal{I})t\nabla\hat{\phi} (\tilde{\mathbf{T}}_p^{(n)}-\tilde{\mathbf{T}}_p^{(n-1)})\|_{H^1_{(0)}H^{\gamma-1}}\\[1mm]
&\hspace{0.5cm}\leq C(\tilde{v}_0)\|\tilde{\zeta}^{(n-1)}-\mathcal{I}\|_{H^1_{(0)}H^{\gamma-1}}\|t\|_{H^1_{(0)}}\|\hat{\phi}\|_{H^{\gamma}}
\|\tilde{\mathbf{T}}_p^{(n)}-\tilde{\mathbf{T}}_p^{(n-1)}\|_{H^1_{(0)}H^{\gamma-1}}\\[1mm]
&\hspace{0.5cm}\leq C(\tilde{v}_0,\tilde{\mathbf{T}}_0,\re,\kappa)\left\|\int_0^t\partial_t(\tilde{\mathbf{T}}_p^{(n)}-\tilde{\mathbf{T}}_p^{(n-1)})\right\|_{H^{1+\eta_9-\beta_9}_{(0)}H^{\gamma-1}}\\[1mm]
&\hspace{0.5cm}\leq C(\tilde{v}_0,\tilde{\mathbf{T}}_0,\re,\kappa)T^{\beta_9}\|\tilde{\mathbf{T}}_p^{(n)}-\tilde{\mathbf{T}}_p^{(n-1)}\|_{H^{1+\eta_9}_{(0)}H^{\gamma-1}}\\[1mm]
&\hspace{0.5cm}\leq C(\tilde{v}_0,\tilde{\mathbf{T}}_0,\re,\kappa)T^{\beta_9}\|\tilde{\mathbf{T}}_p^{(n)}-\tilde{\mathbf{T}}_p^{(n-1)}\|_{\mathcal{F}^{s,\gamma}}.
\end{align*}

\noindent We avoid to write the splitting of $I_4, I_5$ and $I_6$, that are similar to $I_1, I_2$ and $I_3$ respectively. But we summarize the resulting estimates.

\begin{align*}
&\|I_i\|_{H^{2}_{(0)}H^{\gamma-1}}\leq C(\tilde{v}_0,\tilde{\mathbf{T}}_0,\kappa)\left(1+\frac{1}{\We}\right)T^{\varrho_{i-2}}\left(\|\tilde{X}^{(n)}-\tilde{X}^{(n-1)}\|_{\mathcal{F}^{s+1,\gamma}}\right.\\[1mm]
&\hspace{1cm}\left.+\|\tilde{w}^{(n)}-\tilde{w}^{(n-1)}\|_{\mathcal{K}^{s+1}_{(0)}}+\|\tilde{\mathbf{T}}_p^{(n)}-\tilde{\mathbf{T}}_p^{(n-1)}\|_{\mathcal{F}^{s,\gamma}}\right),\hspace{0.3cm}\textrm{for}\hspace{0.3cm} i=4,5,6.
\end{align*}
\medskip

\noindent The estimate for $I_7$ is immediate by lemma  \ref{lem2}, with $\varepsilon=0$ and again lemma \ref{lem2} with $0<\varrho_5<\eta$.

\begin{align*}
&\|I_7\|_{H^2_{(0)}H^{\gamma-1}}\leq \frac{1}{\We}\|\tilde{\mathbf{T}}_p^{(n)}-\tilde{\mathbf{T}}_p^{(n-1)}\|_{H^1_{(0)}H^{\gamma-1}}\leq \frac{1}{\We}\left\|\int_0^t\partial_t(\tilde{\mathbf{T}}_p^{(n)}-\tilde{\mathbf{T}}_p^{(n-1)})\right\|_{H^{1+\eta-\varrho_5}_{(0)}H^{\gamma-1}}\\[1mm]
&\hspace{0.5cm}\leq  \frac{1}{\We}T^{\varrho_5}\|\tilde{\mathbf{T}}_p^{(n)}-\tilde{\mathbf{T}}_p^{(n-1)}\|_{H^{1+\eta}_{(0)}H^{\gamma-1}}\leq  \frac{1}{\We}T^{\varrho_5}\|\tilde{\mathbf{T}}_p^{(n)}-\tilde{\mathbf{T}}_p^{(n-1)}\|_{\mathcal{F}^{s,\gamma-1}}.
\end{align*}
\medskip

\noindent For the last six integrals we focus on $I_8, I_9, I_{10}$. For $I_8$ we have 

\begin{align*}
I_8&=\frac{\kappa}{\We}\int_0^t (J^P(\tilde{X}^{(n)})-J^P(\tilde{X}^{(n-1)}))(\tilde{\zeta}^{(n)}-\mathcal{I})\nabla\tilde{w}^{(n)}\\[1mm]
&+\frac{\kappa}{\We}\int_0^t (J^P(\tilde{X}^{(n)})-J^P(\tilde{X}^{(n-1)}))\nabla\tilde{w}^{(n)}\\[1mm]
&+\frac{\kappa}{\We}\int_0^t (J^P(\tilde{X}^{(n-1)})-J^P)(\tilde{\zeta}^{(n)}-\tilde{\zeta}^{(n-1)})\nabla\tilde{w}^{(n)}\\[1mm]
&+\frac{\kappa}{\We}\int_0^t J^P(\tilde{\zeta}^{(n)}-\tilde{\zeta}^{(n-1)})\nabla\tilde{w}^{(n)}\\[1mm]
&+\frac{\kappa}{\We}\int_0^t(J^P(\tilde{X}^{(n-1)})-J^P)(\tilde{\zeta}^{(n-1)}-\mathcal{I})(\nabla\tilde{w}^{(n)}-\nabla\tilde{w}^{(n-1)})\\[1mm]
&+\frac{\kappa}{\We}\int_0^t J^P(\tilde{\zeta}^{(n-1)}-\mathcal{I})(\nabla\tilde{w}^{(n)}-\nabla\tilde{w}^{(n-1)})\\[1mm]
&+\frac{\kappa}{\We}\int_0^t(J^P(\tilde{X}^{(n-1)})-J^P)(\nabla\tilde{w}^{(n)}-\nabla\tilde{w}^{(n-1)})\\[1mm]
&+\frac{\kappa}{\We}\int_0^t J^P(\nabla\tilde{w}^{(n)}-\nabla\tilde{w}^{(n-1)})=\sum_{i=1}^{8} I_{8,i}.
\end{align*}

\noindent We estimate $I_{8,1}$ and $I_{8,5}$ to give the idea of the final result. We use lemma \ref{lem2} with $\varepsilon=0$, in order to separate the terms we use lemma \ref{lem3} with $\gamma>1$, lemma \ref{lem5}, lemma \ref{Jp-est} or lemma \ref{Jp-dif-est} and lemma \ref{zeta-est} or lemma \ref{zeta-dif-est} and to conclude lemma \ref{lem2}, with $0<\theta_i<\eta_i$, for $i=1,\ldots,4$ and $\theta_i<\eta_i<\frac{s-1-\gamma}{2}$, for $i=5,\ldots,8$.

\begin{align*}
&\|I_{8,1}\|_{H^2_{(0)}H^{\gamma-1}}\leq \frac{\kappa}{\We}\|(J^P(\tilde{X}^{(n)})-J^P(\tilde{X}^{(n-1)}))(\tilde{\zeta}^{(n)}-\mathcal{I})\nabla\tilde{w}^{(n)}\|_{H^1_{(0)}H^{\gamma-1}}\\[2mm]
&\hspace{0.5cm}\leq \frac{\kappa}{\We}\|J^P(\tilde{X}^{(n)})-J^P(\tilde{X}^{(n-1)})\|_{H^1_{(0)}H^{\gamma}}\|(\tilde{\zeta}^{(n)}-\mathcal{I})\nabla\tilde{w}^{(n)}\|_{H^1_{(0)}H^{\gamma-1}}\\[2mm]
&\hspace{0.5cm}\leq \frac{\kappa}{\We} \|\tilde{X}^{(n)}-\tilde{X}^{(n-1)}\|_{H^1_{(0)}H^{\gamma}}\|\tilde{\zeta}^{(n)}-\mathcal{I}\|_{H^1_{(0)}H^{\gamma}}\|\tilde{w}^{(n)}\|_{H^1_{(0)}H^{\gamma}}\\[2mm]
&\hspace{0.5cm}\leq C(\tilde{v}_0,\kappa) \frac{1}{\We}\left\|\int_0^t\partial_t(\tilde{X}^{(n)}-\tilde{X}^{(n-1)})\right\|_{H^{1+\eta_1-\theta_1}_{(0)}H^{\gamma}}\\[2mm]
&\hspace{0.5cm}\leq C(\tilde{v}_0,\kappa) \frac{1}{\We}T^{\theta_1}\|\tilde{X}^{(n)}-\tilde{X}^{(n-1)}\|_{H^{1+\eta_1}_{(0)}H^{\gamma}}\\[1mm]
&\hspace{0.5cm}\leq C(\tilde{v}_0,\kappa) \frac{1}{\We}T^{\theta_1}\|\tilde{X}^{(n)}-\tilde{X}^{(n-1)}\|_{\mathcal{F}^{s+1,\gamma}}\\[4mm]
&\|I_{8,5}\|_{H^2_{(0)}H^{\gamma-1}}\leq \frac{\kappa}{\We}\|(J^P(\tilde{X}^{(n-1)})-J^P)(\tilde{\zeta}^{(n-1)}-\mathcal{I})(\nabla\tilde{w}^{(n)}-\nabla\tilde{w}^{(n-1)})\|_{H^1_{(0)}H^{\gamma-1}}\\[2mm]
&\hspace{0.5cm}\leq \frac{\kappa}{\We}\|J^P(\tilde{X}^{(n-1)})-J^P\|_{H^1_{(0)}H^{\gamma}}\|(\tilde{\zeta}^{(n-1)}-\mathcal{I})(\nabla\tilde{w}^{(n)}-\nabla\tilde{w}^{(n-1)})\|_{H^1_{(0)}H^{\gamma-1}}\\[2mm]
&\hspace{0.5cm}\leq C(\tilde{v}_0,\kappa)\frac{1}{\We}\|\tilde{\zeta}^{(n-1)}-\mathcal{I}\|_{H^1_{(0)}H^{\gamma-1}}\|\tilde{w}^{(n)}-\tilde{w}^{(n-1)}\|_{H^1_{(0)}H^{\gamma}}\\[2mm]
&\hspace{0.5cm}\leq C(\tilde{v}_0,\kappa)\frac{1}{\We}\left\|\int_0^t\partial_t(\tilde{w}^{(n)}-\tilde{w}^{(n-1)})\right\|_{H^{1+\eta_5-\theta_5}_{(0)}H^{\gamma}}\\[2mm]
&\hspace{0.5cm}\leq C(\tilde{v}_0,\kappa)\frac{1}{\We}T^{\theta_5}\|\tilde{w}^{(n)}-\tilde{w}^{(n-1)}\|_{H^{1+\eta_5}_{(0)}H^{\gamma}}\\[1mm]
&\hspace{0.5cm}\leq C(\tilde{v}_0,\kappa)\frac{1}{\We}T^{\theta_5}\|\tilde{w}^{(n)}-\tilde{w}^{(n-1)}\|_{\mathcal{K}_{(0)}^{s+1}}.
\end{align*}
\medskip

\noindent Now, we analyze $I_9, I_{10}$.

\begin{align*}
&I_9=\frac{\kappa}{\We}\int_0^t (J^P(\tilde{X}^{(n)})-J^P(\tilde{X}^{(n-1)}))(\tilde{\zeta}^{(n)}-\mathcal{I})\nabla\tilde{v}_0+\frac{\kappa}{\We}\int_0^t (J^P(\tilde{X}^{(n)})-J^P(\tilde{X}^{(n-1)}))\nabla\tilde{v}_0\\[1mm]
&\hspace{0.5cm}+\frac{\kappa}{\We}\int_0^t (J^P(\tilde{X}^{(n-1)})-J^P)(\tilde{\zeta}^{(n)}-\tilde{\zeta}^{(n-1)})\nabla\tilde{v}_0+\frac{\kappa}{\We}\int_0^t J^P(\tilde{\zeta}^{(n)}-\tilde{\zeta}^{(n-1)})\nabla\tilde{v}_0=\sum_{i=1}^{4} I_{9,i},\\[4mm]
&I_{10}=\frac{\kappa}{\We}\int_0^t (J^P(\tilde{X}^{(n)})-J^P(\tilde{X}^{(n-1)}))(\tilde{\zeta}^{(n)}-\mathcal{I})t\nabla\hat{\phi}+\frac{\kappa}{\We}\int_0^t (J^P(\tilde{X}^{(n)})-J^P(\tilde{X}^{(n-1)}))t\nabla\hat{\phi}\\[1mm]
&\hspace{0.5cm}+\frac{\kappa}{\We}\int_0^t (J^P(\tilde{X}^{(n-1)})-J^P)(\tilde{\zeta}^{(n)}-\tilde{\zeta}^{(n-1)})t\nabla\hat{\phi}+\frac{\kappa}{\We}\int_0^t J^P(\tilde{\zeta}^{(n)}-\tilde{\zeta}^{(n-1)})t\nabla\hat{\phi}=\sum_{i=1}^{4} I_{10,i}.
\end{align*}
\medskip

\noindent By using lemma \ref{lem2}, with $\varepsilon=0$, lemma \ref{lem3}, with $\gamma>1$ and we require enough regularity fot the initial data $\tilde{v}_0$. To get the final result we use lemma \ref{lem2}, with $0<\nu_i<\eta_i$ and $0<\tau_i<\eta_i$, for $i=1,\ldots,4$

\begin{align*}
&\|I_{91}\|_{H^2_{(0)}H^{\gamma-1}}\leq \frac{\kappa}{\We}\|(J^P(\tilde{X}^{(n)})-J^P(\tilde{X}^{(n-1)}))(\tilde{\zeta}^{(n)}-\mathcal{I})\nabla\tilde{v}_0\|_{H^1_{(0)}H^{\gamma-1}}\\[2mm]
&\hspace{0.5cm}\leq\frac{\kappa}{\We}\|J^P(\tilde{X}^{(n)})-J^P(\tilde{X}^{(n-1)})\|_{H^1_{(0)}H^{\gamma}}\|(\tilde{\zeta}^{(n)}-\mathcal{I})\nabla\tilde{v}_0\|_{H^1_{(0)}H^{\gamma-1}}\\[2mm]
&\hspace{0.5cm}\leq C(\tilde{v}_0,\kappa) \frac{1}{\We}\|\tilde{X}^{(n)}-\tilde{X}^{(n-1)}\|_{H^1_{(0)}H^{\gamma}}\|\tilde{\zeta}^{(n)}-\mathcal{I}\|_{H^1_{(0)}H^{\gamma-1}}\|\tilde{v}_0\|_{H^{\gamma+1}}\\[2mm]
&\hspace{0.5cm}\leq C(\tilde{v}_0,\kappa) \frac{1}{\We}\left\|\int_0^t\partial_t(\tilde{X}^{(n)}-\tilde{X}^{(n-1)})\right\|_{H^{1+\eta_1-\nu_1}_{(0)}H^{\gamma}}\\[2mm]
&\hspace{0.5cm}\leq C(\tilde{v}_0,\kappa) \frac{1}{\We} T^{\nu_1}\|\tilde{X}^{(n)}-\tilde{X}^{(n-1)}\|_{H^{1+\eta_1}_{(0)}H^{\gamma}}\\[2mm]
&\hspace{0.5cm}\leq C(\tilde{v}_0,\kappa) \frac{1}{\We} T^{\nu_1}\|\tilde{X}^{(n)}-\tilde{X}^{(n-1)}\|_{\mathcal{F}^{s+1,\gamma}}\\[4mm]
&\|I_{10,1}\|_{H^2_{(0)}H^{\gamma-1}}\leq \frac{\kappa}{\We}\|(J^P(\tilde{X}^{(n)})-J^P(\tilde{X}^{(n-1)}))(\tilde{\zeta}^{(n)}-\mathcal{I})t\nabla\hat{\phi}\|_{H^1_{(0)}H^{\gamma-1}}\\[2mm]
&\hspace{0.5cm}\leq \frac{\kappa}{\We}\|(J^P(\tilde{X}^{(n)})-J^P(\tilde{X}^{(n-1)}))(\tilde{\zeta}^{(n)}-\mathcal{I})t\nabla\hat{\phi}\|_{H^1_{(0)}H^{\gamma-1}}\\[2mm]
&\hspace{0.5cm}\leq\frac{\kappa}{\We}\|J^P(\tilde{X}^{(n)})-J^P(\tilde{X}^{(n-1)})\|_{H^1_{(0)}H^{\gamma}}\|(\tilde{\zeta}^{(n)}-\mathcal{I})t\nabla\hat{\phi}\|_{H^1_{(0)}H^{\gamma-1}}\\[2mm]
&\hspace{0.5cm}\leq C(\tilde{v}_0,\kappa) \frac{1}{\We}\|\tilde{X}^{(n)}-\tilde{X}^{(n-1)}\|_{H^1_{(0)}H^{\gamma}}\|\tilde{\zeta}^{(n)}-\mathcal{I}\|_{H^1_{(0)}H^{\gamma-1}}\|t\|_{H^1_{(0)}}\|\hat{\phi}\|_{H^{\gamma+1}}\\[2mm]
&\hspace{0.5cm}\leq C(\tilde{v}_0,\tilde{\mathbf{T}}_0,\re,\kappa) \frac{1}{\We}\left\|\int_0^t\partial_t(\tilde{X}^{(n)}-\tilde{X}^{(n-1)})\right\|_{H^{1+\eta_1-\tau_1}_{(0)}H^{\gamma}}\\[2mm]
&\hspace{0.5cm}\leq C(\tilde{v}_0,\kappa) \frac{1}{\We} T^{\tau_1}\|\tilde{X}^{(n)}-\tilde{X}^{(n-1)}\|_{H^{1+\eta_1}_{(0)}H^{\gamma}}\\[1mm]
&\hspace{0.5cm}\leq C(\tilde{v}_0,\kappa) \frac{1}{\We} T^{\nu_1}\|\tilde{X}^{(n)}-\tilde{X}^{(n-1)}\|_{\mathcal{F}^{s+1,\gamma}}.
\end{align*}
\medskip

\noindent For the remaining terms we have similar estimates, precisely $\|I_{9,i}\|_{H^2_{(0)}H^{\gamma-1}}\leq C(\tilde{v}_0,\kappa) \frac{1}{\We} T^{\nu_i}\|\tilde{X}^{(n)}-\tilde{X}^{(n-1)}\|_{\mathcal{F}^{s+1,\gamma}}$ and $\|I_{10,i}\|_{H^2_{(0)}H^{\gamma-1}}\leq C(\tilde{v}_0,\kappa) \frac{1}{\We} T^{\tau_i}\cdot\|\tilde{X}^{(n)}-\tilde{X}^{(n-1)}\|_{\mathcal{F}^{s+1,\gamma}}$, for $i=1,\ldots,4$. The proof of the second part of the proposition follows by choosing $\delta=\min\{\frac{1}{4},\delta_i, \varrho_m,\beta_j, \theta_k,\nu_l, \tau_l\}$, for $i=1,\ldots,20$, for $m=1,\ldots,5$, for $j=1,\ldots,12$, for $k=1,\ldots,8$ and  for $l=1,\ldots,4$.
\end{proof}

\subsection{Proof of Proposition \ref{fixed point}}
 
\noindent In order to prove the local existence theorem \ref{local-existence conf-lag} we have to put together the results obtained in Proposition \ref{estimate-conf-lag-(v,q)}, Proposition \ref{estimate-conf-lag-flux} and Proposition \ref{We-G-conf-lag-estimate}.

\begin{align*}
\bullet&\left\|\tilde{w}^{(n+1)}-\tilde{w}^{(n)}\right\|_{\mathcal{K}^{s+1}_{(0)}}+\left\|\tilde{q}_w^{(n+1)}-\tilde{q}_w^{(n)}\right\|_{\mathcal{K}^{s}_{pr(0)}} \leq C(\tilde{v}_0,\tilde{\mathbf{T}}_0,\re,\kappa)\left(1+\frac{1}{\We}\right)T^{\varrho}\\[1mm]
&\cdot\left(\left\|\tilde{X}^{(n)}-\tilde{X}^{(n-1)}\right\|_{\mathcal{F}^{s+1,\gamma}}+\left\|\tilde{w}^{(n)}-\tilde{w}^{(n-1)}\right\|_{\mathcal{K}^{s+1}_{(0)}}+\left\|\tilde{q}_w^{(n)}-\tilde{q}_w^{(n-1)}\right\|_{\mathcal{K}^{s}_{pr(0)}}\right.\\[1mm]
&\hspace{1cm}\left.+\left\|\tilde{\mathbf{T}}_p^{(n)}-\tilde{\mathbf{T}}_p^{(n-1)}\right\|_{\mathcal{F}^{s,\gamma-1}}\right),\\[4mm]
\bullet&\left\|\tilde{X}^{(n+1)}- \tilde{X}^{(n)}\right\|_{\mathcal{F}^{s+1,\gamma}}\leq C(\tilde{v}_0)T^{\eta}\left(\|\tilde{w}^{(n)}-\tilde{w}^{(n-1)}\|_{\mathcal{K}^{s+1}_{(0)}}\right.\\[1mm]
&\hspace{1cm}\left.+\left\|\tilde{X}^{(n)}- \tilde{X}^{(n-1)}\right\|_{\mathcal{F}^{s+1,\gamma}}\right),\\[4mm]
\bullet&\left\|\tilde{\mathbf{T}}_p^{(n+1)}-\tilde{\mathbf{T}}_p^{(n)}\right\|_{\mathcal{F}^{s,\gamma-1}}\leq C(\tilde{v}_0,\tilde{\mathbf{T}}_0,\re,\kappa)\left(1+\frac{1}{\We}\right)T^{\delta}\\[1mm]
&\cdot\left(\left\|\tilde{X}^{(n)}-\tilde{X}^{(n-1)}\right\|_{\mathcal{F}^{s+1,\gamma}}+\left\|\tilde{w}^{(n)}-\tilde{w}^{(n-1)}\right\|_{\mathcal{K}^{s+1}_{(0)}}+\left\|\tilde{\mathbf{T}}_p^{(n)}-\tilde{\mathbf{T}}_p^{(n-1)}\right\|_{\mathcal{F}^{s,\gamma-1}}\right).
\end{align*}
\medskip

\noindent By summing all these results an by taking $\mu=\min\{\varrho,\eta,\beta\}$ we have

\begin{align*}
&\left\|\tilde{w}^{(n+1)}-\tilde{w}^{(n)}\right\|_{\mathcal{K}^{s+1}_{(0)}}+\left\|\tilde{q}_w^{(n+1)}-\tilde{q}_w^{(n)}\right\|_{\mathcal{K}^{s}_{pr(0)}}+\left\|\tilde{X}^{(n+1)}- \tilde{X}^{(n)}\right\|_{\mathcal{F}^{s+1,\gamma}}+\left\|\tilde{\mathbf{T}}_p^{(n+1)}-\tilde{\mathbf{T}}_p^{(n)}\right\|_{\mathcal{F}^{s,\gamma-1}}\\[2mm]
&\leq \tilde{C}(\tilde{v}_0,\tilde{\mathbf{T}}_0,\re,\kappa)\left(1+\frac{1}{\We}\right)T^{\mu}\left(\left\|\tilde{w}^{(n)}-\tilde{w}^{(n-1)}\right\|_{\mathcal{K}^{s+1}_{(0)}}+\left\|\tilde{q}_w^{(n)}-\tilde{q}_w^{(n-1)}\right\|_{\mathcal{K}^{s}_{pr(0)}}\right.\\[2mm]
&\left.+\left\|\tilde{X}^{(n)}-\tilde{X}^{(n-1)}\right\|_{\mathcal{F}^{s+1,\gamma}}+\left\|\tilde{\mathbf{T}}_p^{(n)}-\tilde{\mathbf{T}}_p^{(n-1)}\right\|_{\mathcal{F}^{s,\gamma-1}}\right).
\end{align*}
\medskip

\noindent Then in order to have the contraction we impose

\begin{equation}\label{conf-lag-time}
\displaystyle T<\left(\frac{\We}{\tilde{C}(\tilde{v}_0,\tilde{\mathbf{T}}_0,\re,\kappa)(1+\We)}\right)^{\frac{1}{\mu}}.
\end{equation}
\bigskip

\section{Stability results for \eqref{undimsys}}\label{sec:3}

\noindent The existence of splash singularity is a consequence of the stability result, as we will explain later. Thus we introduce a one parameter family $\tilde{\Omega}_{\varepsilon}(0)$ of initial domains, defined as follows 

$$\tilde{\Omega}_{\varepsilon}(0)=\tilde{\Omega}_0+\varepsilon b,$$

\noindent where $b$ is a unit vector, such that $P^{-1}(\tilde{\Omega}_{\varepsilon}(0))$ is a regular domain, see \ref{fig:1}(a). We consider also a perturbation of the velocity $\tilde{v}_{\varepsilon}(0)$, which has a positive normal component at the splash points. In a rough way the stability results can be resumed as follows.

\begin{equation}\label{closeness}
\textrm{dist}(\tilde{\Omega}_{\varepsilon}(t),\tilde{\Omega}(t))\leq\varepsilon\quad\textrm{hence}\quad \textrm{dist}(P^{-1}(\tilde{\Omega}_{\varepsilon}(t)),P^{-1}(\tilde{\Omega}(t)))\leq\varepsilon,
\end{equation}

\noindent for sufficiently small $\varepsilon$. In particular to deduce \eqref{closeness}, we have to prove the following theorem, related to the flux, since it governs the evolution of the interface.

\begin{theorem}\label{flux-stab}
Let $2<s<\frac{5}{2}$ and a suitable $\delta>0$. If $\displaystyle T<\left(\frac{\We}{C(\tilde{v}_0,\tilde{\mathbf{T}}_0,\re,\kappa)(1+\We)}\right)^{\frac{1}{\delta}}$ then
$$\|\tilde{X}-\tilde{X}_{\varepsilon}\|_{L^{\infty}H^{s+1}}\leq 3C(\tilde{v}_0,\tilde{\mathbf{T}}_0,\re,\kappa)\left(1+\frac{1}{\We}\right)\varepsilon.$$
\end{theorem}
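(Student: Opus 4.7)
The plan is to run the same machinery used in Proposition \ref{fixed point} — the iterative difference bounds derived in Propositions \ref{estimate-conf-lag-(v,q)}, \ref{estimate-conf-lag-flux} and \ref{We-G-conf-lag-estimate} — not between two successive Picard iterates but between two genuine solutions of the Conformal-Lagrangian system \eqref{undim-Conf-Lag-Sys}. Let $(\tilde{X},\tilde{v},\tilde{q},\tilde{\mathbf{T}}_p)$ be the unperturbed solution and $(\tilde{X}_{\varepsilon},\tilde{v}_{\varepsilon},\tilde{q}_{\varepsilon},\tilde{\mathbf{T}}_{p,\varepsilon})$ the one produced by Theorem \ref{local-existence conf-lag} from the $\varepsilon$-perturbed initial data $\tilde{\Omega}_{\varepsilon}(0)=\tilde{\Omega}_0+\varepsilon b$ and $\tilde{v}_{\varepsilon}(0)$. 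Both live on the common time interval $[0,T]$ and their initial data differ by $O(\varepsilon)$ in the relevant Sobolev norms.

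\textbf{Main steps.} First I would subtract the two instances of system \eqref{undim-Conf-Lag-Sys} and perform exactly the same splittings carried out in Section 2 for $\tilde{f}^{(n)}-\tilde{f}^{(n-1)}$, $\tilde{g}^{(n)}-\tilde{g}^{(n-1)}$, $\tilde{h}^{(n)}-\tilde{h}^{(n-1)}$, for the flux equation \eqref{Lag-X-conf-iterative}, and for the elastic tensor evolution \eqref{We-G-conf-iterative}. The role previously played by the successive iterate differences is now taken by $\delta\tilde{X}=\tilde{X}-\tilde{X}_{\varepsilon}$, $\delta\tilde{v}$, $\delta\tilde{q}$ and $\delta\tilde{\mathbf{T}}_p$; each bilinear or trilinear term is controlled by the very same H\"older-Sobolev estimates already carried out. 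The only new ingredient is the presence of non-zero initial data for the difference system, which yields an additional $O(\varepsilon)$ contribution carried along the linearised flow $L^{-1}$ of \eqref{v-conf-lag-sys} and along the ODEs \eqref{Conf-flux}–\eqref{We-G-conf-iterative}.

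Second, summing the three groups of estimates and writing
\[
\mathcal{E}:=\|\delta\tilde{v}\|_{\mathcal{K}^{s+1}}+\|\delta\tilde{q}\|_{\mathcal{K}^{s}_{pr}}+\|\delta\tilde{X}\|_{\mathcal{F}^{s+1,\gamma}}+\|\delta\tilde{\mathbf{T}}_p\|_{\mathcal{F}^{s,\gamma-1}},
\]
one obtains an inequality of the form
\[
\mathcal{E}\leq C(\kappa,M)\Bigl(1+\tfrac{1}{\We}\Bigr)\varepsilon + C(\kappa,M)\Bigl(1+\tfrac{1}{\We}\Bigr)T^{\delta}\mathcal{E},
\]
with $\delta=\min\{\varrho,\eta,\beta\}>0$ inherited from the iterative bounds of Section 2. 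Under the hypothesis $T<(\We/(C(\kappa,M)(1+\We)))^{1/\delta}$ the prefactor of $\mathcal{E}$ on the right-hand side is strictly less than $2/3$, so the second term is absorbed and one concludes $\mathcal{E}\leq 3C(\kappa,M)(1+1/\We)\varepsilon$. Since $\|\delta\tilde{X}\|_{L^{\infty}H^{s+1}}\leq\|\delta\tilde{X}\|_{\mathcal{F}^{s+1,\gamma}}\leq\mathcal{E}$, the claimed bound follows.

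\textbf{Main obstacle.} The delicate point is tracking the $\We^{-1}$ factor faithfully through the coupling between the elastic tensor and the momentum equation. The $1/\We$ prefactor arising in the bound on $\|\delta\tilde{\mathbf{T}}_p\|_{\mathcal{F}^{s,\gamma-1}}$, produced by the retardation and source terms in \eqref{We-G-conf-iterative}, is transferred to $\delta\tilde{v}$ through the trace coupling $\trace(\tilde{\zeta}\nabla\tilde{\mathbf{T}}_pJ^P)$ appearing in $\tilde{f}^{(n)}_T$ and $\tilde{h}^{(n)}_T$, and from there to $\delta\tilde{X}$ via the flux equation. The explicit threshold on $T$ in the hypothesis is precisely tuned so that the gain $T^{\delta}$ compensates the loss $1+1/\We$, closing the absorption uniformly in $\We$; checking that the final constant multiplying $\varepsilon$ does not acquire an unexpected extra negative power of $\We$ — in other words, that the $\varepsilon$-term on the right keeps exactly the same $(1+1/\We)$-dependence as the coefficient in front of $\mathcal{E}$ — is the point that requires care.
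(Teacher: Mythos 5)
Your proposal follows essentially the same route as the paper: Section 3 subtracts the two solutions, reuses the Section 2 splittings for $\tilde f-\tilde f_{\varepsilon}$, $\tilde g-\tilde g_{\varepsilon}$, $\tilde h-\tilde h_{\varepsilon}$, the flux and the tensor equation, adds the $O(\varepsilon)$ perturbation estimates for $J^P-J^P_{\varepsilon}$, $Q^2-Q^2_{\varepsilon}$, $\phi-\phi_{\varepsilon}$, and arrives at exactly your inequality $\mathcal{E}\leq 3C(\kappa,M)(1+\tfrac1{\We})\varepsilon+3C(\kappa,M)(1+\tfrac1{\We})T^{\delta}\mathcal{E}$, concluded by the same absorption under the stated threshold on $T$. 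The only caveat is quantitative: the hypothesis on $T$ literally gives a contraction factor $<1$ rather than your claimed $<2/3$, so the clean constant $3$ requires adjusting $C(\kappa,M)$ (as the paper itself implicitly does).
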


\noindent To obtain theorem \ref{flux-stab}, we take the following differences

\begin{equation}\label{w-stab}
\left\{\begin{array}{lll}
\displaystyle\re\hspace{0.1cm}\partial_t(\tilde{w}-\tilde{w}_{\varepsilon})-(1-\kappa)Q^2\Delta(\tilde{w}-\tilde{w}_{\varepsilon})+(J^P)^T\nabla(\tilde{q}_w-\tilde{q}_{w,\varepsilon})=\tilde{F}_{\varepsilon}\\ [3mm]
\displaystyle \trace(\nabla(\tilde{w}-\tilde{w}_{\varepsilon})J^P)=\tilde{K}_{\varepsilon}\\ [3mm]
[-(\tilde{q}_w-\tilde{q}_{w,\varepsilon})\mathcal{I}+(1-\kappa)\left(\nabla(\tilde{w}-\tilde{w}_{\varepsilon})J^P+(\nabla(\tilde{w}-\tilde{w}_{\varepsilon})J^P)^T\right)](J^P)^{-1}\tilde{n}_0=\tilde{H}_{\varepsilon}\\[3mm]
\displaystyle \tilde{w}_0-\tilde{w}_{\varepsilon,0}=0,
\end{array}\right.
\end{equation}

\noindent where
\begin{align*}
&\tilde{F}_{\varepsilon}=\tilde{f}-\tilde{f}_{\varepsilon}+\tilde{f}_{\phi}^L-\tilde{f}_{\phi,\varepsilon}^L+(1-\kappa)(Q^2-Q^2_{\varepsilon})\Delta \tilde{w}_{\varepsilon}-((J^P)^T-(J_{\varepsilon}^P)^T)\nabla \tilde{q}_{w,\varepsilon},\\[2mm]
&\tilde{K}_{\varepsilon}=\tilde{g}-\tilde{g}_{\varepsilon}+\tilde{g}_{\phi}^L-\tilde{g}_{\phi,\varepsilon}^L-\trace(\nabla\tilde{w}_{\varepsilon}(J^P-J^P_{\varepsilon})),\\[2mm]
&\tilde{H_{\varepsilon}}=\tilde{h}-\tilde{h}_{\varepsilon}+\tilde{h}_{\phi}^L-\tilde{h}_{\phi,\varepsilon}^L+\tilde{q}_{w,\varepsilon}((J^P)^{-1}-(J^P_{\varepsilon})^{-1})\tilde{n}_0\\[1mm]
&\hspace{0.5cm}-(1-\kappa)(\nabla\tilde{w}_{\varepsilon}J^P)(J^P)^{-1}\tilde{n}_0-(1-\kappa)(\nabla\tilde{w}_{\varepsilon}J^P)^T (J^P)^{-1}\tilde{n}_0\\[1mm]
&\hspace{0.5cm}+(1-\kappa)(\nabla\tilde{w}_{\varepsilon}J^P_{\varepsilon})(J^P_{\varepsilon})^{-1}\tilde{n}_0+(1-\kappa)(\nabla\tilde{w}_{\varepsilon}J^P_{\varepsilon})^T(J^P_{\varepsilon})^{-1}\tilde{n}_0,
\end{align*}

\noindent with 

\begin{align*}
&\tilde{f}_{\phi}^L-\tilde{f}_{\phi,\varepsilon}^L=-\re\hspace{0.2cm}\frac{d}{dt}(\phi-\phi_{\varepsilon})+(1-\kappa)Q^2\Delta\phi-(1-\kappa)Q^2_{\varepsilon}\Delta\phi_{\varepsilon}-(J^P)^T\nabla \tilde{q}_{\phi}+(J^P_{\varepsilon})^T\nabla \tilde{q}_{\phi,\varepsilon},\\[2mm]
&\tilde{g}_{\phi}^L-\tilde{g}_{\phi,\varepsilon}^L=-\trace(\nabla\phi J^P)+\trace(\nabla\phi_{\varepsilon}J^P_{\varepsilon}),\\[2mm]
&\tilde{h}_{\phi}^L-\tilde{h}_{\phi,\varepsilon}^L=\tilde{q}_{\phi}(J^P)^{-1}n_0-\tilde{q}_{\phi,\varepsilon}(J^P_{\varepsilon})^{-1}n_0-(1-\kappa)[(\nabla\phi J^P)+(\nabla\phi J^P)^T](J^P)^{-1}n_0\\[2mm]
&\hspace{1.8cm}+(1-\kappa)[(\nabla\phi_{\varepsilon}J^P_{\varepsilon})
+(\nabla\phi_{\varepsilon} J^P_{\varepsilon})^T](J^P_{\varepsilon})^{-1}n_0.
\end{align*}
\medskip

 \noindent The function $\phi_{\varepsilon}=\tilde{v}_0+\frac{1}{\re}t\left((1-\kappa)Q^2_{\varepsilon}\Delta\tilde{v}_0-(J_{\varepsilon}^P)^T\nabla q_{\phi,\varepsilon}+\trace(\nabla\tilde{\mathbf{T}}_{0,\varepsilon}J^P_{\varepsilon}\right)=\tilde{v}_0+t\hat{\phi}_{\varepsilon}$ allows us to invert the operator $L$ defined in \eqref{L} and $\tilde{f}$, $\tilde{f}_{\varepsilon}$, $\tilde{g}$, $\tilde{g}_{\varepsilon}$, $\tilde{h}$, $\tilde{h}_{\varepsilon}$ are the same defined in Section \ref{sec:2} as $\tilde{f}^{(n)}$, $\tilde{g}^{(n)}$ and $\tilde{h}^{(n)}$. For the flux we have that $\tilde{X}_{\varepsilon}(t,\tilde{\alpha})$ satisfies

\begin{equation}
\left\{\begin{array}{lll}
\displaystyle \frac{d}{dt} \tilde{X}_{\varepsilon}(t,\tilde{\alpha})=J^P(\tilde{X}_{\varepsilon}(t,\tilde{\alpha}))\tilde{ v}_{\varepsilon}(t,\tilde{\alpha})\\[3mm]
\displaystyle\tilde{ X}_{\varepsilon}(0,\tilde{\alpha})=\tilde{\alpha}+\varepsilon b,
\end{array}\right.
\end{equation}
\medskip

\noindent and so 

$$\tilde{X}(t,\tilde{\alpha})-\tilde{X}_{\varepsilon}(t,\tilde{\alpha})=-b\varepsilon+\int_0^t \left(J^P( \tilde{X}) \tilde{v}-J^P(\tilde{X}_{\varepsilon}) \tilde{v}_{\varepsilon}\right)(t,\tilde{\alpha})\,d\tau.$$

\medskip

\noindent The perturbed elastic stress tensor $\tilde{\mathbf{T}}_{p,\varepsilon}$ satisfies the following ODE.
\medskip

\begin{equation}\label{We-pert-G}
\left\{\begin{array}{lll}
\displaystyle\partial_t \tilde{\mathbf{T}}_{p,\varepsilon}-J^P(\tilde{X}_{\varepsilon})\tilde{\zeta}_{\varepsilon}\nabla\tilde{v}_{\varepsilon} \tilde{\mathbf{T}}_{p,\varepsilon}-\tilde{\mathbf{T}}_{p,\varepsilon}\left(J^P(\tilde{X}_{\varepsilon})\tilde{\zeta}_{\varepsilon}\nabla\tilde{v}_{\varepsilon}\right)^T=\\[2mm]
\displaystyle-\frac{\tilde{\mathbf{T}}_{p,\varepsilon}}{\We}
+\frac{\kappa\left(J^P(\tilde{X}_{\varepsilon})\tilde{\zeta}_{\varepsilon}\nabla\tilde{v}_{\varepsilon}+\left(J^P(\tilde{X}_{\varepsilon})\tilde{\zeta}_{\varepsilon}\nabla \tilde{v}_{\varepsilon}\right)^T\right)}{\We}\\[5mm]
\tilde{\mathbf{T}}_{p,\varepsilon}(0, \tilde{\alpha})= \tilde{\mathbf{T}}_0.
\end{array}\right.
\end{equation}
\medskip

\noindent hence
\begin{equation}\label{We(G-Geps)}
\begin{split}
&\tilde{\mathbf{T}}_p-\tilde{\mathbf{T}}_{p,\varepsilon}=\int_0^t\left( J^P(\tilde{X})\tilde{\zeta}\nabla\tilde{v} \tilde{\mathbf{T}}_p-J^P(\tilde{X}_{\varepsilon})\tilde{\zeta}_{\varepsilon}\nabla\tilde{v}_{\varepsilon} \tilde{\mathbf{T}}_{p,\varepsilon}\right)\\[2mm]
&\hspace{0.5cm}+\int_0^t\left(\tilde{\mathbf{T}}_p\left(J^P(\tilde{X})\tilde{\zeta}\nabla\tilde{v}\right)^T-\tilde{\mathbf{T}}_{p,\varepsilon}\left(J^P(\tilde{X}_{\varepsilon})\tilde{\zeta}_{\varepsilon}\nabla\tilde{v}_{\varepsilon}\right)^T\right)\\[2mm]
&\hspace{0.5cm}-\frac{1}{\We}\int_0^t(\tilde{\mathbf{T}}_p-\tilde{\mathbf{T}}_{p,\varepsilon})+\frac{\kappa}{\We}\int_0^t \left(J^P(\tilde{X})\tilde{\zeta}\nabla\tilde{v}-J^P(\tilde{X}_{\varepsilon})\tilde{\zeta}_{\varepsilon}\nabla\tilde{v}_{\varepsilon}\right)\\[2mm]
&\hspace{0.5cm}+\frac{\kappa}{\We}\int_0^t\left(\left(J^P(\tilde{X})\tilde{\zeta}\nabla \tilde{v}\right)^T-\left(J^P(\tilde{X}_{\varepsilon})\tilde{\zeta}_{\varepsilon}\nabla \tilde{v}_{\varepsilon}\right)^T\right)\\[2mm]
&\hspace{0.5cm}=I_1+I_2+I_3+I_4+I_5.
\end{split}
\end{equation}
\medskip

\noindent The proof of theorem \ref{flux-stab} is a consequence of the following result.

\begin{proposition}\label{stability}
For $2<s<\frac{5}{2}$, given the initial data $\tilde{v}_0, \tilde{\mathbf{T}}_0\in H^r$ with $r$ big enough and suitable $\delta>0$ we have
\medskip

\begin{enumerate}

\item$\|J^P-J^P_{\varepsilon}\|_{H^p}\leq C\varepsilon, \hspace{0.4cm} \|Q^2- Q_{\varepsilon}^2\|_{H^p}\leq C \varepsilon \hspace{0.3cm}\textrm{for all}\hspace{0.2 cm} p,$ since $Q^2$ and $J^P$ are $C^{\infty}$ functions.
\bigskip

\item $\|\phi-\phi_{\varepsilon}\|_{L^{\infty}H^{s+1}}\leq C(\tilde{v}_0,\tilde{\mathbf{T}}_0,\re,\kappa)\varepsilon,\hspace{0.3cm} \|\phi-\phi_{\varepsilon}\|_{H^1H^{\gamma}}\leq C(\tilde{v}_0,\tilde{\mathbf{T}}_0,\re,\kappa)\varepsilon$.
\bigskip

\item $\|\tilde{q}_{\phi}-\tilde{q}_{\phi,\varepsilon}\|_{H^{r+1}}\leq C(\tilde{v}_0,\tilde{\mathbf{T}}_0,\re,\kappa)\varepsilon\hspace{0.3cm}\forall r\geq 0.$
\bigskip

\item $\displaystyle\|\tilde{X}-\tilde{X}_{\varepsilon}+\varepsilon b-t(J^P-J^P_{\varepsilon})\tilde{v}_0\|_{\mathcal{F}^{s+1,\gamma}}\leq C\varepsilon + C T^{\delta}\\[2mm]
\cdot\left(\|\tilde{X}-\tilde{X}_{\varepsilon}+\varepsilon b-t(J^P-J^P_{\varepsilon})\tilde{v}_0\|_{\mathcal{F}^{s+1,\gamma}}+ \|\tilde{w}-\tilde{w}_{\varepsilon}\|_{\mathcal{K}^{s+1}_{(0)}}\right).$
\end{enumerate}
\medskip

\noindent Then

\begin{align*}
&\|\tilde{w}-\tilde{w}_{\varepsilon}\|_{\mathcal{K}^{s+1}_{(0)}} +\|\tilde{q}_w-\tilde{q}_{w,\varepsilon}\|_{\mathcal{K}^{s}_{pr(0)}}+ \|\tilde{X}-\tilde{X}_{\varepsilon}+\varepsilon b-t(J^P-J^P_{\varepsilon})\tilde{v}_0\|_{\mathcal{F}^{s+1,\gamma} }\\[1mm]
&+\|\tilde{\mathbf{T}}_p-\tilde{\mathbf{T}}_{p,\varepsilon}-t\hat{\mathbf{T}}_{\varepsilon}\|_{\mathcal{F}^{s,\gamma-1}}\leq 3C(\tilde{v}_0,\tilde{\mathbf{T}}_0,\re,\kappa)\varepsilon\left(1+\frac{1}{\We}\right)\\[1mm]
&+3C(\tilde{v}_0,\tilde{\mathbf{T}}_0,\re,\kappa)\left(1+\frac{1}{\We}\right)T^{\delta}\left(\|\tilde{w}-\tilde{w}_{\varepsilon}\|_{\mathcal{K}^{s+1}_{(0)}}+\|\tilde{q}_w-\tilde{q}_{w,\varepsilon}\|_{\mathcal{K}^{s}_{pr(0)}}\right.\\[1mm]
&\left.+\|\tilde{X}-\tilde{X}_{\varepsilon}+\varepsilon b-t(J^P-J^P_{\varepsilon})\tilde{v}_0\|_{\mathcal{F}^{s+1,\gamma}}+\|\tilde{\mathbf{T}}_p-\tilde{\mathbf{T}}_{p,\varepsilon}-t\hat{\mathbf{T}}_{\varepsilon}\|_{\mathcal{F}^{s,\gamma-1}}\right).
\end{align*}
\end{proposition}
\medskip

\noindent The hypothesis $(1)$-$(4)$ hold from \cite[Lemma 6.1]{CCFGG2}, with the only difference related to the definition of the approximated velocity. Indeed in $\phi, \phi_{\varepsilon}$ there is the presence of both the initial velocity $\tilde{v}_0$ and  the initial elastic stress tensor $ \tilde{\mathbf{T}}_0$, then the latter appears in the constants.

\subsection{Proof of Proposition \ref{stability}} 
The proof of this proposition is a result of the  two lemmas presented below, which have the aim to prove stability for both the velocity-pressure system and for the elastic stress tensor. At this point we have to figure out the fact that the difference $\tilde{\mathbf{T}}_p-\tilde{\mathbf{T}}_{p,\varepsilon}$ does not belong to the space $H^2_{(0)}H^{\gamma-1}$ since it does not have the cancelation at time zero required by the definition of $H^2_{(0)}([0,T])$. For this reason instead of considering  $\tilde{\mathbf{T}}_p-\tilde{\mathbf{T}}_{p,\varepsilon}$, we consider $\tilde{\mathbf{T}}_p-\tilde{\mathbf{T}}_{p,\varepsilon}-t\hat{\mathbf{T}}_{\varepsilon}$, where 
\begin{align*}
\hat{\mathbf{T}}_{\varepsilon}=&(J^P-J^P_{\varepsilon})\nabla\tilde{v}_0\tilde{\mathbf{T}}_0+\tilde{\mathbf{T}}_0((J^P-J^P_{\varepsilon})\nabla\tilde{v}_0)^T+\frac{\kappa}{\We}(J^P-J^P_{\varepsilon})\nabla\tilde{v}_0+\frac{\kappa}{\We}\left((J^P-J^P_{\varepsilon})\nabla\tilde{v}_0\right)^T.
\end{align*}
\medskip

\begin{lemma}
For a suitable $\beta>0$ and $2<s<\frac{5}{2}$, we have 

\begin{equation}\label{lemma(G-G_eps)}
\begin{split}
&\|\tilde{\mathbf{T}}_p-\tilde{\mathbf{T}}_{p,\varepsilon}-t\hat{\mathbf{T}}_{\varepsilon}\|_{\mathcal{F}^{s,\gamma-1}}\leq C(\tilde{v}_0,\tilde{\mathbf{T}}_0,\re,\kappa)\left(1+ \frac{1}{\We}\right)\varepsilon+C(\tilde{v}_0,\tilde{\mathbf{T}}_0,\re,\kappa)\left(1+ \frac{1}{\We}\right)T^{\beta}\\[2mm]
&\hspace{0.3cm}\cdot\left(\|\tilde{w}-\tilde{w}_{\varepsilon}\|_{\mathcal{K}_{(0)}^{s+1}}+\|\tilde{\mathbf{T}}_p-\tilde{\mathbf{T}}_{p,\varepsilon}-t\hat{\mathbf{T}}_{\varepsilon}\|_{\mathcal{F}^{s,\gamma-1}}
+\|\tilde{X}-\tilde{X}_{\varepsilon}+b\varepsilon-t(J^p-J^P_{\varepsilon})\tilde{v}_0\|_{\mathcal{F}^{s+1,\gamma}}\right).
\end{split}
\end{equation}
\end{lemma}

\begin{proof}
By definition we have

\begin{align*}
&\tilde{\mathbf{T}}_p-\tilde{\mathbf{T}}_{p,\varepsilon}-t\hat{\mathbf{T}}_{\varepsilon}=\int_0^t \left(J^P(\tilde{X})\tilde{\zeta}\nabla\tilde{w}\tilde{\mathbf{T}}_p-J^P(\tilde{X}_{\varepsilon})\tilde{\zeta}_{\varepsilon}\nabla\tilde{w}_{\varepsilon} \tilde{\mathbf{T}}_{p,\varepsilon}\right)\\[2mm]
&+\int_0^t \left(J^P(\tilde{X})\tilde{\zeta}\nabla\tilde{v}_0 \tilde{\mathbf{T}}_p-J^P(\tilde{X}_{\varepsilon})\tilde{\zeta}_{\varepsilon}\nabla\tilde{v}_0 \tilde{\mathbf{T}}_{p,\varepsilon}-(J^P-J^P_{\varepsilon})\nabla\tilde{v}_0\tilde{\mathbf{T}}_0\right)\\[2mm]
&+\int_0^t \left(J^P(\tilde{X})\tilde{\zeta}t\nabla\hat{\phi} \tilde{\mathbf{T}}_p-J^P(\tilde{X}_{\varepsilon})\tilde{\zeta}_{\varepsilon}t\nabla\hat{\phi}_{\varepsilon}\tilde{\mathbf{T}}_{p,\varepsilon}\right)\\[2mm]
&+\int_0^t\left(\tilde{\mathbf{T}}_p\left(J^P(\tilde{X})\tilde{\zeta}\nabla\tilde{w}\right)^T-\tilde{\mathbf{T}}_{p,\varepsilon}\left(J^P(\tilde{X}_{\varepsilon})\tilde{\zeta}_{\varepsilon}\nabla\tilde{w}_{\varepsilon}\right)^T\right)\\[2mm]
&+\int_0^t\left(\tilde{\mathbf{T}}_p\left(J^P(\tilde{X})\tilde{\zeta}\nabla\tilde{v}_0\right)^T-\tilde{\mathbf{T}}_{p,\varepsilon}\left(J^P(\tilde{X}_{\varepsilon})\tilde{\zeta}_{\varepsilon}\nabla\tilde{v}_0\right)^T-\tilde{\mathbf{T}}_0((J^P-J^P_{\varepsilon})\nabla\tilde{v}_0)^T\right)\\[2mm]
&+\int_0^t\left(\tilde{\mathbf{T}}_p\left(J^P(\tilde{X})\tilde{\zeta}t\nabla\hat{\phi}\right)^T-\tilde{\mathbf{T}}_{p,\varepsilon}\left(J^P(\tilde{X}_{\varepsilon})\tilde{\zeta}_{\varepsilon} t\nabla\hat{\phi}_{\varepsilon}\right)^T\right)-\frac{1}{\We}\int_0^t \left(\tilde{\mathbf{T}}_p-\tilde{\mathbf{T}}_{p,\varepsilon}\right)\\[2mm]
&+\frac{\kappa}{\We}\int_0^t\left(J^P(\tilde{X})\tilde{\zeta}\nabla\tilde{w}-J^P(\tilde{X}_{\varepsilon})\tilde{\zeta}_{\varepsilon}\nabla\tilde{w}_{\varepsilon}\right)\\[2mm]
&+\frac{\kappa}{\We}\int_0^t\left(J^P(\tilde{X})\tilde{\zeta}\nabla\tilde{v}_0-J^P(\tilde{X}_{\varepsilon})\tilde{\zeta}_{\varepsilon}\nabla\tilde{v}_0-(J^P-J^P_{\varepsilon})\nabla\tilde{v}_0\right)
\end{align*}
\begin{align*}
&+\frac{\kappa}{\We}\int_0^t\left(J^P(\tilde{X})\tilde{\zeta}t\nabla\hat{\phi}-J^P(\tilde{X}_{\varepsilon})\tilde{\zeta}_{\varepsilon}t\nabla\hat{\phi}_{\varepsilon}\right)+\frac{\kappa}{\We}\int_0^t\left(\left(J^P(\tilde{X})\tilde{\zeta}\nabla\tilde{w}\right)^T-\left(J^P(\tilde{X}_{\varepsilon})\tilde{\zeta}_{\varepsilon}\nabla\tilde{w}_{\varepsilon}\right)^T\right)\\[2mm]
&+\frac{\kappa}{\We}\int_0^t\left(\left(J^P(\tilde{X})\tilde{\zeta}\nabla\tilde{v}_0\right)^T-\left(J^P(\tilde{X}_{\varepsilon})\tilde{\zeta}_{\varepsilon}\nabla\tilde{v}_0\right)^T-((J^P-J^P_{\varepsilon})\nabla\tilde{v}_0)^T\right)\\[2mm]
&+\frac{\kappa}{\We}\int_0^t\left(\left(J^P(\tilde{X})\tilde{\zeta}t\nabla\hat{\phi}\right)^T-\left(J^P(\tilde{X}_{\varepsilon})\tilde{\zeta}_{\varepsilon}\nabla\hat{\phi}_{\varepsilon}\right)^T\right)=\sum_{i=1}^{13} I_i
\end{align*}
\medskip

\noindent We have to show the estimates for these terms in $L^{\infty}_{\frac{1}{4}}H^s$ and $H^2_{(0)}H^{\gamma-1}$. As we have already seen in the proof of iterative bounds the way to get the results in these spaces are different, indeed the space $H^2_{(0)}([0,T])$ requires more attention. First of all we focus on $L^{\infty}_{\frac{1}{4}}H^s$ and we show the results for some terms. We start with $I_1$, that can be written as follows

\begin{align*}
&I_1=\int_0^t (J^P(\tilde{X})-J^P(\tilde{X}_{\varepsilon})-J^P+J^P_{\varepsilon})\tilde{\zeta}\nabla\tilde{w}(\tilde{\mathbf{T}}_p-\tilde{\mathbf{T}}_0)\\[1mm]
&+\int_0^t (J^P(\tilde{X})-J^P(\tilde{X}_{\varepsilon})-J^P+J^P_{\varepsilon})\tilde{\zeta}\nabla\tilde{w}\tilde{\mathbf{T}}_0+\int_0^t (J^P-J^P_{\varepsilon})\tilde{\zeta}\nabla\tilde{w}(\tilde{\mathbf{T}}_p-\tilde{\mathbf{T}}_0)\\[1mm]
&+\int_0^t (J^P-J^P_{\varepsilon})\tilde{\zeta}\nabla\tilde{w}\tilde{\mathbf{T}}_0+\int_0^t J^P(\tilde{X}_{\varepsilon})(\tilde{\zeta}-\tilde{\zeta}_{\varepsilon})\nabla\tilde{w}(\tilde{\mathbf{T}}_p-\tilde{\mathbf{T}}_0)\\[1mm]
&+\int_0^t J^P(\tilde{X}_{\varepsilon})(\tilde{\zeta}-\tilde{\zeta}_{\varepsilon})\nabla\tilde{w}\tilde{\mathbf{T}}_0+\int_0^t J^P(\tilde{X}_{\varepsilon})\tilde{\zeta}_{\varepsilon}(\nabla\tilde{w}-\nabla\tilde{w}_{\varepsilon})(\tilde{\mathbf{T}}_p-\tilde{\mathbf{T}}_0)\\[1mm]
&+\int_0^t J^P(\tilde{X}_{\varepsilon})\tilde{\zeta}_{\varepsilon}(\nabla\tilde{w}-\nabla\tilde{w}_{\varepsilon})\tilde{\mathbf{T}}_0+\int_0^t J^P(\tilde{X}_{\varepsilon})\tilde{\zeta}_{\varepsilon}\nabla\tilde{w}_{\varepsilon}(\tilde{\mathbf{T}}_p-\tilde{\mathbf{T}}_{p,\varepsilon})=\sum_{i=1}^9 I_{1,i}.
\end{align*}

\noindent We study $I_{1,1}, I_{1,7}, I_{1,9}$ in order to have all the required differences. We use lemma \ref{Jp-est} or lemma \ref{Jp-dif-est}, lemma \ref{zeta-est} or lemma \ref{zeta-dif-est}. Moreover we choose sufficiently smooth initial data. The most difficult part is in $I_{1,1}$ when we need to apply lemma \ref{Jp-dif-est} and we need to be careful in the way as we use it.

\begin{align*}
&\|I_{1,1}\|_{L^{\infty}_{\frac{1}{4}}H^s}\leq \sup_{t\in [0,T]} t^{-\frac{1}{4}}\int_0^t\| (J^P(\tilde{X})-J^P(\tilde{X}_{\varepsilon})-J^P+J^P_{\varepsilon})\tilde{\zeta}\nabla\tilde{w}(\tilde{\mathbf{T}}_p-\tilde{\mathbf{T}}_0)\|_{H^s}\\[2mm]
&\leq T^{\frac{1}{4}}\| (J^P(\tilde{X})-J^P(\tilde{X}_{\varepsilon})-J^P+J^P_{\varepsilon})\tilde{\zeta}\nabla\tilde{w}(\tilde{\mathbf{T}}_p-\tilde{\mathbf{T}}_0)\|_{L^2H^s}\\[2mm]
&\leq T^{\frac{1}{4}} \|J^P(\tilde{X})-J^P(\tilde{X}_{\varepsilon})-J^P+J^P_{\varepsilon}\|_{L^{\infty}H^s}\|\tilde{\zeta}\|_{L^{\infty}H^s}\|\nabla\tilde{w}\|_{L^2H^s}\|\tilde{\mathbf{T}}_p-\tilde{\mathbf{T}}_0\|_{L^{\infty}H^s}\\[2mm]
&\leq T^{\frac{1}{4}}\left(\|J^P(\tilde{X}+\varepsilon b)-J^P(\tilde{X}_{\varepsilon})\|_{L^{\infty}H^s}
+\|J^P(\tilde{X})-J^P-J^P(\tilde{X}+\varepsilon b)+J^P_{\varepsilon}\|_{L^{\infty}H^s}\right)\\[1mm]
&\hspace{1cm}\cdot\|\tilde{X}-\tilde{\alpha}\|_{L^{\infty}H^{s+1}}\|\tilde{w}_{\varepsilon}\|_{L^2H^{s+1}}\|\tilde{\mathbf{T}}_p-\tilde{\mathbf{T}}_0\|_{L^{\infty}H^s}\\[2mm]
&\leq C(\tilde{v}_0,\tilde{\mathbf{T}}_0, \kappa)\left(1+\frac{1}{\We}\right) T^{\frac{1}{4}}\left(\|\tilde{X}-\tilde{X}_{\varepsilon}+\varepsilon b\|_{L^{\infty}H^s}+C(\tilde{v}_0)\varepsilon\right)\\[2mm]
\end{align*}
\begin{align*}
&\hspace{0.5cm}\leq C(\tilde{v}_0,\tilde{\mathbf{T}}_0, \kappa)\left(1+\frac{1}{\We}\right)\varepsilon+C(\tilde{v}_0,\tilde{\mathbf{T}}_0, \kappa)\left(1+\frac{1}{\We}\right) T^{\frac{1}{4}}\\[1mm]
&\hspace{1cm}\cdot\left(\|\tilde{X}-\tilde{X}_{\varepsilon}+\varepsilon b-t(J^P-J^P_{\varepsilon})\tilde{v}_0\|_{L^{\infty}H^s}+T\|(J^P-J^P_{\varepsilon})\tilde{v}_0\|_{H^s}\right)\\[2mm]
&\hspace{0.5cm}\leq C(\tilde{v}_0,\tilde{\mathbf{T}}_0, \kappa)\left(1+\frac{1}{\We}\right) \varepsilon +C(\tilde{v}_0,\tilde{\mathbf{T}}_0, \kappa)\left(1+\frac{1}{\We}\right) T^{\frac{3}{4}}\\[1mm]
&\hspace{1cm}\cdot\left\|\tilde{X}-\tilde{X}_{\varepsilon}+\varepsilon b-t(J^P-J^P_{\varepsilon})\tilde{v}_0\right\|_{\mathcal{F}^{s+1,\gamma}}\\[4mm]
&\|I_{1,7}\|_{L^{\infty}_{\frac{1}{4}}H^s}\leq T^{\frac{1}{4}}\|J^P(\tilde{X}_{\varepsilon})\tilde{\zeta}_{\varepsilon}(\nabla\tilde{w}-\nabla\tilde{w}_{\varepsilon})(\tilde{\mathbf{T}}_p-\tilde{\mathbf{T}}_0)\|_{L^2H^s}\\[2mm]
&\hspace{0.5cm}\leq T^{\frac{1}{4}} \|J^P(\tilde{X}_{\varepsilon})\|_{L^{\infty}H^s}\|\tilde{\zeta}_{\varepsilon}\|_{L^{\infty}H^s}\|\nabla\tilde{w}-\nabla\tilde{w}_{\varepsilon}\|_{L^2H^s}\|\tilde{\mathbf{T}}_p-\tilde{\mathbf{T}}_0\|_{L^{\infty}H^s}\\[2mm]
&\hspace{0.5cm}\leq C(\tilde{v}_0,\tilde{\mathbf{T}}_0, \kappa)\left(1+\frac{1}{\We}\right) T^{\frac{1}{4}}\|\tilde{w}-\tilde{w}_{\varepsilon}\|_{\mathcal{K}_{(0)}^{s+1}},\\[4mm]
&\|I_{1,9}\|_{L^{\infty}_{\frac{1}{4}}H^s}\leq T^{\frac{1}{4}}\|J^P(\tilde{X}_{\varepsilon})\tilde{\zeta}_{\varepsilon}\nabla\tilde{w}_{\varepsilon}(\tilde{\mathbf{T}}_p-\tilde{\mathbf{T}}_{p,\varepsilon})\|_{L^2H^s}\\[2mm]
&\hspace{0.5cm}\leq T^{\frac{1}{4}} \|J^P(\tilde{X}_{\varepsilon})\|_{L^{\infty}H^s}\|\tilde{\zeta}_{\varepsilon}\|_{L^{\infty}H^s}\|\tilde{w}_{\varepsilon}\|_{L^2H^{s+1}}\left(\|\tilde{\mathbf{T}}_p-\tilde{\mathbf{T}}_{p,\varepsilon}-t\hat{\mathbf{T}}_{\varepsilon}\|_{L^{\infty}H^s}+\|t\hat{\mathbf{T}}_{\varepsilon}\|_{L^{\infty}H^s}\right)\\[2mm]
&\hspace{0.5cm}\leq C(\tilde{v}_0,\tilde{\mathbf{T}}_0,\kappa)\left(1+\frac{1}{\We}\right)\varepsilon+C(\tilde{v}_0,\tilde{\mathbf{T}}_0,\kappa)\left(1+\frac{1}{\We}\right)T^{\frac{3}{4}}\|\tilde{\mathbf{T}}_p-\tilde{\mathbf{T}}_{p,\varepsilon}-t\hat{\mathbf{T}}_{\varepsilon}\|_{\mathcal{F}^{s,\gamma-1}}.
\end{align*}
\medskip

\noindent Furthermore the estimates for the other terms can be resumed as follows, for  i=2,6

\begin{align*}
&\|I_{1,i}\|_{L^{\infty}_{\frac{1}{4}}H^s}\leq C(\tilde{v}_0,\tilde{\mathbf{T}}_0)\varepsilon+C(\tilde{v}_0,\tilde{\mathbf{T}}_0) T^{\frac{3}{4}}\|\tilde{X}-\tilde{X}_{\varepsilon}+\varepsilon b-t(J^P-J^P_{\varepsilon})\tilde{v}_0\|_{\mathcal{F}^{s+1,\gamma}}
\end{align*}

\noindent and for all the others, we have
\begin{align*}
&\|I_{1,3}\|_{L^{\infty}_{\frac{1}{4}}H^s}\leq C(\tilde{v}_0,\tilde{\mathbf{T}}_0, \kappa)\left(1+\frac{1}{\We}\right) \varepsilon\\[2mm]
&\|I_{1,4}\|_{L^{\infty}_{\frac{1}{4}}H^s}\leq C(\tilde{v}_0,\tilde{\mathbf{T}}_0)\varepsilon\\[2mm]
&\|I_{1,5}\|_{L^{\infty}_{\frac{1}{4}}H^s}\leq  C(\tilde{v}_0,\tilde{\mathbf{T}}_0, \kappa)\left(1+\frac{1}{\We}\right) \varepsilon +C(\tilde{v}_0,\tilde{\mathbf{T}}_0, \kappa)\left(1+\frac{1}{\We}\right) T^{\frac{3}{4}}\\[2mm]
&\hspace{1cm}\cdot\|\tilde{X}-\tilde{X}_{\varepsilon}+\varepsilon b-t(J^P-J^P_{\varepsilon})\tilde{v}_0\|_{\mathcal{F}^{s+1,\gamma}}\\[2mm]
&\|I_{1,8}\|_{L^{\infty}_{\frac{1}{4}}H^s}\leq C(\tilde{v}_0,\tilde{\mathbf{T}}_0) T^{\frac{1}{4}}\|\tilde{w}-\tilde{w}_{\varepsilon}\|_{\mathcal{K}_{(0)}^{s+1}}
\end{align*}
\noindent Moreover, we want to figure out the estimate of $\hat{T}_{\varepsilon}$, where is hidden the requirement of enough regularity for the initial data indeed we have

\begin{equation}\label{hat-T}
\begin{split}
\|\hat{T}_{\varepsilon}\|_{H^s}&\leq \|(J^P-J^P_{\varepsilon})\nabla\tilde{v}_0\tilde{\mathbf{T}}_0\|_{H^s}+\|\tilde{\mathbf{T}}_0((J^P-J^P_{\varepsilon})\nabla\tilde{v}_0)^T\|_{H^s}+\frac{\kappa}{\We}\|(J^P-J^P_{\varepsilon})\nabla\tilde{v}_0\|_{H^s}\\[2mm]
&+\frac{\kappa}{\We}\|((J^P-J^P_{\varepsilon})\nabla\tilde{v}_0)^T\|_{H^s}\leq C(\tilde{v}_0,\tilde{\mathbf{T}}_0)\varepsilon+C(\tilde{v}_0,\kappa)\frac{1}{\We}\varepsilon\\[2mm]
&\leq C(\tilde{v}_0,\tilde{\mathbf{T}}_0,\kappa)\left(1+\frac{1}{\We}\right)\varepsilon. 
\end{split}
\end{equation}

\medskip

\noindent We rewrite below the way for splitting $I_2$ and $I_3$

\begin{align*}
&I_2=\int_0^t (J^P(\tilde{X})-J^P(\tilde{X}_{\varepsilon})-J^P+J^P_{\varepsilon})\tilde{\zeta}\nabla\tilde{v}_0(\tilde{\mathbf{T}}_p-\tilde{\mathbf{T}}_0)\\[1mm]
&\hspace{0.5cm}+\int_0^t (J^P(\tilde{X})-J^P(\tilde{X}_{\varepsilon})-J^P+J^P_{\varepsilon})\tilde{\zeta}\nabla\tilde{v}_0\tilde{\mathbf{T}}_0+\int_0^t (J^P-J^P_{\varepsilon})\tilde{\zeta}\nabla\tilde{v}_0(\tilde{\mathbf{T}}_p-\tilde{\mathbf{T}}_0)\\[1mm]
&\hspace{0.5cm}+\int_0^t (J^P-J^P_{\varepsilon})(\tilde{\zeta}-\mathcal{I})\nabla\tilde{v}_0\tilde{\mathbf{T}}_0+\int_0^t J^P(\tilde{X}_{\varepsilon})(\tilde{\zeta}-\tilde{\zeta}_{\varepsilon})\nabla\tilde{v}_0(\tilde{\mathbf{T}}_p-\tilde{\mathbf{T}}_0)\\[1mm]
&\hspace{0.5cm}+\int_0^t J^P(\tilde{X}_{\varepsilon})(\tilde{\zeta}-\tilde{\zeta}_{\varepsilon})\nabla\tilde{v}_0\tilde{\mathbf{T}}_0+\int_0^t J^P(\tilde{X}_{\varepsilon})\tilde{\zeta}_{\varepsilon}\nabla\tilde{v}_0(\tilde{\mathbf{T}}_p-\tilde{\mathbf{T}}_{p,\varepsilon})=\sum_{i=1}^7 I_{2,i}.\\[4mm]
&I_3=\int_0^t (J^P(\tilde{X})-J^P(\tilde{X}_{\varepsilon})-J^P+J^P_{\varepsilon})\tilde{\zeta}t\nabla\hat{\phi}(\tilde{\mathbf{T}}_p-\tilde{\mathbf{T}}_0)\\[1mm]
&\hspace{0.5cm}+\int_0^t (J^P(\tilde{X})-J^P(\tilde{X}_{\varepsilon})-J^P+J^P_{\varepsilon})\tilde{\zeta}t\nabla\hat{\phi}\tilde{\mathbf{T}}_0+\int_0^t (J^P-J^P_{\varepsilon})\tilde{\zeta}t\nabla\hat{\phi}(\tilde{\mathbf{T}}_p-\tilde{\mathbf{T}}_0)\\[1mm]
&\hspace{0.5cm}+\int_0^t (J^P-J^P_{\varepsilon})\tilde{\zeta}t\nabla\hat{\phi}\tilde{\mathbf{T}}_0+\int_0^t J^P(\tilde{X}_{\varepsilon})(\tilde{\zeta}-\tilde{\zeta}_{\varepsilon})t\nabla\hat{\phi}(\tilde{\mathbf{T}}_p-\tilde{\mathbf{T}}_0)\\[1mm]
&\hspace{0.5cm}+\int_0^t J^P(\tilde{X}_{\varepsilon})(\tilde{\zeta}-\tilde{\zeta}_{\varepsilon})t\nabla\hat{\phi}\tilde{\mathbf{T}}_0+\int_0^t J^P(\tilde{X}_{\varepsilon})\tilde{\zeta}_{\varepsilon}t(\nabla\hat{\phi}-\nabla\hat{\phi}_{\varepsilon})(\tilde{\mathbf{T}}_p-\tilde{\mathbf{T}}_0)\\[1mm]
&\hspace{0.5cm}+\int_0^t J^P(\tilde{X}_{\varepsilon})\tilde{\zeta}_{\varepsilon}t(\nabla\hat{\phi}-\nabla\hat{\phi}_{\varepsilon})\tilde{\mathbf{T}}_0+\int_0^t J^P(\tilde{X}_{\varepsilon})\tilde{\zeta}_{\varepsilon}t\nabla\hat{\phi}_{\varepsilon}(\tilde{\mathbf{T}}_p-\tilde{\mathbf{T}}_{p,\varepsilon})=\sum_{i=1}^9 I_{3,i}.
\end{align*}
 \medskip
 
\noindent However, we notice that in $I_{2,4}$ there is also the presence of one term of $\hat{\mathbf{T}}_{\varepsilon}$. We summarize the results for $I_{2,2}, I_{2,7}$ and $I_{3,8}$ and we use for $J^P(\tilde{X})-J^P(\tilde{X}_{\varepsilon})-J^P+J^P_{\varepsilon}$, the same estimate obtained for $I_{1,1}$ and for $\hat{\phi}-\hat{\phi}_{\varepsilon}$, which does not depend on time but only on the initial data, we use  proposition \ref{stability}.

\begin{align*}
&\|I_{2,2}\|_{L^{\infty}_{\frac{1}{4}}H^s}\leq T^{\frac{1}{4}}\| (J^P(\tilde{X})-J^P(\tilde{X}_{\varepsilon})-J^P+J^P_{\varepsilon})\tilde{\zeta}\nabla\tilde{v}_0\tilde{\mathbf{T}}_0\|_{L^2H^s}\\[2mm]
&\hspace{0.5cm}\leq T^{\frac{1}{4}}\|J^P(\tilde{X})-J^P(\tilde{X}_{\varepsilon})-J^P+J^P_{\varepsilon}\|_{L^{\infty}H^s}\|\tilde{\zeta}\|_{L^{\infty}H^s}\|\nabla\tilde{v}_0\|_{H^s}\|\tilde{\mathbf{T}}_0\|_{H^s}\\[2mm]
&\hspace{0.5cm}\leq C(\tilde{v}_0,\tilde{\mathbf{T}}_0)T^{\frac{1}{2}} \|\tilde{X}-\tilde{X}_{\varepsilon}+\varepsilon b-t(J^P-J^P_{\varepsilon})\tilde{v}_0\|_{\mathcal{F}^{s+1,\gamma}}
\end{align*}
\begin{align*}
&\|I_{2,7}\|_{L^{\infty}_{\frac{1}{4}}H^s}\leq T^{\frac{1}{4}}\|J^P(\tilde{X}_{\varepsilon})\tilde{\zeta}_{\varepsilon}\nabla\tilde{v}_0(\tilde{\mathbf{T}}_p-\tilde{\mathbf{T}}_{p,\varepsilon})\|_{L^2H^s}\\[2mm]
&\hspace{0.5cm}\leq  T^{\frac{1}{4}} \|J^P(\tilde{X}_{\varepsilon})\|_{L^{\infty}H^s}\|\tilde{\zeta}_{\varepsilon}\|_{L^{\infty}H^s}\|\tilde{v}_0\|_{H^{s+1}}\|\tilde{\mathbf{T}}_p-\tilde{\mathbf{T}}_{p,\varepsilon}\|_{L^2H^s}\\[2mm]
&\hspace{0.5cm}\leq C(\tilde{v}_0) T^{\frac{1}{2}}\left(\|\tilde{\mathbf{T}}_p-\tilde{\mathbf{T}}_{p,\varepsilon}-t\hat{\mathbf{T}}_{\varepsilon}\|_{L^{\infty}H^s}+\|t\hat{\mathbf{T}}_{\varepsilon}\|_{L^{\infty}H^s}\right)\\[2mm]
&\hspace{0.5cm}\leq C(\tilde{v}_0,\tilde{\mathbf{T}}_0,\kappa)\left(1+\frac{1}{\We}\right)\varepsilon+C(\tilde{v}_0,\tilde{\mathbf{T}}_0,\kappa)\left(1+\frac{1}{\We}\right)T\|\tilde{\mathbf{T}}_p-\tilde{\mathbf{T}}_{p,\varepsilon}-t\hat{\mathbf{T}}\|_{\mathcal{F}^{s,\gamma-1}}\\[4mm]
&\|I_{3,8}\|_{L^{\infty}_{\frac{1}{4}}H^s}\leq T^{\frac{1}{4}}\|J^P(\tilde{X}_{\varepsilon})\tilde{\zeta}_{\varepsilon}t(\nabla\hat{\phi}-\nabla\hat{\phi}_{\varepsilon})\tilde{\mathbf{T}}_0\|_{L^2H^s}\\[2mm]
&\hspace{0.5cm}\leq T^{\frac{1}{4}}\|J^P(\tilde{X}_{\varepsilon})\|_{L^{\infty}H^s}\|\tilde{\zeta}_{\varepsilon}\|_{L^{\infty}H^s}\|t(\hat{\phi}-
\hat{\phi}_{\varepsilon})\|_{L^{2}H^{s+1}}\|\tilde{\mathbf{T}}_0\|_{H^s}\\[2mm]
&\hspace{0.5cm}\leq C(\tilde{v}_0,\tilde{\mathbf{T}}_0)T^{\frac{7}{4}}\|\hat{\phi}-
\hat{\phi}_{\varepsilon}\|_{H^{s+1}}\leq C(\tilde{v}_0,\tilde{\mathbf{T}}_0,\re,\kappa)\varepsilon.
\end{align*}
\medskip

\noindent  For all the remaing terms we refer to these estimates and to the estimate of $I_1$. Indeed for$I_4, I_5$ and $I_6$, we proceed as $I_1, I_2$ and $I_3$ and we summarize the result 

\begin{align*}
&\sum_{i=4}^6 \|I_i\|_{L^{\infty}_{\frac{1}{4}}H^s}\leq C(\tilde{v}_0,\tilde{\mathbf{T}}_0,\re,\kappa)\left(1+\frac{1}{\We}\right)\varepsilon+C(\tilde{v}_0,\tilde{\mathbf{T}}_0,\re,\kappa)\left(1+\frac{1}{\We}\right) T^{\frac{1}{4}}\\[1mm]
&\hspace{0.5cm}\cdot\left(\|\tilde{X}-\tilde{X}_{\varepsilon}+\varepsilon b-t(J^P-J^P_{\varepsilon})\tilde{v}_0\|_{\mathcal{F}^{s+1,\gamma}}+\|\tilde{\mathbf{T}}_p-\tilde{\mathbf{T}}_{p,\varepsilon}-t\hat{\mathbf{T}}_{\varepsilon}\|_{\mathcal{F}^{s,\gamma-1}}+\|\tilde{w}-\tilde{w}_{\varepsilon}\|_{\mathcal{K}^{s+1}_{(0)}}\right).
\end{align*}
\medskip

\noindent For $I_7$ we add and subtract the missing terms and we get

\begin{align*}
&\|I_7\|_{L^{\infty}_{\frac{1}{4}}H^s}\leq T^{\frac{1}{4}}\frac{1}{\We}\|\tilde{\mathbf{T}}_p-\tilde{\mathbf{T}}_{p,\varepsilon}\|_{L^2H^s}\leq  T^{\frac{3}{4}}\frac{1}{\We}\left(\|\tilde{\mathbf{T}}_p-\tilde{\mathbf{T}}_{p,\varepsilon}-t\hat{\mathbf{T}}_{\varepsilon}\|_{L^{\infty}H^s}+\|t\hat{\mathbf{T}}_{\varepsilon}\|_{L^{\infty}H^s}\right)\\[2mm]
&\hspace{0.5cm}\leq C(\tilde{v}_0,\tilde{\mathbf{T}}_0,\kappa)\left(1+\frac{1}{\We}\right)\varepsilon+\frac{1}{\We}T \|\tilde{\mathbf{T}}_p-\tilde{\mathbf{T}}_{p,\varepsilon}-t\hat{\mathbf{T}}_{\varepsilon}\|_{\mathcal{F}^{s,\gamma-1}}.
\end{align*}
\medskip

\noindent Finally, we remark that for the estimates $I_i$, for $i=8,\ldots,13$ we can adapt the results obtained for $I_i$, with $i=1,\ldots,6$ but without the presence of the elastic stress tensor and we resume the result below. We will focus on these terms when we will do estimates in $H^2_{(0)}H^{\gamma-1}$, since it will be more complex.

\begin{align*}
&\sum_{i=8}^{13}\|I_i\|_{L^{\infty}_{\frac{1}{4}}H^s}\leq C(\tilde{v}_0,\tilde{\mathbf{T}}_0,\re,\kappa)\left(1+ \frac{1}{\We}\right)\varepsilon+C(\tilde{v}_0,\tilde{\mathbf{T}}_0,\re,\kappa)\left(1+ \frac{1}{\We}\right)T^{\frac{1}{4}}\\[2mm]
&\hspace{0.5cm}\cdot\left(\|\tilde{w}-\tilde{w}_{\varepsilon}\|_{\mathcal{K}^{s+1}_{(0)}}+\|\tilde{\mathbf{T}}_p-\tilde{\mathbf{T}}_{p,\varepsilon}-t\hat{\mathbf{T}}_{\varepsilon}\|_{\mathcal{F}^{s,\gamma-1}}+\|\tilde{X}-\tilde{X}_{\varepsilon}+b\varepsilon-t(J^p-J^P_{\varepsilon})\tilde{v}_0\|_{\mathcal{F}^{s+1,\gamma}}\right).
\end{align*}
\medskip

\noindent We pass to analyze $\tilde{\mathbf{T}}_p-\tilde{\mathbf{T}}_{p,\varepsilon}-t\hat{\mathbf{T}}_{\varepsilon}$ in $H^2_{(0)}H^{\gamma-1}$ and we need to pay attention in order to guarantee enough cancelations at time zero. We start by rewriting $I_1$ in a convenient way.

\begin{align*}
I_1&=\int_0^t (J^P( \tilde{X})-J^P( \tilde{X}_{\varepsilon})-J^P+J^P_{\varepsilon})(\tilde{\zeta}-\mathcal{I})\nabla \tilde{w} (\tilde{\mathbf{T}}_p-\tilde{\mathbf{T}}_0)\\[1mm]
&+\int_0^t(J^P( \tilde{X})-J^P( \tilde{X}_{\varepsilon})-J^P+J^P_{\varepsilon})(\tilde{\zeta}-\mathcal{I})\nabla \tilde{w}\tilde{\mathbf{T}}_0\\[1mm]
&+\int_0^t (J^P-J^P_{\varepsilon})(\tilde{\zeta}-\mathcal{I})\nabla \tilde{w} (\tilde{\mathbf{T}}_p-\tilde{\mathbf{T}}_0)+\int_0^t (J^P-J^P_{\varepsilon})(\tilde{\zeta}-\mathcal{I})\nabla \tilde{w}\tilde{\mathbf{T}}_0\\[1mm]
&+\int_0^t (J^P( \tilde{X})-J^P( \tilde{X}_{\varepsilon})-J^P+J^P_{\varepsilon})\nabla \tilde{w} (\tilde{\mathbf{T}}_p-\tilde{\mathbf{T}}_0)\\[1mm]
&+\int_0^t (J^P( \tilde{X})-J^P( \tilde{X}_{\varepsilon})-J^P+J^P_{\varepsilon})\nabla \tilde{w}\tilde{\mathbf{T}}_0+\int_0^t (J^P-J^P_{\varepsilon})\nabla \tilde{w} (\tilde{\mathbf{T}}_p-\tilde{\mathbf{T}}_0)\\[1mm]
&+\int_0^t (J^P-J^P_{\varepsilon})\nabla \tilde{w}\tilde{\mathbf{T}}_0+\int_0^t (J^P(\tilde{X}_{\varepsilon})-J^P_{\varepsilon})(\tilde{\zeta}-\tilde{\zeta}_{\varepsilon})\nabla\tilde{w}(\tilde{\mathbf{T}}_p-\tilde{\mathbf{T}}_0)\\[1mm]
&+\int_0^t (J^P(\tilde{X}_{\varepsilon})-J^P_{\varepsilon})(\tilde{\zeta}-\tilde{\zeta}_{\varepsilon})\nabla\tilde{w}\tilde{\mathbf{T}}_0+\int_0^t J^P_{\varepsilon}(\tilde{\zeta}-\tilde{\zeta}_{\varepsilon})\nabla\tilde{w}(\tilde{\mathbf{T}}_p-\tilde{\mathbf{T}}_0)\\[1mm]
&+\int_0^t J^P_{\varepsilon}(\tilde{\zeta}-\tilde{\zeta}_{\varepsilon})\nabla\tilde{w}\tilde{\mathbf{T}}_0+\int_0^t (J^P(\tilde{X}_{\varepsilon})-J^P_{\varepsilon})(\tilde{\zeta}_{\varepsilon}-\mathcal{I})(\nabla\tilde{w}-\nabla\tilde{w}_{\varepsilon})(\tilde{\mathbf{T}}_p-\tilde{\mathbf{T}}_0)\\[1mm]
&+\int_0^t (J^P(\tilde{X}_{\varepsilon})-J^P_{\varepsilon})(\tilde{\zeta}_{\varepsilon}-\mathcal{I})(\nabla\tilde{w}-\nabla\tilde{w}_{\varepsilon})\tilde{\mathbf{T}}_0+\int_0^t J^P_{\varepsilon}(\tilde{\zeta}_{\varepsilon}-\mathcal{I})(\nabla\tilde{w}-\nabla\tilde{w}_{\varepsilon})(\tilde{\mathbf{T}}_p-\tilde{\mathbf{T}}_0)\\[1mm]
&+\int_0^t J^P_{\varepsilon}(\tilde{\zeta}_{\varepsilon}-\mathcal{I})(\nabla\tilde{w}-\nabla\tilde{w}_{\varepsilon})\tilde{\mathbf{T}}_0+\int_0^t (J^P(\tilde{X}_{\varepsilon})-J^P_{\varepsilon})(\nabla\tilde{w}-\nabla\tilde{w}_{\varepsilon})(\tilde{\mathbf{T}}_p-\tilde{\mathbf{T}}_0)\\[1mm]
&+\int_0^t (J^P(\tilde{X}_{\varepsilon})-J^P_{\varepsilon})(\nabla\tilde{w}-\nabla\tilde{w}_{\varepsilon})\tilde{\mathbf{T}}_0+\int_0^t J^P_{\varepsilon}(\nabla\tilde{w}-\nabla\tilde{w}_{\varepsilon})(\tilde{\mathbf{T}}_p-\tilde{\mathbf{T}}_0)\\[2mm]
&+\int_0^t J^P_{\varepsilon}(\nabla\tilde{w}-\nabla\tilde{w}_{\varepsilon})\tilde{\mathbf{T}}_0+\int_0^t (J^P(\tilde{X}_{\varepsilon})-J^P_{\varepsilon})(\tilde{\zeta}_{\varepsilon}-\mathcal{I})\nabla\tilde{w}_{\varepsilon}(\tilde{\mathbf{T}}_p-\tilde{\mathbf{T}}_{p,\varepsilon})\\[2mm]
&+\int_0^t J^P_{\varepsilon}(\tilde{\zeta}_{\varepsilon}-\mathcal{I})\nabla\tilde{w}_{\varepsilon}(\tilde{\mathbf{T}}_p-\tilde{\mathbf{T}}_{p,\varepsilon})+\int_0^t (J^P(\tilde{X}_{\varepsilon})-J^P_{\varepsilon})\nabla\tilde{w}_{\varepsilon}(\tilde{\mathbf{T}}_p-\tilde{\mathbf{T}}_{p,\varepsilon})\\[2mm]
&+\int_0^t J^P_{\varepsilon}\nabla\tilde{w}_{\varepsilon}(\tilde{\mathbf{T}}_p-\tilde{\mathbf{T}}_{p,\varepsilon})=\sum_{i=1}^{24} I_{1,i}
\end{align*}
\medskip

\noindent We show the estimate of $I_{1,1}$ for the flux differences. We use lemma \ref{lem2}, with $\varepsilon=0$, lemma \ref{lem3} with $\gamma>1$, lemma \ref{lem5}. Then for $J^P( \tilde{X})-J^P( \tilde{X}_{\varepsilon})-J^P+J^P_{\varepsilon}$ we use lemma \ref{Jp-est} and lemma \ref{Jp-dif-est} and lemma \ref{zeta-est} for $\tilde{\zeta}-\mathcal{I}$, lemma \ref{lem1} for the velocity and for the elastic tensor \eqref{elastic-estim}. In conclusion lemma \ref{lem2} with $0<\delta_1<\eta_1$.

\begin{align*}
&\|I_{1,1}\|_{H^2_{(0)}H^{\gamma-1}}\leq \|(J^P( \tilde{X})-J^P( \tilde{X}_{\varepsilon})-J^P+J^P_{\varepsilon})(\tilde{\zeta}-\mathcal{I})\nabla \tilde{w} (\tilde{\mathbf{T}}_p-\tilde{\mathbf{T}}_0)\|_{H^1_{(0)}H^{\gamma-1}}\\[2mm]
&\hspace{0.5cm}\leq \|J^P( \tilde{X})-J^P( \tilde{X}_{\varepsilon})-J^P+J^P_{\varepsilon}\|_{H^1_{(0)}H^{\gamma}}\|(\tilde{\zeta}-\mathcal{I})\nabla \tilde{w} (\tilde{\mathbf{T}}_p-\tilde{\mathbf{T}}_0)\|_{H^1_{(0)}H^{\gamma-1}}\\[2mm]
\end{align*}
\begin{align*}
&\leq \left(\|J^P(\tilde{X}+\varepsilon b)-J^P(\tilde{X}_{\varepsilon})\|_{H^1_{(0)}H^{\gamma}}+\|J^P(\tilde{X})-J^P-J^P(\tilde{X}+\varepsilon b)+J^P_{\varepsilon}\|_{H^1_{(0)}H^{\gamma}}\right)
\\[2mm]
&\hspace{1cm}\cdot\|\tilde{\zeta}-\mathcal{I}\|_{H^1_{(0)}H^{\gamma-1}}\|\nabla \tilde{w}\|_{H^1_{(0)}H^{\gamma-1}}\|\tilde{\mathbf{T}}_p-\tilde{\mathbf{T}}_0\|_{H^1_{(0)}H^{\gamma-1}}\\[2mm]
&\leq C(\tilde{v}_0,\tilde{\mathbf{T}}_0,\kappa)\left(1+\frac{1}{\We}\right)\left(\|\tilde{X}-\tilde{X}_{\varepsilon}+\varepsilon b-t(J^P-J^P_{\varepsilon})\tilde{v}_0\|_{H^{1}_{(0)}H^{\gamma}}\right.\\
&\hspace{1cm}\left.+\|t(J^P-J^P_{\varepsilon})\tilde{v}_0\|_{H^{1}_{(0)}H^{\gamma}}+\varepsilon\right)\\[2mm]
&\leq  C(\tilde{v}_0,\tilde{\mathbf{T}}_0,\kappa)\left(1+\frac{1}{\We}\right)\varepsilon+  C(\tilde{v}_0,\tilde{\mathbf{T}}_0,\kappa)\left(1+\frac{1}{\We}\right)\\[2mm]
&\hspace{1cm}\cdot\left\|\int_0^t\partial_t(\tilde{X}-\tilde{X}_{\varepsilon}+\varepsilon b-t(J^P-J^P_{\varepsilon})\tilde{v}_0)\right\|_{H^{1+\eta_1-\delta_1}_{(0)}H^{\gamma}}\\[2mm]
&\leq  C(\tilde{v}_0,\tilde{\mathbf{T}}_0,\kappa)\left(1+\frac{1}{\We}\right)\varepsilon+  C(\tilde{v}_0,\tilde{\mathbf{T}}_0,\kappa) T^{\delta_1}\|\tilde{X}-\tilde{X}_{\varepsilon}+\varepsilon b-t(J^P-J^P_{\varepsilon})\tilde{v}_0\|_{H^{1+\eta_1}H^{\gamma}}\\[2mm]
&\leq C(\tilde{v}_0,\tilde{\mathbf{T}}_0,\kappa)\left(1+\frac{1}{\We}\right)\varepsilon+  C(\tilde{v}_0,\tilde{\mathbf{T}}_0,\kappa) T^{\delta_1}\|\tilde{X}-\tilde{X}_{\varepsilon}+\varepsilon b-t(J^P-J^P_{\varepsilon})\tilde{v}_0\|_{\mathcal{F}^{s+1,\gamma}}.
\end{align*}
 
 \noindent Then we remark that for the other terms involving the difference of the flux we have the following results
 
 \begin{align*}
 &\|I_{1,i}\|_{H^2_{(0)}H^{\gamma-1}}\leq C(\tilde{v}_0,\tilde{\mathbf{T}}_0) \varepsilon+C(\tilde{v}_0,\tilde{\mathbf{T}}_0) T^{\delta_i}\|\tilde{X}-\tilde{X}_{\varepsilon}+\varepsilon b-t(J^P-J^P_{\varepsilon})\tilde{v}_0\|_{\mathcal{F}^{s+1,\gamma}},\\[1mm]
 &\hspace{10cm}\textrm{for}\hspace{0.3cm} i=2,6,10,12\\[2mm]
 &\|I_{1,i}\|_{H^2_{(0)}H^{\gamma-1}}\leq C(\tilde{v}_0,\tilde{\mathbf{T}}_0,\kappa)\left(1+\frac{1}{\We}\right) \varepsilon,\hspace{0.3cm}\textrm{for}\hspace{0.3cm} i=3,7\\[2mm]
 &\|I_{1,i}\|_{H^2_{(0)}H^{\gamma-1}}\leq C(\tilde{v}_0,\tilde{\mathbf{T}}_0) \varepsilon, \hspace{0.3cm}\textrm{for}\hspace{0.3cm} i=4,8\\[2mm]
 &\|I_{1,i}\|_{H^2_{(0)}H^{\gamma-1}}\leq C(\tilde{v}_0,\tilde{\mathbf{T}}_0,\kappa)\left(1+\frac{1}{\We}\right)\varepsilon+  C(\tilde{v}_0,\tilde{\mathbf{T}}_0,\kappa) \left(1+\frac{1}{\We}\right)T^{\delta_i}\\[2mm]
&\cdot \|\tilde{X}-\tilde{X}_{\varepsilon}+\varepsilon b-t(J^P-J^P_{\varepsilon})\tilde{v}_0\|_{\mathcal{F}^{s+1,\gamma}},\hspace{0.3cm}\textrm{for}\hspace{0.3cm} i=5,9,11.
 \end{align*}
\medskip

\noindent Concerning the terms with the difference of the velocity we study $I_{1,13}$, by using lemma \ref{lem2}, with $\varepsilon=0$, then in order to separate the terms we use lemma \ref{lem3} with $\gamma>1$ and lemma \ref{lem5}. Futhermore estimate \eqref{flux-estim} and \eqref{elastic-estim} with lemma \ref{lem2} and lemma \ref{lem1} with $\delta_{13}<\eta_{13}<\frac{s-1-\gamma}{2}$ give the final result.
 
\begin{align*}
&\|I_{1,13}\|_{H^2_{(0)}H^{\gamma-1}}\leq \|(J^P(\tilde{X}_{\varepsilon})-J^P_{\varepsilon})(\tilde{\zeta}_{\varepsilon}-\mathcal{I})(\nabla\tilde{w}-\nabla\tilde{w}_{\varepsilon})(\tilde{\mathbf{T}}_p-\tilde{\mathbf{T}}_0)\|_{H^1_{(0)}H^{\gamma-1}}\\[2mm]
&\leq \|J^P(\tilde{X}_{\varepsilon})-J^P_{\varepsilon}\|_{H^1_{(0)}H^{\gamma}}\|(\tilde{\zeta}_{\varepsilon}-\mathcal{I})(\nabla\tilde{w}-\nabla\tilde{w}_{\varepsilon})(\tilde{\mathbf{T}}_p-\tilde{\mathbf{T}}_0)\|_{H^1_{(0)}H^{\gamma-1}}\\[2mm]
&\leq C(\tilde{v}_0)\|\tilde{X}_{\varepsilon}-\tilde{\alpha}-\varepsilon b\|_{H^{1}_{(0)}H^{\gamma}}\|\tilde{\zeta}_{\varepsilon}-\mathcal{I}\|_{H^1_{(0)}H^{\gamma-1}}\|\nabla\tilde{w}-\nabla\tilde{w}_{\varepsilon}\|_{H^1_{(0)}H^{\gamma-1}}\|\tilde{\mathbf{T}}_p-\tilde{\mathbf{T}}_0\|_{H^1_{(0)}H^{\gamma-1}}\\[2mm]
\end{align*}
\begin{align*}
&\leq C(\tilde{v}_0,\tilde{\mathbf{T}}_0,\kappa)\left(1+\frac{1}{\We}\right)\|\tilde{w}-\tilde{w}_{\varepsilon}\|_{H^1_{(0)}H^{\gamma}}\\[2mm]
&\leq C(\tilde{v}_0,\tilde{\mathbf{T}}_0,\kappa)\left(1+\frac{1}{\We}\right)\left\|\int_0^t \partial_t(\tilde{w}-\tilde{w}_{\varepsilon})\right\|_{H^{1+\eta_{13}-\delta_{13}}_{(0)}H^{\gamma}}\\[2mm]
&\leq C(\tilde{v}_0,\tilde{\mathbf{T}}_0,\kappa)\left(1+\frac{1}{\We}\right)T^{\delta_{13}}\|\tilde{w}-\tilde{w}_{\varepsilon}\|_{\mathcal{K}^{s+1}_{(0)}}.
\end{align*}

\noindent For $i=15,17,19$ we have the same estimate as $I_{1,13}$ and for $i=14,16,18,20$ we have $\|I_{1,i}\|_{H^{2}_{(0)}H^{\gamma-1}}\leq C(\tilde{v}_0,\tilde{\mathbf{T}}_0) T^{\delta_i}\|\tilde{w}-\tilde{w}_{\varepsilon}\|_{\mathcal{K}^{s+1}_{(0)}}.$ We conclude with the difference of the elastic stress tensor, by estimating $I_{1,21}$ and use in a key way the fact that $\|t\|_{H^1_{(0)}}\leq T^{\frac{1}{2}}$.

\begin{align*}
&\|I_{1,21}\|_{H^2_{(0)}H^{\gamma-1}}\leq \|(J^P(\tilde{X}_{\varepsilon})-J^P_{\varepsilon})(\tilde{\zeta}_{\varepsilon}-\mathcal{I})\nabla\tilde{w}_{\varepsilon}(\tilde{\mathbf{T}}_p-\tilde{\mathbf{T}}_{p,\varepsilon})\|_{H^1_{(0)}H^{\gamma-1}}\\[2mm]
&\leq \|J^P(\tilde{X}_{\varepsilon})-J^P_{\varepsilon}\|_{H^1_{(0)}H^{\gamma}}\|(\tilde{\zeta}_{\varepsilon}-\mathcal{I})\nabla\tilde{w}_{\varepsilon}(\tilde{\mathbf{T}}_p-\tilde{\mathbf{T}}_{p,\varepsilon})\|_{H^1_{(0)}H^{\gamma-1}}\\[2mm]
&\leq C(\tilde{v}_0)\|\tilde{X}_{\varepsilon}-\tilde{\alpha}-\varepsilon b\|_{H^{1}_{(0)}H^{\gamma}}\|\tilde{\zeta}_{\varepsilon}-\mathcal{I}\|_{H^1_{(0)}H^{\gamma-1}}\|\tilde{w}_{\varepsilon}\|_{H^1_{(0)}H^{\gamma}}\|\tilde{\mathbf{T}}_p-\tilde{\mathbf{T}}_{p,\varepsilon}\|_{H^1_{(0)}H^{\gamma-1}}\\[2mm]
&\leq C(\tilde{v}_0)\left(\|\tilde{\mathbf{T}}_p-\tilde{\mathbf{T}}_{p,\varepsilon}-t\hat{\mathbf{T}}_{\varepsilon}\|_{H^1_{(0)}H^{\gamma-1}}+\|t\hat{\mathbf{T}}_{\varepsilon}\|_{H^1_{(0)}H^{\gamma-1}}\right)\\[2mm]
&\leq C(\tilde{v}_0,\tilde{\mathbf{T}}_0,\kappa)\left(1+\frac{1}{\We}\right)\varepsilon+C(\tilde{v}_0)\left\|\int_0^t\partial_t(\tilde{\mathbf{T}}_p-\tilde{\mathbf{T}}_{p,\varepsilon}-t\hat{\mathbf{T}}_{\varepsilon})\right\|_{H^{1+\eta_{21}-\delta_{21}}_{(0)}H^{\gamma-1}}\\[2mm]
&\leq C(\tilde{v}_0,\tilde{\mathbf{T}}_0,\kappa)\left(1+\frac{1}{\We}\right)\varepsilon+C(\tilde{v}_0) T^{\delta_{21}}\|\tilde{\mathbf{T}}_p-\tilde{\mathbf{T}}_{p,\varepsilon}-t\hat{\mathbf{T}}_{\varepsilon}\|_{\mathcal{F}^{s,\gamma-1}}.
\end{align*}

\noindent For $i=22,\ldots,24$ we have the same result as $I_{1,21}$, but we use less lemmas. Now, we can pass to $I_2$.

\begin{align*}
I_2&=\int_0^t (J^P( \tilde{X})-J^P( \tilde{X}_{\varepsilon})-J^P+J^P_{\varepsilon})(\tilde{\zeta}-\mathcal{I})\nabla \tilde{v}_0 (\tilde{\mathbf{T}}_p-\tilde{\mathbf{T}}_0)\\[1mm]
&+\int_0^t(J^P( \tilde{X})-J^P( \tilde{X}_{\varepsilon})-J^P+\hspace{0.5cm}J^P_{\varepsilon})(\tilde{\zeta}-\mathcal{I})\nabla \tilde{v}_0\tilde{\mathbf{T}}_0\\[1mm]
&+\int_0^t (J^P-J^P_{\varepsilon})(\tilde{\zeta}-\mathcal{I})\nabla \tilde{v}_0 (\tilde{\mathbf{T}}_p-\tilde{\mathbf{T}}_0)+\int_0^t (J^P-J^P_{\varepsilon})(\tilde{\zeta}-\mathcal{I})\nabla \tilde{v}_0\tilde{\mathbf{T}}_0\\[1mm]
&+\int_0^t (J^P( \tilde{X})-J^P( \tilde{X}_{\varepsilon})-J^P+J^P_{\varepsilon})\nabla \tilde{v}_0 (\tilde{\mathbf{T}}_p-\tilde{\mathbf{T}}_0)\\[1mm]
&+\int_0^t (J^P( \tilde{X})-J^P( \tilde{X}_{\varepsilon})-J^P+J^P_{\varepsilon})\nabla \tilde{v}_0\tilde{\mathbf{T}}_0+\int_0^t (J^P-J^P_{\varepsilon})\nabla \tilde{v}_0 (\tilde{\mathbf{T}}_p-\tilde{\mathbf{T}}_0)\\[1mm]
&+\int_0^t (J^P(\tilde{X}_{\varepsilon})-J^P_{\varepsilon})(\tilde{\zeta}-\tilde{\zeta}_{\varepsilon})\nabla \tilde{v}_0(\tilde{\mathbf{T}}_p-\tilde{\mathbf{T}}_0)+\int_0^t (J^P(\tilde{X}_{\varepsilon})-J^P_{\varepsilon})(\tilde{\zeta}-\tilde{\zeta}_{\varepsilon})\nabla \tilde{v}_0\tilde{\mathbf{T}}_0\\[1mm]
&+\int_0^t J^P_{\varepsilon}(\tilde{\zeta}-\tilde{\zeta}_{\varepsilon})\nabla \tilde{v}_0(\tilde{\mathbf{T}}_p-\tilde{\mathbf{T}}_0)+\int_0^t J^P_{\varepsilon}(\tilde{\zeta}-\tilde{\zeta}_{\varepsilon})\nabla \tilde{v}_0\tilde{\mathbf{T}}_0
\end{align*}
\begin{align*}
&+\int_0^t (J^P(\tilde{X}_{\varepsilon})-J^P_{\varepsilon})(\tilde{\zeta}_{\varepsilon}-\mathcal{I})\nabla \tilde{v}_0(\tilde{\mathbf{T}}_p-\tilde{\mathbf{T}}_{p,\varepsilon})+\int_0^t J^P_{\varepsilon}(\tilde{\zeta}_{\varepsilon}-\mathcal{I})\nabla \tilde{v}_0(\tilde{\mathbf{T}}_p-\tilde{\mathbf{T}}_{p,\varepsilon})\\[1mm]
&+\int_0^t (J^P(\tilde{X}_{\varepsilon})-J^P_{\varepsilon})\nabla \tilde{v}_0(\tilde{\mathbf{T}}_p-\tilde{\mathbf{T}}_{p,\varepsilon})+\int_0^t J^P_{\varepsilon}\nabla \tilde{v}_0(\tilde{\mathbf{T}}_p-\tilde{\mathbf{T}}_{p,\varepsilon})
\end{align*}
\medskip

\noindent The estimate of $I_2$ is related to $I_1$ estimate by the fact that we obtain this term from the definitions of  $\tilde{v}=\tilde{w}+\phi=\tilde{w}+\tilde{v}_0+t\hat{\phi}$ and  $\tilde{v}_{\varepsilon}=\tilde{w}_{\varepsilon}+\phi_{\varepsilon}=\tilde{w}_{\varepsilon}+\tilde{v}_0+t\hat{\phi}_{\varepsilon}$. But here we do not have the velocities difference. Moreover in this term is included one of the terms of $\hat{\mathbf{T}}_{\varepsilon}$. Then  
\begin{align*}
&\|I_2\|_{H^{2}_{(0)}H^{\gamma-1}}\leq C(\tilde{v}_0,\tilde{\mathbf{T}}_0,\kappa)\left(1+\frac{1}{\We}\right)\varepsilon+  C(\tilde{v}_0,\tilde{\mathbf{T}}_0,\kappa) T^{\varrho_1}\\
&\hspace{0.5cm}\cdot\left(\|\tilde{X}-\tilde{X}_{\varepsilon}+\varepsilon b-t(J^P-J^P_{\varepsilon})\tilde{v}_0\|_{\mathcal{F}^{s+1,\gamma}}+\|\tilde{\mathbf{T}}_p-\tilde{\mathbf{T}}_{p,\varepsilon}-t\hat{\mathbf{T}}_{\varepsilon}\|_{\mathcal{F}^{s,\gamma-1}}\right).
\end{align*}

\noindent The splitting and the estimate of $I_3$ is summarized below

\begin{align*}
I_3&=\int_0^t (J^P( \tilde{X})-J^P( \tilde{X}_{\varepsilon})-J^P+J^P_{\varepsilon})(\tilde{\zeta}-\mathcal{I})t\nabla\hat{\phi} (\tilde{\mathbf{T}}_p-\tilde{\mathbf{T}}_0)\\[1mm]
&+\int_0^t(J^P( \tilde{X})-J^P( \tilde{X}_{\varepsilon})-J^P+J^P_{\varepsilon})(\tilde{\zeta}-\mathcal{I})t\nabla\hat{\phi}\tilde{\mathbf{T}}_0\\[1mm]
&+\int_0^t (J^P-J^P_{\varepsilon})(\tilde{\zeta}-\mathcal{I})t\nabla\hat{\phi}(\tilde{\mathbf{T}}_p-\tilde{\mathbf{T}}_0)+\int_0^t (J^P-J^P_{\varepsilon})(\tilde{\zeta}-\mathcal{I})t\nabla\hat{\phi}\tilde{\mathbf{T}}_0\\[1mm]
&+\int_0^t (J^P( \tilde{X})-J^P( \tilde{X}_{\varepsilon})-J^P+J^P_{\varepsilon})t\nabla\hat{\phi} (\tilde{\mathbf{T}}_p-\tilde{\mathbf{T}}_0)\\[1mm]
&+\int_0^t (J^P( \tilde{X})-J^P( \tilde{X}_{\varepsilon})-J^P+J^P_{\varepsilon})t\nabla\hat{\phi}\tilde{\mathbf{T}}_0+\int_0^t (J^P-J^P_{\varepsilon})t\nabla\hat{\phi}(\tilde{\mathbf{T}}_p-\tilde{\mathbf{T}}_0)\\[1mm]
&+\int_0^t (J^P-J^P_{\varepsilon})t\nabla\hat{\phi}\tilde{\mathbf{T}}_0+\int_0^t (J^P(\tilde{X}_{\varepsilon})-J^P_{\varepsilon})(\tilde{\zeta}-\tilde{\zeta}_{\varepsilon})t\nabla\hat{\phi}(\tilde{\mathbf{T}}_p-\tilde{\mathbf{T}}_0)\\[1mm]
&+\int_0^t (J^P(\tilde{X}_{\varepsilon})-J^P_{\varepsilon})(\tilde{\zeta}-\tilde{\zeta}_{\varepsilon})t\nabla\hat{\phi}\tilde{\mathbf{T}}_0+\int_0^t J^P_{\varepsilon}(\tilde{\zeta}-\tilde{\zeta}_{\varepsilon})t\nabla\hat{\phi}(\tilde{\mathbf{T}}_p-\tilde{\mathbf{T}}_0)\\[1mm]
&+\int_0^t J^P_{\varepsilon}(\tilde{\zeta}-\tilde{\zeta}_{\varepsilon})t\nabla\hat{\phi}\tilde{\mathbf{T}}_0+\int_0^t (J^P(\tilde{X}_{\varepsilon})-J^P_{\varepsilon})(\tilde{\zeta}_{\varepsilon}-\mathcal{I})t(\nabla\hat{\phi}-\nabla\hat{\phi}_{\varepsilon})(\tilde{\mathbf{T}}_p-\tilde{\mathbf{T}}_0)\\[1mm]
&+\int_0^t (J^P(\tilde{X}_{\varepsilon})-J^P_{\varepsilon})(\tilde{\zeta}_{\varepsilon}-\mathcal{I})t(\nabla\hat{\phi}-\nabla\hat{\phi}_{\varepsilon})\tilde{\mathbf{T}}_0+\int_0^t J^P_{\varepsilon}(\tilde{\zeta}_{\varepsilon}-\mathcal{I})t(\nabla\hat{\phi}-\nabla\hat{\phi}_{\varepsilon})(\tilde{\mathbf{T}}_p-\tilde{\mathbf{T}}_0)\\[1mm]
&+\int_0^t J^P_{\varepsilon}(\tilde{\zeta}_{\varepsilon}-\mathcal{I})t(\nabla\hat{\phi}-\nabla\hat{\phi}_{\varepsilon})\tilde{\mathbf{T}}_0+\int_0^t (J^P(\tilde{X}_{\varepsilon})-J^P_{\varepsilon})t(\nabla\hat{\phi}-\nabla\hat{\phi}_{\varepsilon})(\tilde{\mathbf{T}}_p-\tilde{\mathbf{T}}_0)\\[1mm]
&+\int_0^t (J^P(\tilde{X}_{\varepsilon})-J^P_{\varepsilon})t(\nabla\hat{\phi}-\nabla\hat{\phi}_{\varepsilon})\tilde{\mathbf{T}}_0+\int_0^t J^P_{\varepsilon}t(\nabla\hat{\phi}-\nabla\hat{\phi}_{\varepsilon})(\tilde{\mathbf{T}}_p-\tilde{\mathbf{T}}_0)\\[1mm]
&+\int_0^t J^P_{\varepsilon}t(\nabla\hat{\phi}-\nabla\hat{\phi}_{\varepsilon})\tilde{\mathbf{T}}_0+\int_0^t (J^P(\tilde{X}_{\varepsilon})-J^P_{\varepsilon})(\tilde{\zeta}_{\varepsilon}-\mathcal{I})t\nabla\hat{\phi}_{\varepsilon}(\tilde{\mathbf{T}}_p-\tilde{\mathbf{T}}_{p,\varepsilon})\\[1mm]
&+\int_0^t J^P_{\varepsilon}(\tilde{\zeta}_{\varepsilon}-\mathcal{I})t\nabla\hat{\phi}_{\varepsilon}(\tilde{\mathbf{T}}_p-\tilde{\mathbf{T}}_{p,\varepsilon})+\int_0^t (J^P(\tilde{X}_{\varepsilon})-J^P_{\varepsilon})t\nabla\hat{\phi}_{\varepsilon}(\tilde{\mathbf{T}}_p-\tilde{\mathbf{T}}_{p,\varepsilon})+\int_0^t J^P_{\varepsilon}t\nabla\hat{\phi}_{\varepsilon}(\tilde{\mathbf{T}}_p-\tilde{\mathbf{T}}_{p,\varepsilon})
\end{align*}

\medskip

\noindent The estimate of this term is obtained in the same way as $I_1, I_2$ but here we have the difference $\hat{\phi}-\hat{\phi}_{\varepsilon}$. First of all we remark that $\hat{\phi}$ does not depend on time but only on $\tilde{v}_0, \tilde{\mathbf{T}}_0$ and $t\in H^1_{(0)}([0,T])$ allows us to treat properly $t\hat{\phi}, t\hat{\phi}_{\varepsilon}$ and $t(\hat{\phi}-\hat{\phi}_{\varepsilon})$. Then the final estimates is 
\begin{align*}
&\|I_3\|_{H^{2}_{(0)}H^{\gamma-1}}\leq C(\tilde{v}_0,\tilde{\mathbf{T}}_0,\re,\kappa)\left(1+\frac{1}{\We}\right)\varepsilon+  C(\tilde{v}_0,\tilde{\mathbf{T}}_0,\re,\kappa) T^{\varrho_2}\\
&\hspace{0.5cm}\cdot\left(\|\tilde{X}-\tilde{X}_{\varepsilon}+\varepsilon b-t(J^P-J^P_{\varepsilon})\tilde{v}_0\|_{\mathcal{F}^{s+1,\gamma}}+\|\tilde{\mathbf{T}}_p-\tilde{\mathbf{T}}_{p,\varepsilon}-t\hat{\mathbf{T}}_{\varepsilon}\|_{\mathcal{F}^{s,\gamma-1}}\right).
\end{align*}

\noindent As we have already stated, concerning the estimates of $I_4, I_5, I_6$ we can observe that we will get the same results as $I_1, I_2$ and $I_3$, respectively. For the sake of simplicity we avoid the splitting and the estimates of these integrals. We pass to show how to manage $I_7$, by adding $t\hat{\mathbf{T}}_{\varepsilon}$ which guarantees enough cancelations and it can be estimated in $H^1_{(0)}H^{\gamma-1}$, in the way shown before but to conclude we use lemma \ref{lem2} with $0<\varrho_3<\eta$.

\begin{align*}
&\|I_7\|_{H^2_{(0)}H^{\gamma-1}}\leq \frac{1}{\We}\|\tilde{\mathbf{T}}_p-\tilde{\mathbf{T}}_{p,\varepsilon}\|_{H^1_{(0)}H^{\gamma-1}}\leq \frac{1}{\We}\left(\|\tilde{\mathbf{T}}_p-\tilde{\mathbf{T}}_{p,\varepsilon}-t\hat{\mathbf{T}}_{\varepsilon}\|_{H^1_{(0)}H^{\gamma-1}}+\|t\hat{\mathbf{T}}_{\varepsilon}\|_{H^1_{(0)}H^{\gamma-1}}\right)\\[2mm]
&\leq C(\tilde{v}_0,\tilde{\mathbf{T}}_0,\kappa)\left(1+\frac{1}{\We}\right)\varepsilon+\frac{1}{\We}\left\|\int_0^t\partial_t(\tilde{\mathbf{T}}_p-\tilde{\mathbf{T}}_{p,\varepsilon}-t\hat{\mathbf{T}}_{\varepsilon})\right\|_{H^{1+\varrho_3-\eta}_{(0)}H^{\gamma-1}}\\[2mm]
&\leq C(\tilde{v}_0,\tilde{\mathbf{T}}_0,\kappa)\left(1+\frac{1}{\We}\right)\varepsilon+\frac{1}{\We}T^{\varrho_3}\|\tilde{\mathbf{T}}_p-\tilde{\mathbf{T}}_{p,\varepsilon}-t\hat{\mathbf{T}}_{\varepsilon}\|_{\mathcal{F}^{s,\gamma-1}}.
\end{align*}

\noindent Now for the remaining six integrals, we observe that it is enough to study $I_8, I_9, I_{10}$ since the others are just their transpose. Moreover $I_8, I_9, I_{10}$ come from the definition of the velocity $\tilde{v}$ than we show how to rewrite these integrals to get all the required cancelations at time zero. In addition we remark that in $I_9$ there is the presence of another term of $\hat{\mathbf{T}}_{\varepsilon}$.

\begin{align*}
&I_8=\frac{\kappa}{\We}\int_0^t (J^P(\tilde{X})-J^P(\tilde{X}_{\varepsilon})-J^P+J^P_{\varepsilon})(\tilde{\zeta}-\mathcal{I})\nabla\tilde{w}+\frac{\kappa}{\We}\int_0^t (J^P-J^P_{\varepsilon})(\tilde{\zeta}-\mathcal{I})\nabla\tilde{w}\\[1mm]
&\hspace{0.5cm}+\frac{\kappa}{\We}\int_0^t (J^P(\tilde{X})-J^P(\tilde{X}_{\varepsilon})-J^P+J^P_{\varepsilon})\nabla\tilde{w}+\frac{\kappa}{\We}\int_0^t (J^P-J^P_{\varepsilon})\nabla\tilde{w}\\[1mm]
&\hspace{0.5cm}+\frac{\kappa}{\We}\int_0^t (J^P(\tilde{X}_{\varepsilon})-J^P_{\varepsilon})(\tilde{\zeta}-\tilde{\zeta}_{\varepsilon})\nabla\tilde{w}+\frac{\kappa}{\We}\int_0^t J^P_{\varepsilon}(\tilde{\zeta}-\tilde{\zeta}_{\varepsilon})\nabla\tilde{w}\\[1mm]
&\hspace{0.5cm}+\frac{\kappa}{\We}\int_0^t (J^P(\tilde{X}_{\varepsilon})-J^P_{\varepsilon})(\tilde{\zeta}_{\varepsilon}-\mathcal{I})(\nabla\tilde{w}-\nabla\tilde{w}_{\varepsilon})+\frac{\kappa}{\We}\int_0^t J^P_{\varepsilon}(\tilde{\zeta}_{\varepsilon}-\mathcal{I})(\nabla\tilde{w}-\nabla\tilde{w}_{\varepsilon})\\[1mm]
&\hspace{0.5cm}+\frac{\kappa}{\We}\int_0^t (J^P(\tilde{X}_{\varepsilon})-J^P_{\varepsilon})(\nabla\tilde{w}-\nabla\tilde{w}_{\varepsilon})+\frac{\kappa}{\We}\int_0^t J^P_{\varepsilon}(\nabla\tilde{w}-\nabla\tilde{w}_{\varepsilon})=\sum_{i=1}^{10} I_{8,i}\\[4mm]
&I_9=\frac{\kappa}{\We}\int_0^t (J^P(\tilde{X})-J^P(\tilde{X}_{\varepsilon})-J^P+J^P_{\varepsilon})(\tilde{\zeta}-\mathcal{I})\nabla\tilde{v}_0\\[1mm]
&\hspace{0.5cm}+\frac{\kappa}{\We}\int_0^t (J^P-J^P_{\varepsilon})(\tilde{\zeta}-\mathcal{I})\nabla\tilde{v}_0+\frac{\kappa}{\We}\int_0^t (J^P(\tilde{X})-J^P(\tilde{X}_{\varepsilon})-J^P+J^P_{\varepsilon})\nabla\tilde{v}_0\\[1mm]
&\hspace{0.5cm}+\frac{\kappa}{\We}\int_0^t (J^P(\tilde{X}_{\varepsilon})-J^P_{\varepsilon})(\tilde{\zeta}-\tilde{\zeta}_{\varepsilon})\nabla\tilde{v}_0+\frac{\kappa}{\We}\int_0^t J^P_{\varepsilon}(\tilde{\zeta}-\tilde{\zeta}_{\varepsilon})\nabla\tilde{v}_0=\sum_{i=1}^{5}I_{9,i}
\end{align*}
\begin{align*}
&I_{10}=\frac{\kappa}{\We}\int_0^t (J^P(\tilde{X})-J^P(\tilde{X}_{\varepsilon})-J^P+J^P_{\varepsilon})(\tilde{\zeta}-\mathcal{I})t\nabla\hat{\phi}+\frac{\kappa}{\We}\int_0^t (J^P-J^P_{\varepsilon})(\tilde{\zeta}-\mathcal{I})t\nabla\hat{\phi}\\[1mm]
&\hspace{0.5cm}+\frac{\kappa}{\We}\int_0^t (J^P(\tilde{X})-J^P(\tilde{X}_{\varepsilon})-J^P+J^P_{\varepsilon})t\nabla\hat{\phi}+\frac{\kappa}{\We}\int_0^t (J^P-J^P_{\varepsilon})t\nabla\hat{\phi}\\[2mm]
&\hspace{0.5cm}+\frac{\kappa}{\We}\int_0^t (J^P(\tilde{X}_{\varepsilon})-J^P_{\varepsilon})(\tilde{\zeta}-\tilde{\zeta}_{\varepsilon})t\nabla\hat{\phi}+\frac{\kappa}{\We}\int_0^t J^P_{\varepsilon}(\tilde{\zeta}-\tilde{\zeta}_{\varepsilon})t\nabla\hat{\phi}\\[2mm]
&\hspace{0.5cm}+\frac{\kappa}{\We}\int_0^t (J^P(\tilde{X}_{\varepsilon})-J^P_{\varepsilon})(\tilde{\zeta}_{\varepsilon}-\mathcal{I})t(\nabla\hat{\phi}-\nabla\hat{\phi}_{\varepsilon})+\frac{\kappa}{\We}\int_0^t J^P_{\varepsilon}(\tilde{\zeta}_{\varepsilon}-\mathcal{I})t(\nabla\hat{\phi}-\nabla\hat{\phi}_{\varepsilon})\\[2mm]
&\hspace{0.5cm}+\frac{\kappa}{\We}\int_0^t (J^P(\tilde{X}_{\varepsilon})-J^P_{\varepsilon})t(\nabla\hat{\phi}-\nabla\hat{\phi}_{\varepsilon})+\frac{\kappa}{\We}\int_0^t J^P_{\varepsilon}t(\nabla\hat{\phi}-\nabla\hat{\phi}_{\varepsilon})=\sum_{i=1}^{10} I_{10,i}.
\end{align*}

\noindent We analyze some of these integrals, in particular we focus on $I_{8,7}$, which gives the velocities difference, on $I_{9,1}$, which gives the flux difference and on $I_{10,7}$ in order to understand how to deal with $t(\hat{\phi}-\hat{\phi}_{\varepsilon})$ in $H^{1}_{(0)}H^{\gamma-1}-$norm. For all the integrals we use lemma \ref{lem2} with $\varepsilon=0$, lemma \ref{lem3} with $\gamma>1$ and lemma \ref{lem5} in order to separate each term. Then for $I_{8,7}$ we use lemma \ref{Jp-est}, lemma \ref{zeta-est} and for the velocity lemma \ref{lem2} and lemma \ref{lem1} with $\beta_7<\eta_7<\frac{s-1-\gamma}{2}$. For $I_{9,1}$ we require $\tilde{v}_0$ to be enough regular in order to separate it from $\tilde{\zeta}_{\varepsilon}-\mathcal{I}$ and we apply lemma \ref{Jp-est} and lemma \ref{Jp-dif-est}, then lemma \ref{zeta-est} and in the end lemma \ref{lem2} with $0<\theta_1<\eta_1$. For the last integral $I_{10,7}$, we use lemma \ref{Jp-est} and lemma \ref{zeta-est} and the difference of $\hat{\phi}-\hat{\phi}_{\varepsilon}$ give the dependence on $\tilde{\mathbf{T}}_0$ of the constant and to conclude it is fundamental the fact that $t\in H^1_{(0)}([0,T])$.

\begin{align*}
&\|I_{8,7}\|_{H^2_{(0)}H^{\gamma-1}}\leq \frac{\kappa}{\We}\|(J^P(\tilde{X}_{\varepsilon})-J^P_{\varepsilon})(\tilde{\zeta}_{\varepsilon}-\mathcal{I})(\nabla\tilde{w}-\nabla\tilde{w}_{\varepsilon})\|_{H^1_{(0)}H^{\gamma-1}}\\[2mm]
&\leq \frac{\kappa}{\We}\|J^P(\tilde{X}_{\varepsilon})-J^P_{\varepsilon}\|_{H^1_{(0)}H^{\gamma}}\|(\tilde{\zeta}_{\varepsilon}-\mathcal{I})(\nabla\tilde{w}-\nabla\tilde{w}_{\varepsilon})\|_{H^1_{(0)}H^{\gamma-1}}\\[2mm]
&\leq \frac{\kappa}{\We}\|\tilde{X}_{\varepsilon}-\tilde{\alpha}-\varepsilon b\|_{H^{1}_{(0)}H^{\gamma}}\|\tilde{\zeta}_{\varepsilon}-\mathcal{I}\|_{H^1_{(0)}H^{\gamma-1}}\|\nabla\tilde{w}-\nabla\tilde{w}_{\varepsilon}\|_{H^1_{(0)}H^{\gamma-1}}\\[2mm]
&\leq C(\tilde{v}_0,\kappa)\left\|\int_0^t\partial_t(\tilde{w}-\tilde{w}_{\varepsilon})\right\|_{H^{1+\eta_7-\beta_7}_{(0)}H^{\gamma-1}}\leq C(\tilde{v}_0,\kappa) T^{\beta_7}\|\tilde{w}-\tilde{w}_{\varepsilon}\|_{\mathcal{K}^{s+1}_{(0)}},\\[4mm]
&\|I_{9,1}\|_{H^2_{(0)}H^{\gamma-1}}\leq \frac{\kappa}{\We}\|(J^P(\tilde{X})-J^P(\tilde{X}_{\varepsilon})-J^P+J^P_{\varepsilon})(\tilde{\zeta}-\mathcal{I})\nabla\tilde{v}_0\|_{H^{1}_{(0)}H^{\gamma-1}}\\[2mm]
&\leq\frac{\kappa}{\We}\left(\|J^P(\tilde{X}+\varepsilon b)-J^P(\tilde{X}_{\varepsilon})\|_{H^{1}_{(0)}H^{\gamma}}+\|J^P(\tilde{X})-J^P-J^P(\tilde{X}+\varepsilon b)+J^P_{\varepsilon}\|_{H^{1}_{(0)}H^{\gamma}}\right)\\[1mm]
&\hspace{0.5cm}\cdot\|(\tilde{\zeta}-\mathcal{I})\nabla\tilde{v}_0\|_{H^{1}_{(0)}H^{\gamma-1}}\\[2mm]
&\leq\frac{\kappa}{\We}\left(\|\tilde{X}-\tilde{X}_{\varepsilon}+\varepsilon b\|_{H^{1}_{(0)}H^{\gamma}}+\varepsilon\right)\|\tilde{\zeta}-\mathcal{I}\|_{H^{1}_{(0)}H^{\gamma-1}}\|\nabla\tilde{v}_0\|_{H^{\gamma}}\\[2mm]
&\leq C(\tilde{v}_0,\kappa)\frac{1}{\We}\varepsilon+C(\tilde{v}_0,\kappa)\frac{1}{\We} \left(\|\tilde{X}-\tilde{X}_{\varepsilon}+\varepsilon b-t(J^P-J^P_{\varepsilon})\tilde{v}_0\|_{H^{1}_{(0)}H^{\gamma}}+\|t(J^P-J^P_{\varepsilon})\tilde{v}_0\|_{H^{1}_{(0)}H^{\gamma}}\right)\\[2mm]
\end{align*}
\begin{align*}
&\leq C(\tilde{v}_0,\kappa)\frac{1}{\We}\varepsilon+C(\tilde{v}_0,\kappa)\frac{1}{\We} \left\|\int_0^t\partial_t(\tilde{X}-\tilde{X}_{\varepsilon}+\varepsilon b-t(J^P-J^P_{\varepsilon})\tilde{v}_0\right\|_{H^{1+\eta_1-\theta_1}_{(0)}H^{\gamma}}\\[2mm]
&\leq C(\tilde{v}_0,\kappa)\frac{1}{\We}\varepsilon+C(\tilde{v}_0,\kappa)\frac{1}{\We} T^{\theta_1}\|\tilde{X}-\tilde{X}_{\varepsilon}+\varepsilon b-t(J^P-J^P_{\varepsilon})\tilde{v}_0\|_{\mathcal{F}^{s+1,\gamma}}\\[4mm]
&\|I_{10,7}\|_{H^2_{(0)}H^{\gamma-1}}\leq \frac{\kappa}{\We}\| (J^P(\tilde{X}_{\varepsilon})-J^P_{\varepsilon})(\tilde{\zeta}_{\varepsilon}-\mathcal{I})t(\nabla\hat{\phi}-\nabla\hat{\phi}_{\varepsilon})\|_{H^1_{(0)}H^{\gamma-1}}\\[2mm]
&\leq \frac{\kappa}{\We}\|J^P(\tilde{X}_{\varepsilon})-J^P_{\varepsilon}\|_{H^1_{(0)}H^{\gamma-1}}\|(\tilde{\zeta}_{\varepsilon}-\mathcal{I})t(\nabla\hat{\phi}-\nabla\hat{\phi}_{\varepsilon})\|_{H^1_{(0)}H^{\gamma-1}}\\[2mm]
&\leq \frac{\kappa}{\We}\|\tilde{X}_{\varepsilon}-\tilde{\alpha}-\varepsilon b\|_{H^{1}_{(0)}H^{\gamma}}\|\tilde{\zeta}_{\varepsilon}-\mathcal{I}\|_{H^1_{(0)}H^{\gamma-1}}\|t\|_{H^1_{(0)}}\|\hat{\phi}-\hat{\phi}_{\varepsilon}\|_{H^{\gamma}}\\[2mm]
&\leq C(\tilde{v}_0, \tilde{\mathbf{T}}_0,\re,\kappa)\frac{1}{\We}\varepsilon.
\end{align*}
\medskip

\noindent The estimates of the remaining terms can be deduced from these three estimates above. The proof of the lemma holds by choosing
$\beta=\min\{\frac{1}{4},\delta_i,\varrho_1, \varrho_2,\varrho_3, \beta_j, \theta_k\},$ where the $\delta_i$ come from $I_{1,i}$, the $\beta_j$ come from $I_{8,j}$ and the $\theta_k$ come from $I_{9,k}$.
\end{proof}

\noindent Now we pass to get stability estimates for the velocity and the pressure.

\begin{lemma}
For  $2<s<\frac{5}{2}$ and a suitable choice of $\varrho>0$,  we have
\begin{equation*}
\begin{split}
&\|\tilde{w}-\tilde{w}_{\varepsilon}\|_{\mathcal{K}^{s+1}_{(0)}} +\|\tilde{q}_w-\tilde{q}_{w,\varepsilon}\|_{\mathcal{K}^{s}_{pr(0)}}\leq C(\tilde{v}_0,\tilde{\mathbf{T}}_0,\re,\kappa)\left(1+\frac{1}{\We}\right)\varepsilon\\[2mm]
&+C(\tilde{v}_0,\tilde{\mathbf{T}}_0,\re,\kappa)\left(1+\frac{1}{\We}\right)T^{\varrho}\left( \|\tilde{w}-\tilde{w}_{\varepsilon}\|_{\mathcal{K}^{s+1}_{(0)}} +\|\tilde{q}_w-\tilde{q}_{w,\varepsilon}\|_{\mathcal{K}^{s}_{pr(0)}}\right.\\[2mm]
&\left.+\|\tilde{\mathbf{T}}_p-\tilde{\mathbf{T}}_{p,\varepsilon}-t\hat{\mathbf{T}}_{\varepsilon}\|_{\mathcal{F}^{s,\gamma-1}}+\|\tilde{X}-\tilde{X}_{\varepsilon}+\varepsilon b-t(J^P-J^P_{\varepsilon})\tilde{v}_0\|_{\mathcal{F}^{s+1,\gamma}}\right).
\end{split}
\end{equation*}
\end{lemma}

\begin{proof}
As we did for the iterative bounds we use the invertibility of the operator $L$ which defines the system \eqref{w-stab}, specifically $L(\tilde{w}-\tilde{w}_{\varepsilon},\tilde{q}_{w}-\tilde{q}_{w,\varepsilon})=(\tilde{F}_{\varepsilon},\tilde{K}_{\varepsilon},\tilde{H}_{\varepsilon})$, then we have 
\begin{align*}
&\|\tilde{w}-\tilde{w}_{\varepsilon}\|_{\mathcal{K}^{s+1}_{(0)}} +\|\tilde{q}_w-\tilde{q}_{w,\varepsilon}\|_{\mathcal{K}^{s}_{pr(0)}}
\leq \left( \|\tilde{F}_{\varepsilon}\|_{\mathcal{K}^{s-1}_{(0)}}+\|\tilde{K}_{\varepsilon}\|_{\mathcal{\bar{K}}^{s}_{(0)}}+\|\tilde{H}_{\varepsilon}\|_{\mathcal{K}^{s-\frac{1}{2}}_{(0)}}\right).
\end{align*}
\medskip

\noindent So what we have to show is the following
\begin{align*}
&\|\tilde{F}_{\varepsilon}\|_{\mathcal{K}^{s-1}_{(0)}}+\|\tilde{K}_{\varepsilon}\|_{\mathcal{\bar{K}}^{s}_{(0)}}+\|\tilde{H}_{\varepsilon}\|_{\mathcal{K}^{s-\frac{1}{2}}_{(0)}}\leq C(\tilde{v}_0,\tilde{\mathbf{T}}_0,\re,\kappa)\left(1+\frac{1}{\We}\right)\varepsilon\\[2mm]
&+C(\tilde{v}_0,\tilde{\mathbf{T}}_0,\re,\kappa)\left(1+\frac{1}{\We}\right)T^{\varrho}\left( \|\tilde{w}-\tilde{w}_{\varepsilon}\|_{\mathcal{K}^{s+1}_{(0)}} +\|\tilde{q}_w-\tilde{q}_{w,\varepsilon}\|_{\mathcal{K}^{s}_{pr(0)}}\right.\\[2mm]
&\left.+\|\tilde{\mathbf{T}}_p-\tilde{\mathbf{T}}_{p,\varepsilon}-t\hat{\mathbf{T}}_{\varepsilon}\|_{\mathcal{F}^{s,\gamma-1}}+\|\tilde{X}-\tilde{X}_{\varepsilon}+\varepsilon b-t(J^P-J^P_{\varepsilon})\tilde{v}_0\|_{\mathcal{F}^{s+1,\gamma}}\right).
\end{align*}

\underline{\textbf{Estimate for $\tilde{F}_{\varepsilon}$}}\\

\noindent As defined above $\tilde{F}_{\varepsilon}=\tilde{f}-\tilde{f}_{\varepsilon}+\tilde{f}_{\phi}^L-\tilde{f}_{\phi,\varepsilon}^L+(1-\kappa)(Q^2-Q^2_{\varepsilon})\Delta \tilde{w}_{\varepsilon}-((J^P)^T-(J_{\varepsilon}^P)^T)\nabla \tilde{q}_{w,\varepsilon}$ and we have to do estimates in $L^2H^s$ and $H^{\frac{s-1}{2}}_{(0)}L^2$. We start with the analysis of the  \\
$(1-\kappa)(Q^2-Q^2_{\varepsilon})\Delta \tilde{w}_{\varepsilon}-((J^P)^T-(J_{\varepsilon}^P)^T)\nabla \tilde{q}_{w,\varepsilon}$ and by using proposition \ref{stability} (1)-(5) we have the following estimates in $\mathcal{K}^{s-1}_{(0)}$.

\begin{align*}
&(1-\kappa)\|(Q^2-Q^2_{\varepsilon})\Delta w_{\varepsilon}\|_{L^2H^{s-1}}\leq (1-\kappa) \|Q^2-Q^2_{\varepsilon}\|_{L^{\infty}H^{s-1}}\|w_{\varepsilon}\|_{L^2H^{s+1}}\leq C(\tilde{v}_0,\kappa)\varepsilon\\[3mm]
&(1-\kappa)\|(Q^2-Q^2_{\varepsilon})\Delta w_{\varepsilon}\|_{H^{\frac{s-1}{2}}_{(0)}L^2}\leq(1-\kappa)\|Q^2-Q^2_{\varepsilon}\|_{H^{\frac{s-1}{2}}_{(0)}H^{1+\eta}}\|\Delta w_{\varepsilon}\|_{H^{\frac{s-1}{2}}_{(0)}L^2}\leq C(\tilde{v}_0,\kappa)\varepsilon,\\[3mm]
&\left\|((J^P)^T-(J^P_{\varepsilon})^T)\nabla {q}_{w,\varepsilon}\right\|_{L^2H^{s-1}}\leq \|(J^P)^T-(J^P_{\varepsilon})^T\|_{L^{\infty}H^{s-1}}\| {q}_{w,\varepsilon}\|_{L^2H^s}\leq C(\tilde{v}_0)\varepsilon\\[3mm]
&\|((J^P)^T-(J^P_{\varepsilon})^T)\nabla {q}_{w,\varepsilon}\|_{H^{\frac{s-1}{2}}_{(0)}L^2}\leq \|(J^P)^T-(J^P_{\varepsilon})^T\|_{H^{\frac{s-1}{2}}_{(0)}H^{1+\eta}}\| {q}_{w,\varepsilon}\|_{H^{\frac{s-1}{2}}_{(0)}H^1}\leq C(\tilde{v}_0)\varepsilon.
\end{align*}
\medskip

\noindent Now, we focus on the estimate of $\tilde{f}_{\phi}^L-\tilde{f}_{\phi,\varepsilon}^L=\trace(\nabla\tilde{\mathbf{T}}_0(J^P-J^P_{\varepsilon}))+t(1-\kappa)(Q^2-Q^2_{\varepsilon})\Delta\hat{\phi}+t(1-\kappa)Q^2_{\varepsilon}(\Delta\hat{\phi}-\Delta\hat{\phi}_{\varepsilon})=I_1+I_2+I_3.$ We observe that for $I_2$ and $I_3$ the presence of $t$ in front of $\hat{\phi}=\frac{1}{\re}\left((1-\kappa)Q^2\Delta \tilde{v}_0-(J^P)^T\nabla \tilde{q}_{\phi}+\trace(\nabla\tilde{\mathbf{T}}_0J^P)\right)$, which depend only on the initial data allows us to obtain estimates in $\mathcal{K}^{s-1}_{(0)}$. We cannot state the same for $I_1$ since it depends only on the initial data and it is impossible to get bounds in $\mathcal{K}^{s-1}_{(0)}$. For this reason we show below how to deal with $I_2$ and $I_3$ and it will be clear why we require enough regularity for $\tilde{v}_0,\tilde{\mathbf{T}}_0$. On the other hand we will put $I_1$ together with $\tilde{f}-\tilde{f}_{\varepsilon}$.

\begin{align*}
&\|I_2\|_{L^2H^{s-1}}\leq C(\kappa) \|t\|_{L^2}\|Q^2-Q^2_{\varepsilon}\|_{H^{s}}\|\hat{\phi}\|_{H^{s+1}}\leq  C(\tilde{v}_0,\tilde{\mathbf{T}}_0\re,\kappa)\varepsilon\\[2mm]
&\|I_2\|_{H^{\frac{s-1}{2}}_{(0)}L^2}\leq C(\kappa)\|t\|_{H^{\frac{s-1}{2}}_{(0)}}\|Q^2-Q^2_{\varepsilon}\|_{H^1}\|\hat{\phi}\|_{H^{2}}\leq  C(\tilde{v}_0,\tilde{\mathbf{T}}_0\re,\kappa)\varepsilon\\[2mm]
&\|I_3\|_{L^2H^{s-1}}\leq C(\kappa) \|t\|_{L^2}\|\hat{\phi}-\hat{\phi}_{\varepsilon}\|_{H^{s+1}}\leq  C(\tilde{v}_0,\tilde{\mathbf{T}}_0\re,\kappa)\varepsilon\\[2mm]
&\|I_3\|_{H^{\frac{s-1}{2}}_{(0)}L^2}\leq C(\kappa) \|t\|_{H^{\frac{s-1}{2}}_{(0)}}\|\hat{\phi}-\hat{\phi}_{\varepsilon}\|_{H^{2}}\leq  C(\tilde{v}_0,\tilde{\mathbf{T}}_0\re,\kappa)\varepsilon.
\end{align*}

\noindent As we did in the proof of Proposition \ref{estimate-conf-lag-(v,q)} we write $\tilde{f}-\tilde{f}_{\varepsilon}=\tilde{f}_w-\tilde{f}_{w,\varepsilon}+\tilde{f}_{\phi}-\tilde{f}_{\phi,\varepsilon}+\tilde{f}_{q}-\tilde{f}_{q,\varepsilon}+\tilde{f}_T-\tilde{f}_{T,\varepsilon}$ and by identifying $\tilde{f}^{(n)}$ with $\tilde{f}$ and $\tilde{f}^{(n-1)}$ with $\tilde{f}_{\varepsilon}$, we obtain similar results. In particular, we focus on $\tilde{f}_T-\tilde{f}_{T,\varepsilon}$, since the details for the other terms can be checked in \cite[Lemma 6.2]{CCFGG2} and the result is

\begin{align*}
&\|\tilde{f}_w-\tilde{f}_{w,\varepsilon}+\tilde{f}_{\phi}-\tilde{f}_{\phi,\varepsilon}+\tilde{f}_{q}-\tilde{f}_{q,\varepsilon}\|_{\mathcal{K}^{s-1}_{(0)}}\leq C(\tilde{v}_0,\tilde{\mathbf{T}}_0,\re,\kappa)\varepsilon+C(\tilde{v}_0,\tilde{\mathbf{T}}_0,\re,\kappa)T^{\delta_1}\\
&\cdot\left(\|\tilde{X}-\tilde{X}_{\varepsilon}+\varepsilon b-t(J^P-J^P_{\varepsilon})\tilde{v}_0\|_{\mathcal{F}^{s+1,\gamma}}+\|\tilde{w}-\tilde{w}_{\varepsilon}\|_{\mathcal{K}^{s+1}_{(0)}}+\|\tilde{q}_{w}-\tilde{q}_{w,\varepsilon}\|_{\mathcal{K}^s_{pr(0)}}\right)
\end{align*}

\noindent We consider $\tilde{f}_T-\tilde{f}_{T,\varepsilon}+I_1=\trace(J^P(\tilde{X})\tilde{\zeta}\nabla\tilde{\mathbf{T}}_p)-\trace(J^P(\tilde{X}_{\varepsilon})\tilde{\zeta}_{\varepsilon}\nabla\tilde{\mathbf{T}}_{p,\varepsilon})-\trace((J^P-J^P_{\varepsilon})\nabla\tilde{\mathbf{T}}_0)$. First we deal with the simplest norm, $L^2H^{s-1}$ and we split as we did in proposition \ref{estimate-conf-lag-(v,q)} for $\tilde{f}^{(n)}_T-\tilde{f}^{(n-1)}_T$, but here we have $I_1$ and we have to pay attention in the estimate of $J^P(\tilde{X})-J^P(\tilde{X}_{\varepsilon})$ which is not immediate. We split this term as follows

\begin{align*}
&\tilde{f}_T-\tilde{f}_{T,\varepsilon}+I_1=\trace((J^P(\tilde{X})-J^P(\tilde{X}_{\varepsilon})-J^P+J^P_{\varepsilon})\tilde{\zeta}(\nabla\tilde{\mathbf{T}}_p-\nabla\tilde{\mathbf{T}}_0))\\[1mm]
&+\trace((J^P-J^P_{\varepsilon})\tilde{\zeta}(\nabla\tilde{\mathbf{T}}_p-\nabla\tilde{\mathbf{T}}_0))+\trace((J^P(\tilde{X})-J^P(\tilde{X}_{\varepsilon})-J^P+J^P_{\varepsilon})\tilde{\zeta}\nabla\tilde{\mathbf{T}}_0)\\[1mm]
&+\trace((J^P-J^P_{\varepsilon})(\tilde{\zeta}-\mathcal{I})\nabla\tilde{\mathbf{T}}_0)+\trace((J^P(\tilde{X}_{\varepsilon})-J^P_{\varepsilon})(\tilde{\zeta}-\tilde{\zeta}_{\varepsilon})(\nabla\tilde{\mathbf{T}}_p-\nabla\tilde{\mathbf{T}}_0))\\[1mm]
&+\trace(J^P_{\varepsilon}(\tilde{\zeta}-\tilde{\zeta}_{\varepsilon})(\nabla\tilde{\mathbf{T}}_p-\nabla\tilde{\mathbf{T}}_0))+\trace((J^P(\tilde{X}_{\varepsilon})-J^P_{\varepsilon})(\tilde{\zeta}-\tilde{\zeta}_{\varepsilon})\nabla\tilde{\mathbf{T}}_0)\\[1mm]
&+\trace(J^P_{\varepsilon}(\tilde{\zeta}-\tilde{\zeta}_{\varepsilon})\nabla\tilde{\mathbf{T}}_0)+\trace((J^P(\tilde{X}_{\varepsilon})-J^P_{\varepsilon})\tilde{\zeta}_{\varepsilon}(\nabla\tilde{\mathbf{T}}_p-\nabla\tilde{\mathbf{T}}_{p,\varepsilon}))\\[1mm]
&+\trace(J^P_{\varepsilon}\tilde{\zeta}_{\varepsilon}(\nabla\tilde{\mathbf{T}}_p-\nabla\tilde{\mathbf{T}}_{p,\varepsilon}))
\end{align*}
\medskip

\noindent The final estimate is the following

\begin{align*}
&\|\tilde{f}_T-\tilde{f}_{T,\varepsilon}+I_1\|_{L^2H^s}\leq C(\tilde{v}_0,\tilde{\mathbf{T}}_0,\re,\kappa)\left(1+\frac{1}{\We}\right)\varepsilon+C(\tilde{v}_0,\tilde{\mathbf{T}}_0,\re,\kappa)\left(1+\frac{1}{\We}\right)T^{\frac{3}{4}}\\
&\cdot\left(\|\tilde{X}-\tilde{X}_{\varepsilon}+\varepsilon b-t(J^P-J^P_{\varepsilon})\tilde{v}_0\|_{\mathcal{F}^{s+1,\gamma}}+\|\tilde{\mathbf{T}}_p-\tilde{\mathbf{T}}_{p,\varepsilon}-t\hat{\mathbf{T}}_{\varepsilon}\|_{\mathcal{F}^{s,\gamma-1}}\right),
\end{align*}

\noindent where the constant depends on some parameters that come from the definition of $\hat{\mathbf{T}}_{\varepsilon}$ and the presence of the Weissenberg number comes from the estimate \eqref{elastic-estim}. Moreover the critical term is $\|J^P(\tilde{X})-J^P(\tilde{X}_{\varepsilon})-J^P+J^P_{\varepsilon}\|_{L^{\infty}H^s}\leq \|J^P(\tilde{X}+\varepsilon b)-J^P(\tilde{X}_{\varepsilon})\|_{L^{\infty}H^s}+\|J^P(\tilde{X})-J^P-(J^P(\tilde{X}+\varepsilon b)-J^P_{\varepsilon})\|_{L^{\infty}H^s}\leq C(\tilde{v}_0)T^{\frac{1}{4}}\|\tilde{X}-\tilde{X}_{\varepsilon}+\varepsilon b-t(J^P-J^P_{\varepsilon})\tilde{v}_0\|_{\mathcal{F}^{s+1,\gamma}}+C(\tilde{v}_0)\varepsilon,$ by using both lemma \ref{Jp-est} and lemma \ref{Jp-dif-est}. Furthermore in $H^{\frac{s-1}{2}}_{(0)}L^2-$norm we need a more accurate splitting 

\begin{align*}
&\tilde{f}_T-\tilde{f}_{T,\varepsilon}+I_1=\trace((J^P(\tilde{X})-J^P(\tilde{X}_{\varepsilon})-J^P+J^P_{\varepsilon})(\tilde{\zeta}-\mathcal{I})(\nabla\tilde{\mathbf{T}}_p-\nabla\tilde{\mathbf{T}}_0))\\[1mm]
&+\trace((J^P-J^P_{\varepsilon})(\tilde{\zeta}-\mathcal{I})(\nabla\tilde{\mathbf{T}}_p-\nabla\tilde{\mathbf{T}}_0))+\trace((J^P(\tilde{X})-J^P(\tilde{X}_{\varepsilon})-J^P+J^P_{\varepsilon})(\tilde{\zeta}-\mathcal{I})\nabla\tilde{\mathbf{T}}_0)\\[1mm]
&+\trace((J^P-J^P_{\varepsilon})(\tilde{\zeta}-\mathcal{I})\nabla\tilde{\mathbf{T}}_0)+\trace((J^P(\tilde{X})-J^P(\tilde{X}_{\varepsilon})-J^P+J^P_{\varepsilon})(\nabla\tilde{\mathbf{T}}_p-\nabla\tilde{\mathbf{T}}_0))\\[1mm]
&+\trace((J^P-J^P_{\varepsilon})(\nabla\tilde{\mathbf{T}}_p-\nabla\tilde{\mathbf{T}}_0))+\trace((J^P(\tilde{X})-J^P(\tilde{X}_{\varepsilon})-J^P+J^P_{\varepsilon})(\nabla\tilde{\mathbf{T}}_p-\nabla\tilde{\mathbf{T}}_0))\\[1mm]
&+\trace((J^P(\tilde{X}_{\varepsilon})-J^P_{\varepsilon})(\tilde{\zeta}-\tilde{\zeta}_{\varepsilon})(\nabla\tilde{\mathbf{T}}_p-\nabla\tilde{\mathbf{T}}_0))+\trace(J^P_{\varepsilon}(\tilde{\zeta}-\tilde{\zeta}_{\varepsilon})(\nabla\tilde{\mathbf{T}}_p-\nabla\tilde{\mathbf{T}}_0))\\[1mm]
&+\trace((J^P(\tilde{X}_{\varepsilon})-J^P_{\varepsilon})(\tilde{\zeta}-\tilde{\zeta}_{\varepsilon})\nabla\tilde{\mathbf{T}}_0)+\trace(J^P_{\varepsilon}(\tilde{\zeta}-\tilde{\zeta}_{\varepsilon})\nabla\tilde{\mathbf{T}}_0)\\[1mm]
&+\trace((J^P(\tilde{X}_{\varepsilon})-J^P_{\varepsilon})(\tilde{\zeta}_{\varepsilon}-\mathcal{I})(\nabla\tilde{\mathbf{T}}_p-\nabla\tilde{\mathbf{T}}_{p,\varepsilon}))+\trace(J^P_{\varepsilon}(\tilde{\zeta}_{\varepsilon}-\mathcal{I})(\nabla\tilde{\mathbf{T}}_p-\nabla\tilde{\mathbf{T}}_{p,\varepsilon}))\\[1mm]
&+\trace((J^P(\tilde{X}_{\varepsilon})-J^P_{\varepsilon})(\nabla\tilde{\mathbf{T}}_p-\nabla\tilde{\mathbf{T}}_{p,\varepsilon}))+\trace(J^P_{\varepsilon}(\nabla\tilde{\mathbf{T}}_p-\nabla\tilde{\mathbf{T}}_{p,\varepsilon})).
\end{align*}

\noindent The estimate is obtained in the same way as proposition  \ref{estimate-conf-lag-(v,q)} but taking into account the remarks explained for the previous result. Then by using in a key way lemma \ref{lem2} we have

\begin{align*}
&\|\tilde{f}_T-\tilde{f}_{T,\varepsilon}+I_1\|_{H^{\frac{s-1}{2}}_{(0)}L^2}\leq C(\tilde{v}_0,\tilde{\mathbf{T}}_0,\re,\kappa)\left(1+\frac{1}{\We}\right)\varepsilon+C(\tilde{v}_0,\tilde{\mathbf{T}}_0,\re,\kappa)\left(1+\frac{1}{\We}\right)T^{\varrho_1}\\[1mm]
&\cdot\left(\|\tilde{X}-\tilde{X}_{\varepsilon}+\varepsilon b-t(J^P-J^P_{\varepsilon})\tilde{v}_0\|_{\mathcal{F}^{s+1,\gamma}}+\|\tilde{\mathbf{T}}_p-\tilde{\mathbf{T}}_{p,\varepsilon}-t\hat{\mathbf{T}}_{\varepsilon}\|_{\mathcal{F}^{s,\gamma-1}}\right).
\end{align*}
\bigskip

\underline{\textbf{Estimate for $\tilde{K}_{\varepsilon}$}}\\

\noindent This term is defined as follows
$$\tilde{K}_{\varepsilon}=\tilde{g}-\tilde{g}_{\varepsilon}+\tilde{g}_{\phi}^L-\tilde{g}_{\phi,\varepsilon}^L-\trace(\nabla\tilde{w}_{\varepsilon}(J^P-J^P_{\varepsilon})).$$

\noindent It can be estimated by using proposition \ref{stability} (1)-(4)  in $\mathcal{\bar{K}}^{s}_{(0)}$ and we have

\begin{align*}
&\|\tilde{g}_{\phi}^L-\tilde{g}_{\phi,\varepsilon}^L\|_{\mathcal{\bar{K}}^{s}_{(0)}}\leq C(\tilde{v}_0,\tilde{\mathbf{T}}_0,\re,\kappa)\varepsilon,\\[1mm]
&\|\nabla\tilde{w}_{\varepsilon}(J^P-J^P_{\varepsilon})\|_{L^2H^s}\leq \|\tilde{w}_{\varepsilon}\|_{L^2H^{s+1}}\|J^P-J^P_{\varepsilon}\|_{L^{\infty}H^s}\leq C(\tilde{v}_0)\varepsilon,\\[1mm]
&\|\nabla\tilde{w}_{\varepsilon}(J^P-J^P_{\varepsilon})\|_{H^{\frac{s+1}{2}}_{(0)}H^{-1}}\leq\varepsilon \|\tilde{w}_{\varepsilon}\|_{H^{\frac{s+1}{2}}_{(0)}L^2}\leq C(\tilde{v}_0)\varepsilon.
\end{align*}

\noindent For the difference $\tilde{g}-\tilde{g}_{\varepsilon}$, as we stated before , it is the same as proposition \ref{estimate-conf-lag-(v,q)}. Indeed by identifying $\tilde{g}^{(n)}$ with $\tilde{g}$ and $\tilde{g}^{(n-1)}$ with $\tilde{g}_{\varepsilon}$ we obtain

\begin{align*}
\|\tilde{g}-\tilde{g}_{\varepsilon}\|_{\mathcal{\bar{K}}^s_{(0)}}\leq C(\tilde{v}_0,\tilde{\mathbf{T}}_0,\re,\kappa) \varepsilon&+ C(\tilde{v}_0,\tilde{\mathbf{T}}_0,\re,\kappa) T^{\varrho_2}\left(\|\tilde{w}-\tilde{w}_{\varepsilon}\|_{\mathcal{K}^{s+1}_{(0)}}\right.\\
&\left.+\|\tilde{X}-\tilde{X}_{\varepsilon}+\varepsilon b-t(J^P-J^P_{\varepsilon})\tilde{v}_0\|_{\mathcal{F}^{s+1,\gamma}}\right).
\end{align*}

\underline{\textbf{Estimate for $\tilde{H}_{\varepsilon}$}}\\

\noindent As we did for the previous terms, also this term can be estimated as in proposition  \ref{estimate-conf-lag-(v,q)}. The term we have to study is
\begin{align*}
\tilde{H_{\varepsilon}}&=\tilde{h}-\tilde{h}_{\varepsilon}+\tilde{h}_{\phi}^L-\tilde{h}_{\phi,\varepsilon}^L+\tilde{q}_{w,\varepsilon}((J^P)^{-1}-(J^P_{\varepsilon})^{-1})\tilde{n}_0-(1-\kappa)(\nabla\tilde{w}_{\varepsilon}J^P)(J^P)^{-1}\tilde{n}_0\\[1mm]
& -(1-\kappa)(\nabla\tilde{w}_{\varepsilon}J^P)^T (J^P)^{-1}\tilde{n}_0+(1-\kappa)(\nabla\tilde{w}_{\varepsilon}J^P_{\varepsilon})(J^P_{\varepsilon})^{-1}\tilde{n}_0+(1-\kappa)(\nabla\tilde{w}_{\varepsilon}J^P_{\varepsilon})^T(J^P_{\varepsilon})^{-1}\tilde{n}_0\\[1mm]
&=\tilde{h}-\tilde{h}_{\varepsilon}+\tilde{h}_{\phi}^L-\tilde{h}_{\phi,\varepsilon}^L+\bar{H}_{\varepsilon}.
\end{align*}

\noindent We write the term $\bar{H}_{\varepsilon}$ in the following three terms
\begin{align*}
&I_1=q_{w,\varepsilon}((J^P)^{-1}-(J^P_{\varepsilon})^{-1}){n}_0,\\
&I_2=(1-\kappa)[\nabla w_{\varepsilon}(J^P_{\varepsilon}-J^P)+(\nabla w_{\varepsilon}(J^P_{\varepsilon}-J^P))^T](J^P)^{-1} {n}_0,\\
&I_3=(1-\kappa)[(\nabla w_{\varepsilon}J^P_{\varepsilon})+(\nabla w_{\varepsilon} J^P_{\varepsilon}))^T]((J^P_{\varepsilon})^{-1}- (J^P)^{-1}){n}_0,\\
\end{align*}

\noindent By using proposition \ref{stability} (1)-(4), the trace theorem \ref{parabolic-trace} and lemma \ref{lem1}, we have the following results in $\mathcal{K}^{s-\frac{1}{2}}_{(0)}$.

\begin{align*}
\|I_1\|_{\mathcal{K}^{s-\frac{1}{2}}_{(0)}}+\|I_2\|_{\mathcal{K}^{s-\frac{1}{2}}_{(0)}}+\|I_3\|_{\mathcal{K}^{s-\frac{1}{2}}_{(0)}}\leq C(\tilde{v}_0,\kappa)\varepsilon.
\end{align*}

\noindent We pass to the estimate of $\tilde{h}_{\phi}^L-\tilde{h}_{\phi,\varepsilon}^L=\tilde{\mathbf{T}}_0((J^P)^{-1}-(J^P_{\varepsilon})^{-1})\tilde{n}_0+(1-\kappa)t(\nabla\hat{\phi}_{\varepsilon}-\nabla\hat{\phi})\tilde{n}_0+(1-\kappa)t(\nabla\hat{\phi}_{\varepsilon} (J^P_{\varepsilon})^{-1})^T(J^P_{\varepsilon})^{-1}\tilde{n}_0-(1-\kappa)t(\nabla\hat{\phi} (J^P)^{-1})^T(J^P)^{-1}\tilde{n}_0$, which is obtained by using the definition of $\tilde{q}_{\phi}, \tilde{q}_{\phi_{\varepsilon}}$ in \eqref{q_phi}. We rewrite this difference in three terms 

\begin{align*}
&J_1=\tilde{\mathbf{T}}_0((J^P)^{-1}-(J^P_{\varepsilon})^{-1})\tilde{n}_0\\
&J_2=(1-\kappa)t(\nabla\hat{\phi}_{\varepsilon}-\nabla\hat{\phi})\tilde{n}_0\\
&J_3=(1-k)t\left((\nabla\hat{\phi}_{\varepsilon}-\nabla\hat{\phi})J^P_{\varepsilon}\right)^T(J^P_{\varepsilon})^{-1}\tilde{n}_0+(1-k)t\left(\nabla\hat{\phi}(J^P_{\varepsilon}-J^P)\right)^T(J^P_{\varepsilon})^{-1}\tilde{n}_0\\
&\hspace{0.5cm}+(1-k)t(\nabla\hat{\phi}J^P)^T\left((J^P_{\varepsilon})^{-1}-(J^P)^{-1}\right)\tilde{n}_0.
\end{align*}

\noindent For $J_2, J_3$ we do not have problems. Indeed even if $\hat{\phi}, \hat{\phi}_{\varepsilon}$ do not depend on time, we have a $t$ in front which allows us to make an analysis in the space $H^{\frac{s}{2}-\frac{1}{4}}_{(0)}([0,T])$, in particular $\|J_2\|_{\mathcal{K}^{s-\frac{1}{2}}_{(0)}}+\|J_3\|_{\mathcal{K}^{s-\frac{1}{2}}_{(0)}}\leq C(\tilde{v}_0,\tilde{\mathbf{T}}_0,\re,\kappa)\varepsilon$. On the contrary we are not able to deal with $J_1$ since there is not dependence on time, for this reason we will put this term together with $\tilde{h}-\tilde{h}_{\varepsilon}=\tilde{h}_{w}-\tilde{h}_{w,\varepsilon}+\tilde{h}_{\phi}-\tilde{h}_{\phi,\varepsilon}+\tilde{h}_{q}-\tilde{h}_{q,\varepsilon}+\tilde{h}_T-\tilde{h}_{T,\varepsilon}$. In particular, we focus on $\tilde{h}_T-\tilde{h}_{T,\varepsilon}$, since the details for the other terms can be found in \cite[Lemma 6.2]{CCFGG2} and we summarize the final result

\begin{align*}
&\|\tilde{h}_{w}-\tilde{h}_{w,\varepsilon}+\tilde{h}_{\phi}-\tilde{h}_{\phi,\varepsilon}+\tilde{h}_{q}-\tilde{h}_{q,\varepsilon}\|_{\mathcal{K}^{s-\frac{1}{2}}_{(0)}}\leq C(\tilde{v}_0,\tilde{\mathbf{T}}_0,\re,\kappa)\varepsilon+C(\tilde{v}_0,\tilde{\mathbf{T}}_0,\re,\kappa)T^{\delta_2}\\
&\cdot\left(\|\tilde{X}-\tilde{X}_{\varepsilon}+\varepsilon b-t(J^P-J^P_{\varepsilon})\tilde{v}_0\|_{\mathcal{F}^{s+1,\gamma}}+\|\tilde{w}-\tilde{w}_{\varepsilon}\|_{\mathcal{K}^{s+1}_{(0)}}+\|\tilde{q}_{w}-\tilde{q}_{w,\varepsilon}\|_{\mathcal{K}^s_{pr(0)}}\right).
\end{align*}

\noindent The remaining term is the following
\begin{align*}
\tilde{h}_T-\tilde{h}_{T,\varepsilon}+J_1&=\tilde{\mathbf{T}}_pJ^P(\tilde{X})^{-1}\nabla_{\Lambda}\tilde{X}\tilde{n}_0-\tilde{\mathbf{T}}_{p,\varepsilon}J^P(\tilde{X}_{\varepsilon})^{-1}\nabla_{\Lambda}\tilde{X}_{\varepsilon}\tilde{n}_0+\tilde{\mathbf{T}}_0\left((J^P)^{-1}-(J^P_{\varepsilon})^{-1}\right)\tilde{n}_0.
\end{align*}

\noindent At the beginning we deal with $L^2H^{s-\frac{1}{2}}$ and  we split as follows

\begin{align*}
&\tilde{h}_T-\tilde{h}_{T,\varepsilon}+J_1=(\tilde{\mathbf{T}}_p-\tilde{\mathbf{T}}_{p,\varepsilon})J^P(\tilde{X})^{-1}\nabla_{\Lambda}\tilde{X} \tilde{n}_0\\
&\hspace{0.5cm}+(\tilde{\mathbf{T}}_{p,\varepsilon}-\tilde{\mathbf{T}}_0)\left(J^P(\tilde{X})^{-1}-J^P(\tilde{X}_{\varepsilon})^{-1}-(J^P)^{-1}+(J^P_{\varepsilon})^{-1}\right)\nabla_{\Lambda}\tilde{X}\tilde{n}_0\\
&\hspace{0.5cm}+(\tilde{\mathbf{T}}_{p,\varepsilon}-\tilde{\mathbf{T}}_0)\left((J^P)^{-1}-(J^P_{\varepsilon})^{-1}\right)\nabla_{\Lambda}\tilde{X}\tilde{n}_0\\
&\hspace{0.5cm}+\tilde{\mathbf{T}}_0\left(J^P(\tilde{X})^{-1}-J^P(\tilde{X}_{\varepsilon})^{-1}-(J^P)^{-1}+(J^P_{\varepsilon})^{-1}\right)\nabla_{\Lambda}\tilde{X}\tilde{n}_0\\
&\hspace{0.5cm}+(\tilde{\mathbf{T}}_{p,\varepsilon}-\tilde{\mathbf{T}}_0)J^P(\tilde{X}_{\varepsilon})^{-1}(\nabla_{\Lambda}\tilde{X}-\nabla_{\Lambda}\tilde{X}_{\varepsilon})\tilde{n}_0+\tilde{\mathbf{T}}_0J^P(\tilde{X}_{\varepsilon})^{-1}(\nabla_{\Lambda}\tilde{X}-\nabla_{\Lambda}\tilde{X}_{\varepsilon})\tilde{n}_0.
\end{align*}
\medskip

\noindent To get the desired result we will use the trace theorem \ref{parabolic-trace}, lemma \ref{Jp-est} or lemma \ref{Jp-dif-est} and lemma \ref{zeta-est} or lemma \ref{zeta-dif-est}. Moreover to deal with the elastic part  we use or \eqref{elastic-estim} or we  add $t\hat{\mathbf{T}}_{\varepsilon}$.  In a similar, but  more accurate manner, we have to split $\tilde{h}_T-\tilde{h}_{T,\varepsilon}+J_1$ in order to manage the estimates in $H^{\frac{s}{2}-\frac{1}{4}}_{(0)}L^2$.

\begin{align*}
&\tilde{h}_T-\tilde{h}_{T,\varepsilon}+J_1=(\tilde{\mathbf{T}}_p-\tilde{\mathbf{T}}_{p,\varepsilon})(J^P(\tilde{X})^{-1}-(J^P)^{-1})(\nabla_{\Lambda}\tilde{X}-\mathcal{I}) \tilde{n}_0\\[1mm]
&+(\tilde{\mathbf{T}}_p-\tilde{\mathbf{T}}_{p,\varepsilon})(J^P)^{-1}(\nabla_{\Lambda}\tilde{X}-\mathcal{I}) \tilde{n}_0+(\tilde{\mathbf{T}}_p-\tilde{\mathbf{T}}_{p,\varepsilon})(J^P(\tilde{X})^{-1}-(J^P)^{-1})\tilde{n}_0\\[1mm]
&+(\tilde{\mathbf{T}}_p-\tilde{\mathbf{T}}_{p,\varepsilon})(J^P)^{-1}\tilde{n}_0+(\tilde{\mathbf{T}}_{p,\varepsilon}-\tilde{\mathbf{T}}_0)\left(J^P(\tilde{X})^{-1}-J^P(\tilde{X}_{\varepsilon})^{-1}-(J^P)^{-1}+(J^P_{\varepsilon})^{-1}\right)(\nabla_{\Lambda}\tilde{X}-\mathcal{I})\tilde{n}_0\\[1mm]
&+(\tilde{\mathbf{T}}_{p,\varepsilon}-\tilde{\mathbf{T}}_0)((J^P)^{-1}-(J^P_{\varepsilon})^{-1})(\nabla_{\Lambda}\tilde{X}-\mathcal{I})\tilde{n}_0\\[1mm]
&+\tilde{\mathbf{T}}_0\left(J^P(\tilde{X})^{-1}-J^P(\tilde{X}_{\varepsilon})^{-1}-(J^P)^{-1}+(J^P_{\varepsilon})^{-1}\right)(\nabla_{\Lambda}\tilde{X}-\mathcal{I})\tilde{n}_0
\end{align*}
\begin{align*}
&+\tilde{\mathbf{T}}_0((J^P)^{-1}-(J^P_{\varepsilon})^{-1})(\nabla_{\Lambda}\tilde{X}-\mathcal{I})\tilde{n}_0\\[1mm]
&+(\tilde{\mathbf{T}}_{p,\varepsilon}-\tilde{\mathbf{T}}_0)\left(J^P(\tilde{X})^{-1}-J^P(\tilde{X}_{\varepsilon})^{-1}-(J^P)^{-1}+(J^P_{\varepsilon})^{-1}\right)\tilde{n}_0\\[1mm]
&+\tilde{\mathbf{T}}_0\left(J^P(\tilde{X})^{-1}-J^P(\tilde{X}_{\varepsilon})^{-1}-(J^P)^{-1}+(J^P_{\varepsilon})^{-1}\right)\tilde{n}_0+(\tilde{\mathbf{T}}_{p,\varepsilon}-\tilde{\mathbf{T}}_0)((J^P)^{-1}-(J^P_{\varepsilon})^{-1})\tilde{n}_0\\[1mm]
&+(\tilde{\mathbf{T}}_{p,\varepsilon}-\tilde{\mathbf{T}}_0)(J^P(\tilde{X}_{\varepsilon})^{-1}-(J^P_{\varepsilon})^{-1})(\nabla_{\Lambda}\tilde{X}-\nabla_{\Lambda}\tilde{X}_{\varepsilon})\tilde{n}_0\\[1mm]
&+\tilde{\mathbf{T}}_0(J^P(\tilde{X}_{\varepsilon})^{-1}-(J^P_{\varepsilon})^{-1})(\nabla_{\Lambda}\tilde{X}-\nabla_{\Lambda}\tilde{X}_{\varepsilon})\tilde{n}_0\\[1mm]
&+(\tilde{\mathbf{T}}_{p,\varepsilon}-\tilde{\mathbf{T}}_0)(J^P_{\varepsilon})^{-1}(\nabla_{\Lambda}\tilde{X}-\nabla_{\Lambda}\tilde{X}_{\varepsilon})\tilde{n}_0+\tilde{\mathbf{T}}_0(J^P_{\varepsilon})^{-1}(\nabla_{\Lambda}\tilde{X}-\nabla_{\Lambda}\tilde{X}_{\varepsilon})\tilde{n}_0.
\end{align*}
\medskip

\noindent To obtain the estimate of each term, we refer to proposition \ref{estimate-conf-lag-(v,q)}. The final result is the following

\begin{align*}
&\|\tilde{h}_T-\tilde{h}_{T,\varepsilon}+J_1\|_{\mathcal{K}^{s-\frac{1}{2}}_{(0)}}\leq C(\tilde{v}_0,\tilde{\mathbf{T}}_0,\re,\kappa)\left(1+\frac{1}{\We}\right)\varepsilon+C(\tilde{v}_0,\tilde{\mathbf{T}}_0,\re,\kappa)\left(1+\frac{1}{\We}\right)T^{\varrho_3}\\
&\cdot\left(\|\tilde{X}-\tilde{X}_{\varepsilon}+\varepsilon b-t(J^P-J^P_{\varepsilon})\tilde{v}_0\|_{\mathcal{F}^{s+1,\gamma}}+\|\tilde{\mathbf{T}}_p-\tilde{\mathbf{T}}_{p,\varepsilon}-t\hat{\mathbf{T}}_{\varepsilon}\|_{\mathcal{F}^{s,\gamma-1}}\right)
\end{align*}
\medskip

\noindent The lemma holds by choosing $\varrho=\min\{\delta_1,\varrho_1,\varrho_2,\delta_2,\varrho_3\}$.
\end{proof}

\noindent The proof of proposition \ref{stability} follows from these two lemmas by choosing $\delta=\min\{\beta, \varrho\}$.
\bigskip

\section{Proof of Theorem \ref{flux-stab} and existence of splash}\label{sec:4}

\noindent The final goal is to prove that 

\begin{equation}\label{flux-close}
\|\tilde{X}-\tilde{X}_{\varepsilon}\|_{L^{\infty}H^{s+1}}\leq 3C(\tilde{v}_0,\tilde{\mathbf{T}}_0,\re,\kappa)\varepsilon\left(1+\frac{1}{\We}\right).
\end{equation}

\noindent In particular if $\|\tilde{X}-\tilde{X}_{\varepsilon}-\varepsilon b-t(J^P-J^P_{\varepsilon})\tilde{v}_0\|_{L^{\infty}H^{s+1}}\leq 3C(\tilde{v}_0,\tilde{\mathbf{T}}_0,\re,\kappa)\varepsilon\left(1+\frac{1}{\We}\right)$ then \eqref{flux-close} holds. To get this result we use  proposition \ref{stability}. In addition if  $$1-3C(\kappa,M,\re)T^{\delta}\left(1+\frac{1}{\We}\right)>0,$$ 
then we have

\begin{align*}
&\|\tilde{X}-\tilde{X}_{\varepsilon}-\varepsilon b-t(J^P-J^P_{\varepsilon})\tilde{v}_0\|_{L^{\infty}H^{s+1}}\leq \left(1-3C(\tilde{v}_0,\tilde{\mathbf{T}}_0,\re,\kappa)T^{\delta}\left(1+\frac{1}{\We}\right)\right)\\[1mm]
&\cdot\left(\|\tilde{w}-\tilde{w}_{\varepsilon}\|_{\mathcal{K}^{s+1}_{(0)}}+\|\tilde{q}_w-\tilde{q}_{w,\varepsilon}\|_{\mathcal{K}^{s}_{pr(0)}}+ \|\tilde{X}-\tilde{X}_{\varepsilon}-\varepsilon b-t(J^P-J^P_{\varepsilon})\tilde{v}_0\|_{\mathcal{F}^{s+1,\gamma}}\right.\\[1mm]
&\hspace{0.5cm}\left.+\|\tilde{\mathbf{T}}_p-\tilde{\mathbf{T}}_{p,\varepsilon}-t\hat{\mathbf{T}}_{\varepsilon}\|_{\mathcal{F}^{s,\gamma-1}}\right)\leq 3C(\tilde{v}_0,\tilde{\mathbf{T}}_0,\re,\kappa)\varepsilon\left(1+\frac{1}{\We}\right),
\end{align*}
\medskip

\noindent The assumption on $T$ is equivalent to \eqref{conf-lag-time} up to a constant.\\

\noindent This result states that the fluxes, which govern the evolution of the domain, are close and it implies that the two interfaces are close. Then we can conclude that starting with a regular domain $P^{-1}(\tilde{\Omega}_{\varepsilon}(0))$, we end up in a self-intersecting one, since $P^{-1}(\tilde{\Omega}(T))$ is self-intersecting. This argument works if we have a right initial velocity. In particular, we use the same argument explained in \cite{DMS1}. \\
\noindent We are looking for initial data that satisfy the compatibility conditions \eqref{undim-compatibility}. In particular
$$n^{\perp}\left((1-\kappa)(\nabla u_0+\nabla u_0^T)+\tau_0\right)n=0.$$

\noindent At the beginning we consider the Navier-Stokes system, without the presence of the elastic stress tensor $\tau_0$. In this specific case, as can be seen in \cite{CCFGG2}, the compatibility condition for the initial velocity $u_0$ is given by 
\begin{equation}\label{NS-compcond}
n^{\perp} \left((1-\kappa)(\nabla u_0+\nabla u_0^T)\right) n=0.
\end{equation}

\noindent We take into account the analysis of the Navier-Stokes case and we recall the arguments in \cite{CCFGG2}. Let us consider a neighborhood $U$ of the boundary $\partial\Omega$, we can use  a coordinates system $(s,\lambda)$ given by $x(s,\lambda)=z(s)+\lambda z_s^{\perp}(s)$ and we define a stream function $\psi$ by using the following quadratic expansion

\begin{equation}\label{stream}
\psi(x(s,\lambda))=\bar{\psi}(s,\lambda)=\psi_0(s)+\lambda\psi_1(s)+\frac{1}{2}\lambda^2\psi_2(s).
\end{equation}
\medskip

\noindent Consequently we extend on $U$ both $n^{\perp}$ and $n$ in the following way

\begin{equation*}
\left\{\begin{array}{lll}
N^{\perp}(s,\lambda)=x_s(s,\lambda)=z_s(s)+\lambda z_{ss}^{\perp}(s)=(1-\lambda k(s))z_s(s)\\[3mm]
N(s,\lambda)=x_{\lambda}=z_s^{\perp},
\end{array}\right.
\end{equation*}
\noindent where $k(s)=z_{ss}\cdot z_s^{\perp}$  is the scalar curvature.
\medskip

\noindent Since $u_0$ is divergence free, we define $u_0=\nabla^{\perp}\psi$ and then we substitute this definition in \eqref{NS-compcond} and we get

\begin{equation}\label{cc}
N^{\perp}\left((1-\kappa)(\nabla u_0+\nabla u_0)\right) N=(1-\kappa)(\partial_s^2\psi_0(s)-\psi_2(s))=0.
\end{equation}

\noindent As $u_0\cdot N=\partial_s \psi_0(s)$ and we need a positive normal component in order to apply the argument explained in the Introduction, first of all we take $\psi_0(s)$ in such a way $\partial_s\psi_0(s)>0$ and consequently $\psi_2(s)$ in such a way that condition (\ref{cc}) is satisfied. We can immediately observe that the normal component of the velocity depends only on the stream function and does not depend on the boundary conditions, it suggests for the viscoelastic problem, that $u_0\cdot n$ does not depend on the elastic stress tensor, so the compatibility condition \eqref{undim-compatibility} can be written as follows

\begin{equation}\label{newcompcond}
(1-\kappa)\left(\partial_s^2\psi_0(s)- \psi_2(s)\right)=-\left(N^{\perp}\tau_0 N\right)_{|\lambda=0},
\end{equation}

\noindent Furthermore we can state that for a given $\psi_0$ such that $\partial_s\psi_0>0$ and for any $\tau_0$, there exist $\psi_1, \psi_2$ such that \eqref{newcompcond} is satisfied.\\

\noindent In conclusion we observe that the two main ingredients for proving  the existence of splash singularities are the stability result and the construction of the initial data $(u_0, \tau_0)$. These key results allow us to pass from a regular domain $P^{-1}(\partial\tilde{\Omega}_{\varepsilon}(0))$ into a self-intersecting $P^{-1}(\partial\tilde{\Omega}_{\varepsilon}(T))$. Thus we can define the splash time $t^*$ as follows
$$t^*=\inf\{t\in(0,T): P^{-1}(\partial\tilde{\Omega}_{\varepsilon}(t))\hspace{0.2cm} \textrm{is as in fig. \ref{fig:1} (b)}\}.$$
\noindent We get the the final result

\begin{theorem}
For $2<s<\frac{5}{2}$ there exists a solution of  \eqref{undimsys} $\{\Omega_{\varepsilon}, u_{\varepsilon}, p_{\varepsilon}, \tau_{p,\varepsilon}\}$ on $[0, t^*]$, which forms a splash singularity at time $t^*$. 
\end{theorem}

\subsection*{Acknowledgements:} 
The authors would like to thank \'Angel Castro and Diego C\'ordoba for the helpful convertions and the anonymous referee for their comments and suggestions.

\appendix\section{}\label{appendix}
\subsection{Functional Spaces}
\noindent The functional spaces used throughout this paper were introduced by J. T. Beale in \cite{B}. In particular, we define the Sobolev spaces with fractional derivatives in time $H^s_{(0)}([0,T])$, for $0<s<1$ as the interpolation space between $L^2([0,T])$ and $H^1_{(0)}([0,T])$ through the operator $S=(1-\partial_t^2)$. The domain of $S$ is the following

\begin{equation*}
\{v\in H^2([0,T]): v(0)=0, \partial_t v(T)=0\},
\end{equation*}

\noindent and the $H^s_{(0)}([0,T])$ - norm is the graph norm of the operator $S^{\frac{s}{2}}$. In particular, $v\in  H^s_{(0)}([0,T])$ if $v\in L^2([0,T])$ and

\begin{align*}
\|v\|_{H^s_{(0)}}^2=\sum_{n=0}^{\infty}\left(\int_0^T v(t)\sin\left(\frac{(2n+1)\pi}{2T}t\right)\sqrt\frac{2}{T} \,dt\right)^2\left(\frac{(2n+1)\pi}{2T}\right)^{2s}<\infty.
\end{align*}

\medskip

\noindent For larger exponents, the definition of $H^{s+m}_{(0)}([0,T])$, for $0<s<1$ and $m\in\mathbb{N}$ is 

\begin{center}
$\{v\in H^{s+m}([0,T]):\partial_t^k v(0)=0, k=0,1,\ldots,m-1\},\hspace{0.3cm}\textrm{with}\hspace{0.3cm}\partial_t^m v\in H^s_{(0)}([0,T])$.
\end{center}
\medskip

\noindent This space has the following norm

$$\|v\|_{H^{s+m}_{(0)}}^2=\sum_{k=0}^{m-1}\|\partial_t^k v\|_{ L^2([0,T])}^2+\|\partial_t^m v\|_{ H^s_{(0)}([0,T])}^2.$$
\medskip

\noindent The space we used through the paper are the following

\begin{align*}
&\mathcal{K}^s_{(0)}([0,T]; \Omega)= L^2([0,T]; H^s( \Omega))\cap H^{\frac{s}{2}}_{(0)}([0,T]; L^2( \Omega)),\\[2mm]
&\mathcal{K}^s_{pr(0)}([0,T]; \Omega)=\{q\in L^{\infty}([0,T]; \dot{H}^1( \Omega)): \nabla q\in\mathcal{K}^{s-1}_{(0)}([0,T]; \Omega),
q\in\mathcal{K}^{s-\frac{1}{2}}_{(0)}([0,T]; \partial\Omega)\},\\[2mm]
&\mathcal{\bar{K}}^{s}_{(0)}([0,T]; \Omega)= L^2([0,T];H^s(\Omega)) \cap H^{\frac{s+1}{2}}_{(0)}([0,T]; H^{-1}(\Omega)),\\[2mm]
&\mathcal{F}^{s+1,\gamma}([0,T]; \Omega)= L^{\infty}_{\frac{1}{4}}([0,T]; H^{s+1}( \Omega)) \cap H^2_{(0)}([0,T]; H^{\gamma}( \Omega)),\hspace{0.3cm}\textrm{for}\hspace{0.2cm} s-1-\varepsilon<\gamma<s-1,
\end{align*}
\noindent with 

$$\|f\|_{L^{\infty}_{\frac{1}{4}}H^s}=\sup\limits_{t\in [0,T]}t^{-\frac{1}{4}}\|f(t)\|_{H^s}.$$
\medskip

\subsection{Preliminary Lemmas}

\noindent The spaces defined above are fundamental for the use of the following embedding theorems and interpolation estimates, in order to get constants independent of time. For details, see \cite{B}, \cite{CCFGG2} and \cite{LM}.
\bigskip

\begin{lemma}\label{extentheorem}
Let $B$ a Hilbert space
\begin{enumerate}
\item For $s\geq 0$, there is a bounded extension operator from $H^s((0,T);B)\rightarrow H^s((-\infty,\infty);B)$.
\item For $0\leq s\leq 2$ , $s-\frac{1}{2}$ is not an integer, there is an extension operator from
$$\left\lbrace v\in H^s((0,T);B); \partial_t^k v(0)=0, 0\leq k < s-\frac{1}{2}\right\rbrace \rightarrow H^s((-\infty,\infty);B)$$
with a norm bounded independently on $T$. Moreover, if $Ev$ is the extention of $v$ then 

$$\|Ev\|_{ H^s((-\infty,\infty);B)}\leq C \|v\|_{ H_{(0)}^s((0,T);B)}$$
\end{enumerate}
\end{lemma}
\bigskip

\begin{lemma}\label{lem1}
Suppose $0\leq r\leq 4$.
\begin{enumerate}
\item The Identity extends to a bounded operator
$$\mathcal{K}^r((0,T);\Omega)\rightarrow H^p(0,T)H^{r-2p}(\Omega),$$
$p\leq \frac{r}{2}$.
\item If $r$ is not an odd integer, the restriction of this operator to the subspace with $\partial_t^k v(0)=0$, $0\leq k< \frac{r-1}{2}$ is bounded independently on $T$, indeed
\begin{equation*}
\|v\|_{H^p_{(0)}H^{r-2p}}\leq C \|v\|_{\mathcal{K}^r_{(0)}}.
\end{equation*}
\end{enumerate}
\end{lemma}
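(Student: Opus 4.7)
The strategy is to derive both parts of the lemma from a single interpolation inequality on Hilbert scales, combined with a zero-extension argument for the $T$-uniform version.

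For part (1), the plan is to identify $\mathcal{K}^r=L^2_tH^r_x\cap H^{r/2}_tL^2_x$ as the intersection of the two endpoints of a Hilbert interpolation scale. Indeed, by the Lions--Magenes interpolation theorem for vector-valued Sobolev spaces (applied with the Hilbert couple $(L^2_tH^r_x, H^{r/2}_tL^2_x)$), one has for every $\theta\in[0,1]$
\begin{equation*}
\bigl[L^2\bigl((0,T);H^r(\Omega)\bigr),\, H^{r/2}\bigl((0,T);L^2(\Omega)\bigr)\bigr]_{\theta}=H^{\theta r/2}\bigl((0,T);H^{(1-\theta)r}(\Omega)\bigr),
\end{equation*}
with continuous inclusion from $\mathcal{K}^r$ into the complex interpolation space (the intersection embeds into each interpolated space). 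Setting $\theta=2p/r$ gives exactly $H^p_t H^{r-2p}_x$ for all $0\le p\le r/2$, which is the embedding claimed. The range $0\le r\le 4$ is the one in which the ambient Hilbert scale and the relevant trace theorems behave without pathologies, so no further verification is needed beyond citing the interpolation theorem in \cite{LM}.

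For part (2), the $T$-independent constant is obtained by extension. The idea is to extend $v$ from $(0,T)$ to $\R$ in such a way that the resulting function has norm controlled by $\|v\|_{\mathcal{K}^r}$ with a universal constant. Near $t=T$ one may use a standard Stein-type reflection extension (this produces a universal constant because it only involves the interior regularity). Near $t=0$, extension by zero is used: an elementary Fourier/characterization computation shows that the zero-extension of a function $v\in H^{r/2}(0,\infty;L^2)$ belongs to $H^{r/2}(\R;L^2)$ with a universal norm bound precisely when $\partial_t^k v(0)=0$ for every integer $k$ with $k<\tfrac{r/2-1/2}{1}=\tfrac{r-1}{2}$ — which is the very hypothesis of the lemma. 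The exclusion of odd integer $r$ is needed to avoid the borderline case $r/2-\tfrac12\in\mathbb{Z}$, where an additional logarithmic/trace obstruction appears. Once $v$ is extended to all of $\R$, the interpolation inequality on $\R$ is scale-invariant, so the constant in the embedding $\mathcal{K}^r(\R)\hookrightarrow H^p_tH^{r-2p}_x(\R)$ is independent of $T$, and restricting back to $(0,T)$ yields the claim.

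The main obstacle is the zero-extension step: one must verify carefully that the characterization of zero-extendable functions in $H^{r/2}$ matches the vanishing order $k<(r-1)/2$, which requires tracking the sharp trace index (and thus requires excluding the odd-integer $r$). The interpolation identity and the spatial part are standard, but the temporal extension and its compatibility with the vanishing initial conditions is the only point where the argument is not entirely automatic. Once that is in place, the two parts of the lemma follow simultaneously — part (1) from the interpolation on $(0,T)$, and part (2) from the same interpolation applied to the uniformly extended function on $\R$.
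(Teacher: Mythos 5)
Your argument is correct and coincides with the standard proof of this statement, which the paper does not reprove but simply recalls from \cite{B} and \cite{LM}: part (1) is the Lions--Magenes interpolation identity applied to the couple $(L^2_tH^r_x,\,H^{r/2}_tL^2_x)$ together with the embedding of the intersection into each interpolation space, and part (2) is Beale's extension argument, in which the zero-extension at $t=0$ is bounded on $H^{r/2}_t$ precisely under the vanishing of $\partial_t^k v(0)$ for $k<(r-1)/2$, with odd integer $r$ excluded as the borderline trace case $r/2-\tfrac12\in\mathbb{Z}$. The one point worth making explicit is that a single extension operator must be bounded simultaneously on both component norms so that the extended function lies in $\mathcal{K}^r(\mathbb{R})$; this is exactly what Lemma \ref{extentheorem}(2) supplies for $s=r/2\le 2$, i.e.\ for the stated range $0\le r\le 4$.
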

\bigskip

\begin{lemma}\label{lem2}
Let $\bar{T}>0$ be arbitrary, $B$ a Hilbert space and choose $T\leq\bar{T}$.
\begin{enumerate}
\item 
For $v\in L^2((0,T); B)$, we define $V\in H^1((0,T); B)$ by
$$V(t)=\int_0^t v(\tau)\, d\tau.$$
For $0<s<\frac{1}{2}$ and $0\leq\varepsilon<s$, then the map $v\rightarrow V$ is a bounded operator from $H^s((0,T); B)$ to $H^{s+1-\varepsilon}((0,T);B)$, and

$$\|V\|_{H^{s+1-\varepsilon}((0,T); B)}\leq C_0T^{\varepsilon}\|v\|_{H^s((0,T); B)},$$
where $C_0$ is independent of $T$ for $0< T\leq\bar {T}$.
\medskip

\item For $\frac{1}{2}< s<1$, we impose $v(0)=0$ and $0\leq\varepsilon<s$. Then $v\rightarrow V$ is a bounded operator from $H^s_{(0)}((0,T); B)$ to $H^{s+1-\varepsilon}_{(0)}((0,T); B)$ and 

$$\|V\|_{H^{s+1-\varepsilon}_{(0)}((0,T); B)}\leq C_0T^{\varepsilon}\|v\|_{H^s_{(0)}((0,T); B)},$$
where $C_0$ is independent of $T$ for $0< T\leq\bar{T}$.
\end{enumerate}
\end{lemma}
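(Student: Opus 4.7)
The strategy is to decompose $\|V\|_{H^{s+1-\varepsilon}}$ into its $L^2$ and homogeneous parts, bound each separately, and reduce the whole problem to one Sobolev interpolation inequality that generates the $T^{\varepsilon}$ gain. Write
\[
\|V\|_{H^{s+1-\varepsilon}(0,T;B)}^{2} \sim \|V\|_{L^{2}(0,T;B)}^{2} + \|V\|_{\dot H^{s+1-\varepsilon}(0,T;B)}^{2}.
\]
For the lower-order piece, Cauchy--Schwarz in $\tau$ applied to $V(t)=\int_{0}^{t} v\,d\tau$ yields $\|V(t)\|_{B} \le \sqrt{t}\,\|v\|_{L^{2}(0,T;B)}$, hence $\|V\|_{L^{2}(0,T;B)} \le \tfrac{T}{\sqrt{2}}\|v\|_{L^{2}(0,T;B)}$. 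Since $T \le \bar T$, writing $T \le \bar T^{\,1-\varepsilon}\, T^{\varepsilon}$ gives $\|V\|_{L^{2}(0,T;B)} \le C_{0}\, T^{\varepsilon}\|v\|_{H^{s}}$ with $C_{0}$ depending only on $\bar T$.

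For the top-order piece, the identity $V' = v$ gives $\|V\|_{\dot H^{s+1-\varepsilon}} = \|v\|_{\dot H^{s-\varepsilon}}$, so both parts of the lemma follow once one proves the Sobolev interpolation estimate
\[
\|v\|_{\dot H^{s-\varepsilon}(0,T;B)} \le C_{0}\, T^{\varepsilon}\,\|v\|_{H^{s}(0,T;B)}. \qquad(\star)
\]
To establish $(\star)$, extend $v$ by zero to the whole line, obtaining $\tilde v$ supported in $[0,T]$; a smooth truncation near $t=T$ may be applied first, which is harmless. In case (1), the constraint $0<s<1/2$ is strictly below the trace threshold, so the zero extension preserves the $H^{s}$ norm with a constant independent of $T$. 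In case (2), the assumption $v(0)=0$ plays the role of the vanishing-trace condition required for the same property to hold when $1/2<s\le 1$. The $T^{\varepsilon}$ gain then appears by scaling: setting $w(\tau)=\tilde v(T\tau)$, a direct computation gives
\[
\|\tilde v\|_{L^{2}(\mathbb R;B)} = T^{1/2}\|w\|_{L^{2}}, \qquad \|\tilde v\|_{\dot H^{s}(\mathbb R;B)} = T^{1/2-s}\|w\|_{\dot H^{s}}.
\]
Since $w$ is compactly supported in $[0,1]$ and $s>0$, a Poincar\'e-type inequality yields $\|w\|_{L^{2}} \le C\|w\|_{\dot H^{s}}$ with $C$ independent of $w$; rescaling back gives $\|v\|_{L^{2}(0,T;B)} \le C\, T^{s}\,\|v\|_{\dot H^{s}}$. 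Combining this with the classical interpolation $\|v\|_{\dot H^{s-\varepsilon}} \le C\|v\|_{L^{2}}^{\varepsilon/s}\|v\|_{H^{s}}^{\,1-\varepsilon/s}$ produces exactly $(\star)$, and summing with the $L^{2}$ estimate for $V$ completes both parts.

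The main obstacle I expect is the rigorous justification of the zero-extension step in case (2), where $s$ lies in the Sobolev trace range $(1/2,1]$: the hypothesis $v(0)=0$ has to be invoked precisely to secure $\tilde v \in H^{s}(\mathbb R;B)$ with a norm bound independent of $T$, which is what allows all constants to be absorbed uniformly into $C_{0}$. Modulo this careful bookkeeping of trace and extension constants, the scaling-plus-interpolation mechanism is routine and the two cases close in parallel.
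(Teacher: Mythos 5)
First, a point of reference: the paper contains no proof of Lemma \ref{lem2} --- it is recalled verbatim in the appendix from Beale \cite{B} and Lions--Magenes \cite{LM} --- so your argument has to stand on its own. Your overall mechanism (reduce everything to a Poincar\'e-type bound $\|v\|_{L^2(0,T;B)}\le C\,T^{s}\|v\|_{\dot H^{s}}$ obtained by extending $v$ to the line, rescaling to the unit interval and using the compact-support Poincar\'e inequality, then interpolating to convert $T^{s}$ into $T^{\varepsilon}$) is the right one and is essentially the classical proof. But two steps are genuinely broken as written. The most serious is the zero-extension in case (2): for $\tfrac12<s\le 1$ the hypothesis $v(0)=0$ only removes the obstruction at the \emph{left} endpoint, while nothing is assumed at $t=T$; if $v(T)\neq 0$ the function $v\chi_{(0,T)}$ has a jump at $T$ and does not belong to $H^{s}(\mathbb{R};B)$ at all, so $\tilde v$ need not exist, let alone with a $T$-uniform bound. (The repair is to first reflect across $t=T$ onto $(T,2T)$ --- the reflected function vanishes at both ends of $(0,2T)$ and reflection costs a universal constant --- and only then extend by zero; alternatively, case (2) is handled directly by the embedding $\|v(t)\|_B=\|v(t)-v(0)\|_B\le C\,t^{s-1/2}|v|_{\dot H^{s}}$, which gives $\|v\|_{L^{2}}\le CT^{s}|v|_{\dot H^{s}}$ with no extension at all.)

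The second gap is in case (1), where the $T$-uniformity of the zero-extension constant is precisely the delicate point and you assert it rather than prove it. For the intrinsic Slobodeckij norm on $(0,T)$ it is \emph{false}: for $v\equiv 1$ the intrinsic seminorm vanishes while $\|\chi_{(0,T)}\|_{\dot H^{s}(\mathbb{R})}\sim T^{1/2-s}$, so the extension constant blows up like $T^{-s}$; with that norm the lemma itself fails, since the portion $\|V'\|_{L^{2}}=\|v\|_{L^{2}}=T^{1/2}$ of $\|V\|_{H^{s+1-\varepsilon}}$ cannot be bounded by $C T^{\varepsilon}\cdot T^{1/2}$ uniformly for small $T$. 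The statement is true for the restriction (quotient) norm of \cite{B}, \cite{LM}, where uniformity of the sharp cutoff follows from the $T$-independent boundedness of multiplication by $\chi_{(0,\infty)}$ and $\chi_{(-\infty,T)}$ on $H^{s}(\mathbb{R})$ for $|s|<\tfrac12$; you need to say which norm you are using and supply this. Two smaller but real defects: your decomposition of $\|V\|_{H^{s+1-\varepsilon}}$ omits the intermediate term $\|V'\|_{L^{2}(0,T;B)}=\|v\|_{L^{2}(0,T;B)}$ (present since $s+1-\varepsilon>1$), which is not covered by $(\star)$ and is exactly the term requiring the Poincar\'e bound; and the ``harmless smooth truncation near $t=T$'' is not harmless uniformly in $T$, since a cutoff adapted to scale $T$ has derivative of size $T^{-1}$. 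With these repairs the argument closes; as written it does not.
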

\bigskip

\begin{lemma}\label{lem3}
Suppose $r>1$ and $r\geq s\geq 0.$ If $v\in H^r(\Omega)$ and $w\in H^s(\Omega)$, then $vw\in H^s(\Omega)$ and
$$\|vw\|_{H^s}\leq C\|v\|_{H^r} \|w\|_{H^s}.$$
\end{lemma}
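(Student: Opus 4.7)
The statement is a standard Sobolev multiplication lemma: since $\Omega \subset \R^2$ so that $r > 1 = n/2$, the space $H^r(\Omega)$ embeds into $L^\infty(\Omega)$ and in fact acts by multiplication on $H^s(\Omega)$ for every $0 \le s \le r$. My plan is to first reduce to the whole plane by invoking a Stein-type extension operator, which extends $v \in H^r(\Omega)$ and $w \in H^s(\Omega)$ to elements of $H^r(\R^2)$ and $H^s(\R^2)$ with norms controlled by the original ones. Since the conclusion $vw \in H^s(\Omega)$ is a restriction statement, it suffices to establish the estimate on $\R^2$.

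The main argument would then use Littlewood--Paley / Bony paraproduct decomposition. Writing $v = \sum_j \Delta_j v$ and $w = \sum_k \Delta_k w$ with a dyadic partition of unity in frequency, I decompose
\begin{equation*}
vw \;=\; T_v w + T_w v + R(v,w),
\end{equation*}
where $T_v w := \sum_j S_{j-N} v \, \Delta_j w$, similarly for $T_w v$, and $R(v,w)$ is the diagonal remainder. The paraproduct $T_v w$ is estimated in $H^s$ via the uniform bound $\|S_{j-N} v\|_{L^\infty} \lesssim \|v\|_{L^\infty} \lesssim \|v\|_{H^r}$, which follows from the embedding $H^r \hookrightarrow L^\infty$ in dimension two (here $r > 1$ is used crucially), together with almost-orthogonality of the frequency-localized pieces of $\Delta_j w$. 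The paraproduct $T_w v$ requires more care because $w$ has lower regularity, but Bernstein's inequality on the low-frequency multiplier $S_{j-N} w$ together with the hypothesis $s \le r$ yields the summability of $\sum_j 2^{2js}\|S_{j-N}w \Delta_j v\|_{L^2}^2$ in terms of $\|w\|_{H^s}^2 \|v\|_{H^r}^2$. For the remainder $R(v,w)$, the condition $r > n/2$ (automatic here) guarantees absolute convergence of the diagonal sum and the required $H^s$ bound.

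An alternative, more elementary route handles integer $s$ by expanding $\partial^\alpha(vw)$ through Leibniz and bounding each $\partial^\beta v \, \partial^{\alpha-\beta} w$ in $L^2$ via Hölder with exponents dictated by Sobolev embedding and Gagliardo--Nirenberg interpolation; the non-integer case then follows by real interpolation between consecutive integer orders. The principal obstacle, in either approach, is treating fractional $s$ in a uniform way: the paraproduct proof must carefully distinguish the ranges $s < n/2$, $s = n/2$, $s > n/2$ in the high-high remainder estimate, while the Leibniz proof must invoke interpolation without losing the sharp multiplicative structure. The strict inequality $r > 1 = n/2$ provides exactly the margin needed to close these estimates away from the endpoint, and this is what I would systematically exploit throughout the argument.
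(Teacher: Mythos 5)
The paper never proves this lemma: it is stated in the appendix as one of several embedding and product estimates recalled from Beale \cite{B} and Lions--Magenes \cite{LM}, so there is no in-paper argument to compare against, and your proposal supplies an actual proof where the authors simply cite. Your architecture is the standard and correct one: extend $v,w$ from the (smooth, hence Sobolev-extension) domain $\Omega\subset\R^2$ to the whole plane, note that restricting the product of the extensions returns $vw$, and then run the Bony splitting $vw=T_vw+T_wv+R(v,w)$. The individual estimates you indicate do close. For $T_vw$ one only needs $\|S_{j-N}v\|_{L^\infty}\lesssim\|v\|_{L^\infty}\lesssim\|v\|_{H^r}$ (this is where $r>1=n/2$ enters) together with almost orthogonality of the outputs, and this works for every $s\geq 0$. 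The piece $T_wv$ is indeed the only delicate one: for $s<1$ Bernstein gives $\|S_{j-N}w\|_{L^\infty}\lesssim 2^{j(1-s)}\|w\|_{H^s}$, so $2^{js}\|S_{j-N}w\,\Delta_jv\|_{L^2}\lesssim 2^{j}\|w\|_{H^s}\|\Delta_jv\|_{L^2}$ and the square sum is controlled by $\|v\|_{H^1}\leq\|v\|_{H^r}$; for $s>1$ one uses $H^s\hookrightarrow L^\infty$ and $s\leq r$; at $s=1$ a logarithmic loss appears which is exactly absorbed by the strict inequality $r>1$. The remainder satisfies the classical bound $\|R(v,w)\|_{H^{r+s-1}}\lesssim\|v\|_{H^r}\|w\|_{H^s}$, and $r+s-1\geq s$ again because $r>1$, so it lands in $H^s$ with room to spare. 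Writing out these three regimes (and the trivial restriction step) would turn your sketch into a complete proof; the Leibniz-plus-interpolation route you mention as an alternative is essentially what the classical references carry out, and quoting \cite{B} or \cite{LM}, as the paper does, is of course the shortest admissible path.
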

\bigskip

\begin{lemma}\label{lem4}
If $v\in H^{\frac{1}{q}}$ and $w\in H^{\frac{1}{p}}$ with $\frac{1}{p}+\frac{1}{q}=1$ and $1<p<\infty$ then 
$$\|vw\|_{L^2}\leq C\|v\|_{H^{\frac{1}{q}}}\|w\|_{H^{\frac{1}{p}}}.$$
\end{lemma}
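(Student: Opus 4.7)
The plan is to prove this classical Sobolev product estimate by combining the Sobolev embedding theorem with Hölder's inequality, working in the relevant spatial dimension $n=2$ (since the paper is set in a 2D domain). The identity $\tfrac{1}{p}+\tfrac{1}{q}=1$ with $1<p<\infty$ (so $1<q<\infty$) is precisely the algebraic relation needed to line up the Sobolev embedding exponents with a Hölder pairing in $L^2$.

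First I would invoke the Sobolev embedding in two dimensions: for $0<s<1$, one has $H^s(\Omega)\hookrightarrow L^{r(s)}(\Omega)$ with $\tfrac{1}{r(s)}=\tfrac{1}{2}-\tfrac{s}{2}$. Applying this with $s=1/q$ yields
\begin{equation*}
\frac{1}{r(1/q)}=\frac{1}{2}-\frac{1}{2q}=\frac{1}{2}\Bigl(1-\frac{1}{q}\Bigr)=\frac{1}{2p},
\end{equation*}
so $H^{1/q}(\Omega)\hookrightarrow L^{2p}(\Omega)$ with $\|v\|_{L^{2p}}\le C\|v\|_{H^{1/q}}$. Symmetrically, with $s=1/p$,
\begin{equation*}
\frac{1}{r(1/p)}=\frac{1}{2}-\frac{1}{2p}=\frac{1}{2q},
\end{equation*}
so $H^{1/p}(\Omega)\hookrightarrow L^{2q}(\Omega)$ with $\|w\|_{L^{2q}}\le C\|w\|_{H^{1/p}}$.

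Next, since $\tfrac{1}{2p}+\tfrac{1}{2q}=\tfrac{1}{2}(\tfrac{1}{p}+\tfrac{1}{q})=\tfrac{1}{2}$, Hölder's inequality with exponents $2p$ and $2q$ gives
\begin{equation*}
\|vw\|_{L^2}\le \|v\|_{L^{2p}}\,\|w\|_{L^{2q}}.
\end{equation*}
Chaining this with the two embedding estimates above produces the claim $\|vw\|_{L^2}\le C\|v\|_{H^{1/q}}\|w\|_{H^{1/p}}$.

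There is essentially no substantive obstacle: the only checks are that (i) the embedding exponent $s=1/q\in(0,1)$ falls in the subcritical range $s<n/2=1$ for $n=2$, which holds since $q>1$; (ii) the two Sobolev exponents $2p$ and $2q$ are Hölder-conjugate in the $L^2$-scaled sense, which is precisely the hypothesis $1/p+1/q=1$; and (iii) one works on a Lipschitz domain $\Omega$ so that the standard Sobolev embedding applies (via an extension operator if needed). The only mildly delicate point is the borderline cases $p=1$ or $p=\infty$, which are excluded by hypothesis, so no endpoint embedding such as $H^1\hookrightarrow\mathrm{BMO}$ is required.
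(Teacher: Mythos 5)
Your proof is correct: the exponent bookkeeping ($H^{1/q}\hookrightarrow L^{2p}$, $H^{1/p}\hookrightarrow L^{2q}$ in dimension $n=2$, followed by H\"older with $\tfrac{1}{2p}+\tfrac{1}{2q}=\tfrac12$) is exactly right, and the hypotheses $1<p<\infty$ keep both embeddings strictly subcritical. The paper does not prove this lemma at all — it is recalled from Beale \cite{B} and Lions--Magenes \cite{LM} — and the argument you give is the standard one underlying that cited result, so there is nothing to reconcile.
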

\bigskip

\begin{lemma}\label{lem5}
Suppose  $B, Y, Z$ are Hilbert spaces, and $M:B\times Y\rightarrow Z$ is a bounded, bilinear, multiplication operator. Suppose $w\in H^s((0,T); B)$ and $v\in H^s((0,T); Y)$, where $s>\frac{1}{2}$. If $vw$ is defined by $M(v,w)$, then $vw\in H^s((0,T); Z)$ and the following hold
\begin{enumerate}
\item $$\|vw\|_{H^s((0,T); Z)}\leq C\|v\|_{H^s((0,T); Y)} \|w\|_{H^s((0,T); B)}.$$
\medskip

\item In addition, if $s\leq 2$ and $\partial_t^k v(0)=\partial_t^k w(0)=0$, $0\leq k<s-\frac{1}{2}$ and $s-\frac{1}{2}$ is not an integer, then the constant $C$ in $(1)$ can be chosen independently on $T$. Indeed
$$\|vw\|_{H^s_{(0)}((0,T); Z)}\leq C \|v\|_{H^s_{(0)}((0,T); Y)}\|w\|_{H^s_{(0)}((0,T); B)}.$$

\end{enumerate}
\end{lemma}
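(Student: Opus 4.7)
The plan is to reduce the three statements to a single estimate on the whole line $\mathbb{R}$, obtained via the intrinsic Gagliardo–Slobodeckii characterization of $H^s$.

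\textbf{Part (1).} First I would treat the case $s>1/2$, $s\leq 1$. Since $M$ is a bounded bilinear map, $\|v(t)w(t)\|_Z\leq C\|v(t)\|_Y\|w(t)\|_B$, so the $L^2((0,T);Z)$ norm is controlled by $\|v\|_{L^\infty((0,T);Y)}\|w\|_{L^2((0,T);B)}$ (and analogously with the roles swapped), and the Sobolev embedding $H^s(0,T)\hookrightarrow L^\infty(0,T)$ for $s>1/2$ closes this step. For the $H^s$–seminorm one uses the splitting
\[
M(v(t),w(t))-M(v(\tau),w(\tau))=M(v(t)-v(\tau),w(t))+M(v(\tau),w(t)-w(\tau))
\]
inside the double integral
\[
\iint_{(0,T)^2}\frac{\|\,\cdot\,\|_Z^2}{|t-\tau|^{1+2s}}\,dt\,d\tau,
\]
bound $M$ by its operator norm, and pull out $\|v\|_{L^\infty}$ or $\|w\|_{L^\infty}$ using the same embedding. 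For $s>1$ one differentiates, applies the Leibniz rule, and interpolates: the cross term requires $\|\partial_t v\|_{L^2 H} \|w\|_{L^\infty} + \|v\|_{L^\infty}\|\partial_t w\|_{L^2 H}$ at the integer level, plus the $s-1$ fractional part handled as above. The constants here depend on the Sobolev embedding constants on $(0,T)$, hence on $T$.

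\textbf{Part (2).} The key observation is that the vanishing condition $\partial_t^k v(0)=\partial_t^k w(0)=0$ for $0\leq k<s-\tfrac12$, together with $s-\tfrac12\notin\mathbb{N}$, is exactly the characterization of the subspace $H^s_{00}((0,T);\cdot)$ on which extension by zero to $(-\infty,T]$ (or to $\mathbb{R}$ after also reflecting or cutting at $T$) is a bounded operator into $H^s(\mathbb{R};\cdot)$ with a norm bound independent of $T$. So I would let $\tilde v,\tilde w$ denote the zero extensions, note that $\tilde v\,\tilde w$ equals $vw$ on $(0,T)$ and vanishes outside, and apply the estimate of Part (1) on the whole real line, where by translation invariance the constant is universal:
\[
\|vw\|_{H^s((0,T);Z)}\leq \|\tilde v\tilde w\|_{H^s(\mathbb R;Z)}\leq C\|\tilde v\|_{H^s(\mathbb R;Y)}\|\tilde w\|_{H^s(\mathbb R;B)}\leq C'\|v\|_{H^s((0,T);Y)}\|w\|_{H^s((0,T);B)}.
\]
The constants from the extension operator and from the $\mathbb R$–version of the product estimate are both $T$–independent.

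\textbf{Part (3).} Here only $w$ vanishes to sufficient order at $t=0$, while $v$ does not. Let $k_0=\lfloor s-\tfrac12\rfloor$ and introduce the Taylor polynomial
\[
P(t)=\sum_{k=0}^{k_0}\tfrac{t^k}{k!}\partial_t^k v(0)\in Y,\qquad \tilde v=v-P.
\]
Then $\partial_t^j\tilde v(0)=0$ for $0\leq j\leq k_0$, so $\tilde v$ and $w$ satisfy the hypotheses of Part (2), giving
\[
\|\tilde v\,w\|_{H^s((0,T);Z)}\leq C\|\tilde v\|_{H^s((0,T);Y)}\|w\|_{H^s((0,T);B)}\leq C'\|v\|_{H^s((0,T);Y)}\|w\|_{H^s((0,T);B)}
\]
with a $T$–independent constant. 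For the remaining term $P\,w$, write $P\,w=\sum_{k\leq k_0}\tfrac{t^k}{k!}M(\partial_t^k v(0),w)$: each summand is the action of a fixed (in $t$) multiplier from $Y$, composed with multiplication by a polynomial of degree $\leq k_0\leq s-\tfrac12<2$, which is a bounded operator on $H^s((0,T);\cdot)$ with constant depending only on $\bar T$ and on the $\partial_t^k v(0)$. Summing these contributions yields the claimed inequality with the additive $\|w\|_{H^s}$ term.

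The main technical obstacle is Part (2): one has to verify that under the vanishing conditions, with $s-\tfrac12\notin\mathbb{Z}$ and $s\leq 2$, the zero extension is bounded $H^s((0,T))\to H^s(\mathbb{R})$ uniformly in $T$. This is the classical Lions–Magenes trace/extension theorem (applied to the $t$-variable in Hilbert-space valued functions), and is the reason the index $s-\tfrac12$ appears and why the integer case has to be excluded. Once this is granted, Parts (1) and (3) are routine.
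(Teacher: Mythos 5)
The paper does not prove this lemma: it is stated in the appendix as a recalled result, imported verbatim from Beale \cite{B} (and \cite{LM}), so there is no in-paper argument to compare against. Your sketch reconstructs essentially the standard proof behind that citation — Gagliardo--Slobodeckii splitting of the bilinear product for the seminorm, reduction of the $T$-independent cases to the whole line via the extension operator for functions with vanishing traces (which is exactly Lemma \ref{extentheorem} of this appendix), and Taylor subtraction for part (3) — and the structure is sound. One imprecision worth fixing in Part (3): your chain $\|\tilde v\,w\|_{H^s}\leq C\|\tilde v\|_{H^s}\|w\|_{H^s}\leq C'\|v\|_{H^s}\|w\|_{H^s}$ with $T$-independent $C'$ does not hold as written, because $\|\tilde v\|_{H^s}\leq\|v\|_{H^s}+\|P\|_{H^s}$ and $\|P\|_{H^s((0,T);Y)}$ is controlled by the data $\partial_t^k v(0)$ (with a constant depending only on $\bar T$), not by $\|v\|_{H^s((0,T);Y)}$ with a $T$-uniform constant — the trace estimate $\|\partial_t^k v(0)\|_Y\lesssim\|v\|_{H^s((0,T))}$ degenerates as $T\to 0$. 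The correct conclusion from this term is $C\|v\|_{H^s}\|w\|_{H^s}+C(\partial_t^k v(0))\|w\|_{H^s}$, i.e.\ the $\tilde v\,w$ piece also contributes to the additive $\|w\|_{H^s}$ term, not only the $P\,w$ piece; this is consistent with the statement, which explicitly allows the constants to depend on $\partial_t^k v(0)$. With that adjustment the argument is complete.
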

\bigskip

\begin{lemma}\label{lem6}
For $2<s<\frac{5}{2}$, $\varepsilon, \delta$ positive and small enough and $v\in\mathcal{F}^{s+1,\gamma}$ the following estimates hold
\medskip

\begin{enumerate}
\item $\|v\|_{H_{(0)}^{\frac{s+1}{2}}H^{1-\varepsilon}}\leq C\|v\|_{\mathcal{F}^{s+1,\gamma}}$,\\
\item  $\|v\|_{H_{(0)}^{\frac{s+1}{2}+\varepsilon}H^{1+\delta}}\leq C\|v\|_{\mathcal{F}^{s+1,\gamma}}$,\\
\item  $\|v\|_{H_{(0)}^{\frac{s-1}{2}+\varepsilon}H^{2+\delta}}\leq C\|v\|_{\mathcal{F}^{s+1,\gamma}}$,\\
\item  $\|v\|_{H_{(0)}^{\frac{s}{2}-\frac{1}{4}+\varepsilon}H^{2+\delta}}\leq C\|v\|_{\mathcal{F}^{s+1,\gamma}}$,\\
\item  $\|v\|_{H_{(0)}^1H^{s-1}}\leq C\|v\|_{\mathcal{F}^{s+1,\gamma}}$,\\
\item  $\|v\|_{H_{(0)}^{\frac{1}{2}+2\varepsilon}H^{s}}\leq C\|v\|_{\mathcal{F}^{s+1,\gamma}}$.
\end{enumerate}
\end{lemma}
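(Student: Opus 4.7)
The approach is a one-parameter interpolation between the two defining pieces of
\begin{equation*}
\mathcal{F}^{s+1,\gamma}([0,T];\Omega)=L^{\infty}_{\frac{1}{4},t}H^{s+1}_x\cap H^{2}_tH^{\gamma}_x,
\end{equation*}
followed, where needed, by a trivial spatial Sobolev embedding. All six inequalities (1)--(6) correspond to a single point $(a_j,b_j)$ which, after an interpolation between the endpoints $(0,s+1)$ and $(2,\gamma)$ in the $(a,b)$-plane of anisotropic Sobolev exponents, is matched or dominated.

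The first step converts the weighted piece into a classical $L^2_t$-control. From $\|v(t)\|_{H^{s+1}_x}\le t^{1/4}\|v\|_{L^\infty_{\frac{1}{4},t}H^{s+1}_x}$ and $\int_0^T t^{1/2}\,dt=\tfrac{2}{3}T^{3/2}$,
\begin{equation*}
\|v\|_{L^2_tH^{s+1}_x}\le C\,T^{3/4}\,\|v\|_{\mathcal{F}^{s+1,\gamma}},
\end{equation*}
so $v\in L^2_tH^{s+1}_x\cap H^{2}_tH^{\gamma}_x$. The second step applies the standard interpolation inequality for anisotropic Hilbert-valued Sobolev spaces, proved for instance by Fourier transform in time on a compactly supported extension of $v$ combined with the pointwise-in-$\tau$ spatial bound $\|\widehat v(\tau)\|_{H^{b_\theta}_x}\le\|\widehat v(\tau)\|_{H^{s+1}_x}^{1-\theta}\|\widehat v(\tau)\|_{H^{\gamma}_x}^{\theta}$ and H\"older in $\tau$. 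This yields, for every $\theta\in[0,1]$,
\begin{equation*}
\|v\|_{H^{2\theta}_tH^{(1-\theta)(s+1)+\theta\gamma}_x}\le C\,\|v\|_{L^2_tH^{s+1}_x}^{1-\theta}\,\|v\|_{H^{2}_tH^{\gamma}_x}^{\theta}\le C\,\|v\|_{\mathcal{F}^{s+1,\gamma}},
\end{equation*}
exactly the interpolation mechanism already used for the $\mathcal{K}^r$ scale in Lemma \ref{lem1}, now transported to the endpoints $(0,s+1)$ and $(2,\gamma)$.

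The third step matches each target. For the pair $(a,b)$ read off from (1)--(6), set $\theta=a/2$ and $b_\theta=(s+1)-\theta\bigl((s+1)-\gamma\bigr)$, and use $s+1-\gamma\in(2,2+\varepsilon)$. A one-line algebraic check yields, for $2<s<\tfrac{5}{2}$ and $\varepsilon,\delta$ sufficiently small,
\begin{align*}
&(1)\ \theta=\tfrac{s+1}{4},\ b_\theta\ge\tfrac{s+1}{2}-O(\varepsilon)>1-\varepsilon, &&(2)\ \theta=\tfrac{s+1}{4}+\tfrac{\varepsilon}{2},\ b_\theta\ge\tfrac{s+1}{2}-O(\varepsilon)>1+\delta,\\
&(3)\ \theta=\tfrac{s-1}{4}+\tfrac{\varepsilon}{2},\ b_\theta\ge\tfrac{s+3}{2}-O(\varepsilon)>2+\delta, &&(4)\ \theta=\tfrac{s}{4}-\tfrac{1}{8}+\tfrac{\varepsilon}{2},\ b_\theta\ge\tfrac{s}{2}+\tfrac{5}{4}-O(\varepsilon)>2+\delta,\\
&(5)\ \theta=\tfrac{1}{2},\ b_\theta\ge s-O(\varepsilon)>s-1, &&(6)\ \theta=\tfrac{1}{4}+\varepsilon,\ b_\theta\ge s+\tfrac{1}{2}-O(\varepsilon)>s.
\end{align*}
In every case $b_\theta\ge b$, and the inequality (1)--(6) follows from the interpolation bound above composed with the spatial Sobolev embedding $H^{b_\theta}_x\hookrightarrow H^b_x$.

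For the $T$-independence under $v(0)=\partial_tv(0)=0$, I would extend $v$ by zero to $t<0$; the first-order vanishing ensures the extension stays in $H^{2}_tH^{\gamma}_x\cap L^2_tH^{s+1}_x$ on all of $\mathbb{R}$ with the same norm (the zero-extension argument already recorded in Lemma \ref{lem2}), so Steps~1--3 performed on $\mathbb{R}$ produce a constant $C$ independent of $T$. The only technically delicate point is Step~1: the weighted $L^{\infty}_{\frac{1}{4},t}$ factor does not sit naturally inside a real-interpolation scale, so the reduction to $L^2_tH^{s+1}_x$ via the explicit weight $t^{1/4}$ is both necessary and lossy in $T$. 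That $T$-loss is harmless because Lemma \ref{lem6} is always invoked with $T$ already taken small in the fixed-point scheme of Section~2. Beyond that reduction, the entire lemma is arithmetic on a single straight line in the $(a,b)$-plane joining $(0,s+1)$ to $(2,\gamma)$, combined with the trivial embedding $H^{b_\theta}_x\hookrightarrow H^b_x$ whenever $b_\theta\ge b$.
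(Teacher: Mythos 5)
The paper states Lemma \ref{lem6} without proof --- it is ``recalled'' from the references \cite{B} and \cite{LM} --- so there is no in-house argument to compare against; judged on its own terms, your proof is correct and is essentially the argument those references would supply. The reduction $\|v\|_{L^2_tH^{s+1}_x}\le C\,T^{3/4}\|v\|_{L^\infty_{1/4,t}H^{s+1}_x}$ is exact, the Fourier-side H\"older argument does give $\|v\|_{H^{2\theta}_tH^{(1-\theta)(s+1)+\theta\gamma}_x}\le \|v\|_{L^2_tH^{s+1}_x}^{1-\theta}\|v\|_{H^2_tH^\gamma_x}^{\theta}$, and I have checked that for each of the six targets the required $\theta=a/2$ lies in $[0,1]$ and that $b_\theta\ge b$ holds not only under the appendix constraint $s-1-\varepsilon<\gamma<s-1$ (your ``$s+1-\gamma\in(2,2+\varepsilon)$'') but even under the weaker constraint $1<\gamma<s-1$ used in Theorem \ref{local-existence conf-lag}; the tightest case is item (6), where $b_\theta-s=1-s(\tfrac14+\varepsilon)(\tfrac{s+1-\gamma}{s})>0$ still holds comfortably for $s<\tfrac52$ and $\varepsilon$ small. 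Two minor points. First, the zero-extension step for the $T$-independence claim is really the content of Lemma \ref{extentheorem}(2) (applied with $s=2$, where the trace conditions $v(0)=\partial_tv(0)=0$ are exactly what is needed), not of Lemma \ref{lem2}, which concerns the primitive operator $V(t)=\int_0^t v$; and one should extend across both endpoints $t=0$ and $t=T$ (or use the $T$-uniform right inverse of \ref{extentheorem}) before taking the Fourier transform in time, since the fractional $H^{a}_t((0,T))$ norms are otherwise only defined by restriction. Second, your description of the $T^{3/4}$ factor as ``lossy'' is backwards --- it is a gain for small $T$ and in any case uniformly bounded for $T\le\bar T$, so it does not threaten the $T$-independence of $C$. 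Neither point affects the validity of the argument.
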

\medskip

\begin{remark}
We notice that in the same way as Lemma \ref{lem6} we can deduce the embeddings for the space $\mathcal{F}^{s,\gamma-1}$. For instance, $\mathcal{F}^{s,\gamma-1}\subset H_{(0)}^{\frac{s-1}{2}+\delta}H^{1+\eta}$, $\mathcal{F}^{s,\gamma-1}\subset H_{(0)}^{\frac{s}{2}-\frac{1}{4}+\delta}H^{1+\eta}$, for $\delta, \eta>0$ and small enough.
\end{remark}

\bigskip

\begin{lemma}\label{parabolic-trace}
Let $\Omega$ be a bounded set with a sufficient smooth boundary then the following trace theorems hold
\begin{enumerate}
\item Suppose $\frac{1}{2}< s\leq 5$. The mapping $v\rightarrow\partial_n^j v$ extends to a bounded operator\\ $\mathcal{K}^s([0,T];\Omega)\rightarrow \mathcal{K}^{s-j-\frac{1}{2}}([0,T];\partial\Omega)$, where $j$ is an integer $0\leq j<s-\frac{1}{2}$. The mapping $v\rightarrow \partial_t^k v(\alpha,0)$ extends to a bounded operator 
$\mathcal{K}^s([0,T];\Omega)\rightarrow H^{s-2k-1}(\Omega)$, if $k$ is an integer $ 0\leq k<\frac{1}{2}(s-1)$.

\item  Suppose $\frac{3}{2}<s<5$, $s\neq 3$ and $s-\frac{1}{2}$ not an integer. Let
$$\mathcal{W}^s=\prod_{0\leq j\leq s-\frac{1}{2}}\mathcal{K}^{s-j-\frac{1}{2}}([0,T];\partial\Omega)\times \prod_{0\leq k< \frac{s-1}{2}}H^{s-2k-1}(\Omega),$$
and let $\mathcal{W}^s_0$ the subspace consisting of $\{a_j,w_k\}$, which are the traces described in the previous point, so that $ \partial_t^k a_j(\alpha,0)=\partial_n^j w_k(\alpha)$, $\alpha\in\partial\Omega$, for $j+2k<s-\frac{3}{2}$. Then the traces in the previous point form a bounded operator $\mathcal{K}^s([0,T];\Omega)\rightarrow \mathcal{W}^s_0$ and this operator has a bounded right inverse.
\end{enumerate}
\end{lemma}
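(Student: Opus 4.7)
The plan is to treat Lemma \ref{parabolic-trace} as the standard anisotropic parabolic trace/extension theorem in the Lions--Magenes scale, following \cite{LM} and the adaptation by Beale in \cite{B}. The proof proceeds in three stages: a localization reducing everything to the model half-space $[0,\infty)\times\mathbb{R}^n_+$, the direct trace estimates via partial Fourier transform, and the construction of a right inverse that respects the compatibility at the parabolic corner $\{t=0\}\cap\partial\Omega_0$. For the reduction, I would cover $\overline{\Omega}_0$ by finitely many coordinate patches flattening $\partial\Omega_0$ and use a partition of unity together with the fact that diffeomorphisms act continuously on $\mathcal{K}^s$ for $0\le s\le 5$. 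The interval $[0,T]$ is replaced by $[0,\infty)$ via a bounded extension operator in $t$, with norm absolute in $T$ since all traces of interest live at $t=0$.

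For item (1), in the model half-space I would pass to the partial Fourier transform $\hat v(\tau,\xi',x_n)$ in $(t,x')$ and set $\lambda=(1+|\xi'|^2+|\tau|)^{1/2}$. The norm equivalence
\begin{equation*}
\|v\|_{\mathcal{K}^s}^2\sim \int \lambda^{2s}|\hat v|^2\,dx_n\,d\tau\,d\xi'+\int \lambda^{2(s-j)}|\partial_{x_n}^j\hat v|^2\,dx_n\,d\tau\,d\xi'
\end{equation*}
reduces matters to a one-dimensional trace inequality in $x_n$ at each frequency $(\tau,\xi')$, which loses $j+\tfrac12$ anisotropic derivatives and lands in the boundary space $\mathcal{K}^{s-j-1/2}$. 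The time trace $\partial_t^k v|_{t=0}$ is the classical restriction theorem for $H^{s/2}_tL^2_x\cap L^2_tH^s_x$, where evaluating $k$ time derivatives at $t=0$ costs $2k+1$ anisotropic derivatives, yielding $H^{s-2k-1}(\Omega_0)$.

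For item (2), given compatible data $\{a_j,w_k\}\in\mathcal{W}^s_0$ I would write the extension as $u=u_1+u_2-u_{12}$, where $u_1$ realizes the boundary traces via a Poisson-type formula
\begin{equation*}
\hat u_1(\tau,\xi',x_n)=\sum_{j}\hat a_j(\tau,\xi')\,\frac{x_n^j}{j!}\,\chi(\lambda x_n),
\end{equation*}
with $\chi$ a smooth cutoff equal to $1$ near $0$, where $u_2$ realizes the initial traces via a Seeley-type reflection in $t$ applied to a suitable Sobolev extension of $\{w_k\}$, and where $u_{12}$ corrects the double-counting at the corner. The compatibility condition $\partial_t^k a_j(\cdot,0)=\partial_n^j w_k(\cdot)$ for $j+2k<s-\tfrac32$ is exactly what makes the corner defect vanish to sufficient order so that $u_{12}$ is bounded in $\mathcal{K}^s$ by $\|\{a_j,w_k\}\|_{\mathcal{W}^s}$.

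The main obstacle is the corner analysis on $\{t=0\}\cap\partial\Omega_0$: this is precisely why the statement excludes $s=3$ and $s-\tfrac12\in\mathbb{Z}$, because at these critical exponents the compatibility conditions themselves require logarithmic or weighted corrections and the Fourier cutoff construction above loses sharp boundedness. Away from those critical values, every ingredient is quantitatively standard and the proof reduces to carefully tracking the finitely many corner traces $\partial_t^k\partial_n^j u(0,\alpha)$ with $j+2k<s-\tfrac32$ that enter the extension formula, and verifying that the matching produced by the compatibility conditions stays inside $\mathcal{K}^s$ with the claimed bounds.
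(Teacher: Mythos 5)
The paper does not prove this lemma at all: it is recalled verbatim from the cited references (Beale \cite{B}, Lemmas 2.2--2.3, which in turn rest on the anisotropic trace theory of Lions--Magenes \cite{LM}), and your sketch --- localization to the half-space, partial Fourier transform with the parabolic weight $\lambda=(1+|\xi'|^2+|\tau|)^{1/2}$, the one-dimensional trace inequality losing $j+\tfrac12$ anisotropic derivatives, and the explicit right inverse with corner corrections governed by the compatibility conditions for $j+2k<s-\tfrac32$ --- is exactly the standard argument carried out in those sources. So your proposal is a correct reconstruction of the proof the paper implicitly relies on; the only cosmetic imprecision is that your displayed norm equivalence should carry the full sum over $0\le j\le s$ (or be stated via interpolation for non-integer $s$) rather than a single floating index $j$.
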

\medskip

\subsection{Auxiliary estimates}
In this section we resume the estimates we use throught the proofs in order to avoid too many computations. In the specific we need the following results for $\tilde{X}-\tilde{\alpha}$, by using lemma \ref{lem2}, with $0<\delta<\eta$.

\begin{equation}\label{flux-estim}
\begin{split}
&1.\hspace{0.5cm} \|\tilde{X}-\tilde{\alpha}\|_{L^{\infty}H^{s+1}}\leq \|\tilde{X}-\tilde{\alpha}-tJ^p \tilde{v}_0\|_{L^{\infty}H^{s+1}}+\|tJ^p \tilde{v}_0\|_{L^{\infty}H^{s+1}}\\[2mm]
&\hspace{1.5cm}\leq T^{\frac{1}{4}}\|\tilde{X}-\tilde{\alpha}-tJ^p \tilde{v}_0\|_{L^{\infty}_{\frac{1}{4}}H^{s+1}}+T\|J^p \tilde{v}_0\|_{H^{s+1}}\\[2mm]
&\hspace{1.5cm}\leq C(\tilde{v}_0)T^{\frac{1}{4}}\|\tilde{X}-\tilde{\alpha}-tJ^p \tilde{v}_0\|_{\mathcal{F}^{s+1,\gamma}}+C(\tilde{v}_0)T.\\[5mm]
&2.\hspace{0.5cm} \|\tilde{X}-\tilde{\alpha}\|_{H^1_{(0)}H^{\gamma}}\leq \|\tilde{X}-\tilde{\alpha}-tJ^p \tilde{v}_0\|_{H^1_{(0)}H^{\gamma}}+\|tJ^p \tilde{v}_0\|_{H^1_{(0)}H^{\gamma}}\\[2mm]
&\hspace{1.5cm}\leq \left\|\int_0^t\partial_t(\tilde{X}-\tilde{\alpha}-tJ^p \tilde{v}_0)\right\|_{H^{1+\eta-\delta}_{(0)}H^{\gamma}}+C(\tilde{v}_0)\|t\|_{H^1_{(0)}}\\[2mm]
&\hspace{1.5cm}\leq C(\tilde{v}_0)T^{\delta}\|\tilde{X}-\tilde{\alpha}-tJ^p \tilde{v}_0\|_{H^{1+\eta}_{(0)}H^{\gamma}}+C(\tilde{v}_0)T^{\frac{1}{2}}\\[2mm]
&\hspace{1.5cm}\leq C(\tilde{v}_0)T^{\delta}\|\tilde{X}-\tilde{\alpha}-tJ^p \tilde{v}_0\|_{\mathcal{F}^{s+1,\gamma}}+C(\tilde{v}_0)T^{\frac{1}{2}}.
\\[5mm]
&3.\hspace{0.5cm} \|\tilde{X}-\tilde{\alpha}\|_{H^{\frac{s-1}{2}}_{(0)}H^{2+\mu}}\leq \|\tilde{X}-\tilde{\alpha}-tJ^p \tilde{v}_0\|_{H^{\frac{s-1}{2}}_{(0)}H^{2+\mu}}+\|tJ^p \tilde{v}_0\|_{H^{\frac{s-1}{2}}_{(0)}H^{2+\mu}}\\[2mm]
&\hspace{1.5cm}\leq \left\|\int_0^t\partial_t(\tilde{X}-\tilde{\alpha}-tJ^p \tilde{v}_0)\right\|_{H^{\frac{s-1}{2}+\delta-\delta}_{(0)}H^{2+\mu}}+C(\tilde{v}_0)\|t\|_{H^{\frac{s-1}{2}}_{(0)}}\\[2mm]
&\hspace{1.5cm}\leq C(\tilde{v}_0)T^{\delta}\|\tilde{X}-\tilde{\alpha}-tJ^p \tilde{v}_0\|_{H^{\frac{s-1}{2}+\delta}_{(0)}H^{2+\mu}}+C(\tilde{v}_0)T^{\frac{1}{2}}\\[2mm]
&\hspace{1.5cm}\leq C(\tilde{v}_0)T^{\delta}\|\tilde{X}-\tilde{\alpha}-tJ^p \tilde{v}_0\|_{\mathcal{F}^{s+1,\gamma}}+C(\tilde{v}_0)T^{\frac{1}{2}}.
\end{split}
\end{equation}

\noindent And  we get the following estimates for $\mathbf{\tilde{T}}_p-\mathbf{\tilde{T}}_0$, by using lemma \ref{lem2}, with $0<\delta<\eta$.

\begin{equation}\label{elastic-estim}
\begin{split}
&1.\hspace{0.5cm} \|\mathbf{\tilde{T}}_p-\mathbf{\tilde{T}}_0\|_{L^{\infty}H^{s}}\leq \|\mathbf{\tilde{T}}_p-\mathbf{\tilde{T}}_0-t\mathbf{\hat{T}}\|_{L^{\infty}H^{s}}+\|t\mathbf{\hat{T}}\|_{L^{\infty}H^{s}}\\[2mm]
&\hspace{1cm}\leq T^{\frac{1}{4}}\|\mathbf{\tilde{T}}_p-\mathbf{\tilde{T}}_0-t\mathbf{\hat{T}}\|_{L^{\infty}_{\frac{1}{4}}H^{s}}+T\|\mathbf{\hat{T}}\|_{H^{s}}\\[2mm]
&\hspace{1cm}\leq  C(\tilde{v}_0,\mathbf{\tilde{T}}_0,\kappa,\|\mathbf{\tilde{T}}_p-\mathbf{\tilde{T}}_0-t\mathbf{\hat{T}}\|_{\mathcal{F}^{s,\gamma-1}})\left(1+\frac{1}{\We}\right)T^{\frac{1}{4}}.
\end{split}
\end{equation}
\begin{align*}
&2.\hspace{0.5cm} \|\mathbf{\tilde{T}}_p-\mathbf{\tilde{T}}_0\|_{H^1_{(0)}H^{\gamma-1}}\leq \|\mathbf{\tilde{T}}_p-\mathbf{\tilde{T}}_0-t\mathbf{\hat{T}}\|_{H^1_{(0)}H^{\gamma-1}}+\|t\mathbf{\hat{T}}\|_{H^1_{(0)}H^{\gamma-1}}\\[2mm]
&\hspace{1cm}\leq \left\|\int_0^t\partial_t(\mathbf{\tilde{T}}_p-\mathbf{\tilde{T}}_0-t\mathbf{\hat{T}})\right\|_{H^{1+\eta-\delta}_{(0)}H^{\gamma-1}}+C(\tilde{v}_0,\mathbf{\tilde{T}}_0,\kappa)\left(1+\frac{1}{\We}\right)\|t\|_{H^1_{(0)}}\\[2mm]
&\hspace{1cm}\leq C(\tilde{v}_0,\mathbf{\tilde{T}}_0)T^{\delta}\|\mathbf{\tilde{T}}_p-\mathbf{\tilde{T}}_0-t\mathbf{\hat{T}}\|_{H^{1+\eta}_{(0)}H^{\gamma-1}}+ C(\tilde{v}_0,\mathbf{\tilde{T}}_0,\kappa)\left(1+\frac{1}{\We}\right)T^{\frac{1}{2}}\\[2mm]
&\hspace{1cm}\leq C(\tilde{v}_0,\mathbf{\tilde{T}}_0,\kappa, \|\mathbf{\tilde{T}}_p-\mathbf{\tilde{T}}_0-t\mathbf{\hat{T}}\|_{\mathcal{F}^{s,\gamma-1}})\left(1+\frac{1}{\We}\right)T^{\delta}.\\[5mm]
&3.\hspace{0.5cm} \|\mathbf{\tilde{T}}_p-\mathbf{\tilde{T}}_0\|_{H^{\frac{s-1}{2}}_{(0)}H^{1+\mu}}\leq \|\mathbf{\tilde{T}}_p-\mathbf{\tilde{T}}_0-t\mathbf{\hat{T}}\|_{H^{\frac{s-1}{2}}_{(0)}H^{1+\mu}}+\|t\mathbf{\hat{T}}\|_{H^{\frac{s-1}{2}}_{(0)}H^{1+\mu}}\\[2mm]
&\hspace{1cm}\leq \left\|\int_0^t\partial_t(\mathbf{\tilde{T}}_p-\mathbf{\tilde{T}}_0-t\mathbf{\hat{T}})\right\|_{H^{\frac{s-1}{2}+\delta-\delta}_{(0)}H^{1+\mu}}+C(\tilde{v}_0,\mathbf{\tilde{T}}_0,\kappa)\left(1+\frac{1}{\We}\right)\|t\|_{H^{\frac{s-1}{2}}_{(0)}}\\[2mm]
&\hspace{1cm}\leq C(\tilde{v}_0,\mathbf{\tilde{T}}_0)T^{\delta}\|\mathbf{\tilde{T}}_p-\mathbf{\tilde{T}}_0-t\mathbf{\hat{T}}\|_{H^{\frac{s-1}{2}+\delta}_{(0)}H^{1+\mu}}+C(\tilde{v}_0,\mathbf{\tilde{T}}_0,\kappa)\left(1+\frac{1}{\We}\right)T^{\frac{1}{2}}\\[2mm]
&\hspace{1cm}\leq C(\tilde{v}_0,\mathbf{\tilde{T}}_0,\kappa,\|\mathbf{\tilde{T}}_p-\mathbf{\tilde{T}}_0-t\mathbf{\hat{T}}\|_{\mathcal{F}^{s,\gamma-1}})\left(1+\frac{1}{\We}\right)T^{\delta}.
\end{align*}

\noindent  In this estimate we use the fact that $\mathbf{\hat{T}}$ depends only on the initial data. Indeed 
$$\mathbf{\hat{T}}=J^P\nabla\tilde{v}_0\mathbf{\tilde{T}}_0+\mathbf{\tilde{T}}_0(J^P\nabla\tilde{v}_0)^T-\frac{1}{\We}\mathbf{\tilde{T}}_0+\frac{\kappa}{\We}\left(J^P\nabla\tilde{v}_0+(J^P\nabla\tilde{v}_0)^T\right).$$

\noindent Then, when we estimate its $H^s-$norm we require sufficient regularity for $\tilde{v}_0$ and $\mathbf{\tilde{T}}_0$.

\begin{align*}
\|\mathbf{\hat{T}}\|_{H^s}&=\left\|J^P\nabla\tilde{v}_0\mathbf{\tilde{T}}_0+\mathbf{\tilde{T}}_0(J^P\nabla\tilde{v}_0)^T-\frac{1}{\We}\mathbf{\tilde{T}}_0+\frac{\kappa}{\We}\left(J^P\nabla\tilde{v}_0+(J^P\nabla\tilde{v}_0)^T\right)\right\|_{H^s}\\[2mm]
&\leq  \|J^P\nabla\tilde{v}_0\mathbf{\tilde{T}}_0\|_{H^s}+\frac{1}{\We}\|\mathbf{\tilde{T}}_0\|_{H^s}+\frac{\kappa}{\We}\|J^P\nabla\tilde{v}_0\|_{H^s}\leq C(\tilde{v}_0,\mathbf{\tilde{T}}_0,\kappa)\left(1+\frac{1}{\We}\right).
\end{align*}

\medskip

\noindent All the estimates for proving the local existence and the stability results make use of the following lemmas. For details see \cite{CCFGG2}.

\begin{lemma}\label{Jp-est}
Let $2<s<\frac{5}{2}$ and $\tilde{X}-\tilde{\alpha}-tJ^P\tilde{v}_0\in\mathcal{F}^{s+1,\gamma}$. Then, for $T$ small enough we have
\begin{align*}
&1.\hspace{0.5cm}\|J^P(\tilde{X})\|_{L^{\infty}H^{s+1}}\leq C(M, \tilde{v}_0, \|\tilde{\alpha}\|_{L^2},\|\tilde{X}-\tilde{\alpha}-tJ^P\tilde{v}_0\|_{\mathcal{F}^{s+1,\gamma}})\\[3mm]
&2.\hspace{0.5cm}\|J^P(\tilde{X})-J^P\|_{L^{\infty}H^{s+1}}\leq C(M, \tilde{v}_0, \|\tilde{\alpha}\|_{L^2},\|\tilde{X}-\tilde{\alpha}-tJ^P\tilde{v}_0\|_{\mathcal{F}^{s+1,\gamma}})\\[2mm]
&\hspace{4.5cm}\cdot\left(\|\tilde{X}-\tilde{\alpha}-tJ^P\tilde{v}_0\|_{L^{\infty}H^{s+1}}+\|tJ^P\tilde{v}_0\|_{L^{\infty}H^{s+1}}\right)\\[3mm]
&3.\hspace{0.5cm}\|J^P(\tilde{X})-J^P\|_{H^1_{(0)}H^{\gamma}}\leq  C(M, \tilde{v}_0,\|\tilde{X}-\tilde{\alpha}-tJ^P\tilde{v}_0\|_{\mathcal{F}^{s+1,\gamma}})\|\tilde{X}-\tilde{\alpha}\|_{H^1_{(0)}H^{\gamma}}\\[3mm]
&4.\hspace{0.5cm}\|J^P(\tilde{X})-J^P\|_{H^{\frac{s-1}{2}}_{(0)}H^{1+\delta}}\leq  C(M, \tilde{v}_0,\|\tilde{X}-\tilde{\alpha}-tJ^P\tilde{v}_0\|_{\mathcal{F}^{s+1,\gamma}})\\[1mm]
&\hspace{5.9cm}\cdot(\|\tilde{X}-\tilde{\alpha}-tJ^P\tilde{v}_0\|_{H^{\frac{s-1}{2}+\eta}_{(0)}H^{1+\delta}}+T),
\end{align*}
with 

$$M=\frac{1}{\inf_{\tilde{\alpha}}|\tilde{\alpha}|-C(\tilde{v}_0)T-T^{\frac{1}{4}}\|\tilde{X}-\tilde{\alpha}-tJ^P\tilde{v}_0\|_{\mathcal{F}^{s+1,\gamma}}}.$$
\end{lemma}

\begin{proof}
These results  have been obtained by using the definition of $J^P_{kj}=\partial_{X_j}P_k(P^{-1}(\tilde{X}))$, which contains terms as $\frac{\tilde{X}_i}{|\tilde{X}|}$ and by using estimates \eqref{flux-estim}. 
\end{proof}
\bigskip

\begin{lemma}\label{Jp-dif-est}
Let $2<s<\frac{5}{2}$ and $\tilde{X}-\tilde{\alpha}-tJ^P\tilde{v}_0, \tilde{Y}-\tilde{\alpha}-tJ^P\tilde{v}_0 \in\mathcal{F}^{s+1,\gamma}$. Then, for $T$ small enough we have

\begin{align*}
&1.\hspace{0.5cm}\|J^P(\tilde{X})-J^P(\tilde{Y})\|_{L^{\infty}H^{s+1}}\\[1mm]
&\hspace{0.5cm}\leq C(M, \|\tilde{X}-\tilde{\alpha}-tJ^P\tilde{v}_0\|_{\mathcal{F}^{s+1,\gamma}}, \|\tilde{Y}-\tilde{\alpha}-t J^P\tilde{v}_0\|_{\mathcal{F}^{s+1,\gamma}}) \|\tilde{X}-\tilde{Y}\|_{L^{\infty}H^{s+1}}\\[3mm]
&2.\hspace{0.5cm}\|J^P(\tilde{X})-J^P(\tilde{Y})\|_{H^1_{(0)}H^{\gamma}}\\[1mm]
&\hspace{0.5cm}\leq C(M, \|\tilde{X}-\tilde{\alpha}-t J^P\tilde{v}_0\|_{\mathcal{F}^{s+1,\gamma}}, \|\tilde{Y}-\tilde{\alpha}-tJ^P\tilde{v}_0\|_{\mathcal{F}^{s+1,\gamma}}) \|\tilde{X}-\tilde{Y}\|_{H^1_{(0)}H^{\gamma}},
\end{align*}
where 
\begin{align*}
&M=\max\left\lbrace\frac{1}{\inf_{\tilde{\alpha}}|\tilde{\alpha}|-C(\tilde{v}_0)T-T^{\frac{1}{4}}\|\tilde{X}-\tilde{\alpha}-tJ^P\tilde{v}_0\|_{\mathcal{F}^{s+1,\gamma}}},\right.\\[1mm]
&\hspace{2cm}\left.\frac{1}{\inf_{\tilde{\alpha}}|\tilde{\alpha}|-C(\tilde{v}_0)T-T^{\frac{1}{4}}\|\tilde{Y}-\tilde{\alpha}-tJ^P\tilde{v}_0\|_{\mathcal{F}^{s+1,\gamma}}}\right\rbrace.
\end{align*}
\end{lemma}
\medskip

\begin{remark}
The same estimates we obtain for $J^P(\tilde{X})$ and $J^P(\tilde{X})-J^P(\tilde{Y})$ hold also for $Q^2(\tilde{X})$ and $Q^2(\tilde{X})-Q^2(\tilde{Y})$.
\end{remark}
\bigskip

\begin{lemma}\label{zeta-est}
Let $2<s<\frac{5}{2}$, $\tilde{X}-\tilde{\alpha}-tJ^P\tilde{v}_0\in\mathcal{F}^{s+1,\gamma}$ and $\tilde{\zeta}=(\nabla\tilde{X})^{-1}$. Then for $T$ small enough, we have

\begin{align*}
&1.\hspace{0.5cm}\|\tilde{\zeta}\|_{L^{\infty}H^s}+\sum_{i=1}^{2}\|\partial_i\tilde{\zeta}\|_{L^{\infty}H^{s}}\leq C(M, \|\tilde{X}-\tilde{\alpha}-tJ^P\tilde{v}_0\|_{\mathcal{F}^{s+1,\gamma}})\\[3mm]
&2.\hspace{0.5cm}\|\tilde{\zeta}-\mathcal{I}\|_{L^{\infty}H^s}\leq C(M, \|\tilde{X}-\tilde{\alpha}-tJ^P\tilde{v}_0\|_{\mathcal{F}^{s+1,\gamma}})\|\tilde{X}-\tilde{\alpha}\|_{L^{\infty}H^{s+1}}\\[3mm]
&3.\hspace{0.5cm}\|\tilde{\zeta}-\mathcal{I}\|_{H^{\frac{s-1}{2}+\delta}_{(0)}H^{1+\eta}}\leq C(M, \|\tilde{X}-\tilde{\alpha}-tJ^P\tilde{v}_0\|_{\mathcal{F}^{s+1,\gamma}}) \|\tilde{X}-\tilde{\alpha}\|_{H^{\frac{s-1}{2}+\delta}_{(0)}H^{2+\eta}}\\[3mm]
&4.\hspace{0.5cm}\|\tilde{\zeta}-\mathcal{I}\|_{H^1_{(0)}H^{\gamma-1}}\leq C(M, \|\tilde{X}-\tilde{\alpha}-tJ^P\tilde{v}_0\|_{\mathcal{F}^{s+1,\gamma}}) \|\tilde{X}-\tilde{\alpha}\|_{H^1_{(0)}H^{\gamma}},
\end{align*}

where 

$$M=\frac{1}{1-C(\tilde{v}_0)T-CT^{\frac{1}{4}}\|\tilde{X}-\tilde{\alpha}-tJ^P\tilde{v}_0\|_{\mathcal{F}^{s+1,\gamma}}-CT^{\frac{1}{2}}|\tilde{X}-\tilde{\alpha}-tJ^P\tilde{v}_0\|_{\mathcal{F}^{s+1,\gamma}}^2}.$$
\end{lemma}
\medskip

\begin{proof}
1. Since $\tilde{\zeta}=\frac{1}{\det(\nabla\tilde{X})}(\cof(\nabla\tilde{X}))^T$,  we need to estimate the  $\det(\nabla\tilde{X})=1+\dive(\tilde{X}-\tilde{\alpha})+\det(\nabla(\tilde{X}-\tilde{\alpha}))$ from below, by using \eqref{flux-estim}.\\ \\
2. Since $\tilde{\zeta}-\mathcal{I}=\tilde{\zeta}(\mathcal{I}-\nabla\tilde{X})=\tilde{\zeta}\nabla(\tilde{\alpha}-\tilde{X})$ then by using the previous result and \eqref{flux-estim} we obtain also the second estimate.\\ \\
3. and 4. We use the previous definition of $\tilde{\zeta}$, $\det(\nabla(\tilde{X}))$ and $\tilde{\zeta}-\mathcal{I}$ in the right spaces.
\end{proof}
\bigskip

\begin{lemma}\label{zeta-dif-est}
Let $2<s<\frac{5}{2}$ and $\tilde{X}^{(n)}-\tilde{\alpha}-tJ^P\tilde{v}_0, \tilde{X}^{(n-1)}-\tilde{\alpha}-tJ^P\tilde{v}_0\in\mathcal{F}^{s+1,\gamma}$. Then for $T$ small enough, we have

\begin{align*}
&1.\hspace{0.5cm}\|\tilde{\zeta}^{(n)}-\tilde{\zeta}^{(n-1)}\|_{H^{\frac{s-1}{2}+\delta}_{(0)}H^{1+\eta}}\leq C(M, \tilde{v}_0)\|\tilde{X}^{(n)}-\tilde{X}^{(n-1)}\|_{H^{\frac{s-1}{2}+\delta}_{(0)}H^{2+\eta}}\\[3mm]
&2.\hspace{0.5cm}\|\tilde{\zeta}^{(n)}-\tilde{\zeta}^{(n-1)}\|_{H^1_{(0)}H^{\gamma-1}}\leq C(M, \tilde{v}_0)\|\tilde{X}^{(n)}-\tilde{X}^{(n-1)}\|_{H^1_{(0)}H^{\gamma}}
\end{align*}
where
$$M=\max_{m=n-1,n}\left\lbrace\frac{1}{1-C(\tilde{v}_0)T-CT^{\frac{1}{4}}\mathcal{A}^{(m)}-CT^{\frac{1}{2}}(\mathcal{A}^{(m)})^2}\right\rbrace.$$
with $\mathcal{A}^{(m)}=\|\tilde{X}^{(m)}-\tilde{\alpha}-tJ^P\tilde{v}_0\|_{\mathcal{F}^{s+1,\gamma}}$
\end{lemma}


\begin{thebibliography} {20}

\addcontentsline{toc}{chapter} {Bibliography}

\bibitem{B} J. T. Beale \textit{The initial value problem for the Navier-Stokes equations with a free surface}, Communications on Pure and Applied Mathematics, 34(3): 359-392, 1981.

\bibitem{BGS} J. Baranger, C. Guillop\'e \& J.C. Saut, \textit{Mathematical analysis of differential models for viscoelastic fluids}, Rheology for Polymer Melt Processing, J.M. Piau and J.F. Agassant (Eds.), Elsevier, Amsterdam 199-236,  1996.

\bibitem {CCFGG1} A. Castro, D. C\'ordoba, C. Fefferman, F. Gancedo \& J.G\'omez-Serrano,
 \textit{Finite time singularities for the free boundary incompressible Euler equations}, Annals of Math, 178 (3): 1061-1134, 2013.

\bibitem {CCFGG2} A. Castro, D. C\'ordoba, C. Fefferman, F. Gancedo \& J.G\'omez-Serrano,
 \textit{Splash singularities for the free boundary Navier-Stokes equations}, Ann. PDE 5: 12 (2019). https://doi.org/10.1007/s40818-019-0068-1.
 
 \bibitem{CCFGG3}A. Castro, D. C\'ordoba, C. Fefferman, F. Gancedo \& J.G\'omez-Serrano,
 \textit{Splash singularities for water waves},  Proc. Natl. Acad. Sci. 109 (3): 733-738, 2012.

\bibitem{CCG} A. C\'ordoba, D. C\'ordoba, F. Gancedo, \textit{Interface evolution: Water waves in 2-D}, Advances in Mathematics, 223: 120-173, 2010.

\bibitem{CS1} D. Coutand, S. Shkoller, \textit{On the Finite-Time Splash and Splat Singularities for the 3-D Free-Surface Euler Equations}
Comm. in Mathematical Physics, 325: 143-183, 2014.


\bibitem{CS2} D. Coutand, S. Shkoller, \textit{On the splash singularity for the free-surface of a Navier-Stokes fluid}, arXiv: 1505.01929v1, Submitted, 2015.

\bibitem{CZ} Y. Chen, P. Zheng,
\textit{The global existence of small solutions to the incompressible Viscoelastic Fluid System in 2 and 3 space dimensions}, Comm. Partial Differential Equations 31: 1793-1810, 2006.

\bibitem{DMS} E. Di Iorio, P. Marcati, S. Spirito, \textit{Splash singularity for a free boundary incompressible viscoelastic fluid model}, Theory, Numerics and Applications of Hyperbolic Problems I. HYP 2016: 501-513, 2018

\bibitem{DMS1} E. Di Iorio, P. Marcati, S. Spirito, \textit{Splash singularity for a free boundary incompressible viscoelastic fluid model}, Preprint arXiv:1806.11089, Submitted, 2018.

\bibitem{DMS2} E. Di Iorio, P. Marcati, S. Spirito, \textit{Splash singularities for a 2D Oldroyd-B model with nonlinear Piola-Kirchhoff stress},  Nonlinear Differ. Equ. Appl. 24:60, 2017.

\bibitem{FGO} E.  Fern{\'a}ndez-Cara, F. Guillen, R.R. Ortega, \textit{The mathematical analysis of viscoelastic fluids of the Oldroyd kind}, Elsevier Science, 2002.

\bibitem{GS} C. Guillop\'e \& J.C. Saut, \textit{Existence results for the flow of viscoelastic fluids with a differential constitutive law}, Nonlinear analysis, Th. Meth. Appl., 15 : 849-869, 1990.

\bibitem{G} M.E. Gurtin, \textit{An introduction to continuum mechanics}, Academic Press, 1981.

\bibitem{K} R. Keunings, \textit{On the high Weissenberg number problem}, Journal of Non-Newtonian fluid mechanics, 20: 209-226, 1986.

\bibitem{LeMeur} H. V. J. Le Meur, \textit{Well-posedness of surface wave equations above a viscoelastic fluid}, Journal Math. Fluid Mech.,13: 481-514, 2011.

\bibitem{LeLZ} Z. Lei, C. Liu, Y. Zhou \textit{Global solutions for incompressible viscoelastic fluids}, Arch. Rational Mech. Anal. 188: 371–398, 2008

\bibitem{LLZ} F. Lin, C. Liu, P. Zhang \textit{On Hydrodynamics of Viscoelastic Fluids}, Comm. Pure Appl. Math. 65: 1437-1471, 2005.

\bibitem {LM} J.L. Lions \& E. Magenes \textit{Non-homogeneous boundary value problems and applications}, Springer-Verlag, New York-Heidelberg, 1972.

\bibitem{LM1} P.L. Lions \& N. Masmoudi \textit{Global solutions for some Oldroyd models of Non-Newtonian flows}, Chinese Annals of Mathematics 21: 131-146, 2000.

\bibitem{LZ} F. Lin, P.Zhang \textit{On the Initial-Boundary Value Problem of the Incompressible Viscoelastic Fluid System}, Comm. Pure and Appl. Math., Vol. 61, 539-558, 2008.

\bibitem{LW} C. Liu, N. J. Walkington \textit{An Eulerian Description of Fluids Containing
Visco-Elastic Particles}, Arch. Rational Mech. Anal., Vol. 159, 229-252, 2001.

\bibitem{Old} J. Oldroyd  \textit{On the Formulation of Rheological Equations of State}. Proceedings of the Royal Society of London. Series A, Mathematical and Physical Sciences. 200 (1063): 523-541, 1950.

\bibitem{OP} R.G. Owens, T.N. Phillips, \textit{Computational
Rheology}, Imperial College Press, 2002.

\bibitem {R} M. Renardy,
 \textit{Mathematical analysis of viscoelastic flows}, Society  for industrial and applied mathematics (SIAM), Philadelphia (2000), Vol. 73.
 
 
\bibitem{ThSh}B. Thomases \& M. Shelley \textit{Emergence of Singular Structures in Oldroyd-B Fluids}, Physics of Fluid, 19(10):103103, 2007.
 
\bibitem{WL} J. V. Wehausen \& E. V. Laitone, 
\textit{Surface Waves}, Encyclopaedia of Physics, Vol. IX, pp. 446-778 , Springer Verlag, 1960.


\bibitem{XZZ} L.Xu, P. Zhang, Z. Zhang,  \textit{Global solvability of a free boundary Three-dimensional incompressible viscoelastic fluid system with surface tension}, Arch. Rational Mech. Anal. 208:753-803, 2013.

\end{thebibliography}
\end{document}